\newtheorem{theorem}{Theorem}[section]
\newtheorem{corollary}{Corollary}[section]
\newtheorem{lemma}{Lemma}[section]
\newtheorem{proposition}{Proposition}[section]
\newtheorem{assumption}{Assumption}[section]
\theoremstyle{definition}
\newtheorem{remark}{Remark}[section]
\newtheorem{example}{Example}[section]
\newcommand{\R}{\mathbb{R}}
\renewcommand{\hat}{\widehat}
\newcommand{\bY}{\bm{Y}}
\newcommand{\bs}{\bm{s}}
\newcommand{\bZ}{\bm{Z}}
\newcommand{\bX}{\bm{X}}
\DeclareMathOperator{\Var}{Var}
\DeclareMathOperator{\Cov}{Cov}
\DeclareMathOperator{\E}{E}
\newcommand{\dk}[1]{{ \color{blue}[DK: #1]}}
\begin{document}

\title[]{Gaussian approximation and spatially dependent wild bootstrap for high-dimensional spatial data}
\thanks{D. Kurisu is partially supported by JSPS KAKENHI Grant Number 20K13468. K. Kato is partially supported by NSF grants DMS-1952306 and DMS-2014636. X. Shao is partially supported by NSF grants DMS-1807032 and DMS-2014018. We would like to thank Adam Kashlak, Yasumasa Matsuda, Taisuke Otsu and Yoshihiro Yajima for their helpful comments and suggestions.} 

\author[D. Kurisu]{Daisuke Kurisu}
\author[K. Kato]{Kengo Kato}
\author[X. Shao]{Xiaofeng Shao}

\date{First version: May 16, 2019. This version: \today}

\address[D. Kurisu]{Department of Industrial Engineering and Economics, School of Engineering, Tokyo Institute of Technology\\
2-12-1 Ookayama, Meguro-ku, Tokyo 152-8552, Japan.
}
\email{kurisu.d.aa@m.titech.ac.jp}

\address[K. Kato]{
Department of Statistics and Data Science, Cornell University \\
1194 Comstock Hall, Ithaca, NY 14853, USA.
}
\email{kk976@cornell.edu}

\address[X. Shao]{
Department of Statistics, University of Illinois at Urbana-Champaign \\
Champaign, IL, 61820, USA.
}
\email{xshao@illinois.edu}

\begin{abstract}

In this paper, we establish  a high-dimensional CLT for the sample mean of $p$-dimensional spatial data observed over irregularly spaced sampling sites in $\R^d$, allowing the dimension $p$ to be much larger than the sample size $n$. We adopt a stochastic sampling scheme that can generate irregularly spaced sampling sites in a flexible manner and  include both pure increasing domain and mixed increasing domain frameworks. To facilitate statistical inference, we develop the  spatially dependent wild bootstrap (SDWB) and justify its asymptotic validity in high dimensions by deriving error bounds that hold almost surely conditionally on the stochastic sampling sites. Our dependence conditions on the underlying random field cover a wide class of random fields such as Gaussian random fields and continuous autoregressive moving average  random fields. Through numerical simulations and a real data analysis, we demonstrate the usefulness of  our bootstrap-based inference in several applications, including joint confidence interval construction for high-dimensional spatial data and change-point detection for spatio-temporal data. 

\end{abstract}

\keywords{change-point analysis, irregularly spaced spatial data, high-dimensional CLT, wild bootstrap, spatio-temporal data}

\maketitle

\section{Introduction}

Spatial data analysis plays an important role in many fields, such as atmospheric science, climate studies, ecology, hydrology and seismology. There are many classic textbooks and monographs devoted to modeling and inference of spatial data, see, e.g., \cite{Cr93}, \cite{St99}, \cite{MoWa04},  \cite{GaGu10}, and \cite{BaCaGe15}, among others. This paper aims to advance high-dimensional Gaussian approximation theory and bootstrap-based methodology related to the analysis of multivariate (and possibly high-dimensional) spatial data. Specifically, we assume that our data are from a multivariate random field $\bY = \{\bY(\bs):\bs \in \R^{d}\}$ with $\bY(\bs) = (Y_{1}(\bs),\dots,Y_{p}(\bs))'$, where $d\ge 2$ is the dimension of the spatial domain and $p\ge 2$ stands for the dimension of multivariate measurements at any location $\bs\in \R^d$. 

With recent technological advances and remote sensing technology, multivariate spatial data are becoming more prevalent. For example, levels of multiple air pollutants (e.g., ozone, PM$_{2.5}$, PM$_{10}$, nitric oxide, carbon monoxide) are monitored at many stations in many countries. To understand spatial distributions of carbon intake and emissions as well as their seasonal and annual evolutions, the total-column carbon dioxide (CO2) mole fractions (in units of parts per million) are measured using remote sensing instruments, which produce estimates of CO2 concentration, called profiles, at 20 different pressure levels; see \cite{NgCrBr17}. The latter authors treated 20 measurements at different profiles as a 20-dimensional vector and proposed a modified spatial random effect model to capture spatial dependence and multivariate dependence across profiles. For an early literature  on the modeling and inference of multivariate spatial data, we refer to \cite{GeVo03}, \cite{GeVoScBa04}, and \cite{GeBa10}.

Motivated by the increasing availability of multivariate spatial data with increasing dimensions, we shall study a fundamental problem at the intersection of spatial statistics and high-dimensional statistics: central limit theorem (CLT) for the sample mean of high-dimensional spatial data observed at irregularly spaced sampling sites. When the dimension  $p$ is low and fixed, CLTs for weighted sums of spatial data have been derived when the sampling sites lie on the $d$-dimensional integer lattice, see \cite{BuZh76}, \cite{Ne80}, \cite{Na80}, \cite{Bo82} and \cite{GuRi84}. To accommodate irregularly spaced sampling sites, which is the norm rather than the exception in spatial statistics,  \cite{La03a} introduced a novel stochastic sampling design and derived CLTs under both pure increasing domain and mixed increasing domain settings. However, so far, all these results are restricted to the case when the dimension $p$ is fixed, and there seem no CLT results that allow for the growing dimension in the literature.

To address the high-dimensional CLT for spatial data, we face the following challenges:  (1) when the dimension $p$ exceeds the sample size $n$, even for i.i.d. data, it is usually not known whether the distribution of normalized sample mean (or its norm, such as the $\ell^\infty$-norm) converges to a fixed limit, unless under very stringent assumptions; (2) spatial data have no natural ordering and sampling sites are often irregularly spaced.  In the low-dimensional setting, \cite{La03a} showed that the asymptotic variance depends on the sampling density, and the convergence rate for the sample mean depends on which asymptotic regime we adopt (pure increasing-domain versus mixed increasing domain). To meet the first challenge, we shall build on the celebrated high-dimensional Gaussian approximation techniques that have undergone recent rapid development (see a literature review below) and establish the asymptotic equivalence between the distribution of the normalized sample mean and that of its Gaussian counterpart in high dimensions. To tackle the challenge from the irregular spatial spacing,  we shall adopt the stochastic sampling scheme of \cite{La03a}, which allows the sampling sites to have a nonuniform density across the sampling region and enables the number of sampling sites $n$ to grow at a different rate compared with the volume of the sampling region $\lambda_n^d$. This scheme accommodates both \textit{the pure increasing domain} case ($\lim_{n \to \infty}n\lambda_{n}^{-d} = \kappa \in (0,\infty)$) and \textit{the mixed increasing domain} case ($\lim_{n \to \infty}n\lambda_{n}^{-d} = \infty$).  From a theoretical viewpoint, this scheme covers all possible asymptotic regimes since it is well-known that the sample mean is not consistent under the infill asymptotics \citep{La96}. See  \cite{La03b}, \cite{LaZh06} and \cite{BaLaNo15} for some detailed discussions on the stochastic spatial sampling design.

Specifically, we establish a CLT for the sample mean of high-dimensional spatial data over the rectangles when $p = p_{n} \to \infty$ as $n \to \infty$ and possibly $p \gg n$ under a weak dependence condition, where the random field is observed  at a finite number of discrete locations $\bs_{1},\dots,\bs_{n}$ in a sampling region $R_{n}$ whose volume scales as  $\lambda_{n}^{d}$, where $\lambda_{n} \to \infty$ as $n \to \infty$. To facilitate statistical inference, we propose and develop the spatially dependent wild bootstrap (SDWB, hereafter), which is an extension of the dependent wild bootstrap of \cite{Sh10} to the spatial setting,  and justify its asymptotic validity in high dimensions. 
Notably, we will show that the SDWB  works for a wide class of random fields on $\R^{d}$ that includes multivariate L\'evy-driven moving average (MA) random fields (see \cite{Ku20} for a detailed discussion of such  random fields).  L\'evy-driven MA random fields constitute a rich class of models for spatial data and include both Gaussian and non-Gaussian random fields such as continuous autoregressive moving-average (CARMA) random fields \citep{BrMa17,MaYa18}. 
To illustrate the usefulness of our theory and SDWB, we describe several applications, including  (i) simultaneous inference for the mean vector of high-dimensional spatial data; (ii) construction of confidence bands for the mean function of spatio-temporal data, and (iii) multiple change-point detection for spatio-temporal data.

\subsection*{ Contributions and Connections to the literature}
To put our contributions in perspective, we shall review two lines of research that have inspired our work. The first line is related to {\it gaussian approximation for both high-dimensional independent data and high-dimensional time series}.  There is now a large and still rapidly growing literature on high-dimensional CLTs  over the rectangles and related bootstrap theory when the dimension of the data is possibly much larger than the sample size.  For the sample mean of  independent random vectors, we refer to \cite{ChChKa13, ChChKa14, ChChKa15, ChChKa16, ChChKa17}, \cite{ChChKaKo19}, \cite{DeZh20}, \cite{KuMuBa20}, \cite{FaKo20} and \cite{ChChKo20}. For high-dimensional $U$-statistics and $U$-processes,  see \cite{Ch18} and \cite{ChKa19}. 
In the time series setting, \cite{ZhWu17} developed Gaussian approximation for the maximum of the sample mean of  high-dimensional stationary time series with equidistant observations under the physical dependence measures developed by \cite{Wu05}. Based on a nonparametric estimator for the long-run covariance matrix of the sample mean, they used a simulation-based approach to constructing simultaneous confidence intervals for the mean vector.  \cite{ZhCh18} also developed high-dimensional CLTs for the maximum of the sample mean of  high-dimensional time series under the physical/functional dependence measures and used non-overlapping block bootstrap to perform inference.
\cite{ChChKa19} studied high-dimensional CLTs for the maximum of the sum of $\beta$-mixing and possibly non-stationary time series and showed the asymptotic validity of a block multiplier bootstrap method. Also see \cite{ChYaZh17}, \cite{Ko19}, and \cite{YuCh20} among others for the use of Gaussian approximation or variants in high-dimensional testing problems. 

To the best of our knowledge, our work is the first paper that establishes a high-dimensional CLT for spatial data and rigorously justifies the asymptotic validity of a bootstrap method for high-dimensional data in the spatial setting.  From a technical point of view, the present paper builds on \cite{ChChKa17},  \cite{ChChKa19}, \cite{ZhWu17} and \cite{ZhCh18}, but our theoretical analysis differs substantially from those references in several important aspects. Specifically, (i) we establish a high-dimensional CLT and the asymptotic validity of SDWB that hold almost surely conditionally on the stochastic sampling sites. Precisely, we show that the conditional distribution of the sample mean (or its SDWB counterpart) given the sampling sites can be approximated by a (conditionally) Gaussian distribution.   The randomness of the sampling sites yields additional technical complications in high dimensions; see e.g. Lemma \ref{variance-rate}. (ii) We extend the coupling technique used in \cite{Yu94} to irregularly spatial data to prove the high-dimensional CLT.  This extension is nontrivial since there is no natural ordering for spatial data and the number of observations in each block constructed is random.  Our approach to the blocking construction is also quite different from those in \cite{La03b} and \cite{LaZh06} whose proofs essentially rely on approximating the characteristic function of the weighted sample mean by that of independent blocks; see also Remark \ref{rem: lahiri}. (iii) We explore in detail concrete random fields that satisfy our weak dependence condition and other regularity conditions. Indeed, we show that our regularity conditions can be satisfied for a wide class of multivariate L\'evy-driven MA random fields  that constitute a rich class of models for spatial data \citep[cf.][]{BrMa17,MaYa18,MaYu20} but whose mixing properties have not been investigated so far. Verification of our regularity conditions to L\'{e}vy-driven MA fields is indeed nontrivial and relies on several probabilistic techniques from L\'{e}vy process theory and theory of infinitely divisible random measures \citep{Be96,Sa99,RaRo89}.

 Our work also builds on the literature of {\it bootstrap methods for time series and spatial data}. 
For both time series and spatial data, the block-based bootstrap (BBB) has been fairly well studied since the introduction of moving block bootstrap (MBB) by \cite{Ku89} and \cite{LiSi92}. Among many variants of the MBB, we mention \cite{Ca86} for the non-overlapping block bootstrap, \cite{PoRo92} for the circular bootstrap, \cite{PoRo94} for the stationary bootstrap, and \cite{PaPo01, PaPo02} for the tapered block bootstrap. See \cite{La03b} for a book-length treatment.  The BBB methods have been extended to spatial framework for both regular lattice and irregularly spaced non-lattice data. See e.g., \cite{PoRo93}, \cite{PoPaRo99}, \cite{LaZh06}, and \cite{ZhLa07}.

As we mentioned before, the proposed SDWB  is an extension of the dependent wild bootstrap in \cite{Sh10}, which was developed for time series data. The main difference between SDWB and DWB  is that the SDWB observations in $\R^d$ were generated by simulating an auxiliary random field with suitable covariance function on $\R^d$ to mimic the spatial dependence. In contrast, the DWB in \cite{Sh10} aims to capture temporal dependence when $d=1$. Thus the multipliers (or external variables) in SDWB are spatially dependent, hence the name SDWB. 
As argued in \cite{Sh10}, the DWB/SDWB is much easier to implement for irregularly spaced data than BBB, as the latter requires partitioning the sampling region into blocks and can be less convenient to implement due to incomplete blocks. The SDWB is also different from the block multiplier bootstrap (BMB) proposed in \cite{ChChKa19} since the multipliers of the BMB are i.i.d. Gaussian random variables, while the multipliers of SDWB are dependent Gaussian random variables generated from a stationary Gaussian random field on the irregular spaced sampling sites.

   \cite{Sh10} also demonstrated favorable theoretical properties of DWB for both equally spaced and unequally spaced time series when $d=1$.
Our work is distinct from \cite{Sh10} in several important aspects. (i) Since we are dealing with random fields, our technical assumptions are based on $\beta$-mixing conditions for random fields, which is a nontrivial extension of $\beta$-mixing condition for time series; (ii) all of our results are stated conditional on the underlying (randomly sampled) sampling sites, which is stronger than the unconditional version obtained in \cite{Sh10} for the case $d=1$; (iii)   most importantly, the analysis of \cite{Sh10} is focused on the case where the dimension $p$ is fixed and relies on the explicit limit distribution of the normalized sample mean, while in the high-dimensional case, there are no explicit limit distributions, and the asymptotic analysis of the SDWB is substantially more involved than that of \cite{Sh10}. Overall, the technical assumptions and probabilistic tools we use are considerably different due to our focus on high-dimensional Gaussian approximation for random fields. 
Compared to other bootstrap methods and associated theory developed for spatial data \citep{LaZh06,ZhLa07}, our bootstrap-based inference is targeted at a high-dimensional parameter and our theoretical argument is substantially different. 

The rest of the paper is organized as follows. In Section \ref{sec: sampling}, we introduce the asymptotic framework for the sampling region, stochastic design of sampling locations, and dependence structure of the random field. In Section \ref{sec: SDWB}, we introduce the spatially dependent wild bootstrap and describe its implementation.  In Section \ref{sec: Main-results}, we present a high-dimensional CLT for the sample mean of high-dimensional spatial data and derive the asymptotic validity of  SDWB. In Section \ref{sec: examples}, we discuss a class of random fields that satisfy our regularity conditions. We present some applications of SDWB  in Section \ref{sec: applications}. In Section \ref{sec: simulations}, we investigate finite sample properties of the SDWB via numerical simulations. All the proofs and a real data illustration are included in the supplement.  

\subsection{Notation}
For any vector $\bm{x} = (x_{1},\dots, x_{q})' \in \R^{q}$, let $|\bm{x}| =\sum_{j=1}^{q}|x_{j}|$ and $\|\bm{x}\| =\sqrt{\sum_{j=1}^{q}x_j^2}$ denote the $\ell^{1}$ and $\ell^{2}$-norms of $\bm{x}$, respectively. For two vectors $\bm{x} = (x_{1},\dots, x_{q})'$ and $\bm{y} = (y_{1},\dots, y_{q})' \in \R^{q}$, the notation $\bm{x} \leq \bm{y}$ means that $x_{j} \leq y_{j}$ for all $j=1,\dots,q$. For any set $A \subset \R^{q}$, let $|A|$ denote the Lebesgue measure of $A$, and let $[\![A]\!]$ denote the number of elements in $A$. For any positive sequences $a_{n}, b_{n}$, we write $a_{n} \lesssim b_{n}$ if there is a constant $C >0$ independent of $n$ such that $a_{n} \leq Cb_{n}$ for all $n$,  $a_{n} \sim b_{n}$ if $a_{n} \lesssim b_{n}$ and $b_{n} \lesssim a_{n}$, and $a_{n} \ll b_{n}$ if $a_{n}/b_{n} \to 0$ as $n \to \infty$. For any $a,b \in \R$, let $a \vee b = \max\{a,b\}$ and $a \wedge b = \min\{a,b\}$. For $a \in \R$ and $b>0$, we use the shorthand notation $[a\pm b] = [a-b,a+b]$. Let $\|X\|_{\psi_{1}} = \inf\left\{c>0 : E\left[\exp(|X|/c)-1\right] \leq 1\right\}$ denote the $\psi_{1}$-Orlicz norm for a real-valued random variable $X$. For random variables $X$ and $Y$, we write $X \stackrel{d}{=} Y$ if they have the same distribution.

\section{Settings}\label{sec: sampling}
In this section, we discuss mathematical settings of our sampling design and spatial dependence structure. We observe discrete samples $\bY(\bs_1),\dots,\bY(\bs_n)$ from a random field $\bY = \{ \bY(\bs) : \bs \in \R^d \}$ with $\bY (\bs) = (Y_1(\bs),\dots,Y_p(\bs))' \in \R^p$ and are interested in approximating the distribution of the sample mean $\overline{\bY}_n = n^{-1}\sum_{i=1}^n \bY(\bs_i)$ when $p = p_n \to \infty$ as $n \to \infty$ and possibly $p \gg n$. The sampling sites $\bs_1,\dots,\bs_n \in \R^d$ are stochastic and obtained by rescaling i.i.d. random vectors $\bZ_1,\dots,\bZ_n$; see below for details.   Let $(\Omega^{(j)}, \mathcal{F}^{(j)},P^{(j)})$, $j=1,2,3$ be probability spaces on which the random field $\bY$, a sequence of i.i.d. random vectors $\{\bZ_{i}\}_{i \geq 1}$ with values in $\R^{d}$, and an auxiliary real-valued Gaussian random field $W = \{W(\bs): \bs \in \R^{d}\}$ are defined, respectively. The auxiliary Gaussian random field $W$ will be used in the construction of SDWB. Consider the product probability space  $(\Omega, \mathcal{F},P)$ where $\Omega = \Omega^{(1)} \times \Omega^{(2)} \times \Omega^{(3)}$, $\mathcal{F} = \mathcal{F}^{(1)} \otimes \mathcal{F}^{(2)} \otimes \mathcal{F}^{(3)}$, and $P = P^{(1)} \times P^{(2)} \times P^{(3)}$. Then $\bY$, $\{\bZ_{i}\}_{i \geq 1}$, and $W$ are independent by construction. Let $P_{\bZ}$ denote the joint  distribution of the sequence of i.i.d. random vectors $\{\bZ_{i}\}_{i \geq 1}$ and let $P_{\cdot \mid \bZ}$ denote the conditional  probability given $\sigma (\{\bZ_{i}\}_{i \geq 1})$, the $\sigma$-field generated by $\{\bZ_{i}\}_{i \geq 1}$. Let $E_{\bZ}$ denote the expectation with respect to $\{\bZ_{i}\}_{i \geq 1}$ and let $E_{\cdot \mid \bZ}$ and $\Var_{\cdot \mid \bZ}$ denote the conditional expectation and variance given $\sigma(\{\bZ_{i}\}_{i \geq 1})$, respectively. Finally, let $P_{\cdot|\bY,\bZ}$ and $\Var_{\cdot|\bY, \bZ}$ denote the conditional probability and variance given $\sigma(\{\bY(\bs): \bs \in \R^{d}\}\cup \{\bZ_{i}\}_{i \geq 1})$, respectively.

\subsection{Sampling design}
We follow the setting considered in \cite{La03a} and define the sampling region $R_{n}$ as follows. Let $R_{0}^{\ast}$ be an open connected subset of $(-1/2,1/2]^{d}$ containing the origin and let $R_{0}$ be a Borel set satisfying $R_{0}^{\ast} \subset R_{0} \subset \overline{R}_{0}^{\ast}$, where for any set $A \subset \R^{d}$, $\overline{A}$ denotes its closure. Let $\{\lambda_{n}\}_{n \geq 1}$ be a sequence of positive numbers such that $\lambda_{n} \to \infty$ as $n \to \infty$.  We consider the following set as the sampling region 
\[
R_{n} = \lambda_{n}R_{0}. 
\]
To avoid pathological cases, we also assume that for any sequence of positive numbers $\{a_{n}\}_{n \geq 1}$ with $a_{n} \to 0$ as $n \to \infty$, the number of cubes of the form $a_{n}(\bm{i} +[0,1)^{d})$, $\bm{i} \in \mathbb{Z}^{d}$ with their lower left corner $a_{n}\bm{i}$ on the lattice $a_{n}\mathbb{Z}^{d}$ that intersect both $R_{0}$ and $R_{0}^{c}$ is $O(a_{n}^{-d+1})$ as $n \to \infty$. 

Next we introduce our (stochastic) sampling designs. Let $f$ be a continuous, everywhere positive probability density function on $R_{0}$, and let $\{\bZ_{i}\}_{i \geq 1}$ be a sequence of i.i.d. random vectors with  density $f$. Recall that $\{\bZ_{i}\}_{i \geq 1}$ and $\bY$ are independent from the construction of the probability space $(\Omega, \mathcal{F}, P)$.  We assume that the sampling sites $\bs_{1},\dots, \bs_{n}$ are obtained from realizations $\bm{z}_1,\dots, \bm{z}_n$ of the random vectors $\bZ_{1},\dots, \bZ_{n}$ and the relation
\[
\bs_{i} = \lambda_{n}\bm{z}_{i},\ i=1,\dots, n. 
\]  

In practice, $\lambda_{n}$ can be determined by the diameter of a sampling region. See e.g., \cite{HaPa94} and \cite{MaYa09}. The boundary condition on the prototype set $R_{0}$ holds in many practical situations, including many convex subsets in $\R^{d}$ such as spheres, ellipsoids, polyhedrons, as well as many non-convex sets in $\R^{d}$. See also \cite{La03a} and Chapter 12 in \cite{La03b} for more discussions.

\subsection{Dependence structure}
In what follows, we  assume that the  random field $\bY = \{ \bY(\bs) : \bs \in \R^d \}$ can be decomposed as
\begin{align}
\bY(\bs) &= \bX(\bs) + \bm{\Upsilon}(\bs), \quad \bs \in \R^d, \label{beta-decomp-approx}
\end{align}
where $\bX =\{ \bX(\bs) : \bs \in \R^{d} \}$ with $\bX(\bs) = (X_{1}(\bs),\dots, X_{p}(\bs))'$ is a strictly stationary random field and $\bm{\Upsilon} = \{ \bm{\Upsilon}(\bs) : \bs \in \R^{d} \}$ with $\bm{\Upsilon}(\bs) = (\Upsilon_{1}(\bs),\dots, \Upsilon_{p}(\bs))'$ is a ``residual'' random field such that 
for some $\zeta>0$,
\begin{align}\label{AN-error}
P_{\cdot \mid \bZ}\left(\max_{1 \leq j \leq p}\left|{1 \over \sqrt{n^{2}\lambda_{n}^{-d}}}\sum_{i=1}^{n}\Upsilon_{j}(\bs_{i})\right| > n^{-\zeta}\log^{-1/2} p\right) &=O(n^{-\zeta}) \quad \text{with $P_{\bZ}$-probability one}.
\end{align}
The decomposition (\ref{beta-decomp-approx}) may (and in general does) depend on $n$, i.e., $\bX = \bX^{(n)}$ and $\bm{\Upsilon} = \bm{\Upsilon}^{(n)}$, but the dependence on $n$ is suppressed for  notational convenience. Throughout the paper, we assume that $E[\bm{\Upsilon}(\bs)]=0$ for any $\bs\in \R^d$. Then $\bY$
is approximately stationary with constant mean.

We also  assume that the random field $\bX$ satisfies a certain mixing condition. Let $\sigma_{\bX}(T) = \sigma(\{\bX(\bs): \bs \in T\})$ denote the $\sigma$-field generated by $\{\bX(\bs): \bs \in T\}$ for $T \subset \R^{d}$. For any subsets $T_{1}$ and $T_{2}$ of $\R^{d}$, the $\beta$-mixing coefficient between $\sigma_{\bX}(T_1)$ and $\sigma_{\bX}(T_2)$ is defined by
\[
\check{\beta}(T_{1}, T_{2}) = \sup \frac{1}{2} \sum_{j=1}^{J}\sum_{k=1}^{K}|P(A_{j}\cap B_{k}) - P(A_{j})P(B_{k})|,
\]
where the supremum is taken over all partitions $\{ A_j\}_{j=1}^{J} \subset \sigma_{\bX}(T_{1})$ and $\{ B_k\}_{k=1}^{K} \subset \sigma_{\bX}(T_{2})$ of $\R^d$. 
 Let $\mathcal{R}(b)$ denote the collection of all finite disjoint unions of cubes in $\R^{d}$ with  total volume not exceeding $b$.
Then, we define
\begin{align}\label{beta-mix-def}
\beta(a;b) = \sup \big\{ \check{\beta}(T_{1},T_{2}): d(T_{1},T_{2}) \geq a, T_{1}, T_{2} \in \mathcal{R}(b)\big\}, \ a,b > 0,
\end{align}
where $d(T_{1},T_{2}) = \inf\{|\bm{x} - \bm{y}|: \bm{x} \in T_{1}, \bm{y} \in T_{2}\}$. 
We assume that there exist a nonincreasing function $\beta_{1}$ with $\lim_{a \to \infty}\beta_{1}(a) = 0$ and a nondecreasing function $g$ (that may be unbounded) such that 
\begin{equation}
\beta(a;b) \leq \beta_{1}(a)g(b),\ a,b > 0.
\label{eq: mixing}
\end{equation}

Our mixing condition (\ref{eq: mixing}) is a $\beta$-mixing version of the $\alpha$-mixing condition considered in \cite{La03a}, \cite{LaZh06}, and \cite{BaLaNo15}. In general, the function $\beta_{1}$ may depend on $n$ since the random field $\bm{X}$ that appears in (\ref{beta-decomp-approx}) depends on $n$. Here we assume that $g$ does not depend on $n$ for simplicity, but the extension to the general case that $g$ changes with $n$ is not difficult. The random field $\bY$ itself may not satisfy the mixing condition (\ref{eq: mixing}), since  the mixing condition (\ref{eq: mixing}) is assumed on $\bX$. With the decomposition (\ref{beta-decomp-approx}), we allow $\bY$ to have a flexible dependence structure since the residual random field $\bm{\Upsilon}$ can accommodate a complex dependence structure. In particular, we will show in Section \ref{sec: examples} that a wide class of L\'{e}vy-driven MA random fields admit the decomposition satisfying Condition (\ref{AN-error}).

\begin{remark}
\label{rem: lahiri}
\cite{La03b}, \cite{LaZh06}, and \cite{BaLaNo15} assume the $\alpha$-mixing version of Condition (\ref{eq: mixing}) to prove limit theorems for spatial data in the fixed dimensional case (i.e., $p$ is fixed).  \cite{La03b} established CLTs for weighted sample means of spatial data under an $\alpha$-mixing condition in the univariate case. 
Lahiri's proof relies essentially on approximating the characteristic function of the weighted sample mean by that of independent blocks using the Volkonskii-Rozanov inequality \citep[cf. Proposition 2.6 in][] {FaYa03} and then showing that the characteristic function corresponding to the independent blocks converges to the  characteristic function of its Gaussian limit. 
However, in the high-dimensional case ($p_{n} \to \infty$ as $n \to \infty$),  characteristic functions are difficult to capture the effect of dimensionality in approximation theorems, so  we rely on a different argument than that of \cite{La03b}. Indeed,  we use a stronger blocking argument tailored to $\beta$-mixing sequences; cf. Lemma 4.1 in \cite{Yu94}. As discussed in the latter paper, the corresponding results would not hold for the $\alpha$-mixing case; see Remark (ii) right after the proof of Lemma 4.1 in \cite{Yu94}. Hence we assume Condition (\ref{eq: mixing}) in the present paper. 
\end{remark}

\begin{remark}
It is important to restrict the size of index sets $T_{1}$ and $T_{2}$ in the definition of $\beta(a;b)$. Define the $\beta$-mixing coefficient of a random field $\bX$ similarly to the time series as follows: Let $\mathcal{O}_{1}$ and $\mathcal{O}_{2}$ be half-planes with boundaries $L_{1}$ and $L_{2}$, respectively. For each $a>0$, define $\beta(a) = \sup\left\{ \check{\beta}(\mathcal{O}_{1},\mathcal{O}_{2}) : d(\mathcal{O}_{1},\mathcal{O}_{2}) \geq a\right\}$. 
According to Theorem 1 in \cite{Br89}, if $\{\bX(s): \bs \in \R^{2}\}$ is  strictly stationary, then $\beta(a) = 0$ or $1$ for $a >0$. This implies that if a random field $\bX$ is $\beta$-mixing ($\lim_{a \to \infty}\beta(a)=0$), then it is automatically $m$-dependent, i.e., $\beta(a)=0$ for some $a>m$, where $m$ is a positive constant. To allow for certain flexibility, we restrict the size of $T_1$ and $T_2$ in the definition of $\beta(a;b)$. We refer to \cite{Br93} and \cite{Do94} for more details on mixing coefficients for random fields. 
\end{remark}


\section{Spatially dependent wild bootstrap}\label{sec: SDWB}

In this section, we introduce  the spatially dependent wild bootstrap (SDWB) method for the construction of joint confidence intervals for the mean vector $\bm{\mu} = E[\bY(\bs)] = (\mu_{1},\dots,\mu_{p})'$.  Let $\overline{\bY}_{n} = n^{-1}\sum_{i=1}^{n}\bY(\bs_{i}) = (\overline{Y}_{1,n},\dots, \overline{Y}_{p,n})'$ denote the sample mean. In Section \ref{sec: GA-mean-result}, we will  show that under certain regularity conditions, as $n \to \infty$, 
\[
\sup_{A \in \mathcal{A}}\left|P_{\cdot \mid \bZ}\left(\sqrt{\lambda_{n}^{d}}(\overline{\bY}_{n} - \bm{\mu}) \in A\right) - P_{\cdot \mid \bZ}\left(\bm{V}_{n} \in A\right)\right| \to 0 \quad \text{$P_{\bZ}$-a.s.},
\]
provided that $p = O(n^{\alpha}$) for some $\alpha > 0$, where $\mathcal{A}$ is the class of closed rectangles in $\R^{p}$. Here $\bm{V}_n = (V_{1,n},\dots, V_{p,n})'$ is a centered Gaussian random vector in $\R^{p}$ under $P_{\cdot \mid \bZ}$ with (conditional) covariance matrix $\Sigma^{\bm{V}_{n}} = (\Sigma_{j,k}^{\bm{V}_{n}})_{1 \leq j,k \leq p} = E_{\cdot \mid \bZ}[\bm{V}_{n}\bm{V}'_{n}]$, the form of which is specified later.  This high-dimensional CLT implies that 
a joint $100(1-\tau)\%$ confidence interval for the mean vector $\bm{\mu}$ with $\tau \in (0,1)$ is given by $\hat{C}_{1-\tau} = \prod_{j=1}^{p}\hat{C}_{j,1-\tau}$, where
\begin{align*}
\hat{C}_{j,1-\tau} &= \left[\overline{Y}_{j,n} \pm \lambda_{n}^{-d/2}\sqrt{\Sigma_{j,j}^{\bm{V}_{n}}}q_{n}(1-\tau)\right],
\end{align*}
and  $q_{n}(1-\tau)$ is the ($1-\tau$)-quantile of $\max_{1 \leq j \leq p}\left|V_{j,n}/\sqrt{\Sigma_{j,j}^{\bm{V}_{n}}}\right|$. Indeed, we have 
\[
P_{\cdot \mid \bZ}\left( \bm{\mu} \in \hat{C}_{1-\tau}\right) = P_{\cdot \mid \bZ}\left(\max_{1 \leq j \leq p}\left|V_{j,n}/\sqrt{\Sigma_{j,j}^{\bm{V}_{n}}}\right| \leq q_{n}(1-\tau)\right)  + o(1) \geq 1-\tau + o(1)
\]
with $P_{\bZ}$-probability one, so that $\hat{C}_{1-\tau}$ is a valid joint confidence interval for $\bm{\mu}$ with level approximately $1-\tau$. 

In practice, we have to  estimate the quantile $q_{n}(1-\tau)$, in addition to the coordinatewise variances $\Sigma_{j,j}^{\bm{V}_{n}}$. To this end, we develop  the spatially dependent wild bootstrap (SDWB), as an extension of DWB proposed by \cite{Sh10} to the spatial setting. Given the observations $\{\bY(\bs_{i})\}_{i=1}^{n}$, we define the SDWB pseudo-observations as 
\[
\bY^{\ast}(\bs_{i}) = \overline{\bY}_{n} + (\bY(\bs_{i}) - \overline{\bY}_{n})W(\bs_{i}),\ i = 1,\dots,n,
\]
where 
$\{W(\bs_{i})\}_{i=1}^{n}$ are $n$ discrete samples from a real-valued stationary Gaussian random field $W = \{W(\bs): \bs \in \R^{d}\}$ such that $E[W(\bm{s})]=0$, $\Var(W(\bm{s}))=1$ and $\Cov(W(\bs_{1}), W(\bs_{2})) = a(\|\bs_{2} - \bs_{1}\|/b_{n})$. Here $a(\cdot): \R \to [0,1]$ is a continuous kernel function supported in $[-1,1]$ and $b_{n}$ is a bandwidth such that $b_{n} \to \infty$ as $n \to \infty$. 

To estimate the covariance matrix $\Sigma^{\bm{V}_{n}}$, we use the classical
lag-window type estimator defined as 
\begin{align}
\hat{\Sigma}^{\bm{V}_{n}} &= {1 \over n^{2}\lambda_{n}^{-d}}\sum_{\ell_{1},\ell_{2}=1}^{n}(\bY(\bs_{\ell_{1}}) - \overline{\bY}_{n})(\bY(\bs_{\ell_{2}}) - \overline{\bY}_{n})'\Cov(W(\bs_{\ell_{1}}), W(\bs_{\ell_{2}})) \nonumber \\
&= {1 \over n^{2}\lambda_{n}^{-d}}\sum_{\ell_{1},\ell_{2}=1}^{n}(\bY(\bs_{\ell_{1}}) - \overline{\bY}_{n})(\bY(\bs_{\ell_{2}}) - \overline{\bY}_{n})'a(\|\bs_{\ell_{1}} - \bs_{\ell_{2}}\|/b_{n}). \label{SDWB-cov-est-def}
\end{align} 
Denote by $\hat{\Sigma}_{j,k}^{\bm{V}_n}$ the $(j,k)$-th component of $\hat{\Sigma}^{\bm{V}_{n}}$. Let $\overline{\bY}^{\ast}_{n} = n^{-1}\sum_{i=1}^{n}\bY^{\ast}(\bs_{i}) = (\overline{Y}^{\ast}_{1,n},\dots,\overline{Y}^{\ast}_{p,n})'$. It is not difficult to see that $\hat{\Sigma}^{\bm{V}_{n}} = \lambda_n^d \Var_{\cdot|\bY,\bZ}(\overline{\bY}^{\ast}_{n}) $.
That is, the SDWB variance estimator coincides with the lag window estimator provided that the covariance function and bandwidth used in SDWB match the kernel and bandwidth in the above expression.

Then we can estimate the quantile $q_{n}(1-\tau)$ by 
\begin{align}\label{SDWB-quantile-est-def}
\hat{q}_{n}(1-\tau) = \inf\left\{t \in \R : P_{\cdot|\bY,\bZ}\left(\sqrt{\lambda_{n}^{d}}\max_{1 \leq j \leq p}\left|{\overline{Y}^{\ast}_{j,n} - \overline{Y}_{j,n} \over \sqrt{\hat{\Sigma}_{j,j}^{\bm{V}_{n}}}} \right|  \leq t\right) \geq 1-\tau\right\}.
\end{align}  
We will show in Sections \ref{sec: SDWB-result} and \ref{joint-CI-mean} that the plug-in joint confidence interval $\hat{C}_{1-\tau}$ with $\Sigma_{j,j}^{\bm{V}_{n}}$ and $q_{n}(1-\tau)$ replaced by $\hat{\Sigma}_{j,j}^{\bm{V}_{n}}$ and $\hat{q}_{n}(1-\tau)$ will have asymptotically correct coverage probability under regularity conditions.

\begin{remark}[Comparison with DWB]
Since the introduction of DWB for time series inference, there have been 
quite a bit further extensions  in the time series literature. For example, its validity has been justified for degenerate $U$- and $V$-statistics by \cite{LeNe13} and \cite{ChSeGr14}, and for empirical processes by \cite{DoLaLeNe15}. It has also been used in several testing problems to cope with weak temporal dependence; see \cite{BuWe17}, \cite{RhSh19}, and \cite{HiMo20}  among others.  To widen the scope of applicability of DWB while preserving   its adaptiveness to irregular configuration of time series or spatial data, \cite{SeShWa15} developed  the dependent random weighting, which can be viewed as an extension of traditional random weighting to  both time series and random fields. The theoretical justification in \cite{SeShWa15} is restricted to the $d=1$ case. Although conceptually simple, the theory associated with SDWB is considerably more involved than DWB and our proof techniques are substantially different from the above-mentioned papers due to our focus on  its validity in the high-dimensional setting.  
\end{remark}

\section{Main results}\label{sec: Main-results}

In this section, we first derive a high-dimensional CLT for the sample mean over the rectangles in Section~\ref{sec: GA-mean-result}. Building on the high-dimensional CLT, we establish the asymptotic validity of the SDWB over the rectangles in high dimensions in Section~\ref{sec: SDWB-result}. In what follows, we  maintain the baseline assumption discussed in Section \ref{sec: sampling}. 

\subsection{High-dimensional CLT}
\label{sec: GA-mean-result}

To state the high-dimensional CLT, we shall  consider the two cases separately:  (i) the coordinates of $\bX$ are sub-exponential and (ii) have finite polynomial moments.

\subsubsection{High-dimensional CLT under sub-exponential condition}
We make the following assumption. 
\begin{assumption}\label{ass: design}
Suppose that  $p = O(n^{\alpha})$ for some $\alpha>0$. Let $\{\lambda_{1,n}\}_{n \geq 1}$ and $\{\lambda_{2,n}\}_{n \geq 1}$ be two sequences of positive numbers such that $\lambda_{1,n},
\lambda_{2,n} \to \infty$, $\lambda_{2,n} =o (\lambda_{1,n})$, and $\lambda_{1,n} = o(\lambda_{n})$.

(i) The random field $\bX$ has zero mean, i.e., $E[\bX(\bs)] = \bm{0}$ and the residual random field $\bm{\Upsilon}$ satisfies Condition (\ref{AN-error}). There exist two sequences of  positive constants $\{D_{n}\}_{n \geq 1}$ with $D_{n} \geq 1$ and $\{\delta_{n,\Upsilon}\}_{n \geq 1}$ with $\delta_{n,\Upsilon} \to 0$ such that
\begin{equation}
\begin{split}\label{Ass-moment-exp-HD}
&\max_{1 \leq j \leq p}\|X_{j}(\bs)\|_{\psi_{1}} \leq D_{n} \quad \text{and}\\
&E_{\cdot \mid \bZ}\left[\max_{1 \leq i \leq n}\max_{1 \leq j \leq p}\left|\Upsilon_{j}(\bs_{i})\right|^{q}\right] \leq \delta_{n,\Upsilon}^{q} \quad \text{with $P_{\bZ}$-probability one for some $q \in [8,\infty)$}.
\end{split}
\end{equation}

(ii) The probability density function $f$ is continuous, everywhere positive with support $\overline{R}_{0}$.

(iii) We have $\lim_{n \to \infty}n\lambda_{n}^{-d} = \kappa \in (0,\infty]$ with $\lambda_{n} \geq n^{\bar{\kappa}}$ for some $\bar{\kappa}>0$.

(iv) There exists a constant $0 < c < 1/2$ such that
\begin{align*}
&\max\left\{{1 \over \sqrt{\lambda_{2,n}}},\ D_{n}\left({\log n \over n\lambda_{n}^{-d}} +1\right)\sqrt{{\overline{\beta}_{q}\lambda_{2,n} \over \lambda_{1,n}}},  \left(\lambda_{1,n}^{d/2}\lambda_{2,n}^{d}D_{n}^{2} + \lambda_{1,n}^{d} D_{n}\right)\lambda_{n}^{-d/2},\ {D_{n}^{6}\lambda_{1,n}^{3d}\over n^{2}\lambda_{n}^{-d}} \right\}n^{c} = O(1)
\end{align*}
as $n \to \infty$, where $\overline{\beta}_{q} =\overline{\beta}_{q}(n) :=  1 + \sum_{k=1}^{\lambda_{1,n}}k^{d-1}\beta_{1}^{1-2/q}(k)$.  Further, there exists a constant $0<c'<c$ such that
\begin{align}\label{asy-neg-Upsilon}
\lambda_{n}^{d/2}n^{c'}\delta_{n,\Upsilon} = O(1).
\end{align}

(v) We have $\lim_{n \to \infty}\lambda_{n}^{d}\lambda_{1,n}^{-d}\beta(\lambda_{2,n} ;\lambda_{n}^{d}) = 0$, and there exist some constants $0<\underline{c}<\overline{C}<\infty$ such that 
\begin{align}
\Sigma_{j,j}(\bm{0}) \leq \overline{C},\quad   \int_{\R^{d}} \Sigma_{j,j}(\bs)d\bs \ge \underline{c} \quad \text{and} \quad \int_{\mathbb{R}^{d}}|\Sigma_{j,j}(\bm{s})|d\bm{s} \leq \overline{C} \quad \text{for all $1 \leq j \leq p$,} \label{HDGA_cov_ass} 
\end{align}
where $\Sigma(\bs) = (\Sigma_{j,k}(\bs))_{1\leq j,k \leq p}  = \Cov(\bX(\bs), \bX(\bm{0}))$.
\end{assumption}

A discussion about the above assumptions is warranted. The sequences $\{\lambda_{1,n}\}$ and $\{\lambda_{2,n}\}$ will be used in the large-block-small-block argument, which is commonly used in proving CLTs for sums of mixing random variables; see \cite{La03b}.  Specifically, $\lambda_{1,n}$ corresponds to the side length of large blocks, while $\lambda_{2,n}$ corresponds to the side length of small blocks. 
The first part of Condition (\ref{Ass-moment-exp-HD}) requires the coordinates of $\bX(\bs)$ to be (uniformly) sub-exponential, while the second part of Condition (\ref{Ass-moment-exp-HD}) partially ensures the asymptotic negligibility of the residual random field, along with 
the condition (\ref{AN-error}). 
Condition (ii) is concerned with the distribution of  irregularly spaced sampling sites and allows a nonuniform density across the sampling region.  Condition (iii) implies that our sampling design allows the pure increasing domain case ($\lim_{n \to \infty}n\lambda_{n}^{-d} = \kappa \in (0,\infty)$) and the mixed increasing domain case ($\lim_{n \to \infty}n\lambda_{n}^{-d} = \infty$). 
Condition (\ref{asy-neg-Upsilon}) is used to guarantee the asymptotic negligibility of $\bm{\Upsilon}(\bs)$ for the asymptotic validity of the SDWB.  
For random fields to be discussed in Section \ref{sec: examples}, $\delta_{n,\Upsilon}$ decays exponentially fast as $n \to \infty$, so that Condition (\ref{asy-neg-Upsilon}) is satisfied. 
Condition (v) is a technical condition on the covariance function of $\bX$. Condition (\ref{HDGA_cov_ass}) is used to guarantee that the (conditional) coordinatewise variances of the normalized sample mean $\sqrt{\lambda_n^d} \overline{\bX}_n$ are bounded away from zero almost surely.

Let us briefly compare our conditions with Condition (S.5) in \cite{La03a}, who established CLTs for weighted sums of spatial data in the univariate case (i.e., $p=1$). Condition (iv) corresponds to Lahiri's Condition (S.5) Part (i), and the condition $\lim_{n \to \infty}\lambda_{n}^{d}\lambda_{1,n}^{-d}\beta(\lambda_{2,n} ;\lambda_{n}^{d}) = 0$ corresponds to the $\beta$-mixing version of Lahiri's Condition (S.5) Part (iii).  In particular, $\sqrt{\overline{\beta}_{q}\lambda_{2,n}\lambda_{1,n}^{-1}} = O(n^{-c})$ and $\lambda_{1,n}^{d}D_{n}/\lambda_{n}^{-d/2} = O(n^{-c})$ imply Lahiri's Condition (S.5) Part (i). Although our conditions are slightly more restrictive than his, they enable us to obtain error bounds for the the high-dimensional CLT for the sum of large blocks of high-dimensional spatial data with the dimension growing polynomially fast in the sample size.

We are ready to state the main results. 
Let $\mathcal{A} = \{\prod_{j=1}^{p}[a_{j},b_{j}]: -\infty \leq a_{j}\leq b_{j} \leq \infty, 1\leq j \leq p\}$ denote the collection of closed rectangles in $\R^{p}$. For $\bm{\ell} = (\ell_{1}, \dots, \ell_{d})' \in \mathbb{Z}^{d}$, we let $\Gamma_{n}(\bm{\ell};\bm{0}) = (\bm{\ell} + (0,1]^{d})\lambda_{3,n}$ with $\lambda_{3,n} = \lambda_{1,n} + \lambda_{2,n}$, and define the following hypercubes, 
\[
\Gamma_{n}(\bm{\ell};\bm{1}) = \prod_{j=1}^{d}(\ell_{j}\lambda_{3,n}, \ell_{j}\lambda_{3,n} + \lambda_{1,n}].
\]
Intuitively, $\Gamma_{n}(\bm{\ell};\bm{0})$ is a complete block of indices in $\R^d$ that contains a large block $\Gamma_{n}(\bm{\ell};\bm{1})$ and many small blocks $\Gamma_{n}(\bm{\ell};\bm{0}) \setminus \Gamma_{n}(\bm{\ell};\bm{1})$.  
Let $L_{n} = \{\bm{\ell} \in \mathbb{Z}^{d}: \Gamma_{n}(\bm{\ell},\bm{0}) \cap R_{n} \neq \emptyset\}$ denote the index set of all hypercubes $ \Gamma_{n}(\bm{\ell},\bm{0})$ that are contained in or intersects with the boundary of $R_{n}$.
Define
\[
S_{n}(\bm{\ell};\bm{1}) = \sum_{i:\bs_{i}\in \Gamma_{n}(\bm{\ell};\bm{1}) \cap R_{n}}\bX(\bs_{i}).
\]
If $[\![\{i: \bs_{i} \in \Gamma_{n}(\bm{\ell};\bm{1}) \cap R_{n}\}]\!] = 0$, we set $S_{n}(\bm{\ell};\bm{1}) = \bm{0}$. 

\begin{theorem}[High-dimensional CLT]\label{Thm: GA_hyperrec}
Under Assumption \ref{ass: design}, the following result holds $P_{\bZ}$-almost surely: 
\begin{align}
\sup_{A \in \mathcal{A}}\left|P_{\cdot \mid \bZ}\left(\sqrt{\lambda_{n}^{d}}\overline{\bY}_{n} \in A\right) - P_{\cdot \mid \bZ}\left(\bm{V}_{n} \in A\right)\right| \leq C\left({\lambda_{n} \over \lambda_{1,n}}\right)^{d}\beta(\lambda_{2,n}; \lambda_{n}^{d}) + O(n^{-\left({c' \over 6} \wedge \zeta \right)}), \label{GA-HR}
\end{align}
where $C$ is a positive constant that does not depend on $n$, and $\bm{V}_{n} = (V_{1,n},\dots, V_{p,n})'$ is a centered Gaussian random vector under $P_{\cdot \mid \bZ}$ with (conditional) covariance 
\begin{equation}
E_{\cdot \mid \bZ}\left[\bm{V}_{n}\bm{V}'_{n}\right] = {1 \over n^{2}\lambda_{n}^{-d}}\sum_{\bm{\ell} \in L_{n}}E_{\cdot \mid \bZ}\left[S_{n}(\bm{\ell};\bm{1})S_{n}(\bm{\ell};\bm{1})'\right].
\label{eq: covariance matrix}
\end{equation}
\end{theorem}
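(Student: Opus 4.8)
The plan is to decompose $\sqrt{\lambda_n^d}\,\overline{\bY}_n$ into a leading term coming from the stationary field $\bX$ observed on the large blocks, a remainder from $\bX$ on the small blocks, and the residual-field contribution from $\bm{\Upsilon}$, and then to run a high-dimensional CLT on the large-block sum after replacing the $\beta$-mixing large blocks by genuinely independent ones via a coupling argument. First I would write $\sqrt{\lambda_n^d}\,\overline{\bY}_n = \frac{1}{\sqrt{n^2\lambda_n^{-d}}}\sum_{i=1}^n \bX(\bs_i) + \frac{1}{\sqrt{n^2\lambda_n^{-d}}}\sum_{i=1}^n \bm{\Upsilon}(\bs_i)$; the second term is $O_{P_{\cdot\mid\bZ}}(n^{-\zeta}\log^{-1/2}p)$ with $P_{\bZ}$-probability one by Condition~(\ref{AN-error}), which together with an anti-concentration inequality for Gaussian maxima over rectangles (Nazarov-type) contributes the $O(n^{-\zeta})$ piece of the bound. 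For the $\bX$-part, split the sum over $i$ according to whether $\bs_i$ lies in some large block $\Gamma_n(\bm{\ell};\bm{1})$ or in the complementary small-block region; the small-block sum, normalized, has conditional variance controlled by $\overline{\beta}_q \lambda_{2,n}/\lambda_{1,n}$ and the boundedness conditions in Assumption~\ref{ass: design}(iv)--(v), so by Chebyshev (conditionally on $\bZ$) plus Nazarov anti-concentration it is negligible at rate $n^{-c'/6}$ after the usual smoothing/slicing in the Gaussian comparison.

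Next, for the large-block sum $\frac{1}{\sqrt{n^2\lambda_n^{-d}}}\sum_{\bm{\ell}\in L_n} S_n(\bm{\ell};\bm{1})$, I would invoke the extension of Berbee/Yu's coupling (the $\beta$-mixing blocking lemma, cf.\ Lemma~4.1 in \cite{Yu94}) adapted to the spatial setting: since distinct large blocks are separated by at least $\lambda_{2,n}$ and each has volume $\lesssim \lambda_{1,n}^d \le \lambda_n^d$, one can construct on an enriched space independent blocks $\tilde S_n(\bm{\ell};\bm{1})$ with $\tilde S_n(\bm{\ell};\bm{1}) \stackrel{d}{=} S_n(\bm{\ell};\bm{1})$ conditionally on $\bZ$, and with total coupling error bounded by $[\![L_n]\!]\cdot\beta(\lambda_{2,n};\lambda_n^d) \lesssim (\lambda_n/\lambda_{1,n})^d\beta(\lambda_{2,n};\lambda_n^d)$, which is exactly the first term on the right-hand side of (\ref{GA-HR}). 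This requires fixing an (arbitrary) enumeration of $L_n$ and iterating the two-block coupling; the randomness of the block occupancies is handled because the whole construction is carried out conditionally on $\sigma(\{\bZ_i\})$, so the coupling error bound holds for $P_{\bZ}$-almost every realization once Assumption~\ref{ass: design}(v) ($\lambda_n^d\lambda_{1,n}^{-d}\beta(\lambda_{2,n};\lambda_n^d)\to 0$) and the growth conditions are in force.

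Then I would apply the high-dimensional CLT for sums of independent (conditionally on $\bZ$) random vectors over rectangles — the Chernozhukov--Chetverikov--Kato machinery, in the sub-exponential form of \cite{ChChKa17} — to $\frac{1}{\sqrt{n^2\lambda_n^{-d}}}\sum_{\bm{\ell}}\tilde S_n(\bm{\ell};\bm{1})$, producing a centered Gaussian vector with covariance $\frac{1}{n^2\lambda_n^{-d}}\sum_{\bm{\ell}}E_{\cdot\mid\bZ}[S_n(\bm{\ell};\bm{1})S_n(\bm{\ell};\bm{1})']$, which is precisely $E_{\cdot\mid\bZ}[\bm{V}_n\bm{V}_n']$ in (\ref{eq: covariance matrix}). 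The inputs needed — a uniform lower bound on the coordinatewise variances, an upper bound on $E_{\cdot\mid\bZ}[\max_j |S_n(\bm{\ell};\bm{1})_j|^k]$ for $k=3,4$ and an $L^\infty$ Orlicz-type bound — follow from the $\psi_1$ control of $X_j$, the mixing summability $\overline{\beta}_q$, and the block-volume bounds; the condition $D_n^6\lambda_{1,n}^{3d}/(n^2\lambda_n^{-d}) = O(n^{-c})$ in Assumption~\ref{ass: design}(iv) is exactly what makes the CCK rate $o(1)$ (indeed $O(n^{-c'/6})$ after absorbing the slicing losses), and $p=O(n^\alpha)$ controls the $\log p$ factors. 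Finally, the high-dimensional CLT is combined with the negligibility of the small-block and residual terms using the standard three-step argument: Gaussian comparison (Slepian/Stein interpolation) to merge the Gaussian proxy with the true large-block law, plus anti-concentration to convert the additive-error bounds on the negligible pieces into a bound on $\sup_{A\in\mathcal{A}}$ of the distribution-function difference.

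\textbf{Main obstacle.} The crux is the spatial coupling step: unlike the one-dimensional time-series case there is no canonical linear order on $L_n$, the number of observations $[\![\{i:\bs_i\in\Gamma_n(\bm{\ell};\bm{1})\cap R_n\}]\!]$ in each block is itself random, and one must verify that the enlarged $\sigma$-field on which the independent copies live is compatible with conditioning on $\sigma(\{\bZ_i\})$ so that the entire error analysis — including the covariance identity (\ref{eq: covariance matrix}) and the CCK moment bounds — remains valid $P_{\bZ}$-almost surely rather than only in expectation over $\bZ$. Making the iterated coupling bound telescope correctly with a fixed but arbitrary block enumeration, while tracking that each coupling introduces an error no larger than $\beta(\lambda_{2,n};\lambda_n^d)$ (using that each block and the union of already-coupled blocks both lie in $\mathcal{R}(\lambda_n^d)$ and are $\lambda_{2,n}$-separated), is the delicate part; everything downstream is a careful but essentially routine bookkeeping of the rates in Assumption~\ref{ass: design}(iv).
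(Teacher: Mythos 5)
Your proposal is correct and follows essentially the same route as the paper: decompose $\bY$ into the stationary part plus the residual $\bm{\Upsilon}$ (handled by Condition (\ref{AN-error}) and Nazarov's inequality), split the stationary sum into large and small blocks, replace the $\beta$-mixing blocks by independent copies via the Eberlein/Yu coupling with total error $[\![L_n]\!]\,\beta(\lambda_{2,n};\lambda_n^d)\lesssim(\lambda_n/\lambda_{1,n})^d\beta(\lambda_{2,n};\lambda_n^d)$, kill the small-block contribution by maximal/variance inequalities driven by $\overline{\beta}_q\lambda_{2,n}/\lambda_{1,n}$, and apply Proposition 2.1 of \cite{ChChKa17} to the independent large blocks, whose covariance is exactly (\ref{eq: covariance matrix}). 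The only cosmetic difference is that the paper bounds the small-block maxima via Markov plus the CCK maximal inequality for independent sums (after also coupling the small blocks) rather than a direct Chebyshev argument, and no Gaussian comparison is needed in this theorem itself (it enters only in Corollary \ref{cor41} and the bootstrap validity proof).
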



The proof of Theorem \ref{Thm: GA_hyperrec} relies on an extension of the coupling technique in \cite{Yu94} to irregularly spatial data.  The proof proceeds by first approximating the sample mean by the sum of independent large blocks and then showing the high-dimensional CLT for the sum of independent large blocks. The terms $S_{n}(\bm{\ell};\bm{1})$ that appear in the representation of $E_{\cdot \mid \bZ}[\bm{V}_{n}\bm{V}'_{n}]$ is the (conditional) covariance matrix of independent couplings for the large blocks.  The first term $\left(\lambda_{n}/\lambda_{1,n}\right)^{d}\beta(\lambda_{2,n}; \lambda_{n}^{d})$ in the error bound (\ref{GA-HR}) comes from the blocking argument and reflects a bound on the contribution from small blocks, while the second term corresponds to the error bound of the high-dimensional CLT for the sum of independent large blocks. 

The covariance matrix (\ref{eq: covariance matrix}) of the (conditionally) Gaussian vector $\bm{V}_{n}$ depends on the block construction. While the result of Theorem \ref{Thm: GA_hyperrec} is sufficient to establish the asymptotic validity of the SDWB, it is possible to replace the approximating Gaussian vector by that with covariance matrix independent of the block construction, as shown in the following corollary. 

\begin{corollary}\label{cor41}
If, in addition to Assumption \ref{ass: design}, (i) $(n\lambda_n^{-d})^{-1} = \kappa^{-1} + o((\log n)^{-2})$, where $\kappa^{-1} = 0$ if $\kappa = \infty$; (ii) the density function $f$ is Lipschitz continuous inside $R_{0}$; and (iii) $
\int_{\|\bm{s}\| \geq \lambda_{2,n}}|\Sigma_{j,j}(\bm{s})|d\bm{s} = O(n^{-c'/2})$ uniformly over $1 \le j \le p$, then we have
with $P_{\bZ}$-probability one, 
\begin{align*}
\sup_{A \in \mathcal{A}}\left|P_{\cdot \mid \bZ}\left(\sqrt{\lambda_{n}^{d}}\overline{\bY}_{n} \in A\right) - P\left(\breve{\bm{V}}_{n} \in A\right)\right|= o(1),
\end{align*}
where $\breve{\bm{V}}_{n} = (\breve{V}_{1,n},\dots,\breve{V}_{p,n})'$ is a centered Gaussian random vector with covariance 
\[
\E[\breve{\bm{V}}_{n}\breve{\bm{V}}_{n}'] = \left ( \int_{\R^{d}} \Sigma_{j,k}(\bm{x})d\bm{x}\int_{R_{0}} f^{2}(\bm{z})d\bm{z} + \kappa^{-1}\Sigma_{j,k}(\bm{0}) \right )_{1 \le j,k \le p}.
\]
\end{corollary}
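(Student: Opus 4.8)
The plan is to combine Theorem~\ref{Thm: GA_hyperrec} with a Gaussian comparison inequality over rectangles, so that the problem reduces to showing that the random covariance $\Sigma^{\bm{V}_{n}}=E_{\cdot\mid\bZ}[\bm{V}_{n}\bm{V}_{n}']$ from \eqref{eq: covariance matrix} is entrywise close, $P_{\bZ}$-a.s., to the deterministic matrix $\breve{\Sigma}:=\E[\breve{\bm{V}}_{n}\breve{\bm{V}}_{n}']$. Writing $\Delta_{n}:=\max_{1\le j,k\le p}|\Sigma^{\bm{V}_{n}}_{j,k}-\breve{\Sigma}_{j,k}|$, the triangle inequality gives
\[
\sup_{A\in\mathcal{A}}\bigl|P_{\cdot\mid\bZ}(\sqrt{\lambda_{n}^{d}}\,\overline{\bY}_{n}\in A)-P(\breve{\bm{V}}_{n}\in A)\bigr|\le \sup_{A\in\mathcal{A}}\bigl|P_{\cdot\mid\bZ}(\sqrt{\lambda_{n}^{d}}\,\overline{\bY}_{n}\in A)-P_{\cdot\mid\bZ}(\bm{V}_{n}\in A)\bigr|+\sup_{A\in\mathcal{A}}\bigl|P_{\cdot\mid\bZ}(\bm{V}_{n}\in A)-P(\breve{\bm{V}}_{n}\in A)\bigr|.
\]
By Theorem~\ref{Thm: GA_hyperrec} the first term is $o(1)$ $P_{\bZ}$-a.s., since in \eqref{GA-HR} we have $(\lambda_{n}/\lambda_{1,n})^{d}\beta(\lambda_{2,n};\lambda_{n}^{d})\to 0$ by Assumption~\ref{ass: design}(v) and $O(n^{-(c'/6\wedge\zeta)})\to 0$. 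For the second term, both laws are centered and their diagonal entries are bounded above by $\overline{C}$ via \eqref{HDGA_cov_ass}; moreover, once we know below that $\Sigma^{\bm{V}_{n}}_{j,j}\to\breve{\Sigma}_{j,j}\ge\underline{c}\int_{R_{0}}f^{2}>0$ uniformly, they are also eventually bounded below $P_{\bZ}$-a.s. Hence the Gaussian comparison inequality over rectangles of \cite{ChChKa13,ChChKa17} bounds it by $C\Delta_{n}^{1/3}\{1\vee\log(p/\Delta_{n})\}^{2/3}$. Because $p=O(n^{\alpha})$, this is $o(1)$ as soon as $\Delta_{n}=o((\log n)^{-2})$ $P_{\bZ}$-a.s., which is what Conditions (i)--(iii) are designed to deliver.

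To bound $\Delta_{n}$ I would split $\Sigma^{\bm{V}_{n}}_{j,k}-\breve{\Sigma}_{j,k}$ into a stochastic fluctuation $\Sigma^{\bm{V}_{n}}_{j,k}-E_{\bZ}[\Sigma^{\bm{V}_{n}}_{j,k}]$ and a deterministic bias $E_{\bZ}[\Sigma^{\bm{V}_{n}}_{j,k}]-\breve{\Sigma}_{j,k}$. Strict stationarity of $\bX$ and its independence of $\{\bZ_{i}\}$ yield the exact identity
\[
\Sigma^{\bm{V}_{n}}_{j,k}=\frac{\lambda_{n}^{d}}{n^{2}}\sum_{i,i'=1}^{n}\Sigma_{j,k}(\bs_{i}-\bs_{i'})\,\mathbf{1}\{\bs_{i},\bs_{i'}\in\Gamma_{n}(\bm{\ell};\bm{1})\ \text{for a common}\ \bm{\ell}\in L_{n}\},
\]
so $\Sigma^{\bm{V}_{n}}_{j,k}$ is a degree-two $V$-statistic in the i.i.d.\ vectors $\bZ_{1},\dots,\bZ_{n}$ with kernel bounded by $\overline{C}$; its Hájek projection is bounded with bounded variance, and the degenerate quadratic remainder has standard deviation of smaller order. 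A Bernstein-type concentration bound for the projection, a union bound over the $O(p^{2})=O(n^{2\alpha})$ pairs $(j,k)$, and the Borel--Cantelli lemma then give $\max_{j,k}|\Sigma^{\bm{V}_{n}}_{j,k}-E_{\bZ}[\Sigma^{\bm{V}_{n}}_{j,k}]|=o((\log n)^{-2})$ $P_{\bZ}$-a.s.; this is precisely the type of variance estimate behind Lemma~\ref{variance-rate}.

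The deterministic bias is the main obstacle. Taking $E_{\bZ}$ in the identity above, I would treat the diagonal ($i=i'$) and off-diagonal ($i\ne i'$) parts separately. The diagonal part equals $(\lambda_{n}^{d}/n^{2})\,\Sigma_{j,k}(\bm{0})\,E_{\bZ}[\#\{i:\bs_{i}\ \text{lies in some large block}\}]$, and since the small-block region has relative volume $O(\lambda_{2,n}/\lambda_{1,n})$, this is $(n\lambda_{n}^{-d})^{-1}\Sigma_{j,k}(\bm{0})\{1+O(\lambda_{2,n}/\lambda_{1,n})\}$; Condition (i) together with the polynomial smallness of $\lambda_{2,n}/\lambda_{1,n}$ (Assumption~\ref{ass: design}(iv)) makes this $\kappa^{-1}\Sigma_{j,k}(\bm{0})+o((\log n)^{-2})$. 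For the off-diagonal part, after the change of variables $\bm{u}=\lambda_{n}(\bm{z}-\bm{z}')$ one compares the ``same-large-block'' restricted double integral with the unrestricted integral $\lambda_{n}^{d}\iint_{R_{0}^{2}}\Sigma_{j,k}(\lambda_{n}(\bm{z}-\bm{z}'))f(\bm{z})f(\bm{z}')\,d\bm{z}\,d\bm{z}'$: the discrepancy is a block-boundary effect that I would control by cutting the lag at $\|\bm{u}\|=\lambda_{2,n}$, so that for small lags the straddling fraction is $O(\lambda_{2,n}/\lambda_{1,n})$, while for large lags the tail $\int_{\|\bm{u}\|\ge\lambda_{2,n}}|\Sigma_{j,k}(\bm{u})|\,d\bm{u}$ is $O(n^{-c'/2})$ by Condition (iii) (extended to $j\ne k$ via the Cauchy--Schwarz inequality and \eqref{HDGA_cov_ass}). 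In the unrestricted integral, the same lag split together with the Lipschitz continuity of $f$ (Condition (ii)) lets one replace $f(\bm{z}-\bm{u}/\lambda_{n})$ by $f(\bm{z})$ up to an $O(\lambda_{2,n}/\lambda_{n})$ error on the relevant range, producing $\int_{\R^{d}}\Sigma_{j,k}(\bm{u})\,d\bm{u}\int_{R_{0}}f^{2}(\bm{z})\,d\bm{z}$ with an $o((\log n)^{-2})$ remainder, uniformly in $(j,k)$. Combining the diagonal and off-diagonal contributions gives $E_{\bZ}[\Sigma^{\bm{V}_{n}}_{j,k}]=\breve{\Sigma}_{j,k}+o((\log n)^{-2})$ uniformly, hence $\Delta_{n}=o((\log n)^{-2})$ $P_{\bZ}$-a.s., which closes the argument. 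The delicate part throughout is the uniform-in-$(j,k)$ bookkeeping of the block-boundary errors and the careful matching of the truncation level $\lambda_{2,n}$, the block side lengths $\lambda_{1,n},\lambda_{2,n},\lambda_{n}$, and the rate hypotheses of Assumption~\ref{ass: design} and Conditions (i)--(iii).
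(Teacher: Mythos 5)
Your proposal is correct and follows essentially the same route as the paper: Theorem \ref{Thm: GA_hyperrec} for the first term of the triangle inequality, the Gaussian comparison inequality (Lemma \ref{lem: Gaussian comparison}) reducing the second term to an entrywise covariance bound $\Xi_{n}=o((\log n)^{-2})$, and then exactly the bias/fluctuation split of Lemma \ref{variance-rate} — the change-of-variables computation of $E_{\bZ}[\Sigma^{\bm{V}_n}_{j,k}]$ with block-boundary and covariance-tail errors controlled via Conditions (i)--(iii), plus $V$-statistic concentration and Borel--Cantelli for the stochastic part. Your explicit treatment of the off-diagonal entries (via Cauchy--Schwarz for the tail condition) fills in a detail the paper handles only with the phrase ``similar arguments,'' but the argument is the same.
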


Corollary \ref{cor41} is a high-dimensional extension of Theorem 3.1 in \cite{La03a} when $\omega_{n}(s) = 1$ in his notation. 
The conclusion of Corollary \ref{cor41} follows from Theorem \ref{Thm: GA_hyperrec} combined with the  Gaussian comparison inequality. Indeed, under the assumption of Corollary \ref{cor41}, we will show that $\max_{1 \leq j,k \leq p}|E_{\cdot \mid \bZ}[V_{j,n}V_{k,n}] - E[\breve{V}_{j,n}\breve{V}_{k,n}]| = o((\log n)^{-2})$ with $P_{\bZ}$-probability one, which implies the conclusion of Corollary \ref{cor41} via the Gaussian comparison.

\subsubsection{High-dimensional CLT under polynomial moment condition}

Next, we consider the case where $\bX$ has finite polynomial moments. We make the following assumption.

\begin{assumption}\label{ass: design2}
Suppose that  $p = O(n^{\alpha})$ for some $\alpha>0$. Let $\{\lambda_{1,n}\}_{n \geq 1}$ and $\{\lambda_{2,n}\}_{n \geq 1}$ be two sequences of positive numbers such that $\lambda_{1,n},\lambda_{2,n} \to \infty$, $\lambda_{2,n} =o (\lambda_{1,n})$, and $\lambda_{1,n} = o(\lambda_{n})$.
We replace Condition (i) in Assumption \ref{ass: design} with the following Condition (i'). 

(i') The random field $\bX$ has zero mean $E[\bX(\bs)] = \bm{0}$ and the residual random field $\bm{\Upsilon}$ satisfies (\ref{AN-error}). There exist two sequences of positive constants $\{M_{n}\}_{n \geq 1}$ with $M_{n} \geq 1$ and $\{\phi_{n,\Upsilon}\}_{n \geq 1}$ with $\phi_{n,\Upsilon} \to 0$ such that 
\begin{align}
\max_{1 \leq j \leq p}E\left[|X_{j}(\bs)|^{2+k}\right] &\leq M_{n}^{k},\ k=1,2,  \label{moment-ass1}\\
E\left[\max_{1\leq j \leq p}|X_{j}(\bs)|^{q}\right] &\leq 2M_{n}^{q},\ E_{\cdot \mid \bZ}\left[\max_{1 \leq i \leq n}\max_{1\leq j \leq p}|\Upsilon_{j}(\bs_{i})|^{q}\right] \leq \phi_{n,\Upsilon}^{q}\quad \text{$P_{\bZ}$-a.s} \label{moment-ass2}
\end{align}
for some $q \in [8,\infty)$.

In addition, we maintain Conditions (ii), (iii), and (v) in Assumption \ref{ass: design}, but replace Condition (iv) in Assumption \ref{ass: design} with the following Condition (iv'). 

(iv') There exists a constant $0 < c< 1/2$ such that
\begin{equation}
\label{high-level-condi-HDCLT}
\begin{split}
&\max \Bigg \{  {1 \over \sqrt{\lambda_{2,n}}}, M_{n}\! \left({\log n \over n\lambda_{n}^{-d}} + 1\right)\sqrt{{\overline{\beta}_{q}\lambda_{2,n} \over \lambda_{1,n}}}, (\lambda_{2,n}^{3d/2}+\lambda_{1,n}^{d})\lambda_{n}^{-d/2}M_{n}^{2}, {M_{n}^{2}\lambda_{1,n}^{3d} \over (n^{2}\lambda_{n}^{-d})^{1-2/q}} \Bigg\}n^{c} =O(1) 
\end{split}
\end{equation}
as $n \to \infty$, where $\overline{\beta}_{q} = 1 + \sum_{k=1}^{\lambda_{1,n}}k^{d-1}\beta_{1}^{1-2/q}(k)$. Further, there exists a constant $0<c'<c$ such that
\begin{align}\label{asy-neg-Upsilon2}
\lambda_{n}^{d/2}n^{c'}\phi_{n,\Upsilon} = O(1).
\end{align}
\end{assumption}

Condition (i') allows the coordinates of $\bX$ to have polynomial moments, so it is weaker than Condition (i). As a trade-off,  Condition (iv') is more restrictive than Condition (iv) in Assumption  \ref{ass: design}, as we impose more restrictions on the weak dependence structure of the random field.


\begin{theorem}[High-dimensional CLT]\label{Thm: GA_hyperrec-m}
Under Assumption \ref{ass: design2}, 
the high-dimensional CLT (\ref{GA-HR}) continues to hold $P_{\bZ}$-almost surely.
\end{theorem}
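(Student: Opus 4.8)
The plan is to run essentially the same argument as for Theorem \ref{Thm: GA_hyperrec}, replacing every step that exploited the sub-exponential tails of Condition (i) by a truncation-based argument that only uses the polynomial moment bounds \eqref{moment-ass1}--\eqref{moment-ass2}. First I would dispose of the residual field: since Condition \eqref{AN-error} is unchanged and \eqref{asy-neg-Upsilon2} together with the bound on $E_{\cdot\mid\bZ}[\max_i\max_j|\Upsilon_j(\bs_i)|^q]$ plays the role that \eqref{asy-neg-Upsilon} played before, the same reasoning shows that $\sqrt{\lambda_n^d}(\overline{\bY}_n-\overline{\bX}_n)$ is uniformly negligible over rectangles, contributing the $O(n^{-(c'/6\wedge\zeta)})$ term. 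This reduces matters to the stationary, $\beta$-mixing field $\bX$.

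Next I would carry out the large-block--small-block decomposition exactly as in the proof of Theorem \ref{Thm: GA_hyperrec}: write $\sqrt{\lambda_n^d}\,\overline{\bX}_n=(n^2\lambda_n^{-d})^{-1/2}\sum_{\bm{\ell}\in L_n}S_n(\bm{\ell};\bm{1})+(\text{small-block sum})$, bound the small-block sum in sup-over-$\mathcal{A}$ distance by $C(\lambda_n/\lambda_{1,n})^d\beta(\lambda_{2,n};\lambda_n^d)+O(n^{-c})$ using an anti-concentration/variance estimate together with the variance lemma (the analog of Lemma \ref{variance-rate}) and Condition (v), and then apply the extension of the Berbee/Yu coupling (cf.\ Lemma 4.1 in \cite{Yu94}) to replace $\{S_n(\bm{\ell};\bm{1})\}_{\bm{\ell}\in L_n}$ by independent vectors $\{\tilde S_n(\bm{\ell};\bm{1})\}_{\bm{\ell}\in L_n}$ with matching marginals, at total-variation cost at most $C(\lambda_n/\lambda_{1,n})^d\beta(\lambda_{2,n};\lambda_n^d)$. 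This is precisely the first term on the right-hand side of \eqref{GA-HR}, and all of these steps require only $P_{\bZ}$-a.s.\ control of the occupancy counts $[\![\{i:\bs_i\in\Gamma_n(\bm{\ell};\bm{1})\}]\!]$ via concentration for the i.i.d.\ sampling, exactly as before.

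The one genuinely new ingredient, and the step I expect to be the main obstacle, is the high-dimensional CLT over rectangles for the independent sum $(n^2\lambda_n^{-d})^{-1/2}\sum_{\bm{\ell}}\tilde S_n(\bm{\ell};\bm{1})$ under polynomial moments. Here I would truncate each $\tilde S_n(\bm{\ell};\bm{1})$ componentwise at a level expressed in terms of $M_n$, $\lambda_{1,n}$, $n^2\lambda_n^{-d}$, $p$ and $q$; control the truncation bias via Markov's inequality applied to the $q$-th moment bounds in \eqref{moment-ass1}--\eqref{moment-ass2} (noting $S_n(\bm{\ell};\bm{1})$ is a sum of at most $O(\lambda_{1,n}^d)$ stationary terms, which converts $M_n$ into the powers $M_n^2$, $M_n^2\lambda_{1,n}^{3d}$ appearing in \eqref{high-level-condi-HDCLT}); and then invoke the polynomial-moment high-dimensional CLT for independent vectors (in the spirit of Proposition 2.1 in \cite{ChChKa17}). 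The delicate point is that the truncation level must be chosen so that simultaneously (a) the conditional covariances of the truncated summands still match those of $\tilde S_n(\bm{\ell};\bm{1})$ up to $o((\log p)^{-2})$ in max-norm, so the final Gaussian comparison goes through, and (b) the maximal / third-moment terms in the CCK bound are controlled; reconciling (a) and (b) is what forces the stronger Condition (iv$'$), whose four displayed quantities are exactly the error terms that emerge, and the whole truncation must be performed conditionally on $\bZ$ on a $P_{\bZ}$-probability-one event, since the moment bounds \eqref{moment-ass2} involve the random sites.

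Finally I would match variances and assemble: using the variance lemma I would show that the conditional covariance of the truncated independent sum agrees with $E_{\cdot\mid\bZ}[\bm{V}_n\bm{V}_n']$ up to $o((\log p)^{-2})$ in max-norm $P_{\bZ}$-a.s., invoke the Gaussian comparison inequality to pass from the truncated-sum Gaussian to $\bm{V}_n$, and collect the errors from the residual reduction, the small-block bound, the coupling, and the independent-sum CLT. Under Assumption \ref{ass: design2} each of these is of the order appearing in \eqref{GA-HR}, and a Borel--Cantelli argument upgrades the "with $P_{\bZ}$-probability one" statements from the occupancy-count and truncation steps to the single $P_{\bZ}$-a.s.\ conclusion, yielding Theorem \ref{Thm: GA_hyperrec-m}.
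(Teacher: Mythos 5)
Your proposal is correct and follows essentially the same route as the paper: reduce to $\bX$ via \eqref{AN-error} and \eqref{asy-neg-Upsilon2}, redo the small-block maximal-inequality bounds using only the polynomial moment bounds \eqref{moment-ass1}--\eqref{moment-ass2} (the paper does this via Lemma 2.2.2 of van der Vaart and Wellner and Lemma \ref{lem: maximal inequality}), couple the large blocks by the $\beta$-mixing argument, and then apply a high-dimensional CLT for independent vectors under finite $q$-th moments. The only difference is that you carry out the truncation of the large blocks by hand, whereas the paper simply verifies the moment conditions (M.1), (M.2), (E.2) and cites Proposition 2.1(i) of \cite{ChChKa17} with $B_n=\lambda_{1,n}^{3d/2}M_n$, so your extra truncation and covariance-matching step is subsumed in that citation.
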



Differences in the proofs of Theorems \ref{Thm: GA_hyperrec} and \ref{Thm: GA_hyperrec-m} arise when we approximate the normalized sample mean by the sum of independent  large blocks using the large-block and small-block argument. Indeed, the conditions on the moments of random fields have a direct impact when we apply maximal inequalities to control the order  of small blocks.

\subsection{Asymptotic validity of the SDWB}
\label{sec: SDWB-result}

In this section, we establish the asymptotic validity of  SDWB in high dimensions. Recall that, given the observations $\{\bY(\bs_{i})\}_{i=1}^{n}$, the SDWB pseudo-observations are given by
\[
\bY^{\ast}(\bs_{i}) = \overline{\bY}_{n} + (\bY(\bs_{i}) - \overline{\bY}_{n})W(\bs_{i}),\ i = 1,\dots,n,
\]
where
$\{W(\bs_{i})\}_{i=1}^{n}$ are $n$ discrete samples from a real-valued stationary Gaussian random field $W = \{W(\bs): \bs \in \R^{d}\}$ independent of $\bY$ and $\{ \bZ_i \}_{i \ge 1}$. We make the following assumption on $W$. 
\begin{assumption}\label{ass: SDWB}
The random field $W$ is a stationary 
Gaussian random field with mean zero and covariance function $\Cov(W(\bs_{1}), W(\bs_{2})) = a(\|\bs_{2} - \bs_{1}\|/b_{n})$, where $a(\cdot): \R \to [0,1]$ is a continuous kernel function and $b_n$ is a bandwidth parameter.
The kernel function satisfies that $a(0) = 1$ and $a(x) = 0$ for $|x| \geq 1$. There exist positive constants $c_W$ and $L_W$ such that $|1 - a(x)| \leq L_W|x|$ for $|x| \leq c_W$. Further, with $\mathrm{i} = \sqrt{-1}$,
\begin{align}\label{psd_condition}
K_{a}(x) = \int_{\R}a(u)e^{-\mathrm{i}ux}du \geq 0 \quad \text{for all  $x \in \R$}. 
\end{align}

\end{assumption}

 Condition (\ref{psd_condition}) guarantees the positive semi-definiteness of the covariance matrix of $\{ W(\bs_i)\}_{i=1}^n$. Assumption \ref{ass: SDWB} is satisfied by many commonly used kernel functions in the literature of spectral density estimation, in particular,  Bartlett and Parzen kernels. See  \cite{Pr81} and \cite{An91} for details.

\begin{remark}[Comments on the auxiliary random field $W$]
The covariance function of the Gaussian random field $W$ defined in Assumption \ref{ass: SDWB} implies that the random field $W$ is isotropic. We assume this condition for technical convenience, and it is not difficult to see from the proof that the conclusion of the following theorem holds for the following class of (possibly) non-isotropic covariance functions.  Consider a function $\breve{a}: \mathbb{R}^{d} \to [0,1]$ with $\breve{a}(\bm{0}) = 1$, $\breve{a}(\bm{x}) = 0$ for $\|\bm{x}\| \geq 1$, and assume that there exist positive constants $c_W$ and $L_W$ such that $|1-\breve{a}(\bm{x})| \leq L_W\|\bm{x}\|$ for $\|\bm{x}\| \leq c_W$. Further, assume that the function $\mathfrak{a}: \mathbb{R}^{d} \times \mathbb{R}^{d} \to [0,1]$ defined by $\mathfrak{a}(\bm{x}_{1}, \bm{x}_{2}) = \breve{a}(\bm{x}_{1}-\bm{x}_{2})$ is positive semidefinite. For example, these conditions are satisfied for product kernels of the form $\breve{a}(\bm{x}) = \prod_{j=1}^{d}a_{j}(\sqrt{d}|x_{j}|)$ where $a_{j}$ are one-dimensional kernel functions that satisfy Assumption \ref{ass: SDWB}. In addition, the Gaussian random field assumption can also be relaxed but at the expense of additional technical complications; see Example 4.1 of \cite{Sh10} for an example of non-Gaussian distribution for external random variables
$\{ W(\bs_i)\}_{i=1}^n$.
\end{remark}

\begin{theorem}[Asymptotic validity of SDWB in high dimensions]
\label{DWBvalidity: HR}
Suppose that Assumptions \ref{ass: design} (or \ref{ass: design2}) and \ref{ass: SDWB} hold with $b_n\sim \lambda_{2,n}$. In addition, assume that 
\begin{equation}
\sum_{n=1}^{\infty}n^{c'}(\log n)^{2}\lambda_{n}^{d}\lambda_{1,n}^{-d}\max_{1 \leq j,k \leq p}\int_{\|\bs\|>\sqrt{\lambda_{2,n}}}|\Sigma_{j,k}(\bs)|d\bs < \infty. \label{cov_matrix_tail}
\end{equation}
Then the following result holds $P_{\bZ}$-almost surely: with $P_{\cdot \mid \bZ}$-probability at least $1 - O(n^{-\left({c' \over 6} \wedge \zeta \right)}) - C(\lambda_{n}/\lambda_{1,n})^{d}\beta(\lambda_{2,n};\lambda_{n}^{d})$,
\begin{align}
\sup_{A \in \mathcal{A}}\left|P_{\cdot|\bY,\bZ}\left( \sqrt{\lambda_{n}^{d}}(\overline{\bY}^{\ast}_{n} - \overline{\bY}_{n} )  \in A \right) - P_{\cdot \mid \bZ} \left(\bm{V}_{n} \in A\right)\right| = O(n^{-c'/6}) \label{DWB-HR}
\end{align}
where $C$ is a positive constant that does not depend on $n$ and $p$.
\end{theorem}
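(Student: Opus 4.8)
The plan is to reduce \eqref{DWB-HR} to a Gaussian comparison, exploiting the fact that, conditionally on $(\bY,\{\bZ_i\})$, the bootstrapped sample mean is \emph{exactly} Gaussian. Since $W$ is a mean-zero Gaussian random field independent of $(\bY,\{\bZ_i\})$, the vector $\sqrt{\lambda_n^d}(\overline{\bY}_n^\ast - \overline{\bY}_n) = (\sqrt{\lambda_n^d}/n)\sum_{i=1}^n(\bY(\bs_i)-\overline{\bY}_n)W(\bs_i)$ is, under $P_{\cdot\mid\bY,\bZ}$, centered Gaussian with covariance $\lambda_n^d\Var_{\cdot\mid\bY,\bZ}(\overline{\bY}_n^\ast)=\hat\Sigma^{\bm V_n}$, by the identity noted after \eqref{SDWB-cov-est-def}. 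Hence the left-hand side of \eqref{DWB-HR} equals
\[
\sup_{A\in\mathcal A}\bigl|P(N(\bm0,\hat\Sigma^{\bm V_n})\in A)-P_{\cdot\mid\bZ}(\bm V_n\in A)\bigr|,
\]
a comparison between two centered Gaussian laws with covariances $\hat\Sigma^{\bm V_n}$ and $\Sigma^{\bm V_n}=E_{\cdot\mid\bZ}[\bm V_n\bm V_n']$. By the Gaussian comparison inequality over rectangles (e.g.\ \cite{ChChKa17}), this is at most $C\Delta_n^{1/3}\{1\vee\log(p/\Delta_n)\}^{2/3}$, where $\Delta_n:=\max_{1\le j,k\le p}|\hat\Sigma^{\bm V_n}_{j,k}-\Sigma^{\bm V_n}_{j,k}|$, provided the diagonal entries of $\hat\Sigma^{\bm V_n}$ and $\Sigma^{\bm V_n}$ lie in a fixed interval $[\underline\sigma^2,\bar\sigma^2]\subset(0,\infty)$. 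The bounds $\underline c\lesssim\Sigma^{\bm V_n}_{j,j}\lesssim\overline C$ follow from Condition~(v) and \eqref{HDGA_cov_ass} as in the proof of Theorem~\ref{Thm: GA_hyperrec}, and those for $\hat\Sigma^{\bm V_n}_{j,j}$ follow once $\Delta_n$ is small. Since $p=O(n^\alpha)$, $\log p\lesssim\log n$, so it suffices to show that, $P_{\bZ}$-a.s., $\Delta_n=O(n^{-c'/2})$ (up to logarithmic factors) on an event of $P_{\cdot\mid\bZ}$-probability at least $1-O(n^{-(c'/6\wedge\zeta)})-C(\lambda_n/\lambda_{1,n})^d\beta(\lambda_{2,n};\lambda_n^d)$; the displayed comparison bound is then $O(n^{-c'/6})$.

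Next I would peel off the residual field and the centering from $\Delta_n$. Expanding $\bY=\bX+\bm\Upsilon$ in \eqref{SDWB-cov-est-def}, the $\bm\Upsilon\bm\Upsilon'$ and $\bX\bm\Upsilon'$ contributions are bounded, uniformly in $(j,k)$, using $|a|\le1$, the local point-count estimate $\sum_{\ell_1,\ell_2}\mathbf 1\{\|\bs_{\ell_1}-\bs_{\ell_2}\|\le b_n\}\lesssim n^2 b_n^d\lambda_n^{-d}$, the moment bounds in \eqref{Ass-moment-exp-HD} (resp.\ \eqref{moment-ass1}--\eqref{moment-ass2}) together with Markov's inequality, and condition \eqref{asy-neg-Upsilon} (resp.\ \eqref{asy-neg-Upsilon2}); with $b_n\sim\lambda_{2,n}$ and the remaining parts of Condition~(iv) (resp.\ (iv$'$)) these are $o(n^{-c'/2})$. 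Replacing the centering $\overline{\bY}_n$ by $E[\bX(\bs)]=\bm0$ generates an error governed by $\sqrt{\lambda_n^d}\,\overline{\bX}_n$ (coordinatewise $O_P(1)$, as in the proof of Theorem~\ref{Thm: GA_hyperrec}) and by $\sqrt{\lambda_n^d}\,\overline{\bm\Upsilon}_n$ (controlled by \eqref{AN-error} on an event of $P_{\cdot\mid\bZ}$-probability $\ge1-O(n^{-\zeta})$), and is negligible at the same rate. It then remains to control, uniformly in $j,k$, $|\widetilde\Sigma_{j,k}-\Sigma^{\bm V_n}_{j,k}|$, where $\widetilde\Sigma_{j,k}=(n^2\lambda_n^{-d})^{-1}\sum_{\ell_1,\ell_2=1}^n X_j(\bs_{\ell_1})X_k(\bs_{\ell_2})\,a(\|\bs_{\ell_1}-\bs_{\ell_2}\|/b_n)$, which I would split as $\{\widetilde\Sigma_{j,k}-E_{\cdot\mid\bZ}[\widetilde\Sigma_{j,k}]\}+\{E_{\cdot\mid\bZ}[\widetilde\Sigma_{j,k}]-\Sigma^{\bm V_n}_{j,k}\}$.

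For the stochastic term I would reuse the blocking/coupling machinery from the proof of Theorem~\ref{Thm: GA_hyperrec}: $\widetilde\Sigma_{j,k}$ is a weighted quadratic form in the $\beta$-mixing field $\bX$ over $R_n$, and the extension of the coupling of \cite{Yu94} to irregularly spaced data replaces $\widetilde\Sigma_{j,k}$ by the analogous sum over \emph{independent} large blocks $\Gamma_n(\bm\ell;\bm1)$ except on an event of $P_{\cdot\mid\bZ}$-probability $\le C(\lambda_n/\lambda_{1,n})^d\beta(\lambda_{2,n};\lambda_n^d)$; this is precisely where the $\beta$-mixing probability deficiency in the theorem arises. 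On the coupling event, a Nagaev/Bernstein-type maximal inequality over the $p^2$ index pairs, together with $\overline\beta_q$ and Condition~(iv)/(iv$'$), yields $\max_{j,k}|\widetilde\Sigma_{j,k}-E_{\cdot\mid\bZ}[\widetilde\Sigma_{j,k}]|=O(n^{-c'/2})$ with $P_{\cdot\mid\bZ}$-probability $\ge1-O(n^{-c'/6})$ --- essentially the content of the auxiliary variance-rate lemma (cf.\ Lemma~\ref{variance-rate}) already invoked to construct $\Sigma^{\bm V_n}$. For the deterministic (bias) term, stationarity gives $E_{\cdot\mid\bZ}[\widetilde\Sigma_{j,k}]=(n^2\lambda_n^{-d})^{-1}\sum_{\ell_1,\ell_2}\Sigma_{j,k}(\bs_{\ell_1}-\bs_{\ell_2})\,a(\|\bs_{\ell_1}-\bs_{\ell_2}\|/b_n)$, to be compared with $\Sigma^{\bm V_n}_{j,k}=(n^2\lambda_n^{-d})^{-1}\sum_{\bm\ell\in L_n}\sum_{\bs_i,\bs_{i'}\in\Gamma_n(\bm\ell;\bm1)\cap R_n}\Sigma_{j,k}(\bs_i-\bs_{i'})$ from \eqref{eq: covariance matrix}. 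Since $b_n\sim\lambda_{2,n}$ is far smaller than the side length $\lambda_{1,n}$ of the large blocks, which are separated by strips of width $\lambda_{2,n}$, every pair with $\|\bs_{\ell_1}-\bs_{\ell_2}\|<\lambda_{2,n}$ either lies in one large block or has an endpoint in the small-block region, so the mismatch splits into: (i) pairs at lag $>\sqrt{\lambda_{2,n}}$, of total weight $\lesssim\max_{j,k}\int_{\|\bs\|>\sqrt{\lambda_{2,n}}}|\Sigma_{j,k}(\bs)|\,d\bs$, which by \eqref{cov_matrix_tail} and Borel--Cantelli is $o(n^{-c'}(\log n)^{-2}(\lambda_{1,n}/\lambda_n)^d)$ $P_{\bZ}$-a.s.; (ii) pairs at lag $\le\sqrt{\lambda_{2,n}}$, where $|1-a(\|\bs_{\ell_1}-\bs_{\ell_2}\|/b_n)|\le L_W\sqrt{\lambda_{2,n}}/b_n=O(\lambda_{2,n}^{-1/2})=O(n^{-c})$ by Assumption~\ref{ass: SDWB} and Condition~(iv)/(iv$'$), times $\int_{\R^d}|\Sigma_{j,k}|\le\overline C$; and (iii) pairs lost near large-block boundaries, of relative count $O(\lambda_{2,n}/\lambda_{1,n})$, again controlled by Condition~(iv)/(iv$'$) and \eqref{HDGA_cov_ass}. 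Hence the bias term is $O(n^{-c'/2})$ (in fact smaller), $P_{\bZ}$-a.s.\ for large $n$.

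Combining, on the intersection of the above events --- of $P_{\cdot\mid\bZ}$-probability at least $1-O(n^{-(c'/6\wedge\zeta)})-C(\lambda_n/\lambda_{1,n})^d\beta(\lambda_{2,n};\lambda_n^d)$ for $P_{\bZ}$-a.e.\ realization of the sampling design --- we have $\Delta_n=O(n^{-c'/2})$ (up to logarithmic factors) and the diagonal entries of $\hat\Sigma^{\bm V_n}$ confined to $[\underline\sigma^2,\bar\sigma^2]$, so the Gaussian comparison of the first step delivers \eqref{DWB-HR} with right-hand side $O(n^{-c'/6})$. I expect the main obstacle to be the uniform (over $p^2$ entries) control of the lag-window estimator $\hat\Sigma^{\bm V_n}$: simultaneously its concentration around its conditional mean, which forces the spatial $\beta$-mixing coupling and the high-moment maximal inequalities and produces the $\beta$-mixing term in the probability, and its conditional-mean bias relative to the block-dependent target $\Sigma^{\bm V_n}$, which is exactly where the calibration $b_n\sim\lambda_{2,n}$ and the tail condition \eqref{cov_matrix_tail} are indispensable and where one must reconcile the ``window'' of the estimator (a Euclidean ball of radius $b_n$) with the ``window'' implicit in $\Sigma^{\bm V_n}$ (the large blocks $\Gamma_n(\bm\ell;\bm1)$) via the geometric estimates on the block decomposition.
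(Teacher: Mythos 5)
Your proposal is correct in substance, but it is organized quite differently from the paper's proof, and the reorganization is a genuine simplification. You exploit at the outset that, conditionally on $(\bY,\{\bZ_i\})$, the SDWB statistic $\sqrt{\lambda_n^d}(\overline{\bY}^\ast_n-\overline{\bY}_n)$ is \emph{exactly} $N(\bm 0,\hat\Sigma^{\bm V_n})$, so the whole theorem collapses to one Gaussian comparison controlled by $\Delta_n=\|\hat\Sigma^{\bm V_n}-\Sigma^{\bm V_n}\|_\infty$. The paper instead works at the level of the statistic: it first shows negligibility of the centering term $U_{2,n}$ and the residual term $U_{3,n}$, then uses the Yu-type coupling to replace $U_{1,n}$ by a sum of independent large blocks $U_{11,n}$, and only then invokes two separate Gaussian comparisons — one ($\breve\Delta$) between the conditional law of $U_{11,n}$ given $(\bX,\bZ)$ and a block-level Gaussian $\bm G_n$, and one ($\hat\Delta$) between $\bm G_n$ and $\bm V_n$. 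Your three-way split of $\Delta_n$ maps onto the same ingredients: the residual/centering contributions correspond to the paper's Step on $U_{2,n},U_{3,n}$; the concentration of the lag-window quadratic form around its $\bZ$-conditional mean corresponds to $\breve\Delta$ (and, as you anticipate, still requires the spatial $\beta$-mixing coupling across large blocks plus a maximal inequality over the $p^2$ entries — this is where the $\beta(\lambda_{2,n};\lambda_n^d)$ deficiency enters, and it is \emph{not} quite Lemma \ref{variance-rate}, which concerns a functional of $\bZ$ alone rather than of $\bX$); and your bias term, handled via the Lipschitz bound $|1-a(x)|\le L_W|x|$ on lags below $\sqrt{\lambda_{2,n}}$, the tail condition (\ref{cov_matrix_tail}) with Borel--Cantelli on lags above $\sqrt{\lambda_{2,n}}$, and the $O(\lambda_{2,n}/\lambda_{1,n})$ boundary-pair count, is exactly the paper's $\hat\Delta$ computation ($Q_{1,n}+Q_{2,n}$ with $M=\lambda_{2,n}^{1/2}$). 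What your route buys is the elimination of the statistic-level blocking step and the intermediate Gaussian $\bm G_n$; what the paper's route buys is that the block-level objects run parallel to the proof of Theorem \ref{Thm: GA_hyperrec}, so the same couplings and estimates are reused verbatim. Either way the final rate $C\Delta_n^{1/3}\log^{2/3}p=O(n^{-c'/6})$ comes out the same.
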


From the definition of $\overline{\bY}^{\ast}_{n}$, $\sqrt{\lambda_{n}^{d}}(\overline{\bY}^{\ast}_{n} - \overline{\bY}_{n})$ can be decomposed into the following three terms: 
\begin{align*}
\sqrt{\lambda_{n}^{d}}(\overline{\bY}^{\ast}_{n} - \overline{\bY}_{n}) &= n^{-1}\lambda_{n}^{d/2}\left(\sum_{i=1}^{n}W(\bs_{i})\bX(\bs_{i}) - \sum_{i=1}^{n}W(\bs_{i})\overline{\bY}_{n} + \sum_{i=1}^{n}W(\bs_{i})\bm{\Upsilon}(\bs_{i})\right)\\
& =: n^{-1}\lambda_{n}^{d/2}\left(U_{1,n} + U_{2,n} + U_{3,n}\right). 
\end{align*}
The proof of Theorem  \ref{DWBvalidity: HR}  proceeds with (i) showing asymptotic negligibility of $n^{-1}\lambda_{n}^{d/2}(U_{2,n} + U_{3,n})$ and (ii) approximating  $n^{-1}\lambda_{n}^{d/2}U_{1,n}$ by $\bm{V}_{n}$.
%

\begin{remark}[Comparisons with block-based subsampling and resampling methods]
There have been substantial efforts in extending subsampling \citep{PoRoWo99} and  block-based bootstrap (BBB) methods \citep{La03b} from time series (i.e., $d=1$) to random fields (i.e., $d\ge 2$). For example,  \cite{PoPaRo98} proposed a subsampling method for irregularly spaced spatial data generated by a homogeneous Poisson process. \cite{PoPaRo99} proposed a version of the spatial block bootstrap under the same framework. \cite{LaZh06} developed a grid-based block bootstrap  for irregularly spaced spatial data with nonuniform stochastic sampling designs. 
While subsampling and BBB methods are able to capture spatial dependence nonparametrically, their implementation can be inconvenient when applied to irregularly spaced spatial data, as both require partitioning the sampling region into complete and incomplete blocks, and the implementation details can be highly dependent on  spatial configuration of sampling region. By contrast, the implementation of SDWB only requires the generation of an auxiliary random field $W(\cdot)$ and irregularity of  sampling sites brings no additional difficulty.

On the theory front, 
 \cite{Sh10} showed that DWB and BBB (especially TBB) are often comparable in terms of theoretical properties in the time series setting with a proper choice of kernel function and bandwidth, but all theoretical results developed so far for BBB seem exclusively for low-dimensional time series/random fields. To the best of our knowledge, our work is the first attempt in the literature to show the validity of a
 bootstrap method for high-dimensional spatial data. 
 
\end{remark}


\section{Examples}
\label{sec: examples}

In this section, we discuss some examples of random fields to which our theoretical results can be applied. To this end, we consider multivariate L\'evy-driven MA random fields and discuss their dependence structure.  L\'evy-driven MA random fields include many Gaussian and non-Gaussian random fields and constitute a flexible class of models for multivariate spatial data. 
We show that a broad class of multivariate L\'evy-driven MA random fields, which include CARMA random fields as special cases, satisfies our assumptions.

We first introduce a multivariate L\'{e}vy-driven MA random field; see \cite{Be96} and \cite{Sa99} for standard references on L\'evy processes and \cite{RaRo89} for details on the theory of infinitely divisible measures and fields.
\label{ex: Levy-MARF}
Let $\bm{L}(\cdot) = (L_{1}(\cdot),\dots,L_{p}(\cdot))'$ be an $\R^{p}$-valued L\'evy random measure on the Borel subsets $\mathcal{B}(\R^{d})$ of $\R^{d}$ that is infinitely divisible in the sense that 
\begin{itemize}
\item[(a)] If $A$ and $B$ are disjoint Borel subsets of $\R^{d}$, then $\bm{L}(A)$ and $\bm{L}(B)$ are independent.
\item[(b)] For every Borel subset $A$ of $\R^{d}$ with finite Lebesgue measure $|A|$, 
\begin{align}\label{Levy-exp}
E[\exp(\mathrm{i}\bm{\theta}'\bm{L}(A))] &= \exp(|A|\psi(\bm{\theta})),\ \bm{\theta} \in \R^{p},
\end{align}
where $\mathrm{i} = \sqrt{-1}$ and $\psi$ is the logarithm of the characteristic function of an $\R^{p}$-valued infinitely divisible distribution, which is given by 
\begin{align*}
\psi(\bm{\theta}) &= \mathrm{i}\langle \bm{\theta}, \bm{\gamma}_{0}\rangle - {1 \over 2}\bm{\theta}'\Sigma_{0}\bm{\theta} + \int_{\R^{p}}\left\{e^{\mathrm{i}\langle \bm{\theta}, \bm{x} \rangle }-1-\mathrm{i}\langle \bm{\theta}, \bm{x} \rangle I(\|\bm{x}\| \leq 1)\right\}\nu_{0}(d\bm{x}),
\end{align*}
where $\bm{\gamma}_{0} = (\gamma_{0,1},\dots, \gamma_{0,p})' \in \R^{p}$, $\Sigma_{0} = (\sigma_{0,j,k})_{1 \leq j,k \leq p}$ is a $p\times p$ positive definite matrix, and $\nu_{0}$ is a L\'evy measure with $\int_{\R^{p}}\min\{1,\|\bm{x}\|^{2}\}\nu_{0}(d\bm{x})<\infty$. If $\nu_{0}(d\bm{x})$ has a Lebesgue density, i.e., $\nu_{0}(d\bm{x}) = \nu_{0}(\bm{x})d\bm{x}$, we call $\nu_{0}(\bm{x})$ the L\'evy density. The triplet $(\bm{\gamma}_{0}, \Sigma_{0}, \nu_{0})$ is called the L\'evy characteristic of $\bm{L}$ and uniquely determines the distribution of $\bm{L}$. 
\end{itemize}

By equation (\ref{Levy-exp}), the first and second moments of the random measure $\bm{L}$ are determined by
\[
E[L_{j}(A)] = \mu_{j}^{(\bm{L})}|A|,\ \Cov(L_{j}(A), L_{k}(A)) = \sigma_{j,k}^{(\bm{L})}|A|,\ 1 \leq j,k \leq p,
\] 
where $\mu_{j}^{(\bm{L})} = -i\partial \psi(\bm{0})/\partial \theta_{j}$ and $\sigma_{j,k}^{(\bm{L})} = -\partial^{2}\psi(\bm{0})/\partial \theta_{j}\partial \theta_{k}$. 

The following are a couple of examples of L\'evy random measures. 
\begin{itemize}
\item If $\psi(\bm{\theta}) = -\bm{\theta}'\Sigma_{0} \bm{\theta}/2$ with a $p\times p$ positive semi-definite matrix $\Sigma_{0}$, then $\bm{L}$ is a Gaussian random measure. 

\item If $\psi(\bm{\theta}) = \lambda \int_{\R^{p}}(\exp(\mathrm{i}\langle \bm{\theta}, \bm{x}\rangle) - 1)F(d\bm{x})$, where $\lambda>0$ and $F$ is a probability distribution function with no jump at the origin, then $\bm{L}$ is a compound Poisson random measure with intensity $\lambda$ and jump size distribution $F$. More specifically, 
\begin{align*}
\bm{L}(A) &= \sum_{i=1}^{\infty}\bm{J}_{i}1_{\bm{x}_{i}}(A),\ A \in \mathcal{B}(\R^{d}),
\end{align*}
where $\bm{x}_{i}$ denotes the location of the $i$th unit point mass of a Poisson random measure on $\R^{d}$ with intensity $\lambda>0$ and $\{\bm{J}_{i}\}$ is a sequence of  i.i.d. random vectors in $\R^{p}$ with distribution function $F$ independent of $\{\bm{x}_{i}\}$. 
\end{itemize}

Let $\bm{g} = (g_{j,k})_{1 \leq j,k \leq p}$ be a measurable function on $\R^{d}$ with $g_{j,k} \in L^{1}(\R^{d}) \cap L^{\infty}(\R^{d})$.  A multivariate L\'evy-driven MA random field with kernel $\bm{g}$ driven by a L\'evy random measure $\bm{L}$ is defined by
\begin{align}\label{multi-CARMA}
\bY(\bs) &= (Y_{1}(\bs),\dots,Y_{p}(\bs))' = \int_{\R^{d}}\bm{g}(\bs - \bm{u})\bm{L}(d\bm{u}),\ \bs \in \R^{d}. 
\end{align}
Define $\bm{\mu}_{\bm{L}} = (\mu_{1}^{(\bm{L})},\dots \mu_{p}^{(\bm{L})})'$ and $\Sigma_{\bm{L}} = (\sigma_{j,k}^{(\bm{L})})_{1 \leq j,k \leq p}$. The first and second moments of $\bY(\bm{s})$ satisfy
\begin{align*}
E[\bY(\bm{s})] &= \bm{\mu}_{\bm{L}}\int_{\mathbb{R}^{d}}\bm{g}(\bm{u})d\bm{u},\ \Cov(\bY(\bm{s}), \bY(\bm{v})) = \int_{\mathbb{R}^{d}}\bm{g}(\bm{s}-\bm{u})\Sigma_{\bm{L}}\bm{g}'(\bm{v}-\bm{u})d\bm{u}.
\end{align*}
We refer to \cite{BrMa17} and \cite{MaSt07} for more details on the computation of moments of L\'evy-driven MA processes.

Before discussing theoretical results, we look at some examples of random fields defined by (\ref{multi-CARMA}).  When $p=1$, the random fields defined by (\ref{multi-CARMA}) include  CARMA random fields  \citep{BrMa17}.  For example, if the L\'evy random measure of a CARMA random field is compound Poisson, then the resulting random field is called a compound Poisson-driven CARMA random field. In particular, when
\[
g(\bs) = (1-\varsigma)\exp(\lambda_{1}\|\bs\|) + \varsigma\exp(\lambda_{2}\|\bs\|),
\]
where $\varsigma$ is a parameter that satisfies 
\[
-{\lambda_{2}^{2} - \xi^{2}\lambda_{1} \over \lambda_{1}^{2} - \xi^{2}\lambda_{2}} = {\varsigma \over 1-\varsigma},\ \lambda_{1}<\lambda_{2}<0,\ \xi \leq 0,
\]
then the random field (\ref{multi-CARMA}) is called a CARMA($2,1$) random field. This random field includes normalized CAR($1$) (when $\varsigma=0$) and CAR($2$) (when $\varsigma = -\lambda_{1}/(\lambda_{2} - \lambda_{1})$) as special cases. See \cite{BrMa17} for more details. We also refer to \cite{MaYu20} for a multivariate extension of univariate CARMA random fields.  

\begin{remark}[Connections to  Mat\'ern covariance functions]
In spatial statistics, Gaussian random fields with the following Mat\'ern covariance functions play an important role \cite[cf.][]{Ma86,St99,GuGn06}:
\[
M(\bm{s}; \nu, a, \sigma) = \sigma^{2}(\|a\bm{s}\|)^{\nu}K_{\nu}(\|a\bm{s}\|),\ \nu>0, a>0, \sigma>0,
\]
where $K_{\nu}$ denotes the modified Bessel function of the second kind of order $\nu$ (we call $\nu$ the index of Mat\'ern covariance function). \cite{BrMa17} showed that in the univariate case, when the kernel function is $g(\bm{s}) = (\|a\bm{s}\|)^{\nu}K_{\nu}(\|a\bm{s}\|)$, which they call a Mat\'ern kernel with index $\nu$,  then the Levy-driven MA random field has a Mat\'ern covariance function with index $d/2 + \nu$. For example, a normalized CAR(1) random field has a Mat\'ern covariance function since its kernel function is given by $g(\bm{s}) =  \exp(-\|\lambda_{1}\bm{s}\|) = \sqrt{(2/\pi)}\|\lambda_{1}\bm{s}\|^{1/2}K_{1/2}(\|\lambda_{1}\bm{s}\|)$ for some $\lambda_{1}<0$.

It will turn out that a class of Gaussian (and non-Gaussian) random fields with Mat\'ern covariance functions satisfies our assumptions on $\bY$. For example, a class of multivariate L\'evy-driven MA fields with diagonal kernels of the form $\bm{g}(\bm{s}) = \text{diag}(g_{j,j}(\bm{s}))$ where $g_{j,j}(\bm{s}) = (\|a_{j}\bm{s}\|)^{\nu_{j}}K_{\nu_{j}}(\|a_{j}\bm{s}\|)$ includes Gaussian and non-Gaussian random fields of which each component has a Mat\'ern covariance function. When the kernel function $\bm{g}(\bm{s}) = (g_{j,k}(\bm{s}))$ is non-diagonal, each component of $\bY(\bm{s})$ is a sum of Gaussian or non-Gaussian Mat\'ern random fields if $g_{j,k}$ satisfy the assumptions in Proposition \ref{m-approx-CARMA} below. Since the Mat\'ern kernel decays exponentially fast as $\|\bm{s}\| \to \infty$, our assumptions (in Proposition \ref{m-approx-CARMA}) cover a wide class of (multivariate) Mat\'ern families.


\end{remark}

In general, if $\bm{g}$ depends only on $\|\bm{s}\|$, i.e., $\bm{g}(\bs) = \bm{g}(\|\bs\|)$, then $\bY$ is a strictly stationary isotropic random field whose characteristic function is given by
\begin{equation}
\begin{split}
E[e^{\mathrm{i}\langle \bm{t}, \bY(\bs) \rangle}]
&= \exp\left\{\mathrm{i} \left \langle \bm{t}, \int_{\R^{d}}\bm{g}(\|\bm{v}\|)\bm{\gamma}_{0}d\bm{v} \right \rangle - {1 \over 2}\bm{t}'\left(\int_{\R^{d}}\bm{g}(\|\bm{v}\|)\Sigma_{0}\bm{g}'(\|\bm{v}\|)d\bm{v}\right)\bm{t} \right.  \\ 
&\left. \quad +  \int_{\R^{d}}\left(\int_{\R^{p}}(e^{\mathrm{i}\langle \bm{t}, \bm{g}(\|\bm{v}\|)\bm{x}\rangle } - 1 - \mathrm{i}\langle \bm{t}, \bm{g}(\| \bm{v}\|)\bm{x} \rangle I(\|\bm{g}(\|\bm{v}\|)\bm{x}\| \leq 1))\nu_{0}(d\bm{x})\right)d\bm{v}\right\}. \label{CF-CARMA-RF}
\end{split}
\end{equation}
This implies that the law of $\bY(\bs)$ is infinitely divisible. The second moment of $\bY(\bm{s})$ satisfies
\begin{align*}
\Sigma_{\bY}(\bm{s}) &=\Cov(\bY(\bm{s}), \bY(\bm{0})) = \int_{\mathbb{R}^{d}}\bm{g}(\|\bm{s}-\bm{u}\|)\Sigma_{\bm{L}}\bm{g}'(\|\bm{u}\|)d\bm{u}=:(\sigma^{(\bY)}_{j,k}(\bm{s}))_{1 \leq j,k \leq p}.
\end{align*}

Consider the following decomposition: 
\begin{align*}
\bY(\bs) &= \int_{\R^{d}}\bm{g}(\bs - \bm{u})\psi_{0}\left(\|\bs - \bm{u}\| : m_{n}\right)\bm{L}(d\bm{u}) + \int_{\R^{d}}\bm{g}(\bs - \bm{u})\left(1 - \psi_{0}\left(\|\bs - \bm{u}\| : m_{n}\right)\right)\bm{L}(d\bm{u})\\
&=: \bX^{(m_{n})}(\bs) + \bm{\Upsilon}^{(m_{n})}(\bs),
\end{align*}
where $m_{n}$ is a sequence of positive constants with $m_{n} \to \infty$ as $n \to \infty$ and $\psi_{0}(\cdot:c) : \R \to [0,1]$ is a truncation function defined by
\begin{align*}
\psi_{0}(x:c) = 
\begin{cases}
1 & \text{if $|x| \leq c/4$},\\
-{4 \over c}\left(x-{c \over 2}\right) & \text{if $c/4 < |x| \leq c/2$},\\
0 & \text{if $x>c/2$}.
\end{cases}
\end{align*} 
The random field $\bX^{(m_n)} = \{ \bX^{(m_{n})}(\bs)  : \bs \in \R^d \}$ is $m_{n}$-dependent (with respect to the $\ell^{2}$-norm), i.e., $\bX^{(m_n)}(\bs_{1})$ and $\bX^{(m_n)}(\bs_{2})$ are independent if $\| \bs_1-\bs_2\| \geq m_{n}$. Also, if the tail of the kernel function $\bm{g}$ decays sufficiently fast, then $\bm{\Upsilon}^{(m_n)}=\{\bm{\Upsilon}^{(m_{n})}(\bs)  : \bs \in \R^{d}\}$ is asymptotically negligible. In such cases, we can approximate $\bY$ by an $m_{n}$-dependent process and verify the negligibility condition (\ref{AN-error}) for the residual random field $\bm{\Upsilon} = \bm{\Upsilon}^{(m_{n})}$, as shown in the following proposition.

\begin{proposition}\label{m-approx-CARMA}
Consider a multivariate L\'evy-driven MA random field $\bY$ defined by (\ref{multi-CARMA}).

(i)  Let $\alpha$, $\xi_{0}$, $r_{0}$ and $\overline{M}$ be finite positive constants independent of $n$. Assume that $g_{j,k}(\bs) = \xi_{j,k}e^{-r_{j,k}\|\bs\|}$ where $\xi_{j,j} \neq 0$, $\max_{1 \leq j \leq p}[\![\{ k  : \xi_{j,k} \neq 0,k \neq j\}]\!] \leq \overline{M}$, $|\xi_{j,k}| \leq \xi_{0}$, and $r_{0} \leq r_{j,k} \leq \xi_{0}$ for all $1 \leq j,k \leq p$.
Additionally, assume that $\max_{1 \leq j \leq p}E[\left|L_{j}([0,1]^{d})\right|^{q}] \leq \overline{M}$ for some even integer $q \geq 8$ and $p = O(n^{\alpha})$. Set $m_{n} \geq \delta \log n$ with $\delta > {2(2\alpha+q+2) \over r_{0}q}$. Then, there exists $\zeta>0$ such that Condition (\ref{AN-error}) holds with $\bm{\Upsilon} = \bm{\Upsilon}^{(m_n)}$. 

(ii) Let $0<\underline{M}<\overline{M}<\infty$ and $-1< M_{\beta}<1$ be constants independent of $n$. Further, suppose that $\min_{1 \leq j \leq p}\sigma^{(\bY)}_{j,j}(\bm{0})\geq \underline{M}, \min_{1 \leq j \leq p}\int_{\mathbb{R}^{d}}\sigma^{(\bY)}_{j,j}(\bm{u})d\bm{u} \geq \underline{M}$ and
\begin{itemize}
\item[(a)] the random measure $\bm{L}(\cdot)$ is Gaussian with triplet $(\bm{\gamma}_{0}, \Sigma_{0} , 0)$ such that $\underline{M} \leq \sigma_{0,j,j} \leq \overline{M}$ and $|\gamma_{0,j}| \leq \overline{M}$ for all $1 \leq j \leq p$, or
\item[(b)] the random measure $\bm{L}(\cdot)$ is non-Gaussian with triplet $(\bm{\gamma}_{0}, 0 , \nu_{0})$ with the marginal L\'evy density $\nu_{0,j}(x)$ of $L_{j}(\cdot)$ given by
\begin{align}
\nu_{0,j}(x) &= C_{j}|x|^{-1-\beta_{j}}e^{-\overline{c}_{j}|x|^{\alpha_{0,j}}}1_{\R\backslash \{0\}}(x),\ j=1,\dots,p, \label{LS-Levy} 
\end{align}
where $|\gamma_{0,j}|\leq \overline{M}$, $1 \leq \alpha_{0,j} \leq \overline{M}$, $-1 \leq \beta_{j} \leq M_{\beta}$, and $\underline{M} \leq \overline{c}_{j}, \Var(L_{j}([0,1]^{d})) \leq \overline{M}$ for all $1 \leq j \leq p$.
\end{itemize}
Then $\bY$ satisfies Assumption \ref{ass: design2} and Condition (\ref{cov_matrix_tail}) with $p = O(n^{\alpha})$, $\lambda_{1,n} = \lambda_{n}^{c_{0}/2}$, $\lambda_{2,n} = \lambda_{1,n}^{c_{1}}$, $n^{c_{2}} \lesssim \lambda_{n}^{d} \lesssim n$, $m_{n} = \lambda_{2,n}^{1/(\theta d)}$, $M_{n} = n^{M_{0}}$ and $\phi_{n,\Upsilon} = e^{-M_{0,1}\lambda_{2,n}^{1/(\theta d)}}$ where $M_{0}$, $M_{0,1}$, $\theta$, $c_{0}$, $c_{1}$ and $c_{2}$ are some positive constants independent of $n$ such that $\theta\geq 3$, $c_{0} \in (0, {2 \over 3}(1-{2 \over q}))$, $c_{1} \in (0, {\theta \over 1+ \theta})$ and $c_{2} \in (0,1)$. 
\end{proposition}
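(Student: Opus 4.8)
The plan is to handle both parts through the decomposition $\bY(\bs)=\bX^{(m_n)}(\bs)+\bm{\Upsilon}^{(m_n)}(\bs)$ built from the truncation $\psi_0(\cdot:m_n)$ introduced just before the statement (recentering $\bY$ if necessary so that $E[\bY(\bs)]=\bm{0}$), and, for part (ii), to use that $\bX^{(m_n)}$ is exactly $m_n$-dependent.

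\emph{Part (i).} On the support of the integrand defining $\bm{\Upsilon}^{(m_n)}(\bs)$ one has $\|\bs-\bm{u}\|>m_n/4$, hence $|g_{j,k}(\bs-\bm{u})|\le\xi_0 e^{-r_0m_n/4}$, so the $L^r(\R^d)$-norms of $h_{j,k}:=g_{j,k}(\bs-\cdot)\bigl(1-\psi_0(\|\bs-\cdot\|:m_n)\bigr)$ are all $\lesssim m_n^{(d-1)/r}e^{-r_0m_n/4}$, $r\ge1$. First I would invoke a Rosenthal-type moment inequality for integrals against the infinitely divisible random measure $\bm{L}$ — one that bounds $E\bigl|\int h\,d\bm{L}\bigr|^q$ by a finite sum of products $\prod_i\|h\|_{L^{m_i}}^{m_i}$ with $\sum_i m_i=q$ and constants depending only on the moments of $L_j([0,1]^d)$ up to order $q$, which are finite by hypothesis and readable off the L\'evy--Khintchine exponent (\ref{CF-CARMA-RF}) via the moment--cumulant relations — applied to $h_{j,k}$; this gives $\max_{1\le j\le p}E[|\Upsilon^{(m_n)}_j(\bs)|^q]\lesssim m_n^{(d-1)q/2}e^{-qr_0m_n/4}$ uniformly in $\bs$. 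Since $\bm{\Upsilon}$ is independent of $\{\bZ_i\}_{i\ge1}$ and strictly stationary, $E_{\cdot\mid\bZ}[|\Upsilon_j(\bs_i)|^q]=E[|\Upsilon_j(\bm{0})|^q]$ for every realisation of the sites, so Markov's inequality, a union bound over $j$, and $\bigl|\sum_{i=1}^n\Upsilon_j(\bs_i)\bigr|^q\le n^{q-1}\sum_{i=1}^n|\Upsilon_j(\bs_i)|^q$ bound the left side of (\ref{AN-error}) by $\lesssim p\,\lambda_n^{dq/2}n^{\zeta q}(\log p)^{q/2}m_n^{(d-1)q/2}e^{-qr_0m_n/4}$. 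With $p=O(n^\alpha)$, $\lambda_n^d=O(n)$ and $m_n\ge\delta\log n$ this is $O(n^{-\zeta})$ for some sufficiently small $\zeta>0$ as soon as $\delta>2(2\alpha+q+2)/(r_0q)$, which is (\ref{AN-error}).

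\emph{Part (ii).} The integrand defining $\bX^{(m_n)}(\bs)$ is supported in $\{\|\bs-\bm{u}\|\le m_n/2\}$, so $\sigma_{\bX}(T)$ is contained in the $\sigma$-field generated by $\bm{L}$ restricted to the $(m_n/2)$-neighbourhood of $T$; by independence of $\bm{L}$ over disjoint Borel sets (property (a) above), $\check{\beta}(T_1,T_2)=0$ whenever $d(T_1,T_2)>m_n$. Hence the mixing condition (\ref{eq: mixing}) holds with $\beta_1(k)=\mathbf{1}\{k\le m_n\}$ and $g\equiv1$, so $\beta(\lambda_{2,n};\lambda_n^d)=0$ and $\overline{\beta}_q\lesssim m_n^d=\lambda_{2,n}^{1/\theta}$ for $n$ large, using $m_n<\lambda_{2,n}<\lambda_{1,n}$ (which follows from $1/(\theta d)<1$ and $c_1<1$). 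Moreover, since $\theta\ge3$ we have $\theta d>2$, so $\sqrt{\lambda_{2,n}}>m_n$ eventually, and then $m_n$-dependence of $\bX^{(m_n)}$ forces $\int_{\|\bs\|>\sqrt{\lambda_{2,n}}}|\Sigma_{j,k}(\bs)|\,d\bs=0$ for large $n$; hence (\ref{cov_matrix_tail}) holds because its series has only finitely many nonzero terms, and Condition (iii) of Assumption~\ref{ass: design} is immediate from $n^{c_2}\lesssim\lambda_n^d\lesssim n$.

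It then remains to verify the moment and covariance parts of Assumption~\ref{ass: design2}. For $\bX^{(m_n)}$ I would again use the L\'evy-integral moment inequality, now with kernel $g_{j,k}(\bs-\cdot)\psi_0$, whose $L^1$, $L^2$ and $L^q$ norms are bounded by constants depending only on $\xi_0,r_0$ (uniformly in $n$): in case (a), $\bX^{(m_n)}(\bs)$ is Gaussian with variance bounded above and below, and in case (b) the exponential tempering $\alpha_{0,j}\ge1$ together with $\Var(L_j([0,1]^d))\le\overline{M}$ (which bounds $C_j$, hence all moments of $L_j([0,1]^d)$) makes every moment of $X_j^{(m_n)}(\bs)$ bounded by a constant; therefore (\ref{moment-ass1})--(\ref{moment-ass2}) hold with $M_n=n^{M_0}$ for any $M_0>\alpha/q$, the factor $n^{M_0}$ absorbing $p$ in $E[\max_j|X_j(\bs)|^q]\le p\max_j E[|X_j(\bs)|^q]$. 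For $\bm{\Upsilon}^{(m_n)}$ the Part~(i) estimate gives $E_{\cdot\mid\bZ}[\max_{i,j}|\Upsilon_j(\bs_i)|^q]\le np\max_j E[|\Upsilon_j(\bm{0})|^q]\lesssim np\,m_n^{(d-1)q/2}e^{-qr_0m_n/4}\le\phi_{n,\Upsilon}^q=e^{-qM_{0,1}m_n}$ for any $M_{0,1}<r_0/4$, because $m_n=\lambda_{2,n}^{1/(\theta d)}$ is a positive power of $n$. Condition (v) follows because exponential $L^2$-smallness of $\bm{\Upsilon}^{(m_n)}$ yields $\Sigma_{j,j}(\bs)=\sigma^{(\bY)}_{j,j}(\bs)+O(m_n^{(d-1)/2}e^{-r_0m_n/4})$ uniformly in $\bs$ and $j$, so the hypotheses $\sigma^{(\bY)}_{j,j}(\bm{0})\ge\underline{M}$, $\int_{\R^d}\sigma^{(\bY)}_{j,j}(\bm{u})\,d\bm{u}\ge\underline{M}$ and the routine kernel-based upper bounds transfer to $\Sigma_{j,j}$, giving (\ref{HDGA_cov_ass}). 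Finally, (\ref{high-level-condi-HDCLT}) and (\ref{asy-neg-Upsilon2}) are checked term by term after substituting $\lambda_{1,n}=\lambda_n^{c_0/2}$, $\lambda_{2,n}=\lambda_{1,n}^{c_1}$, $\overline{\beta}_q\lesssim\lambda_{2,n}^{1/\theta}$, $M_n=n^{M_0}$, $n^{c_2}\lesssim\lambda_n^d\lesssim n$: the constraint $c_1<\theta/(1+\theta)$ makes $\overline{\beta}_q\lambda_{2,n}/\lambda_{1,n}$ a negative power of $\lambda_n$, and $c_0<\tfrac23(1-2/q)$ makes both $(\lambda_{2,n}^{3d/2}+\lambda_{1,n}^d)\lambda_n^{-d/2}$ and $\lambda_{1,n}^{3d}/(n^2\lambda_n^{-d})^{1-2/q}$ negative powers of $n$, so that for $c$, $M_0$ and $c'<c$ chosen small enough each quantity in (\ref{high-level-condi-HDCLT}) times $n^c$, and $\lambda_n^{d/2}n^{c'}\phi_{n,\Upsilon}$, is $O(1)$.

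\emph{Main obstacle.} The genuinely delicate ingredient is the moment inequality for integrals against the infinitely divisible random measure $\bm{L}$ together with, in case (b), verifying that the tempered L\'evy density (\ref{LS-Levy}) has all moments finite and uniformly bounded in $j$ and $n$ — this is exactly where $\alpha_{0,j}\ge1$ and the bounds on $\overline{c}_j$, $\beta_j$ and $\Var(L_j([0,1]^d))$ are used. Once that is in hand, the $m_n$-dependence of $\bX^{(m_n)}$ trivialises the mixing quantities and what is left is exponent bookkeeping.
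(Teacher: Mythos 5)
Your proposal follows essentially the same route as the paper's own proof: the same truncation decomposition $\bY=\bX^{(m_n)}+\bm{\Upsilon}^{(m_n)}$, the same kernel tail estimate giving $E[|\Upsilon_j^{(m_n)}(\bs)|^q]\lesssim (\text{poly in } m_n)\, e^{-qr_0 m_n/4}$ followed by Markov and a union/maximal bound for (\ref{AN-error}), exact $m_n$-dependence of $\bX^{(m_n)}$ to annihilate the mixing coefficients and give $\overline{\beta}_q=O(m_n^d)=O(\lambda_{2,n}^{1/\theta})$, and exponent bookkeeping for (\ref{high-level-condi-HDCLT}). Two of your choices are in fact slightly cleaner than the paper's: you dispose of (\ref{cov_matrix_tail}) by noting that $\Sigma_{j,k}$ is exactly supported in $\{\|\bs\|\le m_n\}\subset\{\|\bs\|\le\sqrt{\lambda_{2,n}}\}$, where the paper instead proves exponential decay of the covariance; and your full Rosenthal/cumulant bound for the L\'evy integral is more careful than the paper's one-term bound $E[|\Upsilon_j|^q]\le\overline{M}\int g_j^q(1-\psi_0)^q$ (your extra polynomial factor $m_n^{(d-1)q/2}$ is harmless against the exponential).

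The one genuine gap is your choice of $M_n$. From $E[\max_j|X_j(\bs)|^q]\le p\max_j E[|X_j(\bs)|^q]$ you can only get (\ref{moment-ass2}) with $M_0>\alpha/q$, but (\ref{high-level-condi-HDCLT}) forces $M_0$ to be small: e.g.\ the term $(\lambda_{2,n}^{3d/2}+\lambda_{1,n}^{d})\lambda_n^{-d/2}M_n^2 n^{c}$ equals $n^{2M_0+c}\lambda_n^{-d(1-c_0)/2}$ up to lower-order terms, and with $\lambda_n^d\lesssim n$ this requires $M_0<(1-c_0)/4<1/4$. Hence for $\alpha\ge q/4$ your lower bound $M_0>\alpha/q$ contradicts your later instruction to take ``$M_0$ small enough,'' and the verification of Assumption \ref{ass: design2}(iv') breaks down. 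The repair is available from estimates you have already established: since \emph{all} moments of $X_j(\bs)$ are uniformly bounded (Gaussianity in case (a); exponential tempering with $\overline{c}_j\ge\underline{M}$ and $\alpha_{0,j}\ge1$ in case (b)), write $E[\max_j|X_j|^q]\le\bigl(E[\max_j|X_j|^{qM_0'}]\bigr)^{1/M_0'}\le\bigl(p\max_j E[|X_j|^{qM_0'}]\bigr)^{1/M_0'}=O(n^{\alpha/M_0'})$ and take $M_0'$ as large as needed, so $M_0=\alpha/M_0'$ is arbitrarily small; this is exactly the device the paper uses. A purely cosmetic second point: $m_n$-dependence is with respect to the $\ell^2$-norm while the distance in $\beta(a;b)$ uses the $\ell^1$-norm, so the correct statement is $\beta_1(k)=\mathbf{1}\{k\le\sqrt{d}\,m_n\}$; this changes nothing downstream.
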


The condition on $\sigma_{j,j}^{(\bY)}$ in Proposition \ref{m-approx-CARMA} (ii) is typically satisfied when $\bm{g}$ is diagonal. Proposition \ref{m-approx-CARMA} implies that the approximation error $\sum_{i=1}^{n}\bm{\Upsilon}^{(m_{n})}(\bs_{i})/\sqrt{n^2\lambda_n^{-d}}$ is asymptotically negligible for both the high-dimensional CLT and the asymptotic validity of SDWB, implying that the conclusions of Theorems \ref{Thm: GA_hyperrec-m} and \ref{DWBvalidity: HR} hold for the CARMA-type random field $\bY$. More precisely, a L\'evy-driven MA random field with the kernel function $\bm{g}$ in Proposition \ref{m-approx-CARMA} satisfies Condition (\ref{moment-ass2}) with $\phi_{n,\Upsilon} \lesssim p^{2/q}m_{n}^{(d-1)/q}e^{-r_{0}m_{n}/4}$. See Appendix \ref{Appendix: CARMA} for details. 

\begin{remark}
The L\'evy density of the form (\ref{LS-Levy}) corresponds to a compound Poisson random measure if $\beta_{j} \in [-1,0)$, a Variance Gamma random measure if $\alpha_{0,j} = 1, \beta_{j} = 0$, and a tempered stable random measure if $\beta_{j} \in (0,1)$ (cf. Section 2 in \cite{Ma19} and Section 5 in \cite{KaKu20}).
\end{remark}

Proposition \ref{m-approx-CARMA} (i) also implies that a wide class of CARMA random fields are approximately ($\log n$)-dependent with respect to the $\ell^{2}$-norm. For example, high-dimensional CARMA($p_{0},q_{0}$) random fields with $p_{0}>q_{0}$  are approximately $(\log n)$-dependent random fields: Let $a(z) = z^{p_{0}} + a_{1}z^{p_{0}-1} + \cdots+a_{p_{0}} = \prod_{i=1}^{p_{0}}(z - \lambda_{i})$ be a polynomial of degree $p_{0}$ with real coefficients and distinct negative zeros $\lambda_{1},\dots,\lambda_{p_{0}}$ and let $b(z) = b_{0} + b_{1}z + \cdots + b_{q_{0}}z^{q_{0}} = \prod_{i=1}^{q_{0}}(z - \xi_{i})$ be a polynomial with real coefficients and real zeros $\xi_{1},\dots, \xi_{q_{0}}$ such that $b_{q_{0}}=1$ and $0\leq q_{0} < p_{0}$ and $\lambda_{i}^{2} \neq \xi_{j}^{2}$ for all $i$ and $j$. Define 
\[
a(z) = \prod_{i=1}^{p_{0}}(z^{2} - \lambda_{i}^{2}),\ b(z) = \prod_{i=1}^{q_{0}}(z^{2} - \xi_{i}^{2}).
\]
The kernel function of the univariate isotropic CARMA($p_{0},q_{0}$) random field with $p_{0}>q_{0}$ is given by 
\begin{align}\label{CARMA_kernel_general}
g(\bs) = \sum_{i=1}^{p_{0}}{b(\lambda_{i}) \over a'(\lambda_{i})}e^{\lambda_{i}\|\bs\|},
\end{align}
where $a'$ denotes the derivative of the polynomial $a$. See also \cite{MaYu20} for more detailed discussion on (multivariate) CARMA random fields. It is  straightforward to extend Proposition \ref{m-approx-CARMA} to the case where $g_{j,k}$ is a finite sum of kernel functions with exponential decay. Therefore, our results can be applied to a wide class of CARMA($p_{0},q_{0}$) random fields.

\begin{remark}[CARMA random fields satisfying Assumption \ref{ass: design}]\label{Ass4.1CARMA} 
We can also verify that a class of CARMA random fields satisfies Assumption \ref{ass: design}. Let $\theta \geq 3$ be a given constant and let $c_{0}, c_{1}$ and $c_{2}$ be positive constants with $c_{0} \in (0, {2 \over 3}(1-{2 \over q}))$, $c_{1} \in (0, {\theta \over 1+\theta})$ and $c_{2} \in (0,1)$. Further, let $\alpha > 0$ be any positive constant. Assume that $\bY$ is a multivariate L\'evy-driven MA random field with a kernel function $\bm{g}$ that satisfies the conditions of Proposition \ref{m-approx-CARMA} and $\bY$ is driven by (i) a Gaussian random measure or (ii) a compound Poisson random measure with bounded jumps. Then, Assumption \ref{ass: design} is satisfied with $p = O(n^{\alpha})$, $\lambda_{1,n} = \lambda_{n}^{c_{0}/2}$, $\lambda_{2,n} = \lambda_{1,n}^{c_{1}}$, $n^{c_{2}} \lesssim \lambda_{n}^{d} \lesssim n$, $D_{n} = D_{0}$ and $\delta_{n,\Upsilon} = e^{-D_{0,1}\lambda_{2,n}^{1/(\theta d)}}$ where $D_{0}$ and $D_{0,1}$ are positive constants independent of $n$. See the proof of Proposition \ref{m-approx-CARMA} and Remark \ref{bdd-supp-jump} in Appendix \ref{Appendix: CARMA}.
\end{remark}

\section{Applications}\label{sec: applications}

In this section, we discuss some applications of our results to inference for multivariate spatial and spatio-temporal data. For a spatio-temporal process $\{Y(\bm{s}, t): \bm{s} \in \mathbb{R}^{d}, t \in T\}$, suppose we obtain the observations  at a finite number of sampling sites $\{\bm{s}_{i}\}_{i=1}^{n}$ and at (possibly non-equidistant) discrete time points $\{t_{j}\}_{j=1}^{p} \subset T=\mathbb{R}$ or $\mathbb{Z}$ with $t_{1}<\cdots<t_{p}$.
In our application, we shall convert the spatio-temporal data into a form of multivariate spatial data  by stacking the time series at each location into a vector, i.e., we define
$\bY = \{\bY(\bm{s}) = (Y(\bm{s}, t_{1}),\dots, Y(\bm{s}, t_{p}))': \bm{s} \in \mathbb{R}^{d}\}$. This conversion does not result in any loss of data information but is convenient when the parameter of interest  can be estimated by spatial averaging. Also we do not require temporal stationarity or regular spacing for $\{t_j\}_{j=1}^{p}$, either or both of which are typically required in many statistical methods for spatio-temporal data analysis. More discussions about the implication of this conversion in the context of change-point analysis are offered in Remark~\ref{rem:change}.




\subsection{Joint confidence intervals for mean vectors}\label{joint-CI-mean}

Define $\Sigma^{\bm{V}_{n}} = (\Sigma_{j,k}^{\bm{V}_{n}})_{1 \leq j,k \leq p} = E_{\cdot \mid \bZ}[\bm{V}_{n}\bm{V}'_{n}]$. When we are interested in simultaneous inference on the mean vector $\bm{\mu} = E[\bY(\bs)] = (\mu_{1},\dots,\mu_{p})'$, Theorems \ref{Thm: GA_hyperrec} and \ref{Thm: GA_hyperrec-m} yield joint $100(1-\tau)\%$ confidence intervals for $\bm{\mu}$ with $\tau \in (0,1)$ of the form $\hat{C}_{1-\tau} = \prod_{j=1}^{p}\hat{C}_{j,1-\tau}$, where 
\begin{align*}
\hat{C}_{j,1-\tau} &= \left[\overline{Y}_{j,n} \pm \lambda_{n}^{-d/2}\sqrt{\Sigma_{j,j}^{\bm{V}_{n}}}q_{n}(1-\tau)\right],
\end{align*}
and  $q_{n}(1-\tau)$ is the ($1-\tau$)-quantile of $\max_{1 \leq j \leq p}\left|V_{j,n}/\sqrt{\Sigma_{j,j}^{\bm{V}_{n}}}\right|$. 
Define $\hat{C}^{\ast}_{1-\tau}$ as $\hat{C}_{1-\tau}$ by replacing $\Sigma^{\bm{V}_{n}}$ and $q_{n}(1-\tau)$ with $\hat{\Sigma}^{\bm{V}_{n}}$ and $\hat{q}_{n}(1-\tau)$, respectively.

\begin{proposition}[Asymptotic validity of SDWB joint confidence intervals in high dimensions]\label{asy_valid_SDWB_CI}
Under Assumptions \ref{ass: design} (or \ref{ass: design2}) and \ref{ass: SDWB}, the following result holds $P_{\bZ}$-almost surely: $P_{\cdot \mid \bZ}(\bm{\mu} \in \hat{C}^{\ast}_{1-\tau}) = 1-\tau + o(1)$.
\end{proposition}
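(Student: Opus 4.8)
The plan is to combine the high-dimensional CLT of Theorem~\ref{Thm: GA_hyperrec} (or \ref{Thm: GA_hyperrec-m}) with the asymptotic validity of the SDWB established in Theorem~\ref{DWBvalidity: HR}, together with an anti-concentration argument and a Gaussian comparison inequality to handle the estimation error in the studentization. First, I would observe that by Theorem~\ref{Thm: GA_hyperrec}, $P_{\bZ}$-a.s.,
\[
P_{\cdot \mid \bZ}\bigl(\bm{\mu} \in \hat{C}_{1-\tau}\bigr) = P_{\cdot \mid \bZ}\Bigl(\max_{1\le j\le p}\bigl|V_{j,n}/\sqrt{\Sigma_{j,j}^{\bm{V}_n}}\bigr| \le q_n(1-\tau)\Bigr) + o(1) = 1-\tau + o(1),
\]
so the population-quantile interval $\hat{C}_{1-\tau}$ is asymptotically valid; the task reduces to showing that replacing $(\Sigma_{j,j}^{\bm{V}_n}, q_n(1-\tau))$ by $(\hat{\Sigma}_{j,j}^{\bm{V}_n}, \hat{q}_n(1-\tau))$ does not affect the coverage asymptotically.

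The key steps, in order, are as follows. \textbf{Step 1.} Show that the studentized bootstrap statistic $\sqrt{\lambda_n^d}\max_j|(\overline{Y}^{\ast}_{j,n}-\overline{Y}_{j,n})/\sqrt{\hat{\Sigma}_{j,j}^{\bm{V}_n}}|$ has conditional distribution (given $\bm{Y},\bm{Z}$) close to that of $\max_j|V_{j,n}/\sqrt{\Sigma_{j,j}^{\bm{V}_n}}|$. This follows from Theorem~\ref{DWBvalidity: HR} applied to the rectangles determined by coordinatewise studentization, \emph{provided} one controls $\max_j|\hat{\Sigma}_{j,j}^{\bm{V}_n}/\Sigma_{j,j}^{\bm{V}_n} - 1|$; this ratio consistency is exactly the content of the variance-estimator analysis underlying the SDWB (cf.\ the remark that $\hat{\Sigma}^{\bm{V}_n} = \lambda_n^d\Var_{\cdot|\bm{Y},\bm{Z}}(\overline{\bm{Y}}^{\ast}_n)$ and Condition~(\ref{cov_matrix_tail})), and should be available from the proof of Theorem~\ref{DWBvalidity: HR} as an intermediate bound of order $o((\log n)^{-2})$ with high $P_{\cdot\mid\bm{Z}}$-probability. \textbf{Step 2.} Deduce that $\hat{q}_n(1-\tau) = q_n(1-\tau) + o_P(1)$, using a Gaussian comparison bound (the anti-concentration / Gaussian comparison lemma of \cite{ChChKa17}) relating quantiles of maxima of two Gaussian vectors whose covariances are uniformly close, plus the approximation error from Step~1. \textbf{Step 3.} Plug the estimated quantities back into the coverage probability: write
\[
P_{\cdot \mid \bZ}\bigl(\bm{\mu}\in\hat{C}^{\ast}_{1-\tau}\bigr) = P_{\cdot\mid\bZ}\Bigl(\sqrt{\lambda_n^d}\max_j\bigl|(\overline{Y}_{j,n}-\mu_j)/\sqrt{\hat{\Sigma}_{j,j}^{\bm{V}_n}}\bigr| \le \hat{q}_n(1-\tau)\Bigr),
\]
and approximate the left-hand statistic by $\max_j|V_{j,n}/\sqrt{\Sigma_{j,j}^{\bm{V}_n}}|$ (via Theorem~\ref{Thm: GA_hyperrec} and the ratio consistency of Step~1), the threshold by $q_n(1-\tau)$ (Step~2), and invoke anti-concentration of the Gaussian maximum (which holds since the coordinatewise variances $\Sigma_{j,j}^{\bm{V}_n}$ are bounded away from $0$ and $\infty$ by Condition~(\ref{HDGA_cov_ass})) to absorb the $o_P(1)$ perturbations in both the statistic and the threshold without changing the limiting probability $1-\tau$.

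The main obstacle I anticipate is Step~1---specifically, establishing the uniform ratio consistency $\max_{1\le j\le p}|\hat{\Sigma}_{j,j}^{\bm{V}_n}/\Sigma_{j,j}^{\bm{V}_n}-1| = o_P((\log n)^{-2})$ (and more generally the entrywise sup-norm closeness of $\hat{\Sigma}^{\bm{V}_n}$ to $\Sigma^{\bm{V}_n} = E_{\cdot\mid\bm{Z}}[\bm{V}_n\bm{V}_n']$) conditionally on the sampling sites. This requires handling the lag-window covariance estimator built from the centered data $(\bm{Y}(\bs_{\ell_1})-\overline{\bm{Y}}_n)$ over irregularly spaced sites, controlling the bias from the truncation at bandwidth $b_n\sim\lambda_{2,n}$ (using the tail condition~(\ref{cov_matrix_tail}) on $\Sigma_{j,k}$ and the decomposition $\bm{Y}=\bm{X}+\bm{\Upsilon}$ with the negligibility of $\bm{\Upsilon}$ from Conditions~(\ref{AN-error}),~(\ref{asy-neg-Upsilon})), and the stochastic fluctuation via a maximal inequality over the $p$ coordinates under the $\beta$-mixing condition~(\ref{eq: mixing})---all of which should already be carried out inside the proof of Theorem~\ref{DWBvalidity: HR}, so the work here is mainly to extract and cite the relevant intermediate estimates rather than to reprove them. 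Once that is in hand, Steps~2 and~3 are standard applications of Gaussian anti-concentration and comparison, so the proposition follows.
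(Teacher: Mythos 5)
Your proposal is correct and follows essentially the same route as the paper's proof: the paper likewise extracts the bound $\max_{j,k}|\hat{\Sigma}_{j,k}^{\bm{V}_n}-\Sigma_{j,k}^{\bm{V}_n}| = O(n^{-c'/6}\log^{-2}p)$ from the intermediate estimates $\breve{\Delta}$ and $\hat{\Delta}$ in the proof of Theorem \ref{DWBvalidity: HR}, converts it to uniform ratio consistency of the studentization, applies Theorems \ref{Thm: GA_hyperrec}/\ref{Thm: GA_hyperrec-m} and \ref{DWBvalidity: HR} to both studentized statistics, and concludes by sandwiching $\hat{q}_n(1-\tau)$ between $q_n(1-\tau\pm\rho'_n)$ together with continuity (anti-concentration) of the Gaussian maximum. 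The only cosmetic difference is that the paper phrases your Step 2 as a quantile sandwich rather than $\hat{q}_n = q_n + o_P(1)$.
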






\subsection{Inference on spatio-temporal data}

In this subsection we discuss some applications of our results to inference for spatio-temporal data. 

\subsubsection{Spatio-temporal compound Poisson-driven MA random fields}

To illustrate our approach to spatio-temporal data analysis, we begin with an introduction of a spatio-temporal model. Consider a multivariate compound Poisson-driven MA random field $\bY(\bs)$ defined by
\begin{align}
\bY(\bs) &= (Y_{1}(\bs),\dots,Y_{p}(\bs))' = \sum_{i=1}^{\infty}\bm{g}(\|\bs-\bm{x}_{i}\|)\bm{J}_{i}, \label{ST-CARMA} 
\end{align}
where $\bm{x}_{i}$ is the location of the $i$-th unit point mass of a Poisson random measure on $\R^{d}$, $\{\bm{J}_{i} = (J_{i,1},\dots, J_{i,p})'\}_{i \in \mathbb{Z}}$ is a sequence of i.i.d. random vectors in $\R^{p}$. Moreover, $\{\bm{J}_{i}\}_{i \geq 1}$ is independent of $\{\bm{x}_{i}\}$. Regarding $\{Y_{1}(\bm{s}),\dots, Y_{p}(\bs)\}$ as a time series observed at possibly non-equidistant time points $t_{1}<\cdots < t_{p}$, the model (\ref{ST-CARMA}) can be seen as a nonparametric spatio-temporal model $Y(\bs,t_{j})=Y_{j}(\bs)$ with $E[Y(\bs,t_{j})]=\mu(t_{j})$, $\bs \in \mathbb{R}^{d}$, $j=1,\dots,p$. 

\begin{example}[Inference on the time-varying mean of univariate spatio-temporal data]\label{Ex-ST-TVM}
We can also apply the method in Section \ref{joint-CI-mean} to inference for univariate spatio-temporal model $\{Y(\bs,t_{j}):\bs\in \mathbb{R}^{d},t_{1}<t_{2}<\dots<t_{p}\}$. For example, we can construct joint $100(1-\tau)\%$ confidence intervals for the time-varying mean $E[Y(\bs,t_{j})]=\mu(t_{j})(=E[Y_{j}(\bs)])$ of the model (\ref{ST-CARMA}).
\end{example}

\begin{example}[Simultaneous confidence bands for the mean of multivariate spatio-temporal data]
We can generalize the idea in Example \ref{Ex-ST-TVM} to construct joint $100(1-\tau)\%$ confidence bands for the mean of multivariate spatio-temporal data. Consider the following model: 
\begin{align*}
\bm{\breve{Y}}(\bs) = \left(
\begin{array}{ccc}
\breve{Y}_{1}(\bs, t_{1}) & \cdots & \breve{Y}_{1}(\bs, t_{p}) \\
 \vdots & \ddots & \vdots \\
\breve{Y}_{L}(\bs, t_{1}) & \cdots & \breve{Y}_{L}(\bs, t_{p})
\end{array}
\right) = (\bm{\breve{Y}}(\bs, t_{1}) \cdots \bm{\breve{Y}}(\bs, t_{p})),\ t_{1}< \cdots < t_{p},
\end{align*}
where $\bm{\breve{Y}}(\bs, t) = (\breve{Y}_{1}(\bs,t),\dots, \breve{Y}_{L}(\bs,t))'$ and 
\begin{align*}
\bm{\breve{Y}}(\bs, t) &= \bm{\mu}(t) + \bm{v}(\bs,t) + \bm{\Upsilon}(\bs,t) =: \bX(\bs, t) + \bm{\Upsilon}(\bs,t).
\end{align*}
Here  $\bm{\mu}(t) = (\mu_{1}(t),\dots,\mu_{L}(t))'$ is a deterministic function, $\bm{v}(\bs) = (\bm{v}'(\bs,t_{1}), \cdots, \bm{v}'(\bs,t_{p}))'$ is a $\beta$-mixing random field in $\mathbb{R}^{pL}$ with $E[\bm{v}(\bs)] = \bm{0}$, and $\bm{\Upsilon}(\bm{s})=(\bm{\Upsilon}'(\bm{s},t_{1}),\dots, \bm{\Upsilon}'(\bm{s},t_{p}))'$ is a mean zero residual random field that is asymptotically negligible, i.e. $\bm{\Upsilon}(\bm{s})$ satisfies Condition (\ref{AN-error}) by replacing $p$ with $pL$. Define $\bY(\bs) = (\bm{\breve{Y}}'(\bs, t_{1}),\dots,\bm{\breve{Y}}'(\bs, t_{p}))'$. Then we can construct joint confidence intervals for the $pL$ dimensional mean vector
\begin{align*}
E\left[\left(
\begin{array}{c}
\bX(\bs,t_{1})\\
\vdots \\
\bX(\bs,t_{p})
\end{array}
\right)
\right] &= 
\left(
\begin{array}{c}
\bm{\mu}(t_{1})\\
\vdots \\
\bm{\mu}(t_{p})
\end{array}
\right).
\end{align*}
Then we obtain joint $100(1-\tau)\%$ confidence bands for mean functions $\{\mu_{k}(t): t=t_{1},\dots,t_{p}\}_{k=1}^{L}$ by linear interpolation of joint confidence intervals for each $\mu_{k}$ of the form $\hat{C}_{1-\tau}^{(k)} = \prod_{j=1}^{p}\hat{C}_{j,1-\tau}^{(k)}$ where
\begin{align*}
\hat{C}_{j,1-\tau}^{(k)} &= \left[\overline{Y}_{k+L(j-1),n} \pm \lambda_{n}^{-d/2}\sqrt{\Sigma_{k + L(j-1),k+L(j-1)}^{\bm{V}_{n}}}q_{n}(1-\tau)\right],\ \tau \in (0,1),
\end{align*}
$\overline{Y}_{k+L(j-1),n} = {1 \over n}\sum_{i=1}^{n}\breve{Y}_{k}(\bs_{i},t_{j})$ and $q_{n}(1-\tau)$ is the ($1-\tau$)-quantile of $\max_{1 \leq j \leq pL}\left|V_{j,n}/\sqrt{\Sigma_{j,j}^{\bm{V}_{n}}}\right|$. Define $\hat{C}^{(k)\ast}_{1-\tau}$ as $\hat{C}^{(k)}_{1-\tau}$ by replacing $\Sigma_{k + L(j-1),k+L(j-1)}^{\bm{V}_{n}}$ and $q_{n}(1-\tau)$ with $\hat{\Sigma}_{k + L(j-1),k+L(j-1)}^{\bm{V}_{n}}$ and $\hat{q}_{n}(1-\tau)$, respectively. Theorems \ref{Thm: GA_hyperrec} and \ref{Thm: GA_hyperrec-m} yield that
\[
P_{\cdot \mid \bZ}\left(\left\{\mu_{1}(\bm{t}) \in \hat{C}_{1-\tau}^{(1) \ast}\right\} \cap\cdots \cap \left\{\mu_{L}(\bm{t}) \in \hat{C}_{1-\tau}^{(L) \ast}\right\}\right) \to 1-\tau,\ P_{\bZ}-a.s.,
\]
as $n \to \infty$ where $\mu_{k}(\bm{t}) = (\mu_{k}(t_{1}),\dots, \mu_{k}(t_{p}))'$, $k=1,\dots, L$. 



\end{example}

\begin{example}[Change-point analysis for spatio-temporal data]\label{cp-test-temporal}
Consider a spatio-temporal process $Y(\bs,t)$, $s\in \R^d$ and $t\in \mathbb{Z}$ or $\mathbb{R}$,  
with $E[Y(\bs,t)] = \mu(t)$. Given the spatio-temporal observations $\{Y(\bs_i,t_j),i=1,\cdots,n;t_1<t_2<\cdots<t_p\}$,   our interest is to understand whether there is a shift in mean at any time $t_{j+1}$, as compared to previous observation time $t_j$. Let $\mu_j=\mu(t_j)$. We can formulate this as a hypothesis testing problem,    i.e., we are interested in simultaneously testing the set of null hypotheses
\begin{align*}
H_{j}: \mu_{j+1} - \mu_j = 0,\ 1\leq j \leq p-1
\end{align*}
against the alternatives $H'_{j}: \mu_{j+1} - \mu_j \neq 0,\ 1\leq j \leq p-1$. 

Inspired by the idea in \cite{ChChKa13}, we combine a general stepdown procedure described in \cite{RoWo05} with the SDWB developed to construct a multiple change-point test. Formally, let $\mathfrak{D}$ be a set of all data generating processes, and $\mathfrak{d}$ be the true process. Each null hypothesis $H_{j}$ is equivalent to $\mathfrak{d} \in \mathfrak{D}_{j}$ for some subset $\mathfrak{D}_{j}$ of $\mathfrak{D}$. Let $\mathcal{W}:= \{1,\dots,p-1\}$ and for $w \subset \mathcal{W}$ denote $\mathfrak{D}^{w}:= (\cap_{j \in w}\mathfrak{D}_{j})\cap (\cap_{j \notin w}\mathfrak{D}_{j}^{c})$ where $\mathfrak{D}_{j}^{c}:= \mathfrak{D} \backslash \mathfrak{D}_{j}$. We are interested in a procedure with the strong control of the family-wise
error rate. In other words, we seek a procedure that would reject at least one true null hypothesis with probability not greater than $\tau+o(1)$ uniformly
over a large class of data-generating processes and, in particular, uniformly over the set of true null hypotheses. The strong control of the family-wise error rate means
\begin{align*}
\sup_{w \subset \mathcal{W}}\sup_{\mathfrak{d} \in \mathfrak{D}^{w}}P^{(\mathfrak{d})}_{\cdot \mid \bZ}\left(\text{reject at least one hypothesis among $H_{j}$, $j \in w$}\right) \leq \tau + o(1),
\end{align*}
where $P^{(\mathfrak{d})}_{\cdot \mid \bZ}$ denotes the conditional probability distribution under the data generating process $\mathfrak{d}$ given $\{\bZ_{i}\}_{i \geq 1}$. The step down procedure of \cite{RoWo05} is described as follows. For a subset $w \in \mathcal{W}$, let $\hat{q}_{n,w}(1-\tau)$ be the estimator of the $(1-\tau)$-quantile of $\sqrt{\lambda_{n}^{d}}\max_{j \in w}|\overline{Y}_{j,n}/\sqrt{\Sigma_{j,j}^{\bm{V}_{n}}}|$ under $P_{\cdot|\bY, \bZ}$ defined as follows: 
\begin{align}\label{SN-CP-test-critical-value}
\hat{q}_{n,w}(1-\tau) = \inf\left\{t \in \R : P_{\cdot|\bY,\bZ}\left(\sqrt{\lambda_{n}^{d}}\max_{j \in w}\left|{\overline{Y}^{\ast}_{j,n} - \overline{Y}_{j,n} \over \sqrt{\hat{\Sigma}_{j,j}^{\bm{V}_{n}}}} \right|  \leq t\right) \geq 1-\tau\right\},
\end{align}
where $\overline{Y}_{j,n} = {1 \over n}\sum_{i=1}^{n}(Y(\bs_{i},t_{j+1}) - Y(\bs_{i}, t_j))$. In the first step, let $w(1) = \mathcal{W}$. Reject all hypotheses $H_{j}$ satisfying $\sqrt{\lambda_{n}^{d}}|\overline{Y}_{j,n}/\sqrt{\hat{\Sigma}_{j,j}^{\bm{V}_{n}}} | > \hat{q}_{n,w(1)}(1-\tau)$. If no null hypothesis is rejected, then stop.  If some $H_{j}$ are rejected, let $w(2)$ be the set of all null hypotheses that were not rejected in the first step. In step $\ell \geq 2$, let $w(\ell) \subset \mathcal{W}$ be the subset of null hypotheses that were not rejected up to step $\ell$. Reject all hypotheses $H_{j}$, $j \in w(\ell)$, satisfying $\sqrt{\lambda_{n}^{d}}|\overline{Y}_{j,n}/\sqrt{\hat{\Sigma}_{j,j}^{\bm{V}_{n}}}| > \hat{q}_{n,w(\ell)}(1-\tau)$. If no null hypothesis is rejected, then stop. If some $H_{j}$ are rejected, let $w(\ell+1)$ be the subset of all null hypotheses among $j \in w(\ell)$ that were not rejected. Proceed in this way until the algorithm stops.  If null hypotheses $H_{k_{1}},\dots, H_{k_{m}}$ ($1\leq k_{1}<k_{2}<\dots<k_{m}\leq p-1$) are rejected, then they imply the following structure:  
\begin{align*}
E[Y(\bs, t_k)] &= 
\begin{cases}
\mu^{(1)} & \text{for $1 \leq k \leq k_{1}$},\\
\mu^{(2)} & \text{for $k_{1}+1 \leq k \leq k_{2}$},\\
\vdots & \\
\mu^{(m)} & \text{for $k_{m}+1 \leq k \leq p$},
\end{cases}
\end{align*}
where $\mu^{(1)} \neq \mu^{(2)}$, $\cdots$, $\mu^{(m-1)} \neq \mu^{(m)}$. Therefore the spatio-temporal observations are segmented into $m$ pieces with a constant mean within each piece. This is effective change-point estimation or segmentation for spatio-temporal data with potential mean shifts over time. Below we shall offer some discussion about the difference between our method and those developed for high-dimensional time series. 

\begin{remark}[Connection/difference from change-point testing/estimation for high-dimensional time-ordered data]
\label{rem:change}
Our change-point test is applied to spatio-temporal data, which can be viewed as a special kind of high-dimensional time series with spatial dependence in its high-dimensional vector.
There is a growing literature of change-point detection for the mean of high-dimensional time-ordered data with or without temporal dependence [\cite{ChFr15}, \cite{Ji15}, \cite{Ch16},  \cite{WaSa18}, \cite{WaVoSh19}, \cite{EnHa19}, \cite{LiXuZhLi19},  \cite{YuCh20} and \cite{ZhWaSh21}].
It pays to highlight the main difference between ours and the above-mentioned ones. For one, the stepdown procedure involves the one sample (multiple) testing of a mean vector of dimension $(p-1)$, i.e., $\theta_j=\mu_{j+1}-\mu_j$, $j=1,\cdots,p-1$, where $p$ is the length of time series at hand. 
Under the approximate spatial stationarity assumption, we are able to take sample average over space to gain detection power since $Y(s_i,t_j),~i=1,\cdots,n$ share the same mean $\mu_j$. By contrast, the framework in all the above-mentioned literature on  high-dimensional mean change detection is different, as their parameter of interest is $n$-dimensional, where $n$ is the number of components in their high-dimensional vector. Typically few structural assumptions on the components of the mean vector is imposed  but some temporal i.i.d. or stationarity with weak dependence assumption [see \cite{Ch16}, \cite{Ji15}, \cite{LiXuZhLi19}] needs to be assumed and the sample average is taken over the  equally spaced time points. In our setting, we convert spatio-temporal data into a high-dimensional spatial data by stacking the time series at each location into a high-dimensional vector, thus neither regular temporal spacing nor temporal stationarity is required.


Recently \cite{ZhMaNgYa19} modeled a non-stationary spatio-temporal process as a piecewise stationary spatio-temporal process and proposed a composite likelihood based approach to perform change point and parameter estimation. Their framework is parametric and can handle change in both mean and auto-covariance in both space and time, whereas ours is nonparametric and focuses on the mean change. Both their work and ours assume (approximate) spatial stationarity, impose suitable mixing assumptions on the random field, and leverage the averaging in space under the increasing domain asymptotics. In another related work, \cite{GrKoRe17} developed a procedure of  detecting a change point in the mean function of a spatio-temporal process using tools from functional data analysis, and their framework and methodology are very different from ours.



\end{remark}
\end{example}

\section{Simulation results}\label{sec: simulations}

In this section, we present some simulation results to evaluate the finite sample properties of the SDWB in constructing simultaneous confidence intervals for the  mean vector of  high-dimensional spatial data. Let the sampling region $R_{n} = \lambda_{n}(-1/2,1/2]^2 \subset \R^{2}$ with $\lambda_{n} \in \{15, 25\}$. We consider three data generating processes (DGPs). 

The first DGP (DGP1) is the following compound Poisson-driven CAR($1$) (CP-CAR($1$))-type random field: 
\begin{align}\label{CP-CAR-eq}
\bY(\bs) &= \sum_{i=1}^{\infty}\bm{g}(\|\bs - \bm{x}_{i}\|)\bm{J}_{i},
\end{align}
where $\bm{x}_{i}$ denotes the location of the $i$-th unit point mass  of a Poisson random measure on $\R^{2}$ with intensity $\lambda = 1$ and $\{\bm{J}_{i}\}_{i \geq 1}$ is a sequence of i.i.d. random variables in $\R^{p}$. The CAR($1$) random field is a spatial extension of (well-balanced) L\'evy-driven Ornstein-Uhlenbeck (OU) processes. See \cite{Ba97, Ba01} and \cite{ScWo11} for examples of non-Gaussian OU processes. In our simulation study, we set  $\bm{g}(\|\bm{x}\|) = e^{-3\|\bm{x}\|}I_{p}$ and $\bm{J}_{1} \sim N(0, I_{p})$, where $I_{p}$ denotes the $p \times p$ identity matrix. To simulate the CP-CAR($1$) random field, we follow the algorithm described in \cite{BrMa17}: 
\begin{enumerate}
\item[(i)] Take $R'_{n}$ to be a sufficiently large set containing $R_{n}$. In this simulation study, we take $R'_{n} =35  \cdot  (-1/2, 1/2]^{2}$. 
\item[(ii)] Simulate a Poisson random variable $n(R'_{n})$ with mean $\lambda |R'_{n}|$ and set it as the number of  knots contained in $R'_{n}$.
\item[(iii)] Simulate $n(R'_{n})$ independent and uniformly distributed points $\bm{x}_{1},\dots,\bm{x}_{n(R'_{n})}$ in $R'_{n}$. 
\item[(iv)] Compute the truncated version of (\ref{CP-CAR-eq}): $\bY(\bs) = \sum_{i=1}^{n(R'_{n})}\bm{g}(\|\bs - \bm{x}_{i}\|)\bm{J}_{i}$.
\end{enumerate}

The second DGP (DGP2)  is a $p$-variate Gaussian random field with mean zero and independent components, each of which admits the following Mat\'ern covariance function: 
\begin{align}\label{Matern_cov_def}
\Cov(Y_{j}(\bm{s}), Y_{j}(\bm{0})) = {2^{1-\nu} \over \Gamma(\nu)}\left(\sqrt{2\nu}\|\bm{s}\|/a\right)^{\nu}K_{\nu}\left(\sqrt{2\nu}\|\bm{s}\|/a\right),\ \nu>0, a>0, 1\leq j \leq p,
\end{align}
where $\Gamma(\cdot)$ is the gamma function. In our simulation study, we set $\nu = 3/2$ and $a = 1/\sqrt{3}$.

The third DGP (DPG3) is the following model:  
\begin{align}\label{space_factor_model}
\bY(\bm{s}_{i}) = \bm{A}\bm{F}(\bm{s}_{i}) + \bm{R}(\bm{s}_{i}),\ i = 1,\dots, n, 
\end{align}
where $\bm{A}$ is a $p \times 5$ matrix, $\bm{F}(\bm{s})$ is a $5$-variate mean zero Gaussian random field with independent components that have the Mat\'ern covariance function (\ref{Matern_cov_def}) with $\nu = 3/2$, $a = 1/\sqrt{3}$, and $\bm{R}(\bm{s}_{i})$ are $p$-variate i.i.d. standard Gaussian random vectors. For each combination of $(n,p,\lambda_{n}, b_{n})$, we generate all the $p \times 5$ elements of $\bm{A}$ independently from the uniform distribution on the interval $[-1,1]$ and fix it for all Monte Carlo replications. Compared with DGP2, the Gaussian random field $\bY$ from the model (\ref{space_factor_model}) adds strong componentwise dependence through a factor model structure. 

In our simulation, we also set $p \in \{10, 100, 400\}$ and generate i.i.d. sampling sites $\bs_{1},\dots, \bs_{n} \in R_{n}$ from the uniform distribution on $R_{n}$ with $n = 100$. The configuration is compatible with the one in real data analysis ($p=75$ and $n = 101$); see Section \ref{real-data-US-ppt} of the supplement. We use the Bartlett kernel for the covariance function of the Gaussian random field $\{W(\bs): \bs \in \R^{2}\}$ and examine the coverage accuracy for the bandwidth $b_{n} \in \{1,\cdots,10\}$.
Figure \ref{Fig: Cov-Prob-b-plot} shows coverage probabilities of joint $95$\% (left panel) and $99$\% (right panel) confidence intervals for DGP1 (top row), DGP2 (middle row) and DGP3 (bottom row) based on 1000 Monte Carlo repetitions.  To compute the critical value $\hat{q}_{n}(1-\tau)$, we generate 1,500 bootstrap samples for each run of the simulations.

A few remarks are in order. (a), Comparing the cases $\lambda_{n} = 15$ and $\lambda_{n}=25$, we observe that the latter corresponds to more accurate coverages for the same combination of $(n,p,b_n)$. This can be explained by the  fact that the convergence rate of the sample mean is $\lambda_n^{d/2}$, and $\lambda_n^d$ plays the role of effective sample size here. (b), for all settings, there is a range of $b_n$s that yield empirical coverage levels that are closest to the nominal one. This suggests that in practice it is not necessary to find the optimal $b_n$ that corresponds to the optimal coverage, but instead we only need to locate the range of $b_n$ for which the coverage accuracy is almost optimal. (c),  SDWB works for both low-dimensional (i.e., $p=10$) and high-dimensional cases (i.e., $p=100,400$), and seems to be robust to strong componentwise dependence in view of the results for DGP2 and DGP3.   Overall,  the results are quite encouraging as empirical coverage probabilities of joint confidence intervals are reasonably close to the nominal ones for suitably selected bandwidth $b_n$.



 \begin{figure}
  \begin{center}
    \begin{tabular}{cc}

      \begin{minipage}{0.5\hsize}
        \begin{center}
          \includegraphics[clip, width=7cm]{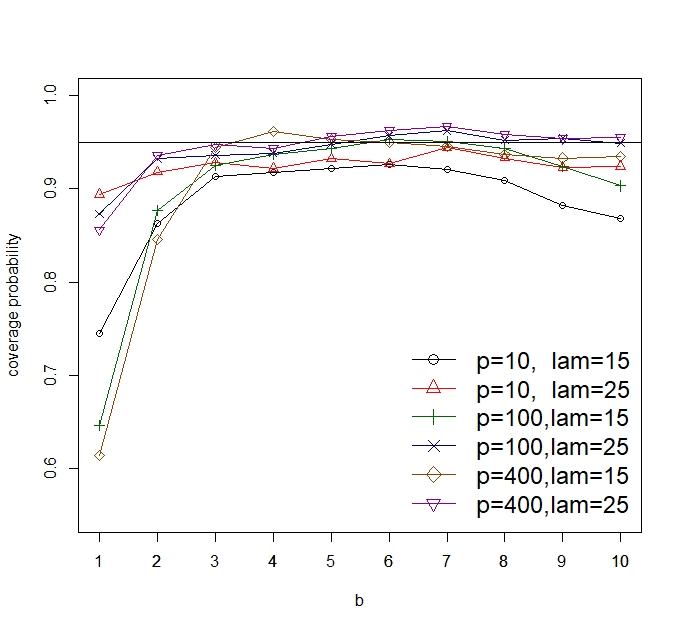}
        \end{center}
      \end{minipage}

       \begin{minipage}{0.5\hsize}
        \begin{center}
          \includegraphics[clip, width=7cm]{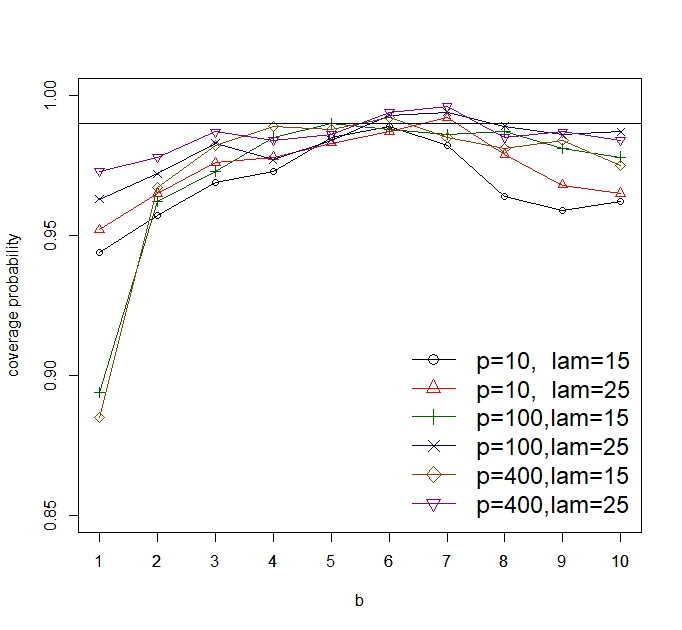}
        \end{center}
       \end{minipage} \\
       
       \begin{minipage}{0.5\hsize}
        \begin{center}
          \includegraphics[clip, width=7cm]{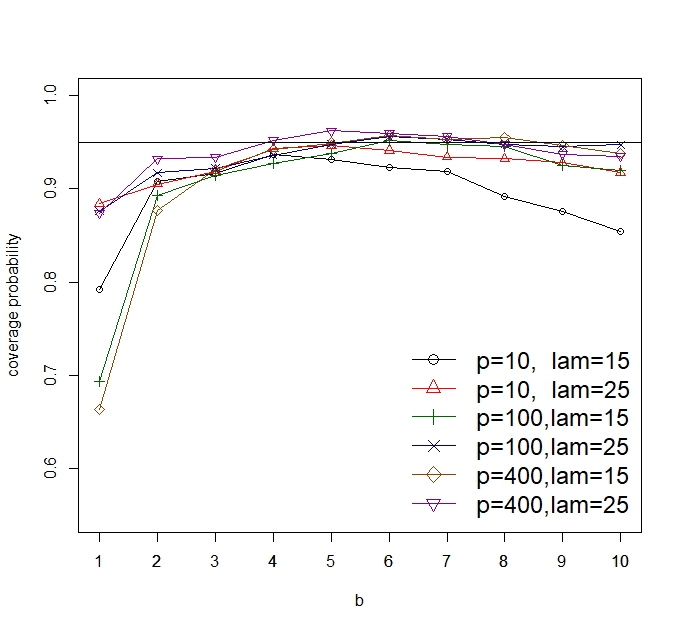}
        \end{center}
      \end{minipage}

       \begin{minipage}{0.5\hsize}
        \begin{center}
          \includegraphics[clip, width=7cm]{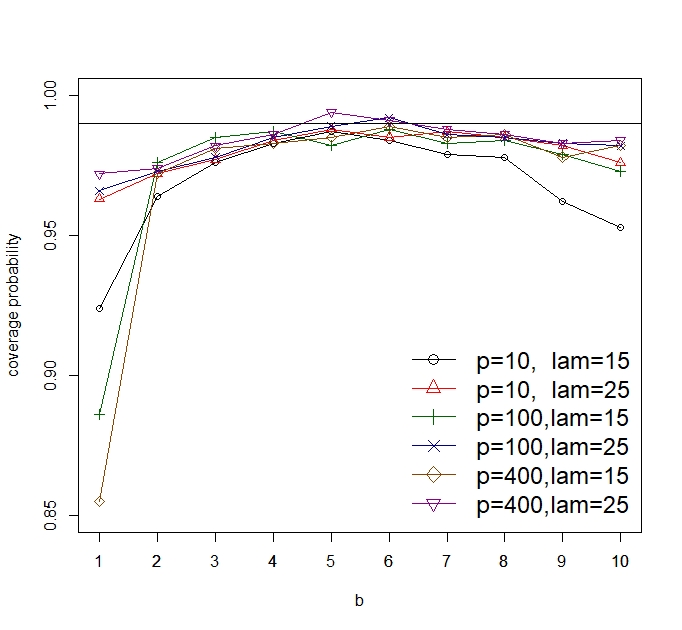}
        \end{center}
       \end{minipage} \\
       
       \begin{minipage}{0.5\hsize}
        \begin{center}
          \includegraphics[clip, width=7cm]{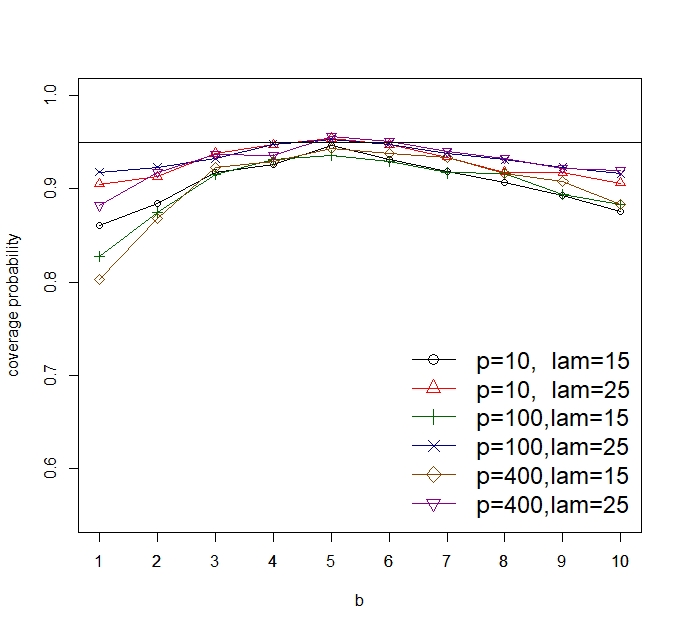}
        \end{center}
      \end{minipage}

       \begin{minipage}{0.5\hsize}
        \begin{center}
          \includegraphics[clip, width=7cm]{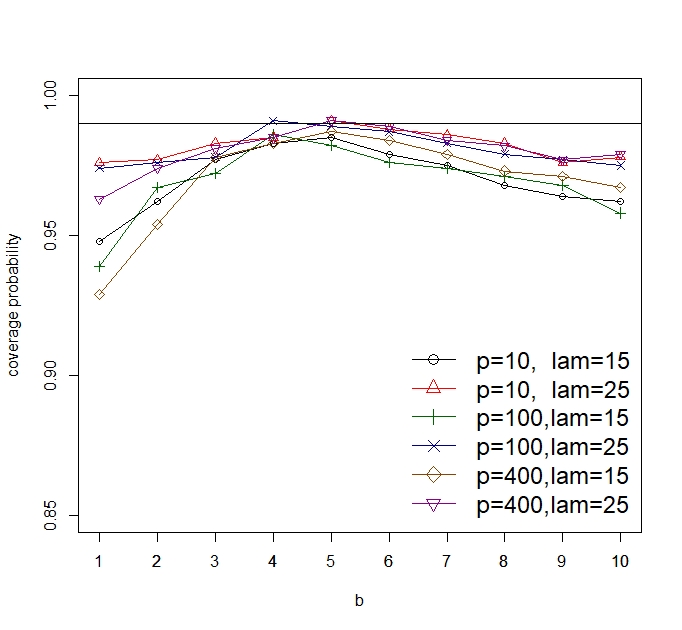}
        \end{center}
       \end{minipage} 

    \end{tabular}
    \caption{Coverage probabilities of joint $95$\% (left panel) and $99$\% (right panel) confidence intervals for DGP1 (top row), DGP2 (middle row) and DGP3 (bottom row).  \label{Fig: Cov-Prob-b-plot}}
  \end{center}
\end{figure}

\section{Conclusion}\label{sec: conclusion}

In this paper, we have advanced Gaussian approximation to high-dimensional spatial data observed at irregularly spaced sampling sites and proposed the spatially dependent wild bootstrap (SDWB) to allow feasible inference. We provide a rigorous theory for Gaussian approximation and bootstrap consistency under the stochastic sampling design in \cite{La03a}, which includes both pure increasing domain and mixed  increasing domain asymptotic frameworks.    SDWB is shown to be valid for a wide class of random fields that includes L\'evy-driven MA random fields and the popular Gaussian random field  as  special cases. We demonstrate the usefulness of SDWB by constructing joint confidence intervals of the mean of random field over time, and  performing change-point testing/estimation in the mean of spatio-temporal data.  The validity of our Gaussian approximation and associated bootstrap theory hinges on the approximate spatial stationarity, suitable mixing and moment assumptions. Both   irregularly temporal spacing and temporal nonstationarity can be accommodated in the application to  inference for  spatio-temporal  data. 



To conclude, we shall mention two important future research topics. First, an obvious one is to come up with a good data driven formula for the bandwidth parameter $b_n$, which plays an important role in the approximation accuracy of SDWB. Unfortunately, we are not aware of any results on the edgeworth expansion for the distribution of sample mean of spatial data, even in the increasing domain asymptotic framework and for the low-dimensional setting, let alone deriving the edgeworth expansion for the distribution of the  sample mean and its bootstrap counterpart in the high-dimensional spatial setting.  For Gaussian approximation of time series  and subsequent inference, a bandwidth parameter is often necessary; see \cite{ZhWu17}, \cite{ZhCh18}, \cite{ChYaZh17}, among others, and it seems difficult to extend their data-driven formula [see e.g., \cite{ChYaZh17}] to the spatial setting. One way out is to adopt the minimal volatility approach, as advocated by 
\cite{PoRoWo99} for subsampling and block bootstrap of low-dimensional time series, and it remains to see whether it works in our setting. Second, the inference problem we study is limited to the mean of random field since our Gaussian approximation result is stated for the mean of $p-$dimensional spatial data. We are hopeful that our theory can be extended to cover inference for the parameter related to second order properties of a random field, such as variogram at a particular lag, given the recent work by \cite{ChYaZh17} on testing white noise hypothesis for high-dimensional time series.
 We leave both topics for future investigation.


\newpage

\bibliography{Ref4}
\bibliographystyle{chicago}

\clearpage

\begin{center}
{\LARGE Supplementary Material - Gaussian approximation and spatially dependent wild bootstrap for high-dimensional spatial data \\
\vspace{3mm}
\Large by Daisuke Kurisu, Kengo Kato and Xiaofeng Shao}
\end{center}

\setcounter{section}{0}
\setcounter{figure}{0}
\setcounter{table}{0}
\def\thetable{S\arabic{table}}
\def\thefigure{S\arabic{figure}}
\def\thesection{S\arabic{section}}

\bigskip

The supplement contains all technical proofs and a real data illustration. In what follows, we set $r_{n} = n^{2}\lambda_{n}^{-d}$ and $\|\bm{x}\|_{\infty} = \max_{1 \leq j \leq p}|x_{j}|$ for $\bm{x} = (x_{1},\dots, x_{p})' \in \mathbb{R}^{p}$.
\appendix


\section{Proofs of Theorems \ref{Thm: GA_hyperrec} and  \ref{Thm: GA_hyperrec-m}}

\subsection{Preliminaries}
\label{sec: preliminaries}

Recall that $\{\lambda_{1,n}\}$ and $\{\lambda_{2.n}\}$ are sequences of positive numbers such that $\lambda_{1,n}/\lambda_{n} + \lambda_{2,n}/\lambda_{1,n} \to 0$ as $n \to \infty$. Let $\lambda_{3,n} = \lambda_{1,n} + \lambda_{2,n}$. We consider a partition of $\R^{d}$ by hypercubes of the form $\Gamma_{n}(\bm{\ell};\bm{0}) = (\bm{\ell} + (0,1]^{d})\lambda_{3,n}$, $\bm{\ell} = (\ell_{1}, \dots, \ell_{d})' \in \mathbb{Z}^{d}$ and divide $\Gamma_{n}(\bm{\ell};\bm{0})$ into $2^{d}$ hypercubes as follows: 
\[
\Gamma_{n}(\bm{\ell};\bm{\epsilon}) = \prod_{j=1}^{d}I_{j}(\epsilon_{j}),\ \bm{\epsilon} = (\epsilon_{1},\dots, \epsilon_{d})' \in \{1,2\}^{d}, 
\]
where for $j=1,\dots,d$,
\[
I_{j}(\epsilon_{j}) = 
\begin{cases}
(\ell_{j}\lambda_{3,n}, \ell_{j}\lambda_{3,n} + \lambda_{1,n}] & \text{if $\epsilon_{j} = 1$}, \\
(\ell_{j}\lambda_{3,n} + \lambda_{1,n}, (\ell_{j}+1)\lambda_{3,n}] & \text{if $\epsilon_{j} = 2$}.
\end{cases}
\]
We note that 
\begin{align}\label{partition volume}
|\Gamma_{n}(\bm{\ell};\bm{\epsilon})| = \lambda_{1,n}^{q(\bm{\epsilon})}\lambda_{2,n}^{d-q(\bm{\epsilon})}
\end{align}
for any $\bm{\ell} \in \mathbb{Z}^{d}$ and $\bm{\epsilon} \in \{1,2\}^{d}$, where $q(\bm{\epsilon}) = [\![\{1 \leq j \leq d: \epsilon_{j} = 1\}]\!]$. Let $\bm{\epsilon}_{0} = (1,\dots, 1)'$. The partitions $\Gamma_{n}(\bm{\ell};\bm{\epsilon}_{0})$ correspond to ``large blocks'' and the partitions $\Gamma(\bm{\ell};\bm{\epsilon})$ for $\bm{\epsilon} \neq \bm{\epsilon}_{0}$ correspond to ``small blocks''.

Let $L_{1,n} = \{\bm{\ell} \in \mathbb{Z}^{d}: \Gamma_{n}(\bm{\ell},\bm{0}) \subset R_{n}\}$ denote the index set of all hypercubes $ \Gamma_{n}(\bm{\ell},\bm{0})$ that are contained in $R_{n}$, and let $L_{2,n} = \{\bm{\ell} \in \mathbb{Z}^{d}:  \Gamma_{n}(\bm{\ell},\bm{0}) \cap R_{n} \neq 0,  \Gamma_{n}(\bm{\ell},\bm{0}) \cap R_{n}^{c} \neq \emptyset \}$ be the index set of boundary hypercubes. Define $L_{n} = L_{1,n} \cup L_{2,n}$ and 
\[
S_{n}(\bm{\ell};\bm{\epsilon}) = \sum_{i:\bs_{i}\in \Gamma_{n}(\bm{\ell};\bm{\epsilon}) \cap R_{n}}\bX(\bs_{i}) = (S_{n}^{(1)}(\bm{\ell};\bm{\epsilon}), \dots, S_{n}^{(p)}(\bm{\ell};\bm{\epsilon}))'.
\]
Then we have 
\begin{align*}
S_{n} &= (S_{n}^{(1)},\dots, S_{n}^{(p)})' = \sum_{i=1}^{n}\bX(\bs_{i}) \\
&= \sum_{\bm{\ell} \in L_{n}}S_{n}(\bm{\ell};\bm{\epsilon}_{0}) + \sum_{\bm{\epsilon} \neq \bm{\epsilon}_{0}}\underbrace{\sum_{\bm{\ell}\in L_{1,n}}S_{n}(\bm{\ell};\bm{\epsilon})}_{=: S_{2,n}(\bm{\epsilon})} + \sum_{\bm{\epsilon} \neq \bm{\epsilon}_{0}}\underbrace{\sum_{\bm{\ell} \in L_{2,n}}S_{n}(\bm{\ell};\bm{\epsilon})}_{=: S_{3,n}(\bm{\epsilon})}\\
&=: S_{1,n} + \sum_{\bm{\epsilon} \neq \bm{\epsilon}_{0}}S_{2,n}(\bm{\epsilon}) + \sum_{\bm{\epsilon} \neq \bm{\epsilon}_{0}}S_{3,n}(\bm{\epsilon}). 
\end{align*}
We first establish the high-dimensional CLT for $r_{n}^{-1/2}S_{n} = \sqrt{\lambda_{n}^{d}}\overline{\bm{X}}_{n}$ and then  for $\sqrt{\lambda_{n}^{d}}\overline{\bY}_{n}$.

We state some lemmas that will be used in the subsequent proofs. The following Lemmas \ref{n summands} and \ref{variance-rate} hold under the assumptions of Theorems \ref{Thm: GA_hyperrec} and \ref{Thm: GA_hyperrec-m}. 
\begin{lemma}\label{n summands}
Let $\mathcal{I}_{n} = \{\bm{i} \in \mathbb{Z}^{d}: (\bm{i} +(0,1]^{d})\cap  R_{n} \neq \emptyset \}$. Then we have 
\[
P_{\bZ}\left(\sum_{k=1}^{n}1\{\lambda_{n} \bZ_{k} \in (\bm{i} +(0,1]^{d})\cap  R_{n} \} > 2(\log n + n\lambda_{n}^{-d})\ \text{for some $\bm{i} \in \mathcal{I}_{n}$, i.o.}\right) = 0
\]
and 
\[
P_{\bZ}\left(\sum_{k=1}^{n}1\{\lambda_{n} \bZ_{k} \in \Gamma_{n}(\bm{\ell}; \bm{\epsilon})\} > C\lambda_{1,n}^{q(\bm{\epsilon})}\lambda_{2,n}^{d-q(\bm{\epsilon})}n\lambda_{n}^{-d}\ \text{for some $\bm{\ell} \in L_{1,n}$, i.o.}\right) = 0
\]
for any $\bm{\epsilon} \in \{1,2\}^{d}$. 
\end{lemma}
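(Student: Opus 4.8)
The plan is to view both events as unions, over only polynomially many blocks, of binomial large-deviation events, to control each by a Chernoff bound, and then to conclude via the first Borel--Cantelli lemma. Fix $n$. Since $\bs_{k}=\lambda_{n}\bZ_{k}$ with $\bZ_{1},\dots,\bZ_{n}$ i.i.d.\ with density $f$ on $R_{0}$, for a Borel set $A\subset R_{n}$ the count $N_{n}(A):=\sum_{k=1}^{n}1\{\lambda_{n}\bZ_{k}\in A\}$ is $\mathrm{Bin}(n,p_{n}(A))$ under $P_{\bZ}$, where $p_{n}(A)=\int_{A/\lambda_{n}}f(\bm{z})\,d\bm{z}$. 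By the density assumption (Assumption \ref{ass: design}(ii), also in force under Assumption \ref{ass: design2}), $f$ is continuous on the compact set $\overline{R}_{0}$, hence bounded by some $\bar{f}<\infty$, so $p_{n}(A)\le \bar{f}\,|A|\,\lambda_{n}^{-d}$ and $E_{\bZ}[N_{n}(A)]\le \bar{f}\,|A|\,n\lambda_{n}^{-d}$. Taking $A=(\bm{i}+(0,1]^{d})\cap R_{n}$ yields a mean of at most $\bar{f}\,n\lambda_{n}^{-d}$, and taking $A=\Gamma_{n}(\bm{\ell};\bm{\epsilon})$ for $\bm{\ell}\in L_{1,n}$ yields, by the volume identity $|\Gamma_{n}(\bm{\ell};\bm{\epsilon})|=\lambda_{1,n}^{q(\bm{\epsilon})}\lambda_{2,n}^{d-q(\bm{\epsilon})}$, a mean of at most $\bar{f}\,\lambda_{1,n}^{q(\bm{\epsilon})}\lambda_{2,n}^{d-q(\bm{\epsilon})}\,n\lambda_{n}^{-d}$.

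Next I would apply the Chernoff bound $P(\mathrm{Bin}(n,p)\ge t)\le (enp/t)^{t}$, which is useful once $t$ is a large enough multiple of the mean $np$. For the first claim, the threshold $2(\log n+n\lambda_{n}^{-d})$ dominates the mean bound for all large $n$ (after enlarging the absolute constant if $\bar{f}$ is large — the precise constant being immaterial for how the lemma is used downstream) and is at least $2\log n$; feeding this into the Chernoff bound and using $\log n\to\infty$, one gets a tail of at most $n^{-a}$ for some constant $a>2$. The additive $\log n$ in the threshold is exactly what forces the Chernoff exponent to dominate $\log n$ in the regime where $n\lambda_{n}^{-d}$ stays bounded, i.e.\ the pure increasing domain case. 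For the second claim, the threshold $C\lambda_{1,n}^{q(\bm{\epsilon})}\lambda_{2,n}^{d-q(\bm{\epsilon})}n\lambda_{n}^{-d}$ is a fixed multiple of the mean bound once $C$ is large enough, so the Chernoff bound gives a tail of at most $\exp\big(-c\,\lambda_{1,n}^{q(\bm{\epsilon})}\lambda_{2,n}^{d-q(\bm{\epsilon})}n\lambda_{n}^{-d}\big)$; since $\lambda_{1,n}^{q(\bm{\epsilon})}\lambda_{2,n}^{d-q(\bm{\epsilon})}\ge \lambda_{2,n}^{d}$ and $n\lambda_{n}^{-d}\to\kappa\in(0,\infty]$ is bounded away from zero, this is at most $\exp(-c'\lambda_{2,n}^{d})$ for large $n$.

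Finally, I would take a union bound over blocks and apply the first Borel--Cantelli lemma. From $\lim_{n}n\lambda_{n}^{-d}=\kappa\in(0,\infty]$ one obtains $\lambda_{n}^{d}\lesssim n$, whence $[\![\mathcal{I}_{n}]\!]\lesssim \lambda_{n}^{d}\lesssim n$ and $[\![L_{1,n}]\!]\lesssim (\lambda_{n}/\lambda_{2,n})^{d}\lesssim n$, while there are only $2^{d}$ choices of $\bm{\epsilon}$. Hence the probability of the bad event at stage $n$ is at most $[\![\mathcal{I}_{n}]\!]\,n^{-a}\lesssim n^{1-a}$ in the first case (summable since $a>2$), and at most $2^{d}[\![L_{1,n}]\!]\exp(-c'\lambda_{2,n}^{d})\lesssim n\exp(-c'\lambda_{2,n}^{d})$ in the second (summable, since the design condition $n^{c}/\sqrt{\lambda_{2,n}}=O(1)$ in Assumption \ref{ass: design}(iv) or Assumption \ref{ass: design2}(iv') forces $\lambda_{2,n}$ to grow at least polynomially in $n$, so $\lambda_{2,n}^{d}$ dwarfs $\log n$). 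By Borel--Cantelli, each of the two events occurs only finitely often with $P_{\bZ}$-probability one, which is exactly the assertion.

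The only genuinely delicate point is the bookkeeping that matches the Chernoff tail against the block count: because $\lambda_{n}^{d}$ can be of order $n$ in the pure increasing domain regime, a tail that is merely of order $n^{-1}$ does not suffice, and it is the additive $\log n$ in the first threshold (respectively, the polynomial divergence of $\lambda_{2,n}$ guaranteed by the design conditions in the second) that supplies the extra room needed for summability. The nonuniformity of $f$ plays no real role beyond producing the harmless constant $\bar{f}=\sup_{R_{0}}f<\infty$.
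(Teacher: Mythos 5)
Your argument is correct and is essentially the same as the paper's: the paper offers no self-contained proof but simply cites Lemmas A.1 and 5.1 of \cite{La03b}, whose proofs run exactly through the binomial/Chernoff tail bound for the cell counts, a union bound over the polynomially many cells (using $\lambda_{n}^{d}\lesssim n$), and the first Borel--Cantelli lemma, just as you do. Your caveat about the literal constant $2$ in the first display --- which in the mixed increasing domain regime really ought to be a constant depending on $\sup_{R_{0}}f$ --- is a genuine observation about the statement, but, as you note, it is immaterial to every downstream use of the lemma, where only the order $\log n + n\lambda_{n}^{-d}$ matters.
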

\begin{proof}
See the proofs of Lemmas A.1 and 5.1 in \cite{La03b} for each statement. 
\end{proof}

\begin{remark}
Since $[\![\{\bm{\epsilon} \in \{1,2\}^{d}: \bm{\epsilon} \neq \bm{\epsilon}_{0} \}]\!] = 2^{d}-1$, $[\![L_{1,n}]\!] \sim (\lambda_{n}/\lambda_{3,n})^{d} \sim (\lambda_{n}/\lambda_{1,n})^{d}$ and $[\![L_{2,n}]\!] \sim (\lambda_{n}/\lambda_{3,n})^{d-1} \sim (\lambda_{n}/\lambda_{1,n})^{d-1} \ll [\![L_{1,n}]\!]$, Lemma \ref{n summands} and Equation (\ref{partition volume}) imply that for sufficiently large $n$, the numbers of summands of $S_{2,n}$ and $S_{3,n}$ are at most $O\left(\lambda_{1,n}^{d-1}\lambda_{2,n}n\lambda_{n}^{-d}(\lambda_{n}/\lambda_{1,n})^{d}\right) = O\left({\lambda_{2,n} \over \lambda_{1,n}}n\right)$ and $O\left(\lambda_{1,n}^{d-1}\lambda_{2,n}n\lambda_{n}^{-d}(\lambda_{n}/\lambda_{1,n})^{d-1}\right) = O\left({\lambda_{2,n} \over \lambda_{n}}n\right)$, respectively. 
\end{remark}
Define $\bm{\sigma}_{n}^{2} = (\sigma_{1,n}^{2},\dots, \sigma_{p,n}^{2})'$ where 
\begin{align*}
\sigma^{2}_{j,n} &= \sum_{\bm{\ell} \in L_{n}}\sum_{k_{1}=1}^{n}\sum_{k_{2}=1}^{n}\Sigma_{j,j}(\lambda_{n}(\bZ_{k_{1}} - \bZ_{k_{2}}))1\{\lambda_{n}\bZ_{k_{1}}, \lambda_{n}\bZ_{k_{2}} \in \Gamma_{n}(\bm{\ell};\bm{\epsilon}_{0}) \cap R_{n}\}.
\end{align*}
Recall that we have assumed that $p = O(n^{\alpha})$. 
\begin{lemma}\label{variance-rate}
Let $E_{\bZ}$ denote the expectation with respect to $\{\bZ_{i}\}_{i \geq 1}$. Then we have
\begin{align}
E_{\bZ}\left[\sigma^{2}_{j,n}\right] &= n^{2}\lambda_{n}^{-d}\left(\left(\int_{\R^{d}} \Sigma_{j,j}(\bm{x})d\bm{x}\right)\left(\int_{R_{0}} f^{2}(\bm{z})d\bm{z}\right) + o(1)\right) + \Sigma_{j,j}(\bm{0})n(1 + o(1)) \label{order-sig0}
\end{align}
uniformly over $1 \leq j \leq p$, and 
\begin{align}\label{unif-sig0}
\left\|\bm{\sigma}_{n}^{2} - E_{\bZ}[\bm{\sigma}_{n}^{2}]\right\|_{\infty} = O\left(n^{2-{1 \over 2r_{0}}}\lambda_{n}^{-d}\right) \quad \text{$P_{\bZ}$-a.s.}
\end{align}
for any even integer $r_{0}$ such that $r_{0} > (2\alpha+3)\vee 4$. Additionally, if $f$ is Lipschitz continuous inside $R_{0}$ and
\begin{align}
    \max_{1 \leq j \leq p}\int_{\|\bm{x}\|>\lambda_{2,n}}|\Sigma_{j,j}(\bm{x})|d\bm{x} = O(n^{-c'/2}), \label{cov_matrix_decay_A2}
\end{align}
then we can replace $o(1)$ in (\ref{order-sig0}) with $O(n^{-c'/2})$. 
\end{lemma}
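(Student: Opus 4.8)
The plan is to regard $\sigma_{j,n}^2$ as a triangular-array V-statistic of degree two and treat the three assertions in turn. Write $\sigma_{j,n}^2=\sum_{k_1,k_2=1}^n H_{j,n}(\bm Z_{k_1},\bm Z_{k_2})$ with the symmetric kernel
\[
H_{j,n}(\bm z,\bm z')=\sum_{\bm\ell\in L_n}\Sigma_{j,j}(\lambda_n(\bm z-\bm z'))\,\mathbf 1\{\lambda_n\bm z,\lambda_n\bm z'\in\Gamma_n(\bm\ell;\bm\epsilon_0)\cap R_n\},
\]
which satisfies $|H_{j,n}|\le\Sigma_{j,j}(\bm 0)\le\overline C$ by Cauchy--Schwarz and, crucially, $\int_{R_0}|H_{j,n}(\bm z,\bm z')|f(\bm z')\,d\bm z'\lesssim\lambda_n^{-d}\int_{\R^d}|\Sigma_{j,j}|\lesssim\lambda_n^{-d}$ uniformly in $\bm z$ and in $j$ (change of variables and Condition (v) of Assumption \ref{ass: design}). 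For the expectation formula (\ref{order-sig0}) I would split $\sum_{k_1,k_2}$ into the diagonal and off-diagonal parts. On the diagonal $H_{j,n}(\bm z,\bm z)=\Sigma_{j,j}(\bm 0)\mathbf 1\{\lambda_n\bm z\in\bigcup_{\bm\ell\in L_n}\Gamma_n(\bm\ell;\bm\epsilon_0)\}$, so its expectation equals $\Sigma_{j,j}(\bm 0)\,n\,P_{\bZ}(\lambda_n\bm Z_1\in\bigcup_{\bm\ell}\Gamma_n(\bm\ell;\bm\epsilon_0))$; since the large blocks cover $R_0$ up to a set of Lebesgue measure $O(\lambda_{2,n}/\lambda_{1,n})+O(\lambda_{1,n}^{-1})$ and $f$ is bounded, this is $\Sigma_{j,j}(\bm 0)n(1+o(1))$, the $o(1)$ independent of $j$. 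For the off-diagonal part the change of variables $\bm s_i=\lambda_n\bm z_i$ gives $n(n-1)\lambda_n^{-2d}\sum_{\bm\ell\in L_n}\iint\Sigma_{j,j}(\bm s_1-\bm s_2)\mathbf 1\{\bm s_1,\bm s_2\in\Gamma_n(\bm\ell;\bm\epsilon_0)\cap R_n\}f(\bm s_1/\lambda_n)f(\bm s_2/\lambda_n)\,d\bm s_1 d\bm s_2$; on each interior cube $\Gamma=\Gamma_n(\bm\ell;\bm\epsilon_0)$, $\bm\ell\in L_{1,n}$, I would use that $f(\cdot/\lambda_n)$ is nearly constant over a cube of side $\lambda_{1,n}=o(\lambda_n)$ (uniform continuity) and that $\iint_{\Gamma\times\Gamma}\Sigma_{j,j}(\bm s_1-\bm s_2)\,d\bm s_1 d\bm s_2=(\int_{\R^d}\Sigma_{j,j})|\Gamma|+o(|\Gamma|)$ (integrability of $\Sigma_{j,j}$), so that summing over $L_{1,n}$ produces $(\int_{\R^d}\Sigma_{j,j})(\int_{R_0}f^2)\lambda_n^d(1+o(1))$; the boundary cubes $L_{2,n}$ contribute relative error $O(\lambda_{1,n}/\lambda_n)=o(1)$ by Lemma \ref{n summands}, and $n(n-1)=n^2(1+O(1/n))$, which together give (\ref{order-sig0}).

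For the almost-sure uniform bound (\ref{unif-sig0}) I would fix an even integer $r_0>(2\alpha+3)\vee 4$ and estimate $E_{\bZ}|\sigma_{j,n}^2-E_{\bZ}\sigma_{j,n}^2|^{r_0}$ through the Hoeffding decomposition $H_{j,n}=\theta_{j,n}+h_{1,j,n}(\bm z)+h_{1,j,n}(\bm z')+h_{2,j,n}(\bm z,\bm z')$, which splits $\sigma_{j,n}^2-E_{\bZ}\sigma_{j,n}^2$ into a diagonal sum $\Sigma_{j,j}(\bm 0)\sum_k\mathbf 1\{\lambda_n\bm Z_k\in\bigcup\Gamma\}$, a linear sum $2n\sum_k h_{1,j,n}(\bm Z_k)$, and a degenerate double sum $\sum_{k_1\ne k_2}h_{2,j,n}(\bm Z_{k_1},\bm Z_{k_2})$. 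The key size estimates are $|\theta_{j,n}|,|h_{1,j,n}|\lesssim\lambda_n^{-d}$ (from the uniform bound above) and $\|h_{2,j,n}\|_\infty\lesssim\overline C$, $E h_{2,j,n}^2\le E H_{j,n}^2\lesssim\lambda_n^{-d}$; applying Rosenthal's inequality to the (bounded, mean-zero) diagonal and linear sums and a moment inequality for degenerate U-statistics to the degenerate sum, and using $\lambda_n^d=O(n)$ (Condition (iii) of Assumption \ref{ass: design}), each piece is $\lesssim_{r_0}n^{3r_0/2}\lambda_n^{-dr_0}$, so $E_{\bZ}|\sigma_{j,n}^2-E_{\bZ}\sigma_{j,n}^2|^{r_0}\lesssim_{r_0}n^{3r_0/2}\lambda_n^{-dr_0}$. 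Then, with $p=O(n^\alpha)$, Markov's inequality and a union bound give $P_{\bZ}(\|\bm\sigma_n^2-E_{\bZ}\bm\sigma_n^2\|_\infty>n^{2-1/(2r_0)}\lambda_n^{-d})\lesssim n^{\alpha}\cdot n^{-(2r_0-1/2)}\cdot n^{3r_0/2}=n^{\alpha+1/2-r_0/2}$, which is summable in $n$ precisely because $r_0>2\alpha+3$, and Borel--Cantelli yields (\ref{unif-sig0}).

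For the refinement I would revisit the two approximations in the off-diagonal computation and make them quantitative. With $f$ Lipschitz on $R_0$, splitting the increment integral at $\|\bm u\|=\lambda_{2,n}$ and using the tail bound (\ref{cov_matrix_decay_A2}) on $\{\|\bm u\|>\lambda_{2,n}\}$, the density-variation error is $O(\lambda_{1,n}^d\lambda_{2,n}/\lambda_n)+O(\lambda_{1,n}^d n^{-c'/2})$ per interior cube; and, choosing the truncation radius in the edge estimate to be $\lambda_{2,n}$, the edge error is $O(\lambda_{1,n}^{d-1}\lambda_{2,n})+O(\lambda_{1,n}^d n^{-c'/2})$ per cube. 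Summing over the $\asymp(\lambda_n/\lambda_{1,n})^d$ interior cubes and multiplying by $n^2\lambda_n^{-2d}$ turns these into $n^2\lambda_n^{-d}(O(\lambda_{2,n}/\lambda_{1,n})+O(n^{-c'/2}))$, and the rate conditions in Assumption \ref{ass: design} (resp.\ Assumption \ref{ass: design2}) force $\lambda_{2,n}/\lambda_{1,n}=O(n^{-c'/2})$, so the $o(1)$ in (\ref{order-sig0}) can be upgraded to $O(n^{-c'/2})$.

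I expect the second assertion to be the main obstacle: one needs the $r_0$-th moment bound on the V-statistic $\sigma_{j,n}^2$ with an exponent sharp enough that the union bound over the $p=O(n^\alpha)$ coordinates combined with Borel--Cantelli over $n$ closes, and this is exactly where the hypothesis $r_0>(2\alpha+3)\vee 4$ is forced. The expectation computation is essentially a triangular-array version of Lahiri's variance calculation, the one genuinely new point being that every error term must be controlled uniformly in $j$ using Condition (v) of Assumption \ref{ass: design}.
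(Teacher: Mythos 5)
Your proposal is correct and follows essentially the same route as the paper's proof: the same diagonal/off-diagonal split with a change of variables and boundary-cube control for the expectation (\ref{order-sig0}), the same $r_{0}$-th moment bound of order $n^{3r_{0}/2}\lambda_{n}^{-dr_{0}}$ on the V-statistic followed by Markov's inequality, a union bound over the $p=O(n^{\alpha})$ coordinates and Borel--Cantelli for (\ref{unif-sig0}), and the same quantitative sharpening to $O(n^{-c'/2})$ under the Lipschitz and covariance-tail conditions. The only cosmetic difference is that the paper controls the quadratic fluctuation by writing it as a martingale $\sum_{k}U_{k}$ and applying the martingale Rosenthal inequality, whereas you invoke a degenerate $U$-statistic moment inequality; both yield the same dominant term.
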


\begin{proof}
In this proof, the $o (\cdot)$ and $O(\cdot)$ terms should be understood to hold uniformly over $1 \le j \le p$. We first verify the expansion (\ref{order-sig0}). Let $R'_{n}(\bm{\ell}) = \{\bm{x} \in \R^{d}: \bm{x} = \bm{x}_{1} - \bm{x}_{2}\ \text{for some}\ \bm{x}_{1}, \bm{x}_{2} \in \Gamma_{n}(\bm{\ell};\bm{\epsilon}_{0}) \cap R_{n}\}$, $R''_{n}(\bm{x};\bm{\ell}) = \{\bm{z} \in R_{0} : \bm{z}, \bm{z} + \lambda_{n}^{-1}\bm{x} \in \lambda_{1,n}\lambda_{n}^{-1}(\bm{\ell} + (0,1]^{d}) \cap R_{0}\}$, and $R'''_{n}(\bm{\ell}) = \{\bm{z} \in R_{0} : \bm{z} \in \lambda_{1,n}\lambda_{n}^{-1}(\bm{\ell} + (0,1]^{d}) \cap R_{0}\}$. We note that  $R''(\bm{x};\bm{\ell}_{1}) \cap R''(\bm{x};\bm{\ell}_{2}) = \emptyset$ whenever $\bm{\ell}_{1} \neq \bm{\ell}_{2}$. Then, we can expand $E_{\bZ}[\sigma^{2}_{j,n}]$ as
\begin{align*}
E_{\bZ}\left[\sigma^{2}_{j,n}\right] &= n(n-1)\sum_{\bm{\ell} \in L_{n}}E_{\bZ}\left[\Sigma_{j,j}(\lambda_{n}(\bZ_{k_{1}} - \bZ_{k_{2}}))1\{\lambda_{n}\bZ_{k_{1}}, \lambda_{n}\bZ_{k_{2}} \in \Gamma_{n}(\bm{\ell};\bm{\epsilon}_{0}) \cap R_{n} \}\right] \nonumber \\
&\quad + n\Sigma_{j,j}(\bm{0})\sum_{\bm{\ell} \in L_{n}}E_{\bZ}\left[1\{\lambda_{n}\bZ_{k_{1}}\in \Gamma_{n}(\bm{\ell};\bm{\epsilon}_{0}) \cap R_{n}\}\right] \nonumber \\
&= n(n-1)\lambda_{n}^{-d}\sum_{\bm{\ell} \in L_{n}}\int_{R'_{n}(\bm{\ell})}\Sigma_{j,j}(\bm{x})\left(\int_{R''_{n}(\bm{x};\bm{\ell})} f(\bm{z}+\lambda_{n}^{-1}\bm{x})f(\bm{z})d\bm{z}\right)d\bm{x}\\
&\quad + n\Sigma_{j,j}(\bm{0})\sum_{\bm{\ell} \in L_{n}}\int_{R'''_{n}(\bm{\ell})}f(\bm{z})d\bm{z} \nonumber \\
&= n(n-1)\lambda_{n}^{-d}\left\{\int_{\lambda_{1,n}(-1,1]^{d}} \Sigma_{j,j}(\bm{x})\left(\sum_{\bm{\ell} \in L_{1,n}}\int_{R''_{n}(\bm{x};\bm{\ell})} f(\bm{z}+\lambda_{n}^{-1}\bm{x})f(\bm{z})d\bm{z}\right)d\bm{x} \right. \\
& \left. \quad  + \sum_{\bm{\ell} \in L_{2,n}}\!\!\int_{R'_{n}(\bm{\ell})}\!\!\Sigma_{j,j}(\bm{x})\!\left(\int_{R''_{n}(\bm{x};\bm{\ell})} \!\!f(\bm{z}+\lambda_{n}^{-1}\bm{x})f(\bm{z})d\bm{z}\right)\!d\bm{x}\right\} + n\Sigma_{j,j}(\bm{0})\!\!\sum_{\bm{\ell} \in L_{n}}\int_{R'''_{n}(\bm{\ell})}f(\bm{z})d\bm{z}.
\end{align*}
Observe that
\[
\sum_{\bm{\ell} \in L_{1,n}}\int_{R''_{n}(\bm{x};\bm{\ell})} f(\bm{z}+\lambda_{n}^{-1}\bm{x})f(\bm{z})d\bm{z} = \int_{\cup_{\bm{\ell} \in L_{1,n} }R''_{n}(\bm{x};\bm{\ell})} f(\bm{z}+\lambda_{n}^{-1}\bm{x})f(\bm{z})d\bm{z}
\]
and $f(\bm{z}+\lambda_{n}^{-1}\bm{x})f(\bm{z})1\left\{\bm{z} \in \cup_{\bm{\ell} \in L_{1,n}}R''_{n}(\bm{x};\bm{\ell})\right\} \leq \|f\|_{\infty}f(\bm{z})1\{\bm{z} \in R_{0}\}$ with $\|f\|_{\infty} = \sup_{\bm{z} \in R_{0}}f(\bm{z})$.
Let $|A|$ be the Lebesgue measure of a Borel subset $A$ of $\R^{d}$.  Since $[\![L_{2,n}]\!] \sim (\lambda_{n}/\lambda_{1,n})^{d-1}$ and $[\![L_{1,n}]\!] \sim (\lambda_{n}/\lambda_{1,n})^{d}$, we have 
\begin{align*}
\sup_{\bm{x} \in \lambda_{2,n}(-1,1]^{d}}\left|R_{0} \backslash \left(\cup_{\bm{\ell} \in L_{1,n}}R''_{n}(\bm{x};\bm{\ell})\right)\right| & \lesssim [\![L_{1,n}]\!]{1 \over \lambda_{n}}\left({\lambda_{1,n} \over \lambda_{n}}\right)^{d-1} + [\![L_{2,n}]\!]\left({\lambda_{1,n} \over \lambda_{n}}\right)^{d}\\
&\lesssim \left({\lambda_{n} \over \lambda_{1,n}}\right)^{d}{1 \over \lambda_{n}}\left({\lambda_{1,n} \over \lambda_{n}}\right)^{d-1} + \left({\lambda_{n} \over \lambda_{1,n}}\right)^{d-1}\left({\lambda_{1,n} \over \lambda_{n}}\right)^{d} \\
&= {1 \over \lambda_{1,n}} + {\lambda_{1,n} \over \lambda_{n}} \lesssim n^{-c'/2},\\
\sup_{\bm{x} \in \lambda_{2,n}(-1,1]^{d}}\left|\cup_{\bm{\ell} \in L_{2,n}}R''_{n}(\bm{x};\bm{\ell})\right|  &= \sum_{\bm{\ell} \in L_{2,n}}\sup_{\bm{x} \in \lambda_{2,n}(-1,1]^{d}}\left|R''_{n}(\bm{x};\bm{\ell})\right|\\ 
&\lesssim [\![L_{2,n}]\!]\left({\lambda_{1,n} \over \lambda_{n}}\right)^{d} \lesssim {\lambda_{1,n} \over \lambda_{n}} \lesssim n^{-c'/2},
\end{align*}
where the inequalities are up to constants independent of $n$ and $\bm{x}$. Likewise, we have 
\begin{align*}
\left|R_{0} \backslash \left(\cup_{\bm{\ell} \in L_{1,n}}R'''_{n}(\bm{\ell})\right)\right| &\lesssim n^{-c'/2} \quad \text{and} \quad \left|\cup_{\bm{\ell} \in L_{2,n}}R'''_{n}(\bm{\ell})\right| \lesssim n^{-c'/2}.
\end{align*}
Thus, the dominated convergence theorem yields that 
\begin{align}\label{L0n-bound-sig}
    \sum_{\bm{\ell} \in L_{n}}\int_{R'''_{n}(\bm{\ell})}f(\bm{z})d\bm{z} = \int_{R_{0}}f(\bm{z})d\bm{z} + O(n^{-c'/2}) = 1 + O(n^{-c'/2}),
\end{align}
\begin{align}
\label{L1n-bound-sig}
&\int_{\lambda_{1,n}(-1,1]^{d}} \Sigma_{j,j}(\bm{x})\left(\int_{\cup_{\bm{\ell} \in L_{1,n}}R''_{n}(\bm{x};\bm{\ell})}\!\!\!\! f(\bm{z}+\lambda_{n}^{-1}\bm{x})f(\bm{z})d\bm{z}\right)d\bm{x} \nonumber \\
&= \int_{\lambda_{2,n}(-1,1]^{d}} \Sigma_{j,j}(\bm{x})\left(\int_{\cup_{\bm{\ell} \in L_{1,n}}R''_{n}(\bm{x};\bm{\ell})}\!\!\!\! f(\bm{z}+\lambda_{n}^{-1}\bm{x})f(\bm{z})d\bm{z}\right)d\bm{x} \nonumber \\
&\quad + \int_{\lambda_{1,n}(-1,1]^{d}\backslash \lambda_{2,n}(-1,1]^{d}} \Sigma_{j,j}(\bm{x})\left(\int_{\cup_{\bm{\ell} \in L_{1,n}}R''_{n}(\bm{x};\bm{\ell})}\!\!\!\! f(\bm{z}+\lambda_{n}^{-1}\bm{x})f(\bm{z})d\bm{z}\right)d\bm{x} \nonumber \\
&= \int_{\lambda_{2,n}(-1,1]^{d}} \Sigma_{j,j}(\bm{x})d\bm{x}\left(\int_{R_{0}}f^{2}(\bm{z})d\bm{z}\right) + O\left(\int_{\|\bm{x}\| \geq \lambda_{2,n}}\Sigma_{j,j}(\bm{x})d\bm{x}\right) + o(1) \nonumber \\
&= \int_{\R^{d}} \Sigma_{j,j}(\bm{x})d\bm{x}\left(\int_{R_{0}}f^{2}(\bm{z})d\bm{z}\right) + o(1). 
\end{align}
Now we consider the case that $f$ is Lipschitz continuous inside $R_{0}$ and (\ref{cov_matrix_decay_A2}) holds. Decompose 
\begin{align*}
    &\int_{\lambda_{1,n}(-1,1]^{d}} \Sigma_{j,j}(\bm{x})\left(\int_{\cup_{\bm{\ell} \in L_{1,n}}R''_{n}(\bm{x};\bm{\ell})}\!\!\!\!f(\bm{z}+\lambda_{n}^{-1}\bm{x})f(\bm{z})d\bm{z}\right)d\bm{x} \nonumber \\
    & = \int_{\lambda_{2,n}(-1,1]^{d}} \Sigma_{j,j}(\bm{x})\left(\int_{\cup_{\bm{\ell} \in L_{1,n}}R''_{n}(\bm{x};\bm{\ell})}\!\!\!\!\left\{f(\bm{z}+\lambda_{n}^{-1}\bm{x}) - f(\bm{z})\right\}f(\bm{z})d\bm{z}\right)d\bm{x}  \nonumber \\
    &\quad + \int_{\lambda_{1,n}(-1,1]^{d} \backslash \lambda_{2,n}(-1,1]^d} \Sigma_{j,j}(\bm{x})\left(\int_{\cup_{\bm{\ell} \in L_{1,n}}R''_{n}(\bm{x};\bm{\ell})}\!\!\!\!\left\{f(\bm{z}+\lambda_{n}^{-1}\bm{x}) - f(\bm{z})\right\}f(\bm{z})d\bm{z}\right)d\bm{x}  \nonumber \\
    &\quad + \int_{\lambda_{2,n}(-1,1]^{d}} \Sigma_{j,j}(\bm{x})\left(\int_{\cup_{\bm{\ell} \in L_{1,n}}R''_{n}(\bm{x};\bm{\ell})}f^{2}(\bm{z})d\bm{z}\right)d\bm{x} \nonumber \\
    &\quad + \int_{\lambda_{1,n}(-1,1]^{d}\backslash \lambda_{2,n}(-1,1]^{d}} \Sigma_{j,j}(\bm{x})\left(\int_{\cup_{\bm{\ell} \in L_{1,n}}R''_{n}(\bm{x};\bm{\ell})}f^{2}(\bm{z})d\bm{z}\right)d\bm{x} \nonumber \\
    &=: I_{1,n} + I_{2,n} + I_{3,n} + I_{4,n}.
\end{align*}
Then we have 
\begin{align}
    |I_{1,n}| &\lesssim (\lambda_{2,n}/\lambda_{n}) \int_{\lambda_{2,n}(-1,1]^{d}}|\Sigma_{j,j}(\bm{x})|d\bm{x} = O(\lambda_{2,n}/\lambda_{n}) = O(n^{-c'/2}), \label{I_1n_ineq} \\
    |I_{2,n}| &\lesssim \int_{\|\bm{x}\| \geq \lambda_{2,n}}|\Sigma_{j,j}(\bm{x})|d\bm{x} = O(n^{-c'/2}),\label{I_2n_ineq} \\
    I_{3,n} &= \int_{\lambda_{2,n}(-1,1]^{d}}\Sigma_{j,j}(\bm{x})d\bm{x}\left(\int_{R_{0}}f^{2}(\bm{z})d\bm{z}\right) + O(n^{-c'/2}), \label{I_3n_eq}\\
    |I_{4,n}| &\lesssim \int_{\|\bm{x}\| \geq \lambda_{2,n}}|\Sigma_{j,j}(\bm{x})|d\bm{x} = O(n^{-c'/2}). \label{I_4n_ineq}
\end{align}
Combining (\ref{I_1n_ineq})--(\ref{I_4n_ineq}), we have 
\begin{align}
    &\int_{\lambda_{1,n}(-1,1]^{d}} \Sigma_{j,j}(\bm{x})\left(\int_{\cup_{\bm{\ell} \in L_{1,n}}R''_{n}(\bm{x};\bm{\ell})}\!\!\!\!f(\bm{z}+\lambda_{n}^{-1}\bm{x})f(\bm{z})d\bm{z}\right)d\bm{x} \nonumber \\
    &= \int_{\mathbb{R}^{d}}\Sigma_{j,j}(\bm{x})d\bm{x}\left(\int_{R_{0}}f^{2}(\bm{z})d\bm{z}\right) + O(n^{-c'/2}). \label{I0n_sig_eq}
\end{align}
Likewise, we have
\begin{align}
\label{L2n-bound-sig}
&\left|\sum_{\bm{\ell} \in L_{2,n}}\int_{R'_{n}(\bm{\ell})}\Sigma_{j,j}(\bm{x})\left(\int_{R''_{n}(\bm{x};\bm{\ell})} f(\bm{z}+\lambda_{n}^{-1}\bm{x})f(\bm{z})d\bm{z}\right)d\bm{x}\right| \nonumber \\
&\leq \int_{\lambda_{1,n}(-1,1]^{d}} \left|\Sigma_{j,j}(\bm{x})\right|\left(\sum_{\bm{\ell} \in L_{2,n}}\int_{R''_{n}(\bm{x};\bm{\ell})} f(\bm{z}+\lambda_{n}^{-1}\bm{x})f(\bm{z})d\bm{z}\right)d\bm{x} \nonumber \\
&\leq \|f\|_{\infty}\int_{\lambda_{1,n}(-1,1]^{d}} \left|\Sigma_{j,j}(\bm{x})\right|\left(\int_{\cup_{\bm{\ell} \in L_{2,n}}R''_{n}(\bm{x};\bm{\ell})} f(\bm{z})d\bm{z}\right)d\bm{x} = O(n^{-c'/2}).
\end{align}
Combining (\ref{L0n-bound-sig}), (\ref{L1n-bound-sig}) (or (\ref{I0n_sig_eq})) and (\ref{L2n-bound-sig}) leads to the expansion (\ref{order-sig0}).

Second, we shall verify (\ref{unif-sig0}). Since $\Gamma_{n}(\bm{\ell}_{1};\bm{\epsilon}_{0}) \cap \Gamma_{n}(\bm{\ell}_{2};\bm{\epsilon}_{0}) = \emptyset$ for $\bm{\ell}_{1} \neq \bm{\ell}_{2}$, we have 
\begin{align*}
\sum_{\bm{\ell} \in L_{n}}1\{\lambda_{n}\bZ_{k_{1}}, \lambda_{n}\bZ_{k_{2}} \in \Gamma_{n}(\bm{\ell};\bm{\epsilon}_{0}) \cap R_{n}\} \leq 1.
\end{align*}
Define $h_{n}(\bm{x}_{1},\bm{x}_{2}) = \Sigma_{j,j}(\lambda_{n}(\bm{x}_{1} - \bm{x}_{2}))\sum_{\bm{\ell} \in L_{n}}1\{\lambda_{n}\bm{x}_{1}, \lambda_{n}\bm{x}_{2} \in \Gamma_{n}(\bm{\ell};\bm{\epsilon}_{0}) \cap R_{n}\}$. Then $\sigma_{j,n}^{2} = \sum_{k_{1}=1}^{n}\sum_{k_{2}=1}^{n}h_{n}(\bZ_{k_{1}}, \bZ_{k_{2}})$ is a $V$-statistics of order $2$ with kernel $h_{n}$. For any even integer $r\geq 4$ and for any $\bm{z} \in \R^{d}$, we have 
\begin{align}
\left|E_{\bZ}\left[\left(h_{n}(\bm{z}, \bZ_{1})\right)^{r}\right]\right| 
&\leq \left|\int |\Sigma_{j,j}(\lambda_{n}(\bm{z} - \bs))|^{r}f(\bs)d\bs\right| \nonumber \\
&\leq \lambda_{n}^{-d}\|f\|_{\infty}\int \left|\Sigma_{j,j}(\bs)\right|^{r}d\bs, \label{r-m-bound1} \\
\left|E_{\bZ}\left[\left(h_{n}(\bZ_{1}, \bZ_{2})\right)^{r}\right]\right| 
&\leq E_{\bZ}\left[\left|E_{\bZ}\left[\left. \left(h_{n}(\bZ_{1},\bZ_{2})\right)^{r} \right|\bZ_{1}\right]\right|\right] \nonumber \\
&\leq \lambda_{n}^{-d}\|f\|_{\infty}\int \left|\Sigma_{j,j}(\bs)\right|^{r}d\bs. \label{r-m-bound2}
\end{align}
Define $h_{1,n}(\bm{z}) = E_{\bZ}[h_{n}(\bm{z},\bZ_{1})]$ for $\bm{z} \in \R^{d}$ and 
\begin{align*}
D_{1,n} &= \sum_{k=1}^{n}\left(h_{n}(\bZ_{k},\bZ_{k}) - E_{\bZ}[h_{n}(\bZ_{1},\bZ_{1})]\right),\\ 
D_{2,n} &= \sum_{k=1}^{n-1}(n-k)(h_{1,n}(\bZ_{k}) - E_{\bZ}[h_{1,n}(\bZ_{k})]),\\
D_{3,n} &= \sum_{k=2}^{n}U_{k},\ U_{k} = \sum_{i=1}^{k-1}(h_{n}(\bZ_{k}, \bZ_{i}) - h_{1,n}(\bZ_{i})),\ k=2,\dots,n.
\end{align*}
It follows that $\sigma_{j,n}^{2} - E_{\bZ}[\sigma_{j,n}^{2}] = D_{1,n} + D_{2,n} + D_{3,n}$. Using (\ref{r-m-bound1}) and (\ref{r-m-bound2}), we have
\begin{align}
E_{\bZ}[D_{1,n}^{r}] &\lesssim \Sigma_{j,j}^{r}(\bm{0})n^{r/2}, \label{r-m-D1}\\
E_{\bZ}\left[D_{2,n}^{r}\right] &\lesssim \left\{\sum_{k=1}^{n-1}(n-k)^{2}E_{\bZ}\left[h_{1,n}^{2}(\bZ_{1})\right]\right\}^{r/2} \nonumber  \\
& \lesssim \|f\|_{\infty}^{r/2}\left(\int |\Sigma_{j,j}(\bs)|^{2}d\bs\right)^{r/2}n^{3r/2}\lambda_{n}^{-rd},\label{r-m-D2}.
\end{align}
Since 
\[
E_{\bZ}[U_{k} \mid \bZ_{1},\hdots, \bZ_{k-1}] = 0, \quad k=2,\dots,n,
\]
the sequence $\left\{\sum_{j=1}^{i}U_{j}, \mathcal{F}_{i}^{\bZ}\right\}_{i=1}^{n}$ is a martingale, where $\mathcal{F}_{i}^{\bZ} = \sigma\left(\bZ_{1},\hdots, \bZ_{i}\right)$ for $i=1,\dots,n$. Thus, applying Rosenthal's inequality (Theorem 2.12 in \cite{HaHe80}) 
and noting that  $U_{i}$ is the sum of $(i-1)$ i.i.d. random variables conditionally on $\bZ_{i}$, 
we have
\begin{align*}
E_{\bZ}[D_{3,n}^{r}] &\lesssim \left\{E_{\bZ}\left[\left(\sum_{k=2}^{n}E_{\bZ}[U_{k}^{2}\mid\mathcal{F}_{k-1}^{\bZ}]\right)^{r/2}\right] + \sum_{k=2}^{n}E_{\bZ}[U_{k}^{r}]\right\}.
\end{align*}
Observe that
\begin{align}
E_{\bZ}\left[\left(\sum_{k=2}^{n}E_{\bZ}[U_{k}^{2}\mid\mathcal{F}_{k-1}^{\bZ}]\right)^{r/2}\right] &\leq E_{\bZ}\left[n^{r/2-1}\sum_{k=2}^{n}\left(E_{\bZ}[U_{k}^{2}\mid\mathcal{F}_{k-1}^{\bZ}]\right)^{r/2}\right] \nonumber \\
&\leq n^{r/2-1}E_{\bZ}\left[\sum_{k=2}^{n}\left(E_{\bZ}[U_{k}^{r}\mid\mathcal{F}_{k-1}^{\bZ}]\right)\right] \nonumber  \\
&= n^{r/2-1}\sum_{k=2}^{n}E_{\bZ}[U_{k}^{r}]. \label{r-m-bound12-D3}
\end{align}
Moreover, 
\begin{align}
&\sum_{k=2}^{n}E_{\bZ}[U_{k}^{r}] \lesssim \sum_{k=2}^{n}E_{\bZ}\left[E_{\bZ}\left[(U_{k} - E_{\bZ}[U_{k} \mid \bZ_{k}])^{r}\mid\bZ_{k}\right] + \left(E_{\bZ}[U_{k}\mid\bZ_{k}]\right)^{r}\right] \nonumber \\
& \lesssim \sum_{k=2}^{n}E_{\bZ}\left[\left\{(k-1)E_{\bZ}\left[(h_{n}(\bZ_{k}, \bZ_{1}) - h_{1,n}(\bZ_{1}))^{2} \mid \bZ_{k}\right]\right\}^{r/2} \right. \nonumber \\ 
&\left. \quad \quad + \left((k-1)(h_{1,n}(\bZ_{k}) - E_{\bZ}[h_{1,n}(\bZ_{1})])\right)^{r}\right] \nonumber \\
&\lesssim \left\{\|f\|_{\infty}^{r/2}\left(\int |\Sigma_{j,j}(\bs)|^{2}d\bs\right)^{r/2}\lambda_{n}^{-rd/2}\sum_{k=2}^{n}(k-1)^{r/2} \right. \nonumber \\ 
& \left. \qquad \qquad  + \|f\|_{\infty}^{r}\left(\int |\Sigma_{j,j}(\bs)|d\bs\right)^{r}\lambda_{n}^{-rd}\sum_{k=2}^{n}(k-1)^{r}\right\} \nonumber \\
&\lesssim \left\{\|f\|_{\infty}^{r/2}\left(\int |\Sigma_{j,j}(\bs)|^{2}d\bs\right)^{r/2}\lambda_{n}^{-rd/2}n^{r/2+1} + \|f\|_{\infty}^{r}\left(\int |\Sigma_{j,j}(\bs)|d\bs\right)^{r}\lambda_{n}^{-rd}n^{r+1}\right\}. \label{r-m-bound2-D3}
\end{align}
Combining (\ref{r-m-bound12-D3}) and (\ref{r-m-bound2-D3}), we have 
\begin{align}
&E_{\bZ}[D_{3,n}^{r}] \nonumber \\ 
&\quad \lesssim \left(\|f\|_{\infty}^{r/2}\left(\int |\Sigma_{j,j}(\bs)|^{2}d\bs\right)^{r/2} + \|f\|_{\infty}^{r}\left(\int |\Sigma_{j,j}(\bs)|d\bs\right)^{r}\right)\left(n^{r}\lambda_{n}^{-rd/2} + n^{3r/2}\lambda_{n}^{-rd}\right).\label{r-m-D3}
\end{align}

Now, from (\ref{r-m-D1}), (\ref{r-m-D2}), and (\ref{r-m-D3}), we have 
\begin{align}
\label{r-sig- BC-bound}
&{E_{\bZ}\left[(\sigma_{j,n}^{2} - E_{\bZ}[\sigma_{j,n}^{2}])^{r}\right] \over (n^{2}\lambda_{n}^{-d})^{r}} \nonumber \\ 
&\lesssim \left(\Sigma_{j,j}^{r}(\bm{0}) + \|f\|_{\infty}^{r/2}\left(\int |\Sigma_{j,j}(\bs)|^{2}d\bs\right)^{r/2} + \|f\|_{\infty}^{r}\left(\int |\Sigma_{j,j}(\bs)|d\bs\right)^{r}\right){n^{r/2} + n^{r}\lambda_{n}^{-rd/2} + n^{3r/2}\lambda_{n}^{-rd} \over n^{2r}\lambda_{n}^{-rd}} \nonumber \\
&= \left(\Sigma_{j,j}^{r}(\bm{0}) + \|f\|_{\infty}^{r/2}\left(\int |\Sigma_{j,j}(\bs)|^{2}d\bs\right)^{r/2} + \|f\|_{\infty}^{r}\left(\int |\Sigma_{j,j}(\bs)|d\bs\right)^{r}\right) \nonumber  \\
&\quad \times \left({1 \over n^{r/2}(n\lambda_{n}^{-d})^{r}} + {1 \over n^{r/2}(n\lambda_{n}^{-d})^{r/2}}+ {1 \over n^{r/2}}\right). 
\end{align}
Set $r$ sufficiently large such that $r/2 - \alpha- 3/2 >0$. Since $\|f\|_{\infty}<\infty$ from Condition (ii) in Assumption \ref{ass: design} and 
\[
\max_{1 \leq j \leq p} \left [ \Sigma_{j,j}(\bm{0}) \bigvee \int \left(|\Sigma_{j,j}(\bs)| + |\Sigma_{j,j}(\bs)|^{2}\right)d\bs \right]= O(1)\ 
\]
from Condition (\ref{HDGA_cov_ass}) in Assumption \ref{ass: design}, the estimate (\ref{r-sig- BC-bound}) yields that 
\begin{align*}
&\sum_{n=1}^{\infty}P_{\bZ}\left(\left\|\bm{\sigma}_{n}^{2} - E_{\bZ}[\bm{\sigma}_{n}^{2}]\right\|_{\infty} > n^{2-{1 \over 2r}}\lambda_{n}^{-d}\right) \\
&\leq \sum_{n=1}^{\infty}\sum_{j=1}^{p}P_{\bZ}\left(\left|\sigma_{j,n}^{2} - E_{\bZ}[\sigma_{j,n}^{2}]\right| > n^{2-{1 \over 2r}}\lambda_{n}^{-d}\right) \\
&\leq \sum_{n=1}^{\infty}\sum_{j=1}^{p}{E_{\bZ}\left[(\sigma_{j,n}^{2} - E_{\bZ}[\sigma_{j,n}^{2}])^{r}\right] \over (n^{2}\lambda_{n}^{-d})^{r}}n^{1/2}\\
&\lesssim \max_{1 \leq j \leq p}\left(\Sigma_{j,j}^{r}(\bm{0}) + \|f\|_{\infty}^{r/2}\left(\int |\Sigma_{j,j}(\bs)|^{2}d\bs\right)^{r/2} + \|f\|_{\infty}^{r}\left(\int |\Sigma_{j,j}(\bs)|d\bs\right)^{r}\right)\sum_{n=1}^{\infty}{pn^{1/2} \over n^{r/2}} \\
&\lesssim \sum_{n=1}^{\infty}{1 \over n^{r/2-\alpha-1/2}} =  \sum_{n=1}^{\infty}{1 \over n^{1 + (r/2-\alpha-3/2)}}<\infty.
\end{align*}
Combining the Borel-Cantelli lemma, we have 
\begin{align*}
\left\|\bm{\sigma}_{n}^{2} - E_{\bZ}[\bm{\sigma}_{n}^{2}]\right\|_{\infty} = O\left(n^{2-{1 \over 2r}}\lambda_{n}^{-d}\right)\quad \text{$P_{\bZ}$-a.s.}
\end{align*}
for any $r/2 - \alpha- 3/2 >0$. This completes the proof. 
\end{proof}

\subsection{Proof of Theorem \ref{Thm: GA_hyperrec}}\label{pf-Thm4-1}

The proof of the theorem relies on the blocking argument for $\beta$-mixing sequences; cf. \cite{Yu94}. The following lemma plays a crucial role in the blocking argument. 

\begin{lemma}\label{lem: indep_lemma}
Let $m \in \mathbb{N}$ and let $Q$ be a probability measure on a product space $(\prod_{i=1}^{m}\Omega_{i}, \prod_{i=1}^{m}\Sigma_{i})$ with marginal measures $Q_{i}$ on $(\Omega_{i}, \Sigma_{i})$. Suppose that $h$ is a bounded measurable function on the product probability space such that $|h|\leq M_{h}<\infty$.
For $1 \le a \le b \le m$, let $Q_{a}^{b}$  be the marginal measure on $(\prod_{i=a}^{b}\Omega_{i}, \prod_{i=a}^{b}\Sigma_{i})$. For a given $\tau > 0$, suppose that, for all $1 \leq k \leq m-1$, 
\begin{align}\label{prod_bound}
\|Q - Q_{1}^{k}\times Q_{k+1}^{m}\|_{\mathrm{TV}} \leq 2\tau,
\end{align}
where $Q_{1}^{k}\times Q_{k+1}^{m}$ is the product measure and $\| \cdot \|_{\mathrm{TV}}$ is the total variation. Then $| Qh - Ph | \leq 2M_{h}(m-1)\tau$, where $P=\prod_{i=1}^{m}Q_{i}$, $Qh=\int h dQ$ and $Ph=\int h dP$. 
\end{lemma}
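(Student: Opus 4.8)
\emph{Proof plan.} The plan is to prove the lemma by a standard telescoping (hybrid) argument: pass from the joint law $Q$ to the product law $P$ one coordinate at a time, and control each swap by the hypothesis \eqref{prod_bound}. Throughout, $\|\cdot\|_{\mathrm{TV}}$ denotes the total variation norm of a signed measure (so that $\|\mu-\nu\|_{\mathrm{TV}}$ is twice the probabilists' total variation distance, consistent with the factor $2\tau$ in \eqref{prod_bound}), and I will use two elementary facts: (a) for a probability measure $\mu$ and a signed measure $\rho$ one has $\|\mu\otimes\rho\|_{\mathrm{TV}} = \|\rho\|_{\mathrm{TV}}$; (b) marginalization (a pushforward) does not increase $\|\cdot\|_{\mathrm{TV}}$; and (c) $|\int h\, d\rho| \le M_h\|\rho\|_{\mathrm{TV}}$ whenever $|h|\le M_h$. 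The case $m=1$ is trivial since then $Q=P=Q_1$, so assume $m\ge 2$.

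First I would introduce the hybrid measures on $(\prod_{i=1}^m\Omega_i,\prod_{i=1}^m\Sigma_i)$: for $0\le j\le m-1$ set
\[
\tilde Q_j \;=\; \Big(\prod_{i=1}^{j}Q_i\Big)\otimes Q_{j+1}^{m},
\]
with the conventions that the empty product is absent when $j=0$ (so $\tilde Q_0 = Q_1^m = Q$) and that $Q_m^m$ is the single-coordinate marginal $Q_m$. Since the marginal of $Q_1^{j}$ on coordinate $j$ is $Q_j$ (consistency of the marginals of $Q$), one has $\tilde Q_{m-1} = \prod_{i=1}^m Q_i = P$. Hence
\[
Qh - Ph \;=\; \sum_{j=1}^{m-1}\big(\tilde Q_{j-1}h - \tilde Q_j h\big),
\]
a sum of exactly $m-1$ terms.

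Next I would bound a single term. For fixed $1\le j\le m-1$, the signed measure $\tilde Q_{j-1}-\tilde Q_j$ factors as $\big(\prod_{i=1}^{j-1}Q_i\big)\otimes\big(Q_j^m - Q_j\otimes Q_{j+1}^m\big)$, where $Q_j^m - Q_j\otimes Q_{j+1}^m$ is a signed measure on $\prod_{i=j}^m\Omega_i$; by fact (a), $\|\tilde Q_{j-1}-\tilde Q_j\|_{\mathrm{TV}} = \|Q_j^m - Q_j\otimes Q_{j+1}^m\|_{\mathrm{TV}}$. To estimate the right side I apply \eqref{prod_bound} with $k=j$, giving $\|Q - Q_1^{j}\otimes Q_{j+1}^m\|_{\mathrm{TV}}\le 2\tau$, and then marginalize both measures onto the coordinates $\{j,j+1,\dots,m\}$: the marginal of $Q$ is $Q_j^m$, the marginal of $Q_1^{j}\otimes Q_{j+1}^m$ is $Q_j\otimes Q_{j+1}^m$, and by fact (b) the distance cannot increase, so $\|Q_j^m - Q_j\otimes Q_{j+1}^m\|_{\mathrm{TV}}\le 2\tau$. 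Combining with fact (c), $|\tilde Q_{j-1}h - \tilde Q_j h|\le M_h\|\tilde Q_{j-1}-\tilde Q_j\|_{\mathrm{TV}}\le 2M_h\tau$, and summing over the $m-1$ terms yields $|Qh-Ph|\le 2M_h(m-1)\tau$.

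None of the steps is difficult; the only points that need care are bookkeeping-type: (i) recognizing that $Q_1^{j}$ inherits $Q_j$ as its last marginal, which is what makes the hybrid chain terminate at $P$ after exactly $m-1$ swaps (and thus produces the factor $m-1$ rather than $m$), and (ii) keeping the normalization of $\|\cdot\|_{\mathrm{TV}}$ fixed so that the per-step loss $2M_h\tau$ matches the $2\tau$ in \eqref{prod_bound}. The analytic content — that tensoring with a probability measure preserves the TV norm and that marginalization is a TV contraction — is elementary.
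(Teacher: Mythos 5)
Your proof is correct. The paper itself gives no argument for this lemma and simply cites Lemma~2 of \cite{Eb84} (see also Corollary~2.7 of \cite{Yu94}); your telescoping/hybrid construction, swapping one coordinate block at a time and controlling each swap via \eqref{prod_bound} after marginalizing onto coordinates $j,\dots,m$, is precisely the standard proof of that cited result, and your bookkeeping (the consistency of marginals that makes the chain end at $P$ after $m-1$ steps, and the normalization of $\|\cdot\|_{\mathrm{TV}}$ that matches the factor $2\tau$) is handled correctly.
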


\begin{proof}[Proof of Lemma \ref{lem: indep_lemma}]
See Lemma 2 in \cite{Eb84}; see also Corollary 2.7 in \cite{Yu94}.
\end{proof}

\begin{proof}[Proof of Theorem \ref{Thm: GA_hyperrec}]

We shall omit ``$P_{\bZ}$-a.s." throughout the proof for the sake of notational simplicity, having in mind that we are picking any realization of  $\{\bZ_{i}\}_{i \geq 1}$ that satisfies the conclusions of Lemmas \ref{n summands} and \ref{variance-rate}. The same comment applies to the other proofs without further mentioning. 
We divide the proof into several steps.

\underline{Step 1} (Reduction to independence). 
Recall the large blocks and small blocks that appeared in Section \ref{sec: preliminaries}, and  $S_{n}(\bm{\ell};\bm{\epsilon}) = \sum_{i:\bs_{i}\in \Gamma_{n}(\bm{\ell};\bm{\epsilon}) \cap R_{n}}\bX(\bs_{i})$.
For each $\bm{\epsilon} \in \{1,2\}^{d}$, let $\{\breve{S}_{n}(\bm{\ell};\bm{\epsilon}): \bm{\ell} \in L_{n} \}$ be a sequence of independent random vectors in $\R^{p}$ under $P_{\cdot \mid \bZ}$ such that 
\[
\breve{S}_{n}(\bm{\ell};\bm{\epsilon}) \stackrel{d}{=} S_{n}(\bm{\ell};\bm{\epsilon}), \ \text{under $P_{\cdot \mid \bZ}$}, \  \bm{\ell} \in L_{n}. 
\]
Define $\breve{S}_{1,n} = \sum_{\bm{\ell} \in L_{n}}\breve{S}_{n}(\bm{\ell};\bm{\epsilon}_{0}) = (\breve{S}^{(1)}_{1,n},\dots,\breve{S}^{(p)}_{1,n})'$ and for $\bm{\epsilon} \neq \bm{\epsilon}_{0}$, define $\breve{S}_{2,n}(\bm{\epsilon}) = \sum_{\bm{\ell}\in L_{1,n}}\breve{S}_{n}(\bm{\ell};\bm{\epsilon})$ and $\breve{S}_{3,n}(\bm{\epsilon}) = \sum_{\bm{\ell}\in L_{2,n}}\breve{S}_{n}(\bm{\ell};\bm{\epsilon})$. 

We begin with verifying the following results:
\begin{align}
&\sup_{A \in \mathcal{A}}\left|P_{\cdot \mid \bZ}\left(S_{1,n} \in A \right) - P_{\cdot \mid \bZ}\left(\breve{S}_{1,n} \in A\right)\right| \leq C\left({\lambda_{n} \over \lambda_{1,n}}\right)^{d}\beta(\lambda_{2,n}; \lambda_{n}^{d}),  \label{indep_block1} \\
&\sup_{t >0}\left|P_{\cdot \mid \bZ}\left(\|S_{2,n}(\bm{\epsilon})\|_{\infty} >t\right) - P_{\cdot \mid \bZ}\left(\|\breve{S}_{2,n}(\bm{\epsilon})\|_{\infty} >t\right)\right| \leq C\left({\lambda_{n} \over \lambda_{1,n}}\right)^{d}\beta(\lambda_{2,n}; \lambda_{n}^{d}) ,  \label{indep_block2} \\
&\sup_{t >0}\left|P_{\cdot \mid \bZ}\left(\|S_{3,n}(\bm{\epsilon})\|_{\infty} >t\right) - P_{\cdot \mid \bZ}\left(\|\breve{S}_{3,n}(\bm{\epsilon})\|_{\infty} >t\right)\right| \leq C\left({\lambda_{n} \over \lambda_{1,n}}\right)^{d-1}\!\!\!\! \beta(\lambda_{2,n}; \lambda_{n}^{d}).  \label{indep_block3}
\end{align}

Recall that $[\![L_{n}]\!] = O((\lambda_{n}/\lambda_{3,n})^{d}) \lesssim (\lambda_{n}/\lambda_{1,n})^{d}$. For $\bm{\epsilon}\in \{1,2\}^{d}$ and $\bm{\ell}_{1}, \bm{\ell}_{2} \in L_{n}$ with $\bm{\ell}_{1} \neq \bm{\ell}_{2}$, let 
\begin{align*}
\mathcal{J}_{1}(\bm{\epsilon}) &= \{1 \leq i_{1} \leq n: \bs_{i_{1}} \in \Gamma_{n}(\bm{\ell}_{1};\bm{\epsilon})\},\ \mathcal{J}_{2}(\bm{\epsilon}) = \{1 \leq i_{2} \leq n: \bs_{i_{2}} \in \Gamma_{n}(\bm{\ell}_{2};\bm{\epsilon})\}. 
\end{align*} 
For any $\bs_{i_k}=(s_{1,i_k},\dots,s_{d,i_k})$, $k=1,2$ such that $i_{1} \in \mathcal{J}_{1}(\bm{\epsilon})$ and $i_{2} \in \mathcal{J}_{2}(\bm{\epsilon})$, we have $\max_{1 \leq u \leq d}|s_{u,i_{1}} - s_{u,i_{2}}| \geq \lambda_{2,n}$ from the definition of $\Gamma(\bm{\ell};\bm{\epsilon})$. This implies $|\bs_{i_1}-\bs_{i_2}|\geq \lambda_{2,n}$.

For any $\bm{\epsilon} \in \{1,2\}^{d}$, let $S_{n}(\bm{\ell}_{1};\bm{\epsilon}), \dots, S_{n}(\bm{\ell}_{[\![L_{n}]\!]};\bm{\epsilon})$ be an arrangement of $\{S_{n}(\bm{\ell};\bm{\epsilon}): \bm{\ell} \in L_{n}\}$.
Let $P_{\cdot \mid \bZ}^{(a)}$ be the marginal distribution of $S_{n}(\bm{\ell}_{a};\bm{\epsilon})$ and let $P_{\cdot \mid \bZ}^{(a:b)}$ be the joint distribution of $\{S_{n}(\bm{\ell}_{k};\bm{\epsilon}): a \leq k \leq b\}$. The $\beta$-mixing property of $\bm{X}$ implies that for $1 \leq k \leq [\![L_{n}]\!]-1\!$,
\[
\left \|P_{\cdot \mid \bZ} - P_{\cdot \mid \bZ}^{(1:k)} \times P_{\cdot \mid \bZ}^{(k+1:[\![L_{n}]\!])} \right\|_{\mathrm{TV}} \lesssim \beta(\lambda_{2,n};\lambda_{n}^{d}).
\]
The inequality does not depend on the arrangement of $\{S_{n}(\bm{\ell};\bm{\epsilon}): \bm{\ell} \in L_{n}\}$. 
Hence, the assumption (\ref{prod_bound})  in Lemma \ref{lem: indep_lemma} is satisfied for $\{S_{n}(\bm{\ell};\bm{\epsilon}): \bm{\ell} \in L_{n}\}$  with $\tau \sim \beta(\lambda_{2,n};\lambda_{n}^{d})$ and $m \lesssim (\lambda_{n}/\lambda_{1,n})^{d}$. 
Combining Lemma \ref{lem: indep_lemma} and the boundary condition on $R_{n}$, we obtain (\ref{indep_block1})-(\ref{indep_block3}).

Pick any rectangle $A = \prod_{j=1}^{p}[a_{j},b_{j}]$ with $a = (a_{1},\dots,a_{p})'$ and $b = (b_{1},\dots,b_{p})'$.
For every $t>0$, (\ref{indep_block1})-(\ref{indep_block3}) yield that 
\begin{align}
&P_{\cdot \mid \bZ}\left(r_{n}^{-1/2}S_n \in A\right) = P_{\cdot \mid \bZ}\left(\left\{-r_{n}^{-1/2}S_{n} \leq -a\right\}\cap \left\{r_{n}^{-1/2}S_{n} \leq b\right\}\right) \nonumber \\
&\leq P_{\cdot \mid \bZ}\! \left(\left\{\! -r_{n}^{-1/2}S_n \leq -a \! \right\}\! \cap \! \left\{\! r_{n}^{-1/2}S_n \leq b\! \right\} \! \cap \! \left\{\!  r_{n}^{-1/2}\left\| {\textstyle \sum}_{\bm{\epsilon} \neq \bm{\epsilon}_{0}}S_{2,n}(\bm{\epsilon}) + S_{3,n}(\bm{\epsilon}) \right\|_{\infty} \leq 2(2^{d}-1)t \! \right\}\! \right) \nonumber  \\
&\quad + \sum_{\bm{\epsilon} \neq \bm{\epsilon}_{0}}P_{\cdot \mid \bZ}\left(r_{n}^{-1/2}\left\|S_{2,n}(\bm{\epsilon})\right\|_{\infty}> t\right) +  \sum_{\bm{\epsilon} \neq \bm{\epsilon}_{0}}P_{\cdot \mid \bZ}\left(r_{n}^{-1/2}\left\|S_{3,n}(\bm{\epsilon})\right\|_{\infty} > t\right) \nonumber \\
&\leq P_{\cdot \mid \bZ}\! \left(\! \left\{-r_{n}^{-1/2}S_{1,n} \leq -a + 2(2^{d}-1)t\right\}\cap \left\{r_{n}^{-1/2}S_{1,n} \leq b + 2(2^{d}-1)t\right\}\! \right) \! + \! 2C \! \left({\lambda_{n} \over \lambda_{1,n}}\right)^{d} \!\! \beta(\lambda_{2,n}; \lambda_{n}^{d}) \nonumber \\
&\quad + \sum_{\bm{\epsilon} \neq \bm{\epsilon}_{0}}P_{\cdot \mid \bZ}\left(r_{n}^{-1/2}\left\|\breve{S}_{2,n}(\bm{\epsilon}) \right\|_{\infty}>t\right) + \sum_{\bm{\epsilon} \neq \bm{\epsilon}_{0}}P_{\cdot \mid \bZ}\left(r_{n}^{-1/2}\left\|\breve{S}_{3,n}(\bm{\epsilon})\right\|_{\infty} > t\right) \nonumber \\
&\leq P_{\cdot \mid \bZ}\! \left( \! \left\{-r_{n}^{-1/2}\breve{S}_{1,n} \leq -a + 2(2^{d}-1)t\right\}\cap \left\{r_{n}^{-1/2}\breve{S}_{1,n} \leq b + 2(2^{d}-1)t\right\}\! \right) \! + 3C \! \left({\lambda_{n} \over \lambda_{1,n}}\right)^{d} \!\! \beta(\lambda_{2,n}; \lambda_{n}^{d}) \nonumber \\
&\quad + \sum_{\bm{\epsilon} \neq \bm{\epsilon}_{0}}P_{\cdot \mid \bZ}\left(r_{n}^{-1/2}\left\|\breve{S}_{2,n}(\bm{\epsilon}) \right\|_{\infty}>t\right) + \sum_{\bm{\epsilon} \neq \bm{\epsilon}_{0}}P_{\cdot \mid \bZ}\left(r_{n}^{-1/2}\left\|\breve{S}_{3,n}(\bm{\epsilon})\right\|_{\infty} > t\right) \label{asy_negligible} \\
&=: I + II + III + IV. \nonumber
\end{align}

Next, we will show below the asymptotic negligibility of $III$ and $IV$, i.e.,  
\begin{align}\label{asy-neg-S2-S3}
P_{\cdot \mid \bZ}\left(r_{n}^{-1/2}\left\|\breve{S}_{k,n}(\bm{\epsilon})\right\|_{\infty} > Cn^{-c'/2}\log^{-1/2} p\right) \lesssim n^{-c'/2}.
\end{align} 
for $k= 2,3$ and $\bm{\epsilon} \neq \bm{\epsilon}_{0}$.
Indeed, given (\ref{asy-neg-S2-S3}), (\ref{asy_negligible}) yields that
\begin{align}
P_{\cdot \mid \bZ}\left(r_{n}^{-1/2}S_n  \in A\right) &\leq P_{\cdot \mid \bZ}\! \left( \! \left\{-r_{n}^{-1/2}\breve{S}_{1,n} \leq -a + 2(2^{d}-1) Cn^{-c'/2}\log^{-1/2} p\right\} \right. \nonumber \\
&\left. \qquad \qquad \cap \left\{r_{n}^{-1/2}\breve{S}_{1,n} \leq b + 2(2^{d}-1)Cn^{-c'/2}\log^{-1/2} p\right\}\! \right) \nonumber \\ 
&\quad + 3C \! \left({\lambda_{n} \over \lambda_{1,n}}\right)^{d} \!\! \beta(\lambda_{2,n}; \lambda_{n}^{d}) + O(n^{-c'/2}) \label{S2-S3-asy-neg-ineq}
\end{align}
In Step 2 below, we will show that 
\begin{align}\label{S1-High-d-bound}
\sup_{A \in \mathcal{A}}\left|P_{\cdot \mid \bZ}\!\left(r_{n}^{-1/2}\breve{S}_{1,n}\in A\right)\! -\! P_{\cdot \mid \bZ}\!\left(\bm{V}_{n} \in A\right)\right| = O(n^{-c'/6}).
\end{align}
Together with (\ref{S2-S3-asy-neg-ineq}) and (\ref{S1-High-d-bound}), we have 
\begin{align*}
&P_{\cdot \mid \bZ}\left(r_{n}^{-1/2}S_n \in A\right)\\
&\leq P_{\cdot \mid \bZ}\! \left( \! \left\{\bm{V}_{n} \leq -a + 2(2^{d}-1) Cn^{-c'/2}\log^{-1/2} p\right\}\cap \left\{\bm{V}_{n} \leq b + 2(2^{d}-1)Cn^{-c'/2}\log^{-1/2} p\right\}\! \right) \nonumber \\ 
&\quad + 3C \! \left({\lambda_{n} \over \lambda_{1,n}}\right)^{d} \!\! \beta(\lambda_{2,n}; \lambda_{n}^{d}) + O(n^{-c'/6}).
\end{align*}
Combining Nazarov's inequality (see Lemma \ref{lem: Nazarov} ahead), we have 
\begin{align*}
P_{\cdot \mid \bZ}\left(r_{n}^{-1/2}S_n \in A\right) &\leq P_{\cdot \mid \bZ}\left(\bm{V}_{n} \in A\right) + O(n^{-c'/2}) + 3C\left({\lambda_{n} \over \lambda_{1,n}}\right)^{d}\beta(\lambda_{2,n}; \lambda_{n}^{d}) + O(n^{-c'/6}) \\
&=  P_{\cdot \mid \bZ}\left(\bm{V}_{n} \in A\right) + 3C\left({\lambda_{n} \over \lambda_{1,n}}\right)^{d}\beta(\lambda_{2,n}; \lambda_{n}^{d}) + O(n^{-c'/6}).
\end{align*}
Likewise, we have $P_{\cdot \mid \bZ}\left(r_{n}^{-1/2}S_n \in A\right) \geq P\left(\bm{V}_{n} \in A\right) - 3C\left({\lambda_{n} \over \lambda_{1,n}}\right)^{d}\beta(\lambda_{2,n}; \lambda_{n}^{d}) - O(n^{-c'/6})$.

Thus, (\ref{asy-neg-S2-S3}) implies that $III$ and $IV$ are asymptotically negligible, i.e., they do not contribute to the high-dimensional CLT for $\sqrt{\lambda_{n}^{d}}\overline{\bm{X}}_{n}$.

Now we move on to proving (\ref{asy-neg-S2-S3}). For every $\tau>0$, Markov's inequality yields that with $t = \tau^{-1}E_{\cdot \mid \bZ}[r_{n}^{-1/2}\|\breve{S}_{2,n}(\bm{\epsilon})\|_{\infty}]$, we have $III \leq \tau$. Then, to prove (\ref{asy-neg-S2-S3}), it suffices to bound the magnitude of $E_{\cdot \mid \bZ}[r_{n}^{-1/2}\|\breve{S}_{2,n}(\bm{\epsilon})\|_{\infty}]$. Note that $\breve{S}_{n}(\bm{\ell};\bm{\epsilon}) = (\breve{S}_{n}^{(1)}(\bm{\ell};\bm{\epsilon}),\dots,\breve{S}_{n}^{(p)}(\bm{\ell};\bm{\epsilon}))'$, $\bm{\ell} \in L_{1,n}$ are independent. We will show that 
\begin{align}
E_{\cdot \mid \bZ}\left[\max_{\bm{\ell} \in L_{1,n}}\|\breve{S}_{n}(\bm{\ell};\bm{\epsilon})\|_{\infty}^{2}\right] &\lesssim \lambda_{1,n}^{2d-2}\lambda_{2,n}^{2}n^{2}\lambda_{n}^{-2d}(\log n)^{2}(\log p)^{2}D_{n}^{2}, \label{mean_ineq_S2} \\
\Var_{\cdot \mid \bZ}(\breve{S}_{n}^{(j)}(\bm{\ell};\bm{\epsilon})) & \lesssim \overline{\beta}_{q}\lambda_{1,n}^{d-1}\lambda_{2,n}(\log n + n\lambda_{n}^{-d})^{2}(\log n)^{2} (\log p)^{2} D_{n}^{2} \label{Var_ineq_S2}
\end{align} 
for $\bm{\ell} \in L_{1,n}$, $\bm{\epsilon} \neq \bm{\epsilon}_{0}$, $1 \leq j \leq p$. 

Consider first (\ref{mean_ineq_S2}). Define $M = \max_{\bm{\ell} \in L_{1,n}}\|\breve{S}_{n}(\bm{\ell};\bm{\epsilon})\|_{\infty}$. By Lemma \ref{n summands}, we have 
\begin{align*}
E_{\cdot \mid \bZ}[M^{2}]  &\lesssim (\lambda_{1,n}^{d-1}\lambda_{2,n}n\lambda_{n}^{-d})^{2}E_{\cdot \mid \bZ}\left[\max_{1 \leq i \leq n}\|\bm{X}(\bs_{i})\|_{\infty}^{2}\right] \\
&\leq 2!\lambda_{1,n}^{2d-2}\lambda_{2,n}^{2}n^{2}\lambda_{n}^{-2d}\left\|\max_{1 \leq i \leq n}\|\bm{X}(\bs_{i})\|_{\infty}\right\|_{\psi_{1}}^{2} \\
&\lesssim \lambda_{1,n}^{2d-2}\lambda_{2,n}^{2}n^{2}\lambda_{n}^{-2d}(\log n)^{2}\left\|\|\bm{X}(\bs)\|_{\infty}\right\|_{\psi_{1}}^{2} \\
&\lesssim \lambda_{1,n}^{2d-2}\lambda_{2,n}^{2}n^{2}\lambda_{n}^{-2d}(\log n)^{2}(\log p)^{2}\!\!\max_{1 \leq j \leq p}\left\|X_{j}(\bs)\right\|_{\psi_{1}}^{2} \\
&\leq 2!\lambda_{1,n}^{2d-2}\lambda_{2,n}^{2}n^{2}\lambda_{n}^{-2d}(\log n)^{2}(\log p)^{2}D_{n}^{2},
\end{align*}
where we used an inequality for the Orlicz norm (see \cite{vaWe96}, p. 95) to obtain the second inequality.

Next consider (\ref{Var_ineq_S2}). Let $\mathcal{I}_{n}(\bm{\ell}) = \{\bm{i} \in \mathbb{Z}^{d} : (\bm{i} + (0,1]^{d}) \cap \Gamma_{n}(\bm{\ell};\bm{\epsilon})\}$ and define $H^{(j)}_{n}(\bm{i}) = \sum_{k: \bs_{k} \in (\bm{i}+(0,1]^{d})\cap R_{n}}X_{j}(\bs_{k})$. Then by Lemma \ref{n summands} and the mixing property of $\bX$, we have 
\begin{align*}
&\Var_{\cdot \mid \bZ}(\breve{S}_{n}^{(j)}(\bm{\ell};\bm{\epsilon})) = E_{\cdot \mid \bZ}\left[\left(\sum_{\bm{i} \in \mathcal{I}_{n}(\bm{\ell})}H_{n}^{(j)}(\bm{i})\right)^{2}\right]\\
& = \sum_{\bm{i} \in \mathcal{I}_{n}(\bm{\ell})}\!\!\!E_{\cdot \mid \bZ}\left[\left(H_{n}^{(j)}(\bm{i})\right)^{2}\right] +\!\!\! \sum_{\bm{i}_{1} \neq \bm{i}_{2}, \bm{i}_{1}, \bm{i}_{2} \in \mathcal{I}_{n}(\bm{\ell})}\!\!\!\!\!\!\!\!\!\!\!\!E_{\cdot \mid \bZ}\left[H_{n}^{(j)}(\bm{i}_{1})H_{n}^{(j)}(\bm{i}_{2})\right]\\
& \lesssim \lambda_{1,n}^{d-1}\lambda_{2,n}(\log n + n\lambda_{n}^{-d})^{2}E_{\cdot \mid \bZ}\left[\max_{1 \leq i \leq n}\|\bm{X}(\bs_{i})\|_{\infty}^{2}\right] \\
&\quad  + \sum_{\bm{i}_{1} \neq \bm{i}_{2}, \bm{i}_{1}, \bm{i}_{2} \in \mathcal{I}_{n}(\bm{\ell})}\!\!\!\!\!\!\!\!\!\!\!\!(\log n + n\lambda_{n}^{-d})E_{\cdot \mid \bZ}\!\! \left[\max_{\bs_{i_{k}} \in (\bm{i}_{1} + (0,1]^{d})\cap R_{n}}\|\bm{X}(\bs_{k})\|_{\infty}\max_{\bs_{\ell} \in (\bm{i}_{2} + (0,1]^{d})\cap R_{n}}\|\bm{X}(\bs_{\ell})\|_{\infty}\right]\\
&\lesssim  \lambda_{1,n}^{d-1}\lambda_{2,n}(\log n + n\lambda_{n}^{-d})^{2}\!\! \left(\!\! (\log n)^{2}(\log p)^{2}D_{n}^{2}  + \sum_{k=1}^{\lambda_{1,n}}k^{d-1}\beta^{1-{2 \over q}}(k;2)E_{\cdot \mid \bZ}\!\! \left[\max_{1 \leq i \leq n}\|\bm{X}(\bs_{i})\|_{\infty}^{q}\right]^{{2 \over q}}\right)\\
& \lesssim \overline{\beta}_{q}\lambda_{1,n}^{d-1}\lambda_{2,n}(\log n + n\lambda_{n}^{-d})^{2}(\log n)^{2}(\log p)^{2}D_{n}^{2},
\end{align*}
where the second inequality follows from similar arguments to prove (\ref{mean_ineq_S2}), and  the third inequality follows from applying similar arguments to the proof of (\ref{mean_ineq_S2}) and using the inequality for $\alpha$- (i.e. $\beta$-) mixing sequence (Proposition 2.5 in \cite{FaYa03}). 

Then, combining these estimates with Lemma \ref{lem: maximal inequality} ahead, we have 
\begin{align*}
&E_{\cdot \mid \bZ}\left[r_{n}^{-1/2}\|\breve{S}_{2,n}(\bm{\epsilon})\|_{\infty}\right]\\
&\leq \! K \! \left(\! D_{n}(\log n + n\lambda_{n}^{-d})\sqrt{{\overline{\beta}_{q}\lambda_{1,n}^{d-1}\lambda_{2,n}(\lambda_{n}/\lambda_{1,n})^{d}(\log n)^{2}(\log p)^{3} \over n^{2}\lambda_{n}^{-d}}} + \lambda_{1,n}^{d-1}\lambda_{2,n}\lambda_{n}^{-d/2}(\log n)(\log p)^{2}D_{n} \!\!\right)\\
&= K \left(D_{n}(\log n + n\lambda_{n}^{-d})\sqrt{{\overline{\beta}_{q}\lambda_{2,n} \over n^{2}\lambda_{n}^{-2d}\lambda_{1,n}}}(\log n)(\log p)^{3/2} + \lambda_{1,n}^{d-1}\lambda_{2,n}\lambda_{n}^{-d/2}(\log n)(\log p)^{2}D_{n}\right),
\end{align*}
where $K$ is universal. The left-hand side is bounded by $Cn^{-c'}\log^{-1/2} p$ under Assumption \ref{ass: design}. Thus, taking $\tau = n^{-c'/2}$, we have  $t \leq Cn^{-c'/2}\log^{-1/2}p$, which yields (\ref{asy-neg-S2-S3}) for $k = 2$, $\bm{\epsilon} \neq \bm{\epsilon}_{0}$. Likewise, 
since $\breve{S}_{n}(\bm{\ell};\bm{\epsilon})$, $\bm{\ell} \in L_{2,n}$ are independent for each $\bm{\epsilon} \neq \bm{\epsilon}_{0}$ and 
\begin{align*}
E_{\cdot \mid \bZ}\left[\max_{\bm{\ell} \in L_{2,n}}\|\breve{S}_{n}(\bm{\ell};\bm{\epsilon})\|_{\infty}^{2}\right] \lesssim \lambda_{1,n}^{2d-2}\lambda_{2,n}^{2}n^{2}\lambda_{n}^{-2d}(\log n)^{2}(\log p)^{2}D_{n}^{2},\\ 
\Var_{\cdot \mid \bZ}(\breve{S}_{n}^{(j)}(\bm{\ell};\bm{\epsilon})) \lesssim \overline{\beta}_{q}\lambda_{1,n}^{d-1}\lambda_{2,n}(\log n + n\lambda_{n}^{-d})^{2}(\log n)^{2}(\log p)^{2}D_{n}^{2}
\end{align*}
for $\bm{\ell} \in L_{2,n}$, $\bm{\epsilon} \neq \bm{\epsilon}_{0}$, $1 \leq j \leq p$, Lemma \ref{lem: maximal inequality} ahead and the boundary condition on $R_{n}$ yield that 
\begin{align*}
&E_{\cdot \mid \bZ}\left[r_{n}^{-1/2}\|\breve{S}_{3,n}(\bm{\epsilon})\|_{\infty}\right]\\
&\leq \! K \!\! \left( \!\! D_{n}(\log n + n\lambda_{n}^{-d})\sqrt{{\overline{\beta}_{q}\lambda_{1,n}^{d-1}\lambda_{2,n}(\lambda_{n}/\lambda_{1,n})^{d-1}(\log n)^{2}(\log p)^{3} \over n^{2}\lambda_{n}^{-d}}} + \lambda_{1,n}^{d-1}\lambda_{2,n}\lambda_{n}^{-d/2}(\log n)(\log p)^{2}D_{n}\!\! \right)\\
&= K \left(D_{n}(\log n + n\lambda_{n}^{-d})\sqrt{{\overline{\beta}_{q}\lambda_{2,n} \over n^{2}\lambda_{n}^{-2d+1}}}(\log n)(\log p)^{3/2} + \lambda_{1,n}^{d-1}\lambda_{2,n}\lambda_{n}^{-d/2}(\log n)(\log p)^{2}D_{n}\right),
\end{align*}
where $K$ is universal. Then, under Assumption \ref{ass: design}, the left-hand side is bounded by $Cn^{-c'}\log^{-1/2} p$, which yields (\ref{asy-neg-S2-S3}) for $k=3$, $\bm{\epsilon} \neq \bm{\epsilon}_{0}$. Conclude that $IV \lesssim n^{-c'/2}$ for some $t \leq Cn^{-c'/2}\log^{-1/2}p$. 

\medskip

\underline{Step 2} (High-dimensional CLT applied to the sum of independent blocks).
Recall that $\bm{V}_{n} = (V_{1,n},\dots, V_{p,n})'$ be a centered Gaussian random vector under $P_{\cdot \mid \bZ}$ with covariance 
\[
E_{\cdot \mid \bZ}\left[\bm{V}_{n}\bm{V}'_{n}\right] = {1 \over n^{2}\lambda_{n}^{-d}}\sum_{\bm{\ell} \in L_{n}}E_{\cdot \mid \bZ}\left[S_{n}(\bm{\ell};\bm{\epsilon}_{0})S_{n}(\bm{\ell};\bm{\epsilon}_{0})'\right].
\]
We wish to show that
\begin{align*}
\sup_{A \in \mathcal{A}}\left|P_{\cdot \mid \bZ}\!\left(r_{n}^{-1/2}\sum_{\bm{\ell} \in L_{n}}\!\!\breve{S}_{n}(\bm{\ell};\bm{\epsilon}_{0}) \in A\right)\! -\! P_{\cdot \mid \bZ}\!\left(\bm{V}_{n} \in A\right)\right| = O(n^{-c'/6}).
\end{align*}
Since $\breve{S}_{n}(\bm{\ell};\bm{\epsilon})$, $\bm{\ell} \in L_{n}$ are independent, we may apply Proposition 2.1 in \cite{ChChKa17}. We wish to verify Conditions (M.1), (M.2) and (E.2) of the proposition to this case. We first verify Condition (M.1).  Lemma \ref{variance-rate} shows that there exist constants $0<c_{1} \leq c_{2}<\infty$ such that 
\begin{align}\label{bound-S-block}
c_{1} \leq {1 \over [\![L_{n}]\!]}\sum_{\bm{\ell} \in L_{n}}\Var_{\cdot \mid \bZ}\left(\breve{S}_{n}^{(j)}(\bm{\ell};\bm{\epsilon}_{0})/\sqrt{n^{2}\lambda_{n}^{-d}[\![L_{n}]\!]^{-1}}\right) \leq c_{2}
\end{align}
uniformly over $1 \leq j \leq p$. This implies Condition (M.1) of Proposition 2.1 in \cite{ChChKa17}. 

Next we verify Conditions (M.2) and (E.2). Recall that $[\![L_{n}]\!] \sim (\lambda_{n}/\lambda_{1,n})^{d}$. We  have that  for $k=3,4$,
\begin{align*}
&E_{\cdot \mid \bZ}\left[\left\|\breve{S}_{n}(\bm{\ell};\bm{\epsilon}_{0})/\sqrt{r_{n}[\![L_{n}]\!]^{-1}}\right\|_{\infty}^{k}\right] \\
&\leq (n\lambda_{n}^{-d}\lambda_{1,n}^{d})^{k}E_{\cdot \mid \bZ}\left[\max_{1 \leq i \leq n}\|\bm{X}(\bs_{i})\|_{\infty}^{k}\right]/(n\lambda_{n}^{-d}\lambda_{1,n}^{d/2})^{k}\\
&\lesssim  (n\lambda_{n}^{-d}\lambda_{1,n}^{d})^{k}\left((\log n)(\log p)\max_{1 \leq i \leq n}\max_{1\leq j \leq p}\left\|X_{j}(\bs_{i})\right\|_{\psi_{1}}\right)^{k}/(n\lambda_{n}^{-d}\lambda_{1,n}^{d/2})^{k}\\
& \leq (n\lambda_{n}^{-d}\lambda_{1,n}^{d})^{k}(\log n)^{k}(\log p)^{k}D_{n}^{k}/(n\lambda_{n}^{-d}\lambda_{1,n}^{d/2})^{k} = \lambda_{1,n}^{kd/2}(\log n)^{k}(\log p)^{k}D_{n}^{k}.
\end{align*}
Thus, Conditions (M.2) and (E.2) with $q = 4$ of Proposition 2.1 in \cite{ChChKa17} are verified with $B_{n} = \lambda_{1,n}^{3d/2}(\log n)^{3}(\log p)^{3}D_{n}^{3}$ in their notation, which leads to the assertion of this step. 

\medskip

\underline{Step 3} (Conclusion). 
Pick any rectangle $A = \prod_{j=1}^{p}[a_{j},b_{j}]$ with $a = (a_{1},\dots, a_{p})'$ and $b = (b_{1},\dots, b_{p})'$. Applying similar arguments to Step 1, we have 
\begin{align*}
&P_{\cdot \mid \bZ}\left(\sqrt{\lambda_{n}^{d}}\overline{\bY}_{n}\in A\right) = P_{\cdot \mid \bZ}\left(\left\{-\sqrt{\lambda_{n}^{d}}\overline{\bY}_{n} \leq -a\right\} \cap \left\{\sqrt{\lambda_{n}^{d}}\overline{\bY}_{n} \leq b \right\}  \right)\\  
&\leq P_{\cdot \mid \bZ}\left(\left\{-\sqrt{\lambda_{n}^{d}}\overline{\bY}_{n} \leq -a\right\} \cap \left\{\sqrt{\lambda_{n}^{d}}\overline{\bY}_{n} \leq b \right\} \cap \left\{r_{n}^{-1/2}\left\|\sum_{i=1}^{n}\bm{\Upsilon}(\bs_{i})\right\|_{\infty} \leq n^{-\zeta}(\log p)^{-1/2}\right\} \right) \\
&\quad + P_{\cdot \mid \bZ}\left(r_{n}^{-1/2}\left\|\sum_{i=1}^{n}\bm{\Upsilon}(\bs_{i})\right\|_{\infty} > n^{-\zeta}(\log p)^{-1/2}\right)\\
&\leq P_{\cdot \mid \bZ}\left(\left\{-r_{n}^{-1/2}S_{n} \leq -a + n^{-\zeta}(\log p)^{-1/2}\right\} \cap \left\{r_{n}^{-1/2}S_{n}  \leq b + n^{-\zeta}(\log p)^{-1/2}\right\}\right) + O(n^{-\zeta})\\
&\leq P_{\cdot \mid \bZ}\!\left(\{-\bm{V}_{n} \leq -a + 2n^{-({c' \over 2}\wedge \zeta)}\log^{-1/2}p\}\cap \{\bm{V}_{n} \leq b + 2n^{-({c' \over 2} \wedge \zeta)}\log^{-1/2}p\}\right)\\
&\quad +  3C\left({\lambda_{n} \over \lambda_{1,n}}\right)^{d}\beta(\lambda_{2,n}; \lambda_{n}^{d}) + O(n^{-c'/6}) + O(n^{-\zeta}).
\end{align*}
Combining  Nazarov's inequality (see Lemma \ref{lem: Nazarov} ahead), we have 
\begin{align*}
&P_{\cdot \mid \bZ}\left(\sqrt{\lambda_{n}^{d}}\overline{\bY}_{n}\in A\right)\\
&\leq P_{\cdot \mid \bZ}\left(\bm{V}_{n} \in A\right) + O(n^{-({c' \over 2} \wedge \zeta)}) + 3C\left({\lambda_{n} \over \lambda_{1,n}}\right)^{d}\beta(\lambda_{2,n}; \lambda_{n}^{d}) + O(n^{-c'/6}) + O(n^{-\zeta})\\
&=  P_{\cdot \mid \bZ}\left(\bm{V}_{n} \in A\right) + 3C\left({\lambda_{n} \over \lambda_{1,n}}\right)^{d}\beta(\lambda_{2,n}; \lambda_{n}^{d}) + O(n^{-({c' \over 6}\wedge \zeta)}).
\end{align*}
Likewise, we have
\begin{align*}
P_{\cdot \mid \bZ}\left(\sqrt{\lambda_{n}^{d}}\overline{\bY}_{n} \in A\right) &\geq P\left(\bm{V}_{n} \in A\right) - 3C\left({\lambda_{n} \over \lambda_{1,n}}\right)^{d}\beta(\lambda_{2,n}; \lambda_{n}^{d}) - O(n^{-({c'\over 6}\wedge \zeta)}).
\end{align*}
Conclude that
\begin{align*}
\sup_{A \in \mathcal{A}}\left|P_{\cdot \mid \bZ}\left(\sqrt{\lambda_{n}^{d}}\overline{\bY}_{n} \in A\right) - P_{\cdot \mid \bZ}\left(\bm{V}_{n} \in A\right)\right| 
&\leq 3C\left({\lambda_{n} \over \lambda_{1,n}}\right)^{d}\beta(\lambda_{2,n}; \lambda_{n}^{d}) + O(n^{-({c'\over 6}\wedge \zeta)}).
\end{align*}
This completes the overall proof.
\end{proof}

\subsection{Proof of Theorem \ref{Thm: GA_hyperrec-m}}\label{pf-Thm4-2}

The proof is similar to that of Theorem \ref{Thm: GA_hyperrec}, so we only point out required modifications in each step. 

\underline{Step 1} (Reduction to independence). 
By Lemma 2.2.2 in \cite{vaWe96}, for $\bm{\epsilon} \neq \bm{\epsilon}_{0}$, we have
\begin{align*}
E_{\cdot \mid \bZ}\left[\max_{\bm{\ell} \in L_{1,n}}\|\breve{S}_{n}(\bm{\ell};\bm{\epsilon})\|_{\infty}^{2}\right] &\lesssim [\![L_{1,n}]\!]^{1/2}\max_{\ell \in L_{1,n}}\left(E_{\cdot \mid \bZ}\left[\|\breve{S}_{n}(\bm{\ell};\bm{\epsilon})\|_{\infty}^{4}\right]\right)^{1/2}\\
&\lesssim (\lambda_{n}^{d}\lambda_{1,n}^{-d})^{1/2}\max_{\ell \in L_{1,n}}\left(E_{\cdot \mid \bZ}\left[\|S_{n}(\bm{\ell};\bm{\epsilon})\|_{\infty}^{4}\right]\right)^{1/2}.
\end{align*}
Note that 
\begin{align*}
\|S_{n}(\bm{\ell};\bm{\epsilon})\|_{\infty}^{4} &\leq \sum_{\bs_{i_{1}},\bs_{i_{2}},\bs_{i_{3}},\bs_{i_{4}} \in \Gamma(\bm{\ell};\bm{\epsilon})\cap R_{n}, \bm{\epsilon} \neq \bm{\epsilon}_{0}}\prod_{k=1}^{4}\|\bm{X}(\bs_{i_{k}})\|_{\infty}.
\end{align*}
Then we have $E_{\cdot \mid \bZ}\left[\|S_{n}(\bm{\ell};\bm{\epsilon})\|_{\infty}^{4}\right] \lesssim \lambda_{1,n}^{4(d-1)}\lambda_{2,n}^{4}n^{4}\lambda_{n}^{-4d}M_{n}^{4}$. This yields that 
\begin{align}
E_{\cdot \mid \bZ}\left[\max_{\bm{\ell} \in L_{1,n}}\|\breve{S}_{n}(\bm{\ell};\bm{\epsilon})\|_{\infty}^{2}\right] &\lesssim \lambda_{n}^{d/2}\lambda_{1,n}^{-d/2}\lambda_{1,n}^{2d-2}\lambda_{2,n}^{2}n^{2}\lambda_{n}^{-2d}M_{n}^{2} \nonumber \\
&= \lambda_{1,n}^{3d/2}(\lambda_{2,n}/\lambda_{1,n})^{2}\lambda_{n}^{-3d/2}n^{2}M_{n}^{2}. \label{moment-bound1}
\end{align}
Similar arguments to those used in the proof of (\ref{Var_ineq_S2}) yield that when $\bm{\epsilon} \neq \bm{\epsilon}_{0}$,
\begin{align}
\Var_{\cdot \mid \bZ}(\breve{S}_{n}^{(j)}(\bm{\ell};\bm{\epsilon})) \lesssim \overline{\beta}_{q}\lambda_{1,n}^{d-1}\lambda_{2,n}(\log n + n\lambda_{n}^{-d})^{2}, \label{moment-bound2}
\end{align}
where $\overline{\beta}_{q} =  1 + \sum_{k=1}^{\lambda_{1,n}}k^{d-1}\beta_{1}^{1-2/q}(k)$. Thus, from (\ref{moment-bound1}), (\ref{moment-bound2}) and Lemma \ref{lem: maximal inequality} ahead, we have 
\begin{align*}
&E_{\cdot \mid \bZ}\left[r_{n}^{-1/2}\|\breve{S}_{2,n}(\bm{\epsilon})\|_{\infty}\right]\\
&\lesssim \left((\log n + n\lambda_{n}^{-d})\sqrt{{\overline{\beta}_{q}\lambda_{1,n}^{d-1}\lambda_{2,n}(\lambda_{n}/\lambda_{1,n})^{d}\log p \over n^{2}\lambda_{n}^{-d}}} + \lambda_{1,n}^{3d/4-1}\lambda_{2,n}\lambda_{n}^{-d/4}M_{n}\log p\right)\\
&= \left((\log n + n\lambda_{n}^{-d})\sqrt{{\overline{\beta}_{q}\lambda_{2,n}\log p \over n^{2}\lambda_{n}^{-2d}\lambda_{1,n}}} + \lambda_{1,n}^{3d/4-1}\lambda_{2,n}\lambda_{n}^{-d/4}M_{n}\log p\right) \\
&\lesssim n^{-c'}\log^{-1/2} p.
\end{align*}
This implies that $S_{2,n}(\bm{\epsilon})$ and $S_{3,n}(\bm{\epsilon})$  for $\bm{\epsilon} \neq \bm{\epsilon}_{0}$ are asymptotically negligible from the argument in Step 1 of the proof of Theorem \ref{Thm: GA_hyperrec}.

\underline{Step 2} (High-dimensional CLT applied to  the sum of independent blocks). 
As in the proof of Theorem \ref{Thm: GA_hyperrec}, we have that uniformly over $j = 1,\dots, p$,
\begin{align*}
&E_{\cdot \mid \bZ}\left[\left|{\breve{S}^{(j)}_{n}(\bm{\ell};\bm{\epsilon}_{0}) \over \sqrt{r_{n}(\lambda_{1,n}/\lambda_{n})^{d}}}\right|^{k}\right] \lesssim {\lambda_{1,n}^{kd}n^{k}\lambda_{n}^{-kd}M_{n}^{k-2} \over n^{k}\lambda_{n}^{-kd}\lambda_{1,n}^{kd/2}} = \lambda_{1,n}^{kd/2}M_{n}^{k-2},\ k=3,4, \\
&E_{\cdot \mid \bZ}\left[\left\|{\breve{S}_{n}(\bm{\ell};\bm{\epsilon}_{0}) \over \sqrt{r_{n}(\lambda_{1,n}/\lambda_{n})^{d}}}\right\|_{\infty}^{q}\right] \lesssim \lambda_{1,n}^{qd/2}M_{n}^{q}.
\end{align*}
Therefore, the conditions of Proposition 2.1 (i) in \cite{ChChKa17} are verified with $B_{n} = \lambda_{1,n}^{3d/2}M_{n}$ in their notation, which leads to the conclusion of Step 2. Step 3 is completely analogous. This completes the proof. \qed

\subsection{Proof of Corollary \ref{cor41}}\label{pf-Cor4-1}
Observe that 
\begin{align}
     &\sup_{A \in \mathcal{A}}\left|P_{\cdot \mid \bZ}\left(\sqrt{\lambda_{n}^{d}}\overline{\bY}_{n} \in A\right) - P\left(\breve{\bm{V}}_{n} \in A\right)\right| \nonumber \\
     &\leq \sup_{A \in \mathcal{A}}\left|P_{\cdot \mid \bZ}\left(\sqrt{\lambda_{n}^{d}}\overline{\bY}_{n} \in A\right) - P_{\cdot \mid \bZ}\left(\bm{V}_{n} \in A\right)\right| + \sup_{A \in \mathcal{A}}\left|P_{\cdot \mid \bZ}\left(\bm{V}_{n} \in A\right) - P\left(\breve{\bm{V}}_{n} \in A \right)\right| \nonumber \\
     & =: \rho_{1,n} + \rho_{2,n} \label{cor41-decomp}
\end{align}
From Theorem \ref{Thm: GA_hyperrec}, we have $\rho_{1,n}=o(1)$. 
Since both $\bm{V}_{n} = (V_{1,n},\dots, V_{p,n})'$ and $\breve{\bm{V}}_{n} = (\breve{V}_{1,n},\dots,\breve{V}_{p,n})'$ are Gaussian (under $P_{\cdot \mid \bZ}$ for $\bm{V}_{n}$), by Lemma \ref{lem: Gaussian comparison} ahead, $\rho_{2,n}$ is bounded by $C\Xi_{n}^{1/3}\log^{2/3} p$, where 
\begin{align*}
\Xi_{n} &= \max_{1 \leq j,k \leq p}\left|E_{\cdot \mid \bZ}[V_{j,n}V_{k,n}] - E[\breve{V}_{j,n}\breve{V}_{k,n}]\right|\\
&= \max_{1 \leq j,k \leq p}\left|r_{n}^{-1}E_{\cdot \mid \bZ}[\breve{S}_{1,n}^{(j)}\breve{S}_{1,n}^{(k)}] - E[\breve{V}_{j,n}\breve{V}_{k,n}]\right|. 
\end{align*}
The second equality follows from the conclusion of Theorem \ref{Thm: GA_hyperrec}. From Condition (i) in Corollary \ref{cor41} and similar arguments to the proof of Lemma \ref{variance-rate}, we have  $\Xi_{n} \lesssim n^{-({c' \over 2} \wedge {1 \over 2r_{0}})} + o((\log n)^{-2})$, which implies that $\rho_{2,n} = o(1)$. This completes the proof. \qed

\section{Proof of Theorem \ref{DWBvalidity: HR}}\label{Append_B}

To simplify our analysis, we set the bandwidth  as $b_{n} = \lambda_{2,n}$. We prove Theorem \ref{DWBvalidity: HR} separately under Assumptions \ref{ass: design} and \ref{ass: design2}.

\subsection{Proof of Theorem \ref{DWBvalidity: HR} under Assumption \ref{ass: design}}

We divide the proof into several subsections. 

\subsubsection{Approximation of $\sqrt{\lambda_{n}^{d}}(\overline{\bY}^{\ast}_{n} - \overline{\bY}_{n})$ by $r_{n}^{-1/2}\sum_{i=1}^{n}W(\bs_{i})\bX(\bs_{i})$}
\label{B11-BootHD}
Write
\begin{align*}
U_{n} &:= \sum_{i=1}^{n}W(\bs_{i})(\bY(\bs_{i}) - \overline{\bY}_{n})\\
&= \sum_{i=1}^{n}W(\bs_{i})\bX(\bs_{i}) - \sum_{i=1}^{n}W(\bs_{i})\overline{\bY}_{n} + \sum_{i=1}^{n}W(\bs_{i})\bm{\Upsilon}(\bs_{i})\\
& =: U_{1,n} + U_{2,n} + U_{3,n}. 
\end{align*}

We will show the asymptotic negligibility of $U_{2,n}$ and $U_{3,n}$ in two steps. Namely, in Step 1, we will show that with $P_{\cdot \mid \bZ}$-probability at least $1 - O(n^{-c'/6})$, 
\begin{align}\label{B.1.1-(1)}
P_{\cdot|\bY, \bZ}\left(r_{n}^{-1/2}\left\|U_{2,n}\right\|_{\infty} > Cn^{-c'/2}\log^{-1/2}p\right) \leq n^{-c'/4},
\end{align}
while in Step 2, we will show  that with $P_{\cdot \mid \bZ}$-probability at least $1-O(n^{-c'/6})$,
\begin{align}\label{B.1.1-(2)}
P_{\cdot|\bY, \bZ}\left(r_{n}^{-1/2}\left\|U_{3,n}\right\|_{\infty} > Cn^{-c'/2}\log^{-1/2}p\right) \leq n^{-c'/4}.
\end{align}

Given (\ref{B.1.1-(1)}) and (\ref{B.1.1-(2)}), we have that with $P_{\cdot \mid \bZ}$-probability at least $1 - O(n^{-c'/6})$,   for any rectangle $A = \prod_{j=1}^{p}[a_{j},b_{j}]$ with $a = (a_{1},\dots, a_{p})'$ and $b = (b_{1},\dots, b_{p})'$, 
\begin{align}
    &P_{\cdot \mid \bY,\bZ}\left(r_{n}^{-1/2}U_{n} \in A\right) = P_{\cdot \mid \bY,\bZ}\left(\left\{-r_{n}^{-1/2}U_{n} \leq -a\right\} \cap \left\{r_{n}^{-1/2}U_{n} \leq b\right\}\right) \nonumber \\
    &\leq P_{\cdot \mid \bY,\bZ}\left(\left\{-r_{n}^{-1/2}U_{n} \leq -a\right\} \cap \left\{r_{n}^{-1/2}U_{n} \leq b\right\} \cap \left\{r_{n}^{-1/2}\left\|U_{2,n} + U_{3,n}\right\|_{\infty} \leq 2Cn^{-c'/2}\log^{-1/2}p\right\}\right) \nonumber \\
    &\quad + P_{\cdot \mid \bY,\bZ}\left(r_{n}^{-1/2}\left\|U_{2,n}\right\|_{\infty} > Cn^{-c'/2}\log^{-1/2}p\right) + P_{\cdot \mid \bY,\bZ}\left(r_{n}^{-1/2}\left\|U_{3,n}\right\|_{\infty} > Cn^{-c'/2}\log^{-1/2}p\right) \nonumber \\
    &\leq P_{\cdot|\bm{X},\bZ}\left(\left\{-r_{n}^{-1/2}U_{1,n} \leq -a + 2Cn^{-c'/2}\log^{-1/2}p\right\} \right. \nonumber \\ 
    &\left. \quad \quad \cap \left\{r_{n}^{-1/2}U_{1,n} \leq b + 2Cn^{-c'/2}\log^{-1/2}p\right\} \right) + O(n^{-c'/4}). \label{asy-neg-U2-U3-max}
\end{align}
Likewise, we have that with $P_{\cdot \mid \bZ}$-probability at least $1 - O(n^{-c'/6})$,
\begin{align*}
    P_{\cdot \mid \bY,\bZ}\left(r_{n}^{-1/2}U_{n} \in A\right) &\geq P_{\cdot|\bm{X},\bZ}\left(\left\{-r_{n}^{-1/2}U_{1,n} \leq -a + 2Cn^{-c'/2}\log^{-1/2}p\right\} \right.  \\ 
    &\left. \quad \quad \cap \left\{r_{n}^{-1/2}U_{1,n} \leq b + 2Cn^{-c'/2}\log^{-1/2}p\right\} \right) - O(n^{-c'/4}).
\end{align*}
Here $P_{\cdot \mid \bX,\bZ}$ denotes the conditional probability given $\sigma\left(\{\bX(\bs): \bs \in \R^{d}\} \cup \{\bZ_{i}\}_{i \geq 1}\right)$. We can replace $P_{\cdot \mid \bY,\bZ}$ with $P_{\cdot \mid \bX,\bZ}$ by interpreting them as the probability with respect to $W$.

\underline{Step 1} (Proof of (\ref{B.1.1-(1)})). 
We have  $E_{\cdot \mid \bZ}\left[\left(\sum_{i=1}^{n}W(\bs_{i})\right)^{2}\right] = \sum_{i_{1}, i_{2}=1}^{n}a(\|(\bs_{i_{1}} - \bs_{i_{2}})/\lambda_{2,n}\|)$, which is $O(n^{2}(\lambda_{2,n}/\lambda_{n})^{d})$. Since $\bm{V}_{n}$ is Gaussian under $P_{\cdot \mid \bZ}$, the maximal inequality for Gaussian random variables (Lemma 2.2.10 in \cite{vaWe96}) yields that $E_{\cdot \mid \bZ}[\|\bm{V}_{n}\|_{\infty}] = O(\sqrt{\log p})$. Together with Theorem \ref{Thm: GA_hyperrec}, we have
\begin{align}\label{max-rate-Xmean}
E_{\cdot \mid \bZ}\left[\|\overline{\bY}_{n}\|_{\infty}\right]= O(\lambda_{n}^{-d/2}\sqrt{\log p }).
\end{align}
Observe that for $t_{1}, t_{2}>0$, 
\begin{align}
P_{\cdot \mid \bZ}\left(P_{\cdot|\bm{\bY, \bZ}}\left(r_{n}^{-1/2}\left\|U_{2,n}\right\|_{\infty} > t_{1}\right) > t_{2}\right) &\leq {1 \over t_{2}}E_{\cdot \mid \bZ}\left(P_{\cdot|\bm{\bY, \bZ}}\left(r_{n}^{-1/2}\left\|U_{2,n}\right\|_{\infty} > t_{1}\right)\right) \nonumber \\
&\leq {1 \over t_{1}t_{2}}E_{\cdot \mid \bZ}\left[E_{\cdot|\bY, \bZ}\left[r_{n}^{-1/2}\left\|U_{2,n}\right\|_{\infty}\right]\right] \nonumber \\
&\leq {r_{n}^{-1/2} \over t_{1}t_{2}}E_{\cdot \mid \bZ}\left[\|\overline{\bY}_{n}\|_{\infty}\left|\sum_{i=1}^{n}W(\bm{s}_{i})\right|\right].\label{asy-neg-U2-max}
\end{align}
Here we have
\begin{align*}
E_{\cdot \mid \bZ}\left[\|\overline{\bY}_{n}\|_{\infty}\left|\sum_{i=1}^{n}W(\bm{s}_{i})\right|\right] &= E_{\cdot \mid \bZ}\left[\|\overline{\bY}_{n}\|_{\infty}\right]E_{\cdot \mid \bZ}\left[\left|\sum_{i=1}^{n}W(\bm{s}_{i})\right|\right]\\
&\leq E_{\cdot \mid \bZ}\left[\|\overline{\bY}_{n}\|_{\infty}\right]\left(E_{\cdot \mid \bZ}\left[\left(\sum_{i=1}^{n}W(\bm{s}_{i})\right)^{2}\right]\right)^{1/2}\\
&\lesssim (\lambda_{n}^{-d/2}\sqrt{\log p})n(\lambda_{2,n}/\lambda_{n})^{d/2}\\
& = r_{n}^{1/2}(\lambda_{2,n}/\lambda_{n})^{d/2}\log^{1/2} p \leq r_{n}^{1/2}n^{-c'}. 
\end{align*}
Setting $t_{1} = Cn^{-c'/2}\log^{-1/2}p$ and $t_{2} = n^{-c'/4}$ in (\ref{asy-neg-U2-max}), we have
\begin{align*}
P_{\cdot \mid \bZ}\left(P_{\cdot|\bm{\bY, \bZ}}\left(r_{n}^{-1/2}\left\|U_{2,n}\right\|_{\infty} > Cn^{-c'/2}\log^{-1/2}p\right) > n^{-c'/4}\right) = O(n^{-c'/6}).
\end{align*}

\underline{Step 2} (Proof of (\ref{B.1.1-(2)})). 
It suffices to show that
\begin{align}\label{BC-conv}
P_{\cdot \mid \bZ}\left(P_{\cdot|\bY, \bZ}\left(r_{n}^{-1/2}\left\|\sum_{i=1}^{n}W(\bs_{i})\bm{\Upsilon}(\bs_{i})\right\|_{\infty} \geq Cn^{-c'/2}(\log p)^{-1/2}\right) > n^{-c'/4}\right) = O(n^{-c'/6}).
\end{align}

Observe that
\begin{align*}
    &P_{\cdot \mid \bZ}\left(P_{\cdot|\bY, \bZ}\left(r_{n}^{-1/2}\left\|\sum_{i=1}^{n}W(\bs_{i})\bm{\Upsilon}(\bs_{i})\right\|_{\infty} \geq Cn^{-c'/2}(\log p)^{-1/2}\right) > n^{-c'/4}\right)\\
    &\quad \leq n^{c'/4}E_{\cdot \mid \bZ}\left[P_{\cdot|\bY, \bZ}\left(r_{n}^{-1/2}\left\|\sum_{i=1}^{n}W(\bs_{i})\bm{\Upsilon}(\bs_{i})\right\|_{\infty} \geq Cn^{-c'/2}(\log p)^{-1/2}\right)\right]\\
    &\quad \leq C^{-1}n^{3c'/4}(\log p)^{1/2}E_{\cdot \mid \bZ}\left[E_{\cdot|\bY, \bZ}\left[r_{n}^{-1/2}\left\|\sum_{i=1}^{n}W(\bs_{i})\bm{\Upsilon}(\bs_{i})\right\|_{\infty} \right]\right]\\
    &\quad = C^{-1}r_{n}^{-1/2}n^{3c'/4}(\log p)^{1/2}E_{\cdot \mid \bZ}\left[\left\|\sum_{i=1}^{n}W(\bs_{i})\bm{\Upsilon}(\bs_{i})\right\|_{\infty} \right].
\end{align*}

Note that 
\begin{align*}
    E_{\cdot \mid \bZ}\left[\left\|\sum_{i=1}^{n}W(\bs_{i})\bm{\Upsilon}(\bs_{i})\right\|_{\infty} \right] &\leq E_{\cdot \mid \bZ}\left[\sum_{i=1}^{n}|W(\bs_{i})|\|\bm{\Upsilon}(\bs_{i})\|_{\infty}\right] \\
    &\leq nE_{\cdot \mid \bZ}\left[\max_{1\leq i\leq n}|W(\bs_{i})|\right]E_{\cdot \mid \bZ}\left[\max_{1\leq i\leq n}\|\bm{\Upsilon}(\bs_{i})\|_{\infty}\right]\\
    &\lesssim n(\log n)\delta_{n,\Upsilon}\max_{1\leq i \leq n}\|W(\bs_{i})\|_{\psi_{1}}\\
    &\lesssim n(\log n)\delta_{n,\Upsilon},
\end{align*}
where we used an inequality for the Orlicz norm (see \cite{vaWe96}, p. 95) to obtain the third inequality. Then we have 
\begin{align*}
    &P_{\cdot \mid \bZ}\left(P_{\cdot|\bY, \bZ}\left(r_{n}^{-1/2}\left\|\sum_{i=1}^{n}W(\bs_{i})\bm{\Upsilon}(\bs_{i})\right\|_{\infty} \geq Cn^{-c'/2}(\log p)^{-1/2}\right) > n^{-c'/4}\right)\\
    &\quad \lesssim \lambda_{n}^{d/2}n^{3c'/4}\delta_{n,\Upsilon}(\log n)^{3/2}. 
\end{align*}

Therefore, Condition (\ref{asy-neg-Upsilon}) in Assumption \ref{ass: design} yields (\ref{BC-conv}), which implies (\ref{B.1.1-(2)}). 

\subsubsection{Approximation of $r_{n}^{-1/2}U_{1,n}$ by large blocks}\label{Boot-U-large-block-approx}

 Define $U_{n}(\bm{\ell}; \bm{\epsilon}) = \sum_{i: \bs_{i} \in \Gamma_{n}(\bm{\ell};\bm{\epsilon})\cap R_{n}}W(\bs_{i})\bX(\bs_{i}) = (U_{n}^{(1)}(\bm{\ell}; \bm{\epsilon}),\dots,U_{n}^{(p)}(\bm{\ell}; \bm{\epsilon}))'$ and 
\begin{align*}
U_{1,n} &= \sum_{i=1}^{n}W(\bs_{i})\bX(\bs_{i}) = \sum_{\bm{\ell} \in L_{n}}U_{n}(\bm{\ell};\bm{\epsilon}_{0}) + \sum_{\bm{\epsilon} \neq \bm{\epsilon}_{0}}\underbrace{\sum_{\bm{\ell} \in L_{1,n} }U_{n}(\bm{\ell};\bm{\epsilon})}_{=:U_{12,n}(\bm{\epsilon})} + \sum_{\bm{\epsilon} \neq \bm{\epsilon}_{0}}\underbrace{\sum_{\bm{\ell} \in L_{2,n} }U_{n}(\bm{\ell};\bm{\epsilon})}_{=:U_{13,n}(\bm{\epsilon})}\\
&=: U_{11,n} + \sum_{\bm{\epsilon} \neq \bm{\epsilon}_{0}}U_{12,n}(\bm{\epsilon}) + \sum_{\bm{\epsilon} \neq \bm{\epsilon}_{0}}U_{13,n}(\bm{\epsilon}).
\end{align*}

In this subsection, we will show below that $U_{12,n}(\bm{\epsilon})$ and $U_{13,n}(\bm{\epsilon})$ ($\bm{\epsilon} \neq \bm{\epsilon}_0$) are asymptotically negligible, i.e., with $P_{\cdot \mid \bZ}$-probability at least $1 - O(n^{-c'/6}) - C(\lambda_{n}/\lambda_{1,n})^{d}\beta(\lambda_{2,n};\lambda_{n}^{d})$,
\begin{align}\label{U12-U13-asy-neg}
    P_{\cdot|\bm{X},\bZ}\left(r_{n}^{-1/2}\left\|U_{1k,n}(\bm{\epsilon})\right\|_{\infty} > Cn^{-c'/2}\log^{-1/2}p\right) \leq n^{-c'/4}
\end{align}
for $k=2,3$.

Given (\ref{U12-U13-asy-neg}), we have that with $P_{\cdot \mid \bZ}$-probability at least $1 - O(n^{-c'/6}) - 2C(\lambda_{n}/\lambda_{1,n})^{d}\beta(\lambda_{2,n};\lambda_{n}^{d})$, for any rectangle $A = \prod_{j=1}^{p}[a_{j},b_{j}]$ with $a = (a_{1},\dots, a_{p})'$ and $b = (b_{1},\dots, b_{p})'$,  
\begin{align}
    &P_{\cdot|\bm{X},\bZ}\left(r_{n}^{-1/2}U_{1,n} \in A\right) = P_{\cdot|\bm{X},\bZ}\left(\left\{-r_{n}^{-1/2}U_{1,n} \leq -a\right\} \cap \left\{r_{n}^{-1/2}U_{1,n} \leq b\right\}\right) \nonumber \\
    &\leq P_{\cdot|\bm{X},\bZ}\left(\left\{-r_{n}^{-1/2}U_{1,n} \leq -a\right\} \cap \left\{r_{n}^{-1/2}U_{1,n} \leq b\right\} \right. \nonumber \\
    & \left. \quad \cap \left\{r_{n}^{-1/2}\left\|{\textstyle \sum}_{\bm{\epsilon} \ne \bm{\epsilon}_0} U_{12,n}(\bm{\epsilon}) + U_{13,n}(\bm{\epsilon})\right\|_{\infty} \leq (2^{d+1}-2)Cn^{-c'/2}\log^{-1/2}p\right\}\right) \nonumber \\
    &\quad + P_{\cdot|\bm{X},\bZ}\left(r_{n}^{-1/2}\left\| {\textstyle \sum}_{\bm{\epsilon} \neq \bm{\epsilon}_{0}}U_{12,n}(\bm{\epsilon}) + U_{13,n}(\bm{\epsilon})\right\|_{\infty} > (2^{d+1}-2)Cn^{-c'/2}\log^{-1/2}p\right) \nonumber \\
    &\leq P_{\cdot|\bm{X},\bZ}\left(\left\{-r_{n}^{-1/2}U_{11,n} \leq -a + (2^{d+1}-2)Cn^{-c'/2}\log^{-1/2}p\right\} \right. \nonumber \\
    &\left. \quad \quad \cap \left\{r_{n}^{-1/2}U_{11,n} \leq b + (2^{d+1}-2)Cn^{-c'/2}\log^{-1/2}p\right\}\right) \nonumber \\
    &\quad + \sum_{\bm{\epsilon} \neq \bm{\epsilon}_{0}}P_{\cdot|\bm{X},\bZ}\left(r_{n}^{-1/2}\left\|U_{12,n}(\bm{\epsilon})\right\|_{\infty}>Cn^{-c'/2}\log^{-1/2}p\right) \nonumber \\
    &\quad + \sum_{\bm{\epsilon} \neq \bm{\epsilon}_{0}}P_{\cdot|\bm{X},\bZ}\left(r_{n}^{-1/2}\left\|U_{13,n}(\bm{\epsilon})\right\|_{\infty}>Cn^{-c'/2}\log^{-1/2}p\right).\nonumber \\
    &\leq P_{\cdot|\bm{X},\bZ}\left(\left\{-r_{n}^{-1/2}U_{11,n} \leq -a + (2^{d+1}-2)Cn^{-c'/2}\log^{-1/2}p\right\} \right. \nonumber \\
    &\left. \quad \quad \cap \left\{r_{n}^{-1/2}U_{11,n} \leq b + (2^{d+1}-2)Cn^{-c'/2}\log^{-1/2}p\right\}\right) + O(n^{-c'/4}) \label{U1n-U11n-approx-ineq}. 
\end{align}

Likewise, we have that with $P_{\cdot \mid \bZ}$-probability at least $1 - O(n^{-c'/6}) - 2C(\lambda_{n}/\lambda_{1,n})^{d}\beta(\lambda_{2,n};\lambda_{n}^{d})$,
\begin{align*}
    &P_{\cdot|\bm{X},\bZ}\left(r_{n}^{-1/2}U_{1,n} \in A\right) \\
    &\quad \geq P_{\cdot|\bm{X},\bZ}\left(\left\{-r_{n}^{-1/2}U_{11,n} \leq -a + (2^{d+1}-2)Cn^{-c'/2}\log^{-1/2}p\right\} \right.  \\
    &\quad \left. \quad \quad \cap \left\{r_{n}^{-1/2}U_{11,n} \leq b + (2^{d+1}-2)Cn^{-c'/2}\log^{-1/2}p\right\}\right) - O(n^{-c'/4}).
\end{align*}

Now we move on to proving (\ref{U12-U13-asy-neg}). Recall that $[\![L_{n}]\!] = O((\lambda_{n}/\lambda_{3,n})^{d}) \lesssim (\lambda_{n}/\lambda_{1,n})^{d}$ and for $\bm{\ell}_{1}, \bm{\ell}_{2} \in L_{n}$ with $\bm{\ell}_{1} \neq \bm{\ell}_{2}$, let 
\begin{align*}
\mathcal{J}_{1}(\bm{\epsilon}) &= \{1 \leq i_{1} \leq n: \bs_{i_{1}} \in \Gamma_{n}(\bm{\ell}_{1};\bm{\epsilon})\},\ \mathcal{J}_{2}(\bm{\epsilon}) = \{1 \leq i_{2} \leq n: \bs_{i_{2}} \in \Gamma_{n}(\bm{\ell}_{2};\bm{\epsilon})\}. 
\end{align*} 
Pick any $\bs_{i_k}=(s_{1,i_k},\dots,s_{d,i_k})$, $k=1,2$ such that $i_{1} \in \mathcal{J}_{1}(\bm{\epsilon})$ and $i_{2} \in \mathcal{J}_{2}(\bm{\epsilon})$. Then we have $\max_{1 \leq u \leq d}|s_{u,i_{1}} - s_{u,i_{2}}| \geq \lambda_{2,n}$ from the definition of $\Gamma(\bm{\ell};\bm{\epsilon})$. This implies $\|\bs_{i_1}-\bs_{i_2}\| \geq \lambda_{2,n}$ and $|\bs_{i_1}-\bs_{i_2}|\geq \lambda_{2,n}$, so that the dependence structure of the Gaussian random field $W$ does not contribute to the dependence between $U_{n}(\bm{\ell}_{1};\bm{\epsilon})$ and $U_{n}(\bm{\ell}_{2};\bm{\epsilon})$ for $\bm{\ell}_{1} \neq \bm{\ell}_{2}$ since $W(\bm{s}_{1})$ and $W(\bm{s}_{2})$ are independent if $\|\bm{s}_{i_1} - \bm{s}_{i_2}\| \geq \lambda_{2,n}$ from the definition of $W$. 

For $\bm{\epsilon} \in \{1,2\}^{d}$, consider an arrangement $U_{n}(\bm{\ell}_{1};\bm{\epsilon}), \dots, U_{n}(\bm{\ell}_{[\![L_{n}]\!]};\bm{\epsilon})$ of $\{U_{n}(\bm{\ell};\bm{\epsilon}): \bm{\ell} \in L_{n}\}$.
Let $Q_{\cdot \mid \bZ}^{(a)}$ be the marginal distribution of $U_{n}(\bm{\ell}_{a};\bm{\epsilon})$ and let $Q_{\cdot \mid \bZ}^{(a:b)}$ be the joint distribution of $\{U_{n}(\bm{\ell}_{k};\bm{\epsilon}): a \leq k \leq b\}$. The $\beta$-mixing property of $\bm{X}$ implies that for $1 \leq k \leq [\![L_{n}]\!]-1\!$,
\[
\left \|Q_{\cdot \mid \bZ} - Q_{\cdot \mid \bZ}^{(1:k)} \times Q_{\cdot \mid \bZ}^{(k+1:[\![L_{n}]\!])} \right\|_{\mathrm{TV}} \lesssim \beta(\lambda_{2,n};\lambda_{n}^{d}),
\]
where the inequality does not depend on the arrangement of $\{U_{n}(\bm{\ell};\bm{\epsilon}): \bm{\ell} \in L_{n}\}$. 
Hence, the assumption (\ref{prod_bound})  in Lemma \ref{lem: indep_lemma} is satisfied for $\{U_{n}(\bm{\ell};\bm{\epsilon}) : \bm{\ell} \in L_{n}\}$ ($\bm{\epsilon} \neq \bm{\epsilon}_{0}$) with $\tau \sim \beta(\lambda_{2,n};\lambda_{n}^{d})$ and $m \lesssim (\lambda_{n}/\lambda_{1,n})^{d}$. This implies that for $\bm{\epsilon} \neq \bm{\epsilon}_{0}$, we can construct a sequence of independent random vectors $\{\breve{U}_{n}(\bm{\ell};\bm{\epsilon}): \bm{\ell} \in L_{n}\}$ with
$\breve{U}_{n}(\bm{\ell};\bm{\epsilon}) \stackrel{d}{=} U_{n}(\bm{\ell};\bm{\epsilon})$  under $P_{\cdot \mid \bZ}$. 
Define
\[
\breve{U}_{12,n}(\bm{\epsilon}) = \sum_{\bm{\ell} \in L_{1,n}}\breve{U}_{n}(\bm{\ell};\bm{\epsilon}),\ \breve{U}_{13,n}(\bm{\epsilon}) = \sum_{\bm{\ell} \in L_{2,n}}\breve{U}_{n}(\bm{\ell};\bm{\epsilon}),\ \bm{\epsilon}\neq \bm{\epsilon}_{0}.
\]
Then for any $t_{1}>0$,
\begin{align}
    &\sup_{t \geq 0}\left|P_{\cdot \mid \bZ}\left(P_{\cdot|\bm{X},\bZ}\left(\left\|U_{1k,n}(\bm{\epsilon})\right\|_{\infty}>t_{1}\right)>t\right) - P_{\cdot \mid \bZ}\left(P_{\cdot|\bm{X},\bZ}\left(\left\|\breve{U}_{1k,n}(\bm{\epsilon})\right\|_{\infty}>t_{1}\right)>t\right)\right| \nonumber \\
    &\lesssim (\lambda_{n}/\lambda_{1,n})^{d}\beta(\lambda_{2,n};\lambda_{n}^{d}),\ k=2,3.\label{indep-block-U12-U13-max}
\end{align}
By Markov's inequality, we have
\begin{align*}
    P_{\cdot \mid \bZ}\left(P_{\cdot|\bm{X},\bZ}\left(\left\|\breve{U}_{1k,n}(\bm{\epsilon})\right\|_{\infty}>t_{1}\right)>t\right) &\leq {1 \over t_{1}t}E_{\cdot \mid \bZ}\left[\left\|\breve{U}_{1k,n}(\bm{\ell};\bm{\epsilon})\right\|_{\infty}\right].
\end{align*}
Similarly to  the proofs of (\ref{mean_ineq_S2}) and (\ref{Var_ineq_S2}), we have 
\begin{align*}
    E_{\cdot \mid \bZ}\left[\left\|\breve{U}_{1k,n}(\bm{\ell};\bm{\epsilon})\right\|_{\infty}\right]\lesssim r_{n}^{1/2}n^{-c'}\log^{-1/2}p,\ k=2,3.
\end{align*}
Thus, for $t_{1} = C(n^{2}\lambda_{n}^{d})^{1/2}n^{-c'/2}\log^{-1/2}p$ and $t = n^{-c'/4}$, we have 
\begin{align}\label{asy-neg-U12-U13-XZ}
    P_{\cdot \mid \bZ}\left(P_{\cdot|\bm{X},\bZ}\left(r_{n}^{-1/2}\left\|\breve{U}_{1k,n}(\bm{\epsilon})\right\|_{\infty}>Cn^{-c'/2}\log^{-1/2}p\right)>n^{-c'/4}\right) &\lesssim n^{-c'/6},\ k=2,3.  
\end{align}
Together with (\ref{indep-block-U12-U13-max}) and (\ref{asy-neg-U12-U13-XZ}), we have
\begin{align*}
    &P_{\cdot \mid \bZ}\left(P_{\cdot|\bm{X},\bZ}\left(r_{n}^{-1/2}\left\|U_{1k,n}(\bm{\epsilon})\right\|_{\infty}>Cn^{-c'/2}\log^{-1/2}p\right)>n^{-c'/4}\right)\\
    &\quad \lesssim (\lambda_{n}/\lambda_{1,n})^{d}\beta(\lambda_{2,n};\lambda_{n}^{d}) + O(n^{-c'/6}),
\end{align*}
which leads to (\ref{U12-U13-asy-neg}).

\subsubsection{Conditional Gaussian approximation to $r_{n}^{-1/2}U_{11,n}$}\label{condti-GA-proof}
We will show that with $P_{\cdot \mid \bZ}$-probability at least $1 - O(n^{-c'/6})$, 
\begin{align}\label{DWB: block}
&\sup_{A \in \mathcal{A}}\left|P_{\cdot|\bX,\bZ}\left(r_{n}^{-1/2}U_{11,n} \in A\right) - P_{\cdot \mid \bZ}\left(\bm{G}_{n} \in A\right)\right| = O(n^{-c'/6}). 
\end{align}
where $\bm{G}_{n} = (G_{1,n},\dots, G_{p,n})'$ is a centered Gaussian random vector under $P_{\cdot \mid \bZ}$ with covariance 
\[
E_{\cdot \mid \bZ}\left[\bm{G}_{n}\bm{G}'_{n}\right] = {1 \over n^{2}\lambda_{n}^{-d}}\sum_{\bm{\ell} \in L_{n}}E_{\cdot \mid \bZ}\left[U_{n}(\bm{\ell};\bm{\epsilon}_{0})U_{n}(\bm{\ell};\bm{\epsilon}_{0})'\right].
\]

Recall $U_{11,n}=\sum_{\bm{\ell}\in L_{n}}U_{n}(\bm{\ell};\bm{\epsilon}_{0})$ and $U_{n}(\bm{\ell}; \bm{\epsilon}_{0}) = \sum_{i: \bs_{i} \in \Gamma_{n}(\bm{\ell};\bm{\epsilon}_{0})\cap R_{n}}W(\bs_{i})\bX(\bs_{i})$. Note that $\{U_{n}(\bm{\ell};\bm{\epsilon}_{0}): \bm{\ell}\in L_{n}\}$ is a sequence of independent random vectors under $P_{\cdot \mid \bX,\bZ}$ since $W(\bs_{1})$ and $W(\bs_{2})$ are independent for any $\bs_{1}\in \Gamma_{n}(\bm{\ell}_{1};\bm{\epsilon}_{0})$ and $\bs_{2}\in \Gamma_{n}(\bm{\ell}_{2};\bm{\epsilon}_{0})$ such that $\bm{\ell}_{1} \neq \bm{\ell}_{2}$ from similar arguments to Subsection \ref{Boot-U-large-block-approx}. 

Since both $r_{n}^{-1/2}U_{11,n}$ and $\bm{G}_{n}$ are (conditionally) Gaussian, by Lemma \ref{lem: Gaussian comparison} ahead, the left-hand side on (\ref{DWB: block}) is bounded by $C\breve{\Delta}^{1/3}\log^{2/3} p$, where 
\begin{align*}
\breve{\Delta}&= {1 \over n^{2}\lambda_{n}^{-d}}\max_{1 \leq j_{1},j_{2} \leq p}\left|\sum_{\bm{\ell} \in L_{n}}\!\! \left(E_{\cdot|\bX,\bZ}[U_{n}^{(j_{1})}(\bm{\ell};\bm{\epsilon}_{0})U_{n}^{(j_{2})}(\bm{\ell};\bm{\epsilon}_{0})] - E_{\cdot \mid \bZ}[U_{n}^{(j_{1})}(\bm{\ell};\bm{\epsilon}_{0})U_{n}^{(j_{2})}(\bm{\ell};\bm{\epsilon}_{0})] \right)\right|,
\end{align*}
and $E_{\cdot|\bX,\bZ}$ denotes the conditional expectation given $\sigma(\{\bX(\bs) : \bs \in \R^{d}\} \cup \{\bZ_{i}\}_{i \geq 1})$. Hence it suffices to prove that $P_{\cdot \mid \bZ}(\breve{\Delta}>C'n^{-5c'/6}\log^{-2}p) \leq Cn^{-c'/6}$ with suitable constants $C$ and $C'$ that are independent of $n$. Observe that 
\begin{align*}
|E_{\cdot|\bX,\bZ}[U_{n}^{(j_{1})}(\bm{\ell};\bm{\epsilon}_{0})U_{n}^{(j_{2})}(\bm{\ell};\bm{\epsilon}_{0})]| &= \left|E_{\cdot|\bX,\bZ}\left[\sum_{\bs_{i_{1}}, \bs_{i_{2}} \in \Gamma_{n}(\bm{\ell};\bm{\epsilon}_{0})\cap R_{n}}W(\bs_{i_{1}})W(\bs_{i_{2}})X_{j_{1}}(\bs_{k_{1}})X_{j_{2}}(\bs_{i_{2}})\right]\right|\\
&\lesssim \sum_{\bs_{i_{1}}, \bs_{i_{2}} \in \Gamma_{n}(\bm{\ell};\bm{\epsilon}_{0})\cap R_{n}}a\left({\|\bs_{i_{1}} - \bs_{i_{2}}\| \over \lambda_{2,n}}\right)|X_{j_{1}}(\bs_{i_{1}})||X_{j_{2}}(\bs_{i_{2}})|.
\end{align*}
Then by Lemma \ref{n summands} and inequalities for Orlicz norm (\cite{vaWe96}, p.95), we have 
\begin{align*}
&E_{\cdot \mid \bZ}\left[\max_{\bm{\ell} \in L_{n}}\max_{1 \leq j_{1},j_{2} \leq p}|E_{\cdot|\bX,\bZ}[U_{n}^{(j_{1})}(\bm{\ell};\bm{\epsilon}_{0})U_{n}^{(j_{2})}(\bm{\ell};\bm{\epsilon}_{0})]|^{2}\right]\\
&\quad \lesssim \lambda_{1,n}^{2d}\lambda_{2,n}^{2d}n^{4}\lambda_{n}^{-4d}E_{\cdot \mid \bZ}\left[\max_{1 \leq i \leq n}\|\bm{X}(\bs_{i})\|_{\infty}^{4}\right]\\
&\quad \lesssim \lambda_{1,n}^{2d}\lambda_{2,n}^{2d}n^{4}\lambda_{n}^{-4d}(\log n)^{4}(\log p)^{4}D_{n}^{4}.
\end{align*}
Similarly, we have $E_{\cdot \mid \bZ}[(E_{\cdot|\bX,\bZ}[U_{n}^{(j_{1})}(\bm{\ell};\bm{\epsilon}_{0})U_{n}^{(j_{2})}(\bm{\ell};\bm{\epsilon}_{0})])^{2}] \lesssim \lambda_{1,n}^{2d}\lambda_{2,n}^{2d}n^{4}\lambda_{n}^{-4d}(\log n)^{4}D_{n}^{4}$ uniformly over $1 \le j \le p$. 
Hence by Lemma \ref{lem: maximal inequality} ahead, we have
 \begin{align*}
 E_{\cdot \mid \bZ}[\breve{\Delta}] &\lesssim {\lambda_{1,n}^{d}\lambda_{2,n}^{d}n^{2}\lambda_{n}^{-2d}(\lambda_{n}/\lambda_{1,n})^{d/2}(\log n)^{2}(\log p)^{1/2}D_{n}^{2} \over n^{2}\lambda_{n}^{-d}}\\
 &\quad + {\lambda_{1,n}^{d}\lambda_{2,n}^{d}n^{2}\lambda_{n}^{-2d}(\log n)^{2}(\log p)^{3}D_{n}^{2} \over n^{2}\lambda_{n}^{-d}}\\
&\lesssim D_{n}^{2}\lambda_{1,n}^{d/2}\lambda_{2,n}^{d}\lambda_{n}^{-d/2}(\log n)^{2}(\log p)^{1/2} + D_{n}^{2}\lambda_{1,n}^{d}\lambda_{2,n}^{d}\lambda_{n}^{-d}(\log n)^{2}(\log p)^{3} \\
&\leq C'n^{-c'}\log^{-2}p. 
 \end{align*}
The conclusion (\ref{DWB: block}) follows from an application of Markov's inequality.

\subsubsection{Gaussian comparison}\label{B13-BootHD}
We wish to show that
\begin{align}\label{DWB: comparison}
&\sup_{A \in \mathcal{A}}\left|P_{\cdot \mid \bZ}\left(\bm{V}_{n} \in A\right) - P_{\cdot \mid \bZ}\left(\bm{G}_{n} \in A\right)\right| = O\left(n^{-c'/6}\right).
\end{align}
By Lemma \ref{lem: Gaussian comparison} ahead, the left-hand side on (\ref{DWB: comparison}) is bounded by $C\hat{\Delta}^{1/3}\log^{2/3} p$, where 
\begin{align*}
\hat{\Delta} &= {1 \over n^{2}\lambda_{n}^{-d}}\max_{1 \leq j_{1},j_{2} \leq p}\left|\sum_{\bm{\ell} \in L_{n}}\!\left(E_{\cdot \mid \bZ}[U_{n}^{(j_{1})}(\bm{\ell};\bm{\epsilon}_{0})U_{n}^{(j_{2})}(\bm{\ell};\bm{\epsilon}_{0})] - E_{\cdot \mid \bZ}[S_{n}^{(j_{1})}(\bm{\ell};\bm{\epsilon}_{0})S_{n}^{(j_{2})}(\bm{\ell};\bm{\epsilon}_{0})]\right)\right|.
\end{align*}
Hence it suffices to prove that $\hat{\Delta} \lesssim n^{-c'}\log^{-2}p$. Observe that
\begin{align*}
&\left|E_{\cdot|\bZ}[U_{n}^{(j_{1})}(\bm{\ell};\bm{\epsilon}_{0})U_{n}^{(j_{2})}(\bm{\ell};\bm{\epsilon}_{0})] - E_{\cdot|\bm{Z}}[S_{n}^{(j_{1})}(\bm{\ell};\bm{\epsilon}_{0})S_{n}^{(j_{2})}(\bm{\ell};\bm{\epsilon}_{0})]\right|\\
&\quad \leq \sum_{m_{1}=1}^{n}\sum_{m_{2}=1}^{n}\left(1 - a\left(\left\|{\bm{s}_{m_{1}} - \bm{s}_{m_{2}} \over \lambda_{2,n}}\right\|\right)\right)|\Sigma_{j_{1},j_{2}}(\bm{s}_{m_{1}} - \bm{s}_{m_{2}})|1\{\bm{s}_{m_{1}}, \bm{s}_{m_{2}} \in \Gamma_{n}(\bm{\ell};\bm{\epsilon}) \cap R_{n}\}\\
&\quad =: Q_{n}.
\end{align*}
Let $R_{D} = \{\bm{z}_{1} - \bm{z}_{2}: \bm{z}_{1},\bm{z}_{2} \in {\Gamma_{n}(\bm{\ell};\bm{\epsilon}) \over \lambda_{n}} \cap R_{0}\}$, $R(\bm{z}) = (R_{0} + \bm{z})\cap R_{0}$ for all $\bm{z} \in R_{D}$, and let $M>0$. Then we have 
\begin{align*}
Q_{n} &=\sum_{m_{1}=1}^{n}\sum_{m_{2}=1}^{n}\left(1 - a\left(\left\|{\bm{s}_{m_{1}} - \bm{s}_{m_{2}} \over \lambda_{2,n}}\right\|\right)\right)|\Sigma_{j_{1},j_{2}}(\bm{s}_{m_{1}} - \bm{s}_{m_{2}})|\\
&\quad \times 1\{\bm{s}_{m_{1}} -\bm{s}_{m_{2}} \in \lambda_{n}R_{D}\}1\{\bm{s}_{m_{2}}/\lambda_{n} \in R(\bm{s}_{m_{1}}/\lambda_{n})\}\\
&=\sum_{m_{1}=1}^{n}\sum_{m_{2}=1}^{n}\left(1 - a\left(\left\|{\bm{s}_{m_{1}} - \bm{s}_{m_{2}} \over \lambda_{2,n}}\right\|\right)\right)|\Sigma_{j_{1},j_{2}}(\bm{s}_{m_{1}} - \bm{s}_{m_{2}})|\\
&\quad \times 1\{\bm{s}_{m_{1}} -\bm{s}_{m_{2}} \in \lambda_{n}R_{D}\}1\{\bm{s}_{m_{2}}/\lambda_{n} \in R(\bm{s}_{m_{1}}/\lambda_{n})\}1\{\|\bm{s}_{m_{1}} -\bm{s}_{m_{2}}\|\leq M\}\\
&\quad + \sum_{m_{1}=1}^{n}\sum_{m_{2}=1}^{n}\left(1 - a\left(\left\|{\bm{s}_{m_{1}} - \bm{s}_{m_{2}} \over \lambda_{2,n}}\right\|\right)\right)|\Sigma_{j_{1},j_{2}}(\bm{s}_{m_{1}} - \bm{s}_{m_{2}})|\\
&\quad \times 1\{\bm{s}_{m_{1}} -\bm{s}_{m_{2}} \in \lambda_{n}R_{D}\}1\{\bm{s}_{m_{2}}/\lambda_{n} \in R(\bm{s}_{m_{1}}/\lambda_{n})\}1\{\|\bm{s}_{m_{1}} -\bm{s}_{m_{2}}\| > M\}\\
&=: Q_{1,n} + Q_{2,n}. 
\end{align*}
When $\{\|\bm{s}_{m_{1}}-\bm{s}_{m_{2}}\| \leq M\}$ and $|M/\lambda_{2,n}|\leq c_{W}$, we have $1-a(\|(\bm{s}_{m_{1}}-\bm{s}_{m_{2}})/\lambda_{2,n}\|) \lesssim |M/\lambda_{2,n}|$ uniformly over $\|\bm{s}_{m_{1}}-\bm{s}_{m_{2}}\| \leq M$, which implies that 
\begin{align*}
Q_{1,n} &\lesssim \left({M \over \lambda_{2,n}}\right)\sum_{m_{1}=1}^{n}\sum_{m_{2}=1}^{n}|\Sigma_{j_{1},j_{2}}(\bm{s}_{m_{1}} - \bm{s}_{m_{2}})|\\
&\quad \times 1\{\bm{s}_{m_{1}} -\bm{s}_{m_{2}} \in \lambda_{n}R_{D}\}1\{\bm{s}_{m_{2}}/\lambda_{n} \in R(\bm{s}_{m_{1}}/\lambda_{n})\}1\{\|\bm{s}_{m_{1}} -\bm{s}_{m_{2}}\| > M\}\\
& \lesssim \max_{1\leq j\leq p}|\Sigma_{j,j}(\bm{0})|\left({M \over \lambda_{2,n}}\right)(n\lambda_{n}^{-d}+\log n)\left({\lambda_{1,n} \over \lambda_{n}}\right)^{d}n  \\
&\lesssim Mn\lambda_{n}^{-d}(n\lambda_{n}^{-d}+\log n)\lambda_{2,n}^{-1}\lambda_{1,n}^{d}.
\end{align*}
Observe that 
\begin{align*}
    E_{\bZ}[Q_{2,n}] &= n^{2}\int_{R_{D}\cap \{\|\lambda_{n}\bm{z}\|>M\}}\int_{R(\bm{z})}\left(1 - a\left(\left\|{\lambda_{n}\bm{z}\over \lambda_{2,n}}\right\|\right)\right)|\Sigma_{j_{1},j_{2}}(\lambda_{n}\bm{z})|f(\bm{z}_{1})f(\bm{z}_{1}-\bm{z})d\bm{z}_{1}d\bm{z}\\
    &= n^{2}\lambda_{n}^{-d}\!\!\!\int_{\lambda_{n}R_{D}\cap \{\|\bm{w}\|>M\}}\int_{R(\bm{w}/\lambda_{n})}\!\!\left(\!1 - a\!\left(\left\|{\bm{w}\over \lambda_{2,n}}\right\|\right)\right)\!|\Sigma_{j_{1},j_{2}}(\bm{w})|f(\bm{z}_{1})f(\bm{z}_{1}-\bm{w}/\lambda_{n})d\bm{z}_{1}d\bm{w}\\
    &\lesssim n^{2}\lambda_{n}^{-d}\int_{\{\|\bm{w}\|>M\}}|\Sigma_{j_{1},j_{2}}(\bm{w})|d\bm{w}.
\end{align*}
Then for any $t>0$, 
\begin{align*}
    P_{\bZ}\left(Q_{2,n}> n^{-c'}(\log p)^{-2}n^{2}\lambda_{n}^{-d}(\lambda_{1,n}/\lambda_{n})^{d}\right) &\leq {E_{\bZ}[Q_{2,n}] \over n^{-c'}(\log p)^{-2}n^{2}\lambda_{n}^{-d}(\lambda_{1,n}/\lambda_{n})^{d}}\\
    &\lesssim n^{c'}(\log p)^{2}\lambda_{n}^{d}\lambda_{1,n}^{-d}\int_{\{\|\bm{w}\|>M\}}|\Sigma_{j_{1},j_{2}}(\bm{w})|d\bm{w}
\end{align*}
If we set $M = \lambda_{2,n}^{1/2}$, then from (\ref{cov_matrix_tail}) and the Borel-Cantelli lemma, we have ${Q_{2,n} \over n^{2}\lambda_{n}^{-d}(\lambda_{1,n}/\lambda_{n})^{d}} = O(n^{-c'}\log^{-2}p)$. This yields that  
\[
\begin{split}
\hat{\Delta} &\lesssim {n\lambda_{n}^{-d}(n\lambda_{n}^{-d}+\log n)\lambda_{1,n}^{d}\lambda_{2,n}^{-1/2}\over n^{2}\lambda_{n}^{-d}(\lambda_{1,n}/\lambda_{n})^{d}} + n^{-c'}\log^{-2}p \\
&\lesssim \lambda_{2,n}^{-1/2}\log n + n^{-c'}\log^{-2}p \\
&\lesssim n^{-c'}\log^{-2}p.
\end{split}
\]

\subsubsection{Conclusion}
Pick any rectangle $A = \prod_{j=1}^{p}[a_{j},b_{j}]$ with $a = (a_{1},\dots, a_{p})'$ and $b = (b_{1},\dots, b_{p})'$. Combining (\ref{asy-neg-U2-U3-max}), (\ref{U1n-U11n-approx-ineq}), (\ref{DWB: block}) and (\ref{DWB: comparison}) in Subsections \ref{B11-BootHD}-\ref{B13-BootHD}, we have that with $P_{\cdot \mid \bZ}$-probability at least $1 - O(n^{-c'/6}) - 2C(\lambda_{n}/\lambda_{1,n})^{d}\beta(\lambda_{2,n};\lambda_{n}^{d})$,
\begin{align*}
    &P_{\cdot \mid \bY,\bZ}\left(r_{n}^{-1/2}U_{n} \in A\right)\\ 
    &\leq P_{\cdot|\bm{X},\bZ}\left(\left\{-r_{n}^{-1/2}U_{1,n} \leq -a + 2Cn^{-c'/2}\log^{-1/2}p\right\} \right.  \\ 
    &\left. \quad \quad \cap \left\{r_{n}^{-1/2}U_{1,n} \leq b + 2Cn^{-c'/2}\log^{-1/2}p\right\} \right) + O(n^{-c'/4})\quad(\text{from (\ref{asy-neg-U2-U3-max})})\\
    &\leq P_{\cdot|\bm{X},\bZ}\left(\left\{-r_{n}^{-1/2}U_{11,n} \leq -a + 2^{d+1}Cn^{-c'/2}\log^{-1/2}p\right\} \right. \nonumber \\
    &\quad \left. \quad \quad \cap \left\{r_{n}^{-1/2}U_{11,n} \leq b + 2^{d+1}Cn^{-c'/2}\log^{-1/2}p\right\}\right) + O(n^{-c'/4}) \quad (\text{from (\ref{U1n-U11n-approx-ineq})})\\
    &\leq P_{\cdot \mid \bZ}\left(\left\{-\bm{G}_{n} \leq -a + 2^{d+1}Cn^{-c'/2}\log^{-1/2}p\right\} \right. \nonumber \\
    &\quad \left. \quad \quad \cap \left\{\bm{G}_{n} \leq b + 2^{d+1}Cn^{-c'/2}\log^{-1/2}p\right\}\right) + O(n^{-c'/6}) + O(n^{-c'/4})\quad (\text{from (\ref{DWB: block})})\\
    &\leq P_{\cdot \mid \bZ}\left(\left\{-\bm{V}_{n} \leq -a + 2^{d+1}Cn^{-c'/2}\log^{-1/2}p\right\} \right. \nonumber \\
    &\quad \left. \quad \quad \cap \left\{\bm{V}_{n} \leq b + 2^{d+1}Cn^{-c'/2}\log^{-1/2}p\right\}\right) + O(n^{-c'/6}) + O(n^{-c'/4}) \quad (\text{from (\ref{DWB: comparison})}).
\end{align*}
Combining Nazarov's inequality (see Lemma \ref{lem: Nazarov} ahead), we have that with $P_{\cdot \mid \bZ}$-probability at least $1 - O(n^{-c'/6}) - 2C(\lambda_{n}/\lambda_{1,n})^{d}\beta(\lambda_{2,n};\lambda_{n}^{d})$,
\begin{align*}
    P_{\cdot \mid \bY,\bZ}\left(r_{n}^{-1/2}U_{n} \in A\right) \leq P_{\cdot \mid \bZ}\left(\bm{V}_{n} \in A\right) + O(n^{-c'/6})
\end{align*}
uniformly over $A \in \mathcal{A}$. Likewise, we have that with $P_{\cdot \mid \bZ}$-probability at least $1 - O(n^{-c'/6}) - 2C(\lambda_{n}/\lambda_{1,n})^{d}\beta(\lambda_{2,n};\lambda_{n}^{d})$, $P_{\cdot \mid \bY,\bZ}\left(r_{n}^{-1/2}U_{n} \in A\right) \geq P_{\cdot \mid \bZ}\left(\bm{V}_{n} \in A\right) - O(n^{-c'/6})$ uniformly over $A \in \mathcal{A}$. This completes the proof of Theorem \ref{DWBvalidity: HR}. \qed

\subsection{Proof of Theorem \ref{DWBvalidity: HR} under Assumption \ref{ass: design2}}  
The proof is similar to that of the previous case, so we only point out required modifications in each step.

%

\subsubsection{Conditional Gaussian approximation to $r_{n}^{-1/2}U_{11,n}$}
By Lemma 2.2.2 in \cite{vaWe96}, we have 
\begin{align*}
&E_{\cdot \mid \bZ}\left[\max_{\ell \in L_{n}}\max_{1 \leq j_{1},j_{2} \leq p}\left(E_{\cdot|\bX,\bZ}\left[|U_{n}^{(j_{1})}(\bm{\ell};\bm{\epsilon}_{0})U_{n}^{(j_{2})}(\bm{\ell};\bm{\epsilon}_{0})|\right]\right)^{2}\right]\\
&\lesssim [\![L_{n}]\!]^{1/2}\max_{\ell \in L_{n}}\left(E_{\cdot \mid \bZ}\left[\max_{1 \leq j_{1},j_{2} \leq p}\left(E_{\cdot|\bX,\bZ}\left[|U_{n}^{(j_{1})}(\bm{\ell};\bm{\epsilon}_{0})U_{n}^{(i_{2})}(\bm{\ell};\bm{\epsilon}_{0})|\right]\right)^{4}\right]\right)^{1/2}.
\end{align*}
Note that 
\begin{align*}
&\left(E_{\cdot|\bX,\bZ}\left[|U_{n}^{(j_{1})}(\bm{\ell};\bm{\epsilon}_{0})U_{n}^{(j_{2})}(\bm{\ell};\bm{\epsilon}_{0})|\right]\right)^{4}\\
&= \left(\sum_{\bs_{i_{1}}, \bs_{i_{2}} \in \Gamma(\bm{\ell};\bm{\epsilon}_{0}) \cap R_{n}}a\left({\|\bs_{i_{1}}-\bs_{i_{2}}\| \over \lambda_{2,n}}\right)X_{j_{1}}(\bs_{i_{1}})X_{j_{2}}(\bs_{i_{2}})\right)^{4}\\
&\lesssim \sum_{\bs_{i_{1}},\dots, \bs_{i_{8}} \in \Gamma(\bm{\ell};\bm{\epsilon}_{0}) \cap R_{n}}\!\!\!a\left({\|\bs_{i_{1}}-\bs_{i_{2}}\| \over \lambda_{2,n}}\right)a\left({\|\bs_{i_{3}}-\bs_{i_{4}}\| \over \lambda_{2,n}}\right)a\left({\|\bs_{i_{5}}-\bs_{i_{6}}\| \over \lambda_{2,n}}\right)a\left({\|\bs_{i_{7}}-\bs_{i_{8}}\| \over \lambda_{2,n}}\right)\\
&\quad \quad \times \prod_{k=1}^{8}\|\bm{X}(\bs_{i_{k}})\|_{\infty}.
\end{align*}
Then we have
\begin{align*}
\max_{\bm{\ell} \in L_{n}}E_{\cdot \mid \bZ}\left[\max_{1\leq j_{1},j_{2}\leq p}\left(E_{\cdot|\bX,\bZ}\left[|U_{n}^{(j_{1})}(\bm{\ell};\bm{\epsilon}_{0})U_{n}^{(j_{2})}(\bm{\ell};\bm{\epsilon}_{0})|\right]\right)^{4}\right] &\lesssim \lambda_{1,n}^{4d}\lambda_{2,n}^{4d}n^{8}\lambda_{n}^{-8d}E_{\cdot \mid \bZ}[\|\bm{X}(\bs)\|_{\infty}^{8}]\\
&\lesssim \lambda_{1,n}^{4d}\lambda_{2,n}^{4d}n^{8}\lambda_{n}^{-8d}M_{n}^{8}.
\end{align*}
This yields that 
\begin{align}
&E_{\cdot \mid \bZ}\left[\max_{\ell \in L_{n}}\max_{1 \leq j_{1},j_{2} \leq p}\left(E_{\cdot|\bX,\bZ}\left[|U_{n}^{(j_{1})}(\bm{\ell};\bm{\epsilon}_{0})U_{n}^{(j_{2})}(\bm{\ell};\bm{\epsilon}_{0})|\right]\right)^{2}\right] \nonumber \\
&\lesssim [\![L_{n}]\!]^{1/2}\lambda_{1,n}^{2d}\lambda_{2,n}^{2d}n^{4}\lambda_{n}^{-4d}M_{n}^{4} \lesssim \lambda_{1,n}^{3d/2}\lambda_{2,n}^{2d}n^{4}\lambda_{n}^{-7d/2}M_{n}^{4}. \label{moment-bound2-1}
\end{align}
Likewise,  we have 
\begin{align}
E_{\cdot \mid \bZ}\left[E_{\cdot|\bX,\bZ}\left[|U_{n}^{(j_{1})}(\bm{\ell};\bm{\epsilon}_{0})U_{n}^{(j_{2})}(\bm{\ell};\bm{\epsilon}_{0})|^{2}\right]\right] &\lesssim \lambda_{1,n}^{d}\lambda_{2,n}^{3d}n^{4}\lambda_{n}^{-4d}M_{n}^{4} \label{moment-bound2-2}
\end{align}
uniformly over $1 \leq j_{1}, j_{2} \leq p$.
Hence, combining Lemma \ref{lem: maximal inequality} ahead, we have
 \begin{align*}
 E_{\cdot \mid \bZ}[\breve{\Delta}] &\lesssim {\lambda_{1,n}^{d/2}\lambda_{2,n}^{3d/2}n^{2}\lambda_{n}^{-2d}(\lambda_{n}/\lambda_{1,n})^{d/2}M_{n}^{2}\sqrt{\log p} + \lambda_{1,n}^{d}\lambda_{2,n}^{d}n^{2}\lambda_{n}^{-2d}M_{n}^{2}\log p \over n^{2}\lambda_{n}^{-d}}\\
&= M_{n}^{2}(\lambda_{2,n}^{3d/2}\lambda_{n}^{-d/2}\sqrt{\log p} + \lambda_{1,n}^{d}\lambda_{2,n}^{d}\lambda_{n}^{-d}\log p) \\
&\lesssim n^{-c'}\log^{-2}p. 
 \end{align*}
Therefore, the same result as in Subsection \ref{condti-GA-proof} holds. The rest of the proof is completely analogous. This completes the proof. \qed

\section{Proof of Proposition \ref{m-approx-CARMA}}\label{Appendix: CARMA}
We focus on the case that $\bm{g}$ is a diagonal matrix since the proof for the general case is similar. 
\subsection{Proof of Proposition \ref{m-approx-CARMA} (i)}
Consider the following decomposition: 
\begin{align*}
\bY(\bs) &= \int_{\R^{d}}\bm{g}(\|\bs - \bm{u}\|)\psi_{0}\left(\|\bs - \bm{u}\| : m_{n}\right)\bm{L}(d\bm{u}) + \int_{\R^{d}}\bm{g}(\|\bs - \bm{u}\|)\left(1 - \psi_{0}\left(\|\bs - \bm{u}\| : m_{n}\right)\right)\bm{L}(d\bm{u})\\
&=: \bX^{(m_{n})}(\bs) + \bm{\Upsilon}^{(m_{n})}(\bs).
\end{align*}
Recall that $\bX^{(m_{n})}$ is $m_{n}$-dependent (with respect to the $\ell^2$-norm), i.e., $\bX^{(m_n)}(\bs_{1})$ and $\bX^{(m_n)}(\bs_{2})$ are independent when $\|\bs_{1}-\bs_{2}\|\geq m_n$. For any even integer $q \geq 8$, we have 
\begin{align*}
E_{\cdot \mid \bZ}[(\Upsilon^{(m_n)}_{j}(\bs_{i}))^{q}] &\leq \overline{M}\int_{\R^{d}}g_{j}^{q}(\|\bm{u}\|)\left(1- \psi_{0}\left(\|\bm{u}\| : m_{n}\right)\right)^{q}d\bm{u}\\
& \leq \overline{M}\xi_{0}\int_{\|\bm{u}\| \geq m_{n}/4}e^{-qr_{0}\|\bm{u}\|}\left|1 + {4 \over m_{n}}\left(\|\bm{u}\| - {m_{n} \over 2}\right)\right|^{q}d\bm{u}\\
& \leq \overline{M}\xi_{0}\int_{\|\bm{u}\| \geq m_{n}/4}e^{-qr_{0}\|\bm{u}\|}\left|1 + {4\|\bm{u}\| \over m_{n}}\right|^{q}d\bm{u}\\
&\leq 2^{q-1}\overline{M}\xi_{0}\int_{\|\bm{u}\| \geq m_{n}/4}e^{-qr_{0}\|\bm{u}\|}\left(1 + {4^{q}\|\bm{u}\|^{q} \over m_{n}^{q}}\right)d\bm{u}\\
&\lesssim \overline{M}\xi_{0}\int_{m_{n}/4}^{\infty}e^{-qr_{0}t}\left(1 + {4^{q}t^{q} \over m_{n}^{q}}\right)t^{d-1}dt \\
&\lesssim \overline{M}\xi_{0}m_{n}^{d-1}e^{-{qr_{0}m_{n} \over 4}}.
\end{align*}
By Markov's inequality and Lemma 2.2.2 in \cite{vaWe96}, we have 
\begin{align*}
P_{\cdot \mid \bZ}\left(\left\|\sum_{i=1}^{n}\bm{\Upsilon}^{(m_{n})}(\bs_{i})\right\|_{\infty} > \varrho \right) &\leq \varrho^{-1}E_{\cdot \mid \bZ}\!\left[\left\|\sum_{i=1}^{n}\bm{\Upsilon}^{(m_{n})}(\bs_{i})\right\|_{\infty}\right]\\ 
&\lesssim \varrho^{-1}p^{2/q}\max_{1 \leq j \leq p}\!\left(\!E_{\cdot \mid \bZ}\!\left[\left|\sum_{i=1}^{n}\Upsilon^{(m_n)}_{j}(\bs_{i})\right|^{q/2}\right]\right)^{2/q}  \\
&\leq \varrho^{-1}p^{2/q}\max_{1 \leq j \leq p}\!\left(\!E_{\cdot \mid \bZ}\left[n^{q/2}\max_{1 \leq i \leq n}\left|\Upsilon^{(m_n)}_{j}(\bs_{i})\right|^{q/2}\right]\right)^{2/q}\\
&\lesssim \varrho^{-1}p^{2/q}n\max_{1 \leq j \leq p}\!\!\left(\!n^{1/2}\!\max_{1 \leq i \leq n}\!\left(E_{\cdot \mid \bZ}\left[\left|\Upsilon^{(m_n)}_{j}(\bs_{i})\right|^{q}\right]\right)^{1/2}\!\right)^{2/q}\\
&\lesssim \varrho^{-1}p^{2/q}n^{(q+1)/q}\!\!\max_{1 \leq j \leq p}\!\left(E_{\cdot \mid \bZ}[|\Upsilon^{(m_n)}_{j}(\bs_{1})|^{q}]\right)^{1/q}\\
&\lesssim \varrho^{-1}p^{2/q}n^{(q+1)/q}m_{n}^{(d-1)/q}e^{-{r_{0}m_{n} \over 4}}.
\end{align*}
To obtain the fifth inequality, we used stationarity of $\bm{\Upsilon}^{(m_{n})}$. Let $\varrho = r_{n}^{1/2}$. Since $\lambda_{n}^{d/2} \lesssim n^{1/2}$, we have  
\begin{align*}
P_{\cdot \mid \bZ}\left(r_{n}^{-1/2}\left\|\sum_{i=1}^{n}\bm{\Upsilon}^{(m_{n})}(\bs_{i})\right\|_{\infty} >1 \right) &\lesssim p^{2/q}n^{1/q}\lambda_{n}^{d/2}m_{n}^{(d-1)/q}e^{-{r_{0}m_{n} \over 4}}\\
& \lesssim p^{2/q}n^{1/q +1/2}m_{n}^{(d-1)/q}e^{-{r_{0}m_{n} \over 4}}.
\end{align*}
Since $p = O(n^{\alpha})$ and $m_{n} = \delta \log n$, we have 
\begin{align*}
p^{2/q}n^{1/q+1/2}m_{n}^{(d-1)/q}e^{-{r_{0}m_{n} \over 4}} = n^{(\alpha+1)/q + 1/2}\delta^{(d-1)/q}(\log n)^{(d-1)/q}n^{-\delta r_{0}/4}.
\end{align*}
Thus, when $\delta > {2(2\alpha +q + 2) \over r_{0}q}$, we have
\begin{align*}
n^{(\alpha+1)/q + 1/2 - \delta r_{0}/4}(\log n)^{(d-1)/q} &\lesssim n^{-2\zeta}(\log n)^{-1/2}
\end{align*}
for some $\zeta>0$, i.e., 
\[
p^{2/q}n^{1/q+1/2}m_{n}^{(d-1)/q}e^{-{r_{0}m_{n} \over 4}} \lesssim n^{-2\zeta}(\log p)^{-1/2}. 
\]
This yields the desired result. \qed

\subsection{Proof of Proposition \ref{m-approx-CARMA} (ii)} 

Recall that diagonal components of $\bm{g}(\|\bm{v}\|)$ are of the form $g_{j,j}(\|\bm{v}\|) = \xi_{j,j}e^{-r_{j,j}\|\bm{v}\|}$. In this case, $\bY(\bs) = (Y_{1}(\bs), \dots, Y_{p}(\bs))'$ is given by
\[
Y_{j}(\bs) = \int_{\R^{d}}g_{j,j}(\|\bs - \bm{v}\|)L_{j}(d\bm{v}),\ j=1,\dots, p. 
\]
Then the marginal L\'evy density of $Y_{j}(\bs)$ is given by
\begin{align*}
\int_{\R^{d}}{1 \over \left|g_{j,j}(\|\bm{v}\|)\right|}\nu_{0,j}\left({x \over g_{j,j}(\|\bm{v}\|)}\right)d\bm{v}.
\end{align*}
See \cite{KaRoSpWa19} and \cite{Sa06} for more details. In the following proofs of (a) and (b), we will verify that we can take $M_{n} = n^{M_{0}}$ with small $M_{0}>0$. 

\subsubsection{Proof of Part (a)} The representation of the characteristic function of $\bY(\bs)$ given in Equation (\ref{CF-CARMA-RF}) implies that if $\bm{L}(\cdot)$ is Gaussian with triplet $(\bm{\gamma}_{0}, \Sigma_{0}, 0)$, then $\bY(\bs)$ is also Gaussian with mean $\int_{\R^{d}}\bm{g}(\|\bm{v}\|)\bm{\gamma}_{0}d\bm{v}$ and the covariance matrix
\[
\int_{\R^{d}}\bm{g}(\|\bm{v}\|)\Sigma_{0}\bm{g}(\|\bm{v}\|)'d\bm{v}. 
\]
Thus, $\|\bm{\gamma}_{0}\|_{\infty}\leq \overline{M}<\infty$ and $\max_{1 \leq j \leq p}\sigma_{0,j,j} \leq \overline{M} <\infty$ imply $\max_{1 \leq j \leq p}\|Y_{j}(\bs)\|_{\psi_{1}} \leq D_{0}$ for some $D_{0}<\infty$. This also implies that $\max_{1 \leq j \leq p}\|X^{(m_n)}_{j}(\bs)\|_{\psi_{1}} \leq D_{0}$, where $X^{(m_n)}_{j}$ is the $j$-th component of $\bX^{(m_{n})}$. Moreover, by Lemma 2.2.2 in \cite{vaWe96}, we have
\begin{align*}
E[\|\bm{X}^{(m_{n})}(\bs)\|_{\infty}^{q}] &\lesssim (\log p)^{q}\!\left(\max_{1 \leq j \leq p}\|X^{(m_n)}_{j}(\bs)\|_{\psi_{1}}\!\right)^{q}\!\!\!\\ 
&= \alpha^{q} (\log n)^{q}\!\left(\max_{1 \leq j \leq p}\|X^{(m_n)}_{j}(\bs)\|_{\psi_{1}}\!\right)^{q}\!\!\! = O((\log n)^{q}).
\end{align*}
Conclude that we can take $M_{n} = n^{M_{0}}$ with small $M_{0}>0$.

Now we move on to the proof of Part (b).
\subsubsection{Verification of Assumption \ref{ass: design2} Condition (i')}
Since $Y_{j}(\bs)$ is infinitely divisible, for $q \geq 1$, $E[|Y_{j}(\bs)|^{q}] = O(1)$ if and only if $\int_{|x|>1}|x|^{q}\nu_{0,j}(x)dx = O(1)$ (cf. Theorem 25.3 in \cite{Sa99}). Set $\underline{r} = r_{0} \wedge r_{0}^{\overline{M}}$ and $q > 1 \vee (\xi_{0}/\underline{r}-1)$. Observe that
\begin{align*}
\int_{\R^{d}}\int_{|z_{1}|\geq 1}\!\!\!|z_{1}|^{q}\nu_{0,1}\left({z_{1} \over g_{1,1}(\|\bm{v}\|)}\right)dz_{1}{1 \over \left|g_{1,1}(\|\bm{v}\|)\right|}d\bm{v} \lesssim \int_{\R^{d}}\!\!e^{\xi_{0}\|\bm{v}\|}\int_{|z_{1}| \geq 1}\!\!\!\!|z_{1}|^{q-\beta_{1}-1}e^{-\underline{M}e^{\underline{r}\|\bm{v}\|}|z_{1}|^{\alpha_{0,1}}}dz_{1}d\bm{v}.
\end{align*}
Define $c'_{1} = \underline{M}e^{\underline{r}\|\bm{v}\|}$. Then we have
\begin{align*}
\int_{|z_{1}| \geq 1}|z_{1}|^{q-\beta_{1}-1}e^{-c'_{1}|z_{1}|^{\alpha_{0,1}}}dz_{1} &\lesssim \int_{\R}|z_{1}|^{q}e^{-c'_{1}|z_{1}|}dz_{1} = 2\int_{0}^{\infty}z_{1}^{q}e^{-c'_{1}z_{1}}dz_{1} = {2q! \over (c'_{1})^{q+1}}.
\end{align*}
Thus, 
\begin{align*}
\int_{\R^{d}}e^{\xi_{0}\|\bm{v}\|}\int_{\R}|z_{1}|^{q}e^{-c'_{1}|z_{1}|}dz_{1}d\bm{v} &\lesssim {2q! \over \underline{M}^{q+1}}\int_{\R^{d}}e^{-((q+1)\underline{r}-\xi_{0})\|\bm{v}\|}d\bm{v}<\infty.
\end{align*}
This implies that $E[|Y_{j}(\bs)|^{q}] = O(1)$ for any $q > 1 \vee (\xi_{0}/\underline{r}-1)$. Moreover, when $\bm{g}$ is non-diagonal, we can also show that for any $q > 1 \vee (\xi_{0}/\underline{r}-1)$, $\max_{1 \leq j \leq p}E[|Y_{j}(\bs)|^{q}] \leq M_{0,q}<\infty$ where $M_{0,q}$ is independent of $n$, provided that
\begin{align*}
\xi_{j,j} &\neq 0, \max_{1 \leq j \leq p}[\![\{ k  : \xi_{j,k} \neq 0,k \neq j\}]\!] \leq \overline{M}, \\
r_{0}&\leq r_{j,k}\leq \xi_{0},\ |\xi_{j,k}| \leq \xi_{0},\ |\gamma_{0,j}|\leq \overline{M},\\
1 &\leq \alpha_{0,j} \leq \overline{M},\ -1\leq \beta_{j}\leq M_{\beta},\ \underline{M} \leq \overline{c}_{j}, C_{j} \leq \overline{M},
\end{align*}
for all $1 \leq j,k \leq p$. Indeed, define $Y_{j,k}(\bm{s}) = \xi_{j,k}\int_{\mathbb{R}^{d}}e^{-r_{j,k}\|\bm{s}\|}L_{k}(d\bm{s})$ and then we have $Y_{j}(\bm{s}) = \sum_{k=1,\xi_{j,k} \neq 0}^{p}Y_{j,k}(\bm{s})$ and 
\begin{align*}
 |Y_{j}(\bm{s})|^{q} &\lesssim \sum_{k=1,\xi_{j,k} \neq 0}^{p}|Y_{j,k}(\bm{s})|^{q} \leq \overline{M}\max_{1 \leq j,k \leq p,\xi_{j,k} \neq 0}|Y_{j,k}(\bm{s})|^{q}   
\end{align*}
for $q > 1 $. Similarly to the case when $\bm{g}$ is diagonal, we have $\max_{1 \leq j,k \leq p,\xi_{j,k} \neq 0}E[|Y_{j,k}(\bm{s})|^{q}] \leq M'_{0,q}<\infty$ where $M'_{0,q}$ is independent of $n$. 
Thus, for any $q > 1 \vee (\xi_{0}/\underline{r}-1)$ and $M'_{0}>1$, 
\begin{align*}
E[\|\bY(\bs)\|_{\infty}^{q}] &\leq \left(E[\|\bY(\bs)\|_{\infty}^{qM'_{0}}]\right)^{1/M'_{0}} \\
&\leq \left(E\left[\sum_{j=1}^{p}|Y_{j}(\bs)|^{qM'_{0}}\right]\right)^{1/M'_{0}} \\
&\lesssim \left(E\left[\sum_{j,k=1,\xi_{j,k}\neq 0}^{p}|Y_{j,k}(\bs)|^{qM'_{0}}\right]\right)^{1/M'_{0}}\\
&\leq \overline{M}^{1/M'_{0}}p^{1/M'_{0}}\max_{1 \leq j,k \leq p,\xi_{j,k} \neq 0}\left(E[|Y_{j,k}(\bs)|^{qM'_{0}}]\right)^{1/M'_{0}} \\
&= O(n^{\alpha /M'_{0}}).
\end{align*}
Likewise, we have $E[\|\bm{X}^{(m_{n})}(\bs)\|_{\infty}^{q}] = O(n^{\alpha /M'_{0}})$. Conclude that we can take $M_{n} = n^{M_{0}}$ with small $M_{0} = \alpha/M'_{0}>0$. From  the proof of Proposition \ref{m-approx-CARMA} (i) we also have $\phi_{n,\Upsilon} = e^{-M_{0,1}\lambda_{2,n}^{1/(\theta d)}}$ for some $M_{0,1}>0$. Hence we have verified Assumption \ref{ass: design2} Condition (i').

In the following proof, we will verify other regularity conditions, i.e. Condition (iv') in Assumption \ref{ass: design2}, Condition (\ref{cov_matrix_tail}), and Condition (\ref{HDGA_cov_ass}) in Assumption \ref{ass: design} (v).
\subsubsection{Verification of (\ref{high-level-condi-HDCLT}) in Assumption \ref{ass: design2} Condition (iv')}\label{C23}
Proposition \ref{m-approx-CARMA} (i) implies that we can assume that $\bX^{(m_{n})}$ is $\lambda_{2,n}^{1/(\theta d)}$-dependent with respect to the $\ell^2$-norm, i.e., $m_{n} = \lambda_{2,n}^{1/(\theta d)}$. This implies that 
\begin{align*}
\overline{\beta}_{q} &=1+\sum_{k=1}^{\lambda_{1,n}}k^{d-1}\beta_{1}^{1-2/q}(k) \sim \sum_{k=1}^{\lambda_{2,n}^{1/\theta d}}k^{d-1}\beta_{1}^{1-2/q}(k) = O(\lambda_{2,n}^{1/(\theta d)} \times \lambda_{2,n}^{(d-1)/(\theta d)}) = O(\lambda_{2,n}^{1/\theta}).
\end{align*}
Observe that
\begin{align*}
{1 \over \sqrt{\lambda_{2,n}}} &\sim {1 \over \lambda_{1,n}^{c_{1}/2}},\ M_{n}\sqrt{\overline{\beta}_{q}\lambda_{2,n} \over \lambda_{1,n}} \sim {n^{M_{0}} \over \lambda_{1,n}^{(1-c_{1}(\theta+1)/\theta)/2}},
\end{align*}
\begin{align*}
{\lambda_{1,n}^{d} \over \lambda_{n}^{d/2}}M_{n}^{2} \sim {n^{2M_{0}} \over \lambda_{n}^{d(1-c_{0})/2}},\ {M_{n}^{2}\lambda_{1,n}^{3d} \over (n^{2}\lambda_{n}^{-d})^{1-2/q}} \lesssim {n^{2M_{0}}\lambda_{n}^{3dc_{0}/2} \over n^{1-2/q}}.
\end{align*}
Since we can take $M_{0}$ sufficiently small, we can find $0<c'<c<1/2$ that satisfies Assumption \ref{ass: design2} Condition (iv') with $p$, $\lambda_{1,n}$, $\lambda_{2,n}$, $\lambda_{n}$ and $M_{n}$ in the assumption of Proposition \ref{m-approx-CARMA}.

\subsubsection{Verification of (\ref{asy-neg-Upsilon2}) in Assumption \ref{ass: design2} Condition (iv')}\label{C24}
The proof of Proposition \ref{m-approx-CARMA} (i) implies that we can take $\phi_{n,\Upsilon} = e^{-M_{0,1}\lambda_{2,n}^{1/(\theta d)}}$ for some $M_{0,1}>0$. Since $\phi_{n,\Upsilon}$ decays exponentially fast as $n \to \infty$, Assumption \ref{ass: design2} Condition (iv') is satisfied. 

\subsubsection{Verification of Conditions (\ref{HDGA_cov_ass}) and  (\ref{cov_matrix_tail})}\label{C25}
We first verify Condition (\ref{cov_matrix_tail}). Observe that
\begin{align*}
\left|\Cov(Y_{j_{1}}(\bs), Y_{j_{2}}(\bm{0}))\right| &= \left|\sigma_{j_{1},j_{2}}\int_{\R^{d}}g_{j_{1},j_{1}}(\|\bm{v}\|)g_{j_{2},j_{2}}(\|\bs - \bm{v}\|)d\bm{v}\right|\\
&\quad\leq \xi_{0}^{2} |\sigma_{j_{1},j_{2}}|\int_{\R^{d}}e^{-r_{j_{1},j_{1}}\|\bm{v}\|}e^{-r_{j_{2},j_{2}}\|\bs - \bm{v}\|}d\bm{v},
\end{align*} 
where $\sigma_{j_{1},j_{2}} = \Cov(L_{j_{1}}([0,1]^{d}), L_{j_{2}}([0,1]^{d})) \leq \sqrt{\Var(L_{j_{1}}([0,1]^{d}))}\sqrt{\Var(L_{j_{2}}([0,1]^{d}))} \leq \overline{M}$.
Without loss of generality, we can assume that $r_{j_{1},j_{1}} \geq r_{j_{2},j_{2}}$. Note that $(\|\bs\| - \|\bm{v}\|)/2 \leq \|\bs - \bm{v}\|$. Then we have 
\begin{align*}
\left|\Cov(Y_{j_{1}}(\bs), Y_{j_{2}}(\bm{0}))\right| &\leq \xi_{0}^{2}\overline{M}\int_{\R^{d}}e^{-r_{j_{1},j_{1}}\|\bm{v}\|}e^{-r_{j_{2},j_{2}}(\|\bs\| - \|\bm{v}\|)/2}d\bm{v}\\
&= \xi_{0}^{2}\overline{M}e^{-r_{j_{2},j_{2}}\|\bs\|/2}\int_{\R^{d}}e^{-(r_{j_{1},j_{1}} - r_{j_{2},j_{2}}/2)\|\bm{v}\|}d\bm{v} \lesssim \xi_{0}^{2}\overline{M}e^{-r_{0}\|\bs\|/2}.
\end{align*}
This yields that $\left|\Cov(Y_{j_{1}}(\bs), Y_{j_{2}}(\bm{0}))\right|$ decays exponentially fast as $\|\bs\|\to\infty$. Likewise, we have 
\[
|\Sigma_{j_{1},j_{2}}(\bs)| = \left|\Cov(X_{j_{1}}(\bs), X_{j_{2}}(\bm{0}))\right| \lesssim \xi_{0}^{2}\overline{M}e^{-r_{0}\|\bs\|/2}.
\]
Thus, (\ref{cov_matrix_tail}) is satisfied. Condition (\ref{HDGA_cov_ass}) immediately follows from the assumption of Part (b) and the argument in the proof of Proposition \ref{m-approx-CARMA} (i). This completes the overall proof. \qed

\begin{remark}[Non-Gaussian L\'evy-driven random fields with exponential moments]\label{bdd-supp-jump}
From Theorem 25.3 in \cite{Sa99}, $E[e^{|Y_{j}(\bs)|}]<\infty$ if and only if $\int_{|x|>1}e^{|x|}\nu_{0,j}(x)dx<\infty$. Assume that the supports $\text{Supp}(\nu_{0,j})$ of $\nu_{0,j} \ (1 \leq j \leq p)$ are uniformly bounded, i.e. there exists a constant $M>0$ independent of $n$ such that $\text{Supp}(\nu_{0,j}) \subset [-M,M]$ and 
\begin{align*}
\max_{1 \leq j \leq p}|\nu_{0,j}(x)| \leq \overline{M}e^{-\overline{c}_{0}|x|},\ x \in \R,
\end{align*}
for some $0<\overline{M}<\infty$ and $\overline{c}_{0}>1$ that are independent of $n$. 
This implies that the L\'evy measures $L_{j}(\cdot)$ have bounded jumps. Let $c'_{1} = \overline{c}_{0}e^{r_{1,1}\|\bm{v}\|}$. Then we have 
\begin{align*}
&\int_{\R^{d}}\int_{|z_{1}|\geq 1}e^{|z_{1}|}\nu_{0,1}\left({z_{1} \over g_{1,1}(\|\bm{v}\|)}\right)dz_{1}{1 \over \left|g_{1,1}(\|\bm{v}\|)\right|}d\bm{v}\\
&\lesssim \overline{M}\int_{\R^{d}}e^{r_{1,1}\|\bm{v}\|}\int_{1 \leq |z_{1}| \leq (M \vee 1)}e^{-(c'_{1}-1)|z_{1}|}dz_{1}d\bm{v} \lesssim \overline{M}\int_{\R^{d}}e^{-(\overline{c}_{0}e^{r_{0}\|\bm{v}\|} - \xi_{0}\|\bm{v}\|-1)}d\bm{v} < \infty,
\end{align*}
which implies that $E[e^{|Y_{j}(\bs)|}]$ (and $E[e^{|X^{(m_n)}_{j}(\bs)|}]$) is bounded uniformly over $1 \leq j \leq p$.
By Lemma 2.2.2 in \cite{vaWe96}, we have
\begin{align*}
E[\|\bm{X}^{(m_{n})}(\bs)\|_{\infty}^{q}] &\lesssim (\log p)^{q}\!\left(\max_{1 \leq j \leq p}\|X^{(m_n)}_{j}(\bs)\|_{\psi_{1}}\!\right)^{q} \\
&= \alpha^{q} (\log n)^{q}\!\left(\max_{1 \leq j \leq p}\|X^{(m_n)}_{j}(\bs)\|_{\psi_{1}}\!\right)^{q} \\
&= O((\log n)^{q}).
\end{align*}
Then, arguing as in Subsection \ref{C23}, \ref{C24} and \ref{C25}, we can verify that the conditions in Assumption \ref{ass: design} are satisfied  with $\lambda_{n}$, $\lambda_{1,n}$, $\lambda_{2,n}$, $D_{n}$ and $\delta_{n,\Upsilon}$ given in Remark \ref{Ass4.1CARMA}.
\end{remark}

\section{Proof of Proposition \ref{asy_valid_SDWB_CI}}\label{Appendix: CB_proof}

We wish to show that $P_{\cdot \mid \bZ}(\bm{\mu} \in \hat{C}^{\ast}_{1-\tau}) = 1-\tau + o(1)$.
Note that 
\[
\bm{\mu} \in \hat{C}^{\ast}_{1- \tau}\Leftrightarrow \sqrt{\lambda_{n}^{d}}\max_{1 \leq j \leq p}\left|{\overline{Y}_{j,n}- \mu_{j} \over \sqrt{\hat{\Sigma}_{j,j}^{\bm{V}_{n}}}}\right| \leq  \hat{q}_{n}(1-\tau).
\]
From  Subsections \ref{B11-BootHD}-\ref{B13-BootHD} in Appendix \ref{Append_B}, we have that with $P_{\cdot \mid \bZ}$-probability at least $1- O(n^{-c'/6}) - 2C(\lambda_{n}/\lambda_{1,n})^{d}\beta(\lambda_{2,n};\lambda_{n}^{d})$,
\begin{align*}
    \max_{1 \leq j,k \leq p}\left|\hat{\Sigma}_{j,k}^{\bm{V}_{n}} - \Sigma_{j,k}^{\bm{V}_{n}}\right| &\leq \breve{\Delta} + \hat{\Delta} + O(n^{-c'/6}\log^{-2} p)\\
    &= O(n^{-5c'/6}\log^{-2}p) + O(n^{-c'/6}\log^{-2} p) \\
    &= O(n^{-c'/6}\log^{-2} p).
\end{align*}
Since $\Sigma_{j,j}^{\bm{V}_{n}}$ are  bounded and  bounded away from $0$ uniformly over $1 \leq j \leq p$ (this follows from Lemma \ref{variance-rate}), we have that with $P_{\cdot \mid \bZ}$-probability at least $1- O(n^{-c'/6}) - 2C(\lambda_{n}/\lambda_{1,n})^{d}\beta(\lambda_{2,n};\lambda_{n}^{d})$,
\begin{align}
\max_{1 \leq j \leq p}\left|\sqrt{\Sigma_{j,j}^{\bm{V}_{n}} \over \hat{\Sigma}_{j,j}^{\bm{V}_{n}}} - 1\right| &\leq \max_{1 \leq j \leq p}\left|\sqrt{\Sigma_{j,j}^{\bm{V}_{n}} \over \hat{\Sigma}_{j,j}^{\bm{V}_{n}}} - 1\right|\left|\sqrt{\Sigma_{j,j}^{\bm{V}_{n}} \over \hat{\Sigma}_{j,j}^{\bm{V}_{n}}} + 1\right| = \max_{1 \leq j \leq p}\left|{\Sigma_{j,j}^{\bm{V}_{n}} \over \hat{\Sigma}_{j,j}^{\bm{V}_{n}}} - 1\right| \nonumber \\
&\leq \max_{1 \leq j \leq p}\left|{1 \over \hat{\Sigma}_{j,j}^{\bm{V}_{n}}}\right|\max_{1 \leq j \leq p}\left|\hat{\Sigma}_{j,j}^{\bm{V}_{n}} - \Sigma_{j,j}^{\bm{V}_{n}}\right|  \leq {O\left(n^{-c'/6}\log^{-2}p\right) \over \min_{1 \leq j \leq p}\Sigma_{j,j}^{\bm{V}_{n}} - \max_{1 \leq j \leq p}\left|\hat{\Sigma}_{j,j}^{\bm{V}_{n}} - \Sigma_{j,j}^{\bm{V}_{n}}\right|}  \nonumber \\
&= O(1) \times O\left(n^{-c'/6}\log^{-2}p\right) = O\left(n^{-c'/6}\log^{-2}p\right). \label{SDWB-var-rate}
\end{align}

Observe that 
\begin{align*}
&{\lambda_{n}^{d/2}(\overline{Y}_{j,n}- \mu_{j}) \over \sqrt{\hat{\Sigma}_{j,j}^{\bm{V}_{n}}}} = \sqrt{{\Sigma_{j,j}^{\bm{V}_{n}} \over \hat{\Sigma}_{j,j}^{\bm{V}_{n}}}} \cdot {\lambda_{n}^{d/2}(\overline{Y}_{j,n}- \mu_{j}) \over \sqrt{\Sigma_{j,j}^{\bm{V}_{n}}}}\\
&\quad = {\lambda_{n}^{d/2}(\overline{Y}_{j,n}- \mu_{j}) \over \sqrt{\Sigma_{j,j}^{\bm{V}_{n}}}} + \underbrace{\left(\sqrt{{\Sigma_{j,j}^{\bm{V}_{n}} \over \hat{\Sigma}_{j,j}^{\bm{V}_{n}}}}-1\right)}_{=O_{P_{\cdot \mid \bZ}}(n^{-c'/6}\log^{-2}p)} \cdot \underbrace{{\lambda_{n}^{d/2}(\overline{Y}_{j,n}- \mu_{j}) \over \sqrt{\Sigma_{j,j}^{\bm{V}_{n}}}}}_{=O_{P_{\cdot \mid \bZ}}(\log^{1/2} p)} \quad (\text{from (\ref{SDWB-var-rate}) and (\ref{max-rate-Xmean})})\\
&\quad = {\lambda_{n}^{d/2}(\overline{Y}_{j,n}- \mu_{j}) \over \sqrt{\Sigma_{j,j}^{\bm{V}_{n}}}} + O_{P_{\cdot \mid \bZ}}\left( n^{-c'/6}\log^{-3/2}p \right)
\end{align*}
uniformly over $1 \leq j \leq p$. Thus, Theorems \ref{Thm: GA_hyperrec} and \ref{Thm: GA_hyperrec-m} imply that there exists a sequence
of constants $\rho_{n,1} \to 0$ such that
\[
\sup_{t \in\R}\left|P_{\cdot \mid \bZ}\left( \sqrt{\lambda_{n}^{d}}\max_{1 \leq j \leq p}\left|{\overline{Y}_{j,n}- \mu_{j} \over \sqrt{\hat{\Sigma}_{j,j}^{\bm{V}_{n}}}}\right| \leq t\right) - P_{\cdot \mid \bZ}\left( \max_{1 \leq j \leq p}\left|{V_{j,n} \over \sqrt{\Sigma_{j,j}^{\bm{V}_{n}}}}\right| \leq t\right) \right|\leq \rho_{n,1}.
\]
Likewise, (\ref{SDWB-var-rate}) and Theorem \ref{DWBvalidity: HR} yield that there exists a sequence of constants $\rho_{n,2} \to 0$
such that
\[
\sup_{t\in\R}\left|P_{\cdot|\bY,\bZ}\left(\sqrt{\lambda_{n}^{d}}\max_{1 \leq j \leq p}\left|{\overline{Y}_{j,n}^{\ast}- \overline{Y}_{j,n} \over \sqrt{\hat{\Sigma}_{j,j}^{\bm{V}_{n}}}}\right| \leq t \right) - P_{\cdot \mid \bZ}\left(  \max_{1 \leq j \leq p}\left|{V_{j,n} \over \sqrt{\Sigma_{j,j}^{\bm{V}_{n}}}}\right| \leq t \right) \right| \leq \rho_{n,2}.
\]
Let $\Omega_{n}$ denote the event on which these inequalities hold
and let $q_{n}(1-\tau)$ denote the $(1-\tau)$-th quantile of $\max_{1 \leq j \leq p}|V_{j,n}/\sqrt{\Sigma_{j,j}^{\bm{V}_{n}}}|$.
Note that $P_{\cdot \mid \bZ}\left(\Omega_{n}\right) \to 1$ as $n \to \infty$. Define $\rho'_{n}= \rho_{n,1} \vee \rho_{n,2}$ (or we take $\rho'_{n}$ sufficiently slow if necessary).
Then on $\Omega_{n}$, we have
\begin{align*}
&P_{\cdot|\bY,\bZ}\!\left(\sqrt{\lambda_{n}^{d}}\max_{1 \leq j \leq p}\left|{\overline{Y}_{j,n}^{\ast}- \overline{Y}_{j,n} \over \sqrt{\hat{\Sigma}_{j,j}^{\bm{V}_{n}}}}\right| \leq q_{n}(1-\tau + \rho'_{n}) \! \right) \\
&\geq P_{\cdot \mid \bZ}\! \left(  \max_{1 \leq j \leq p}\left|{V_{j,n} \over \sqrt{\Sigma_{j,j}^{\bm{V}_{n}}}}\right| \leq q_{n}(1-\tau + \rho'_{n}) \! \right) -\rho'_{n}\\
&=1-\tau.
\end{align*}
We used the continuity of the distribution of $\max_{1 \leq j \leq p}|V_{j,n}/\sqrt{\Sigma_{j,j}^{\bm{V}_{n}}}|$
to obtain the last equation. 
This yields that on $\Omega_{n}$, $\hat{q}_{n}(1-\tau) \leq q_{n}(1-\tau+\rho'_{n})$.
Likewise, we have $q_{n}(1-\tau-\rho'_{n}) \leq \hat{q}_{n}(1-\tau)$ on $\Omega_{n}$. Then we have 
\begin{align*}
 & P_{\cdot \mid \bZ}\!\! \left( \sqrt{\lambda_{n}^{d}}\max_{1 \leq j \leq p}\left|{\overline{Y}_{j,n}- \mu_{j} \over \sqrt{\hat{\Sigma}_{j,j}^{\bm{V}_{n}}}}\right| \leq \hat{q}_{n}(1-\tau)\!\! \right) \\
 &\leq P_{\cdot \mid \bZ}\!\! \left(\sqrt{\lambda_{n}^{d}}\max_{1 \leq j \leq p}\left|{\overline{Y}_{j,n}- \mu_{j} \over \sqrt{\hat{\Sigma}_{j,j}^{\bm{V}_{n}}}}\right| \leq q_{n}(1-\tau + \rho'_{n})\!\! \right) +o(1)\\
 & \quad= P_{\cdot \mid \bZ}\left(\max_{1 \leq j \leq p}\left|{V_{j,n} \over \sqrt{\Sigma_{j,j}^{\bm{V}_{n}}}}\right| \leq q_{n}(1-\tau + \rho'_{n})\right) +o(1)\\
 & \quad =1-\tau+\rho'_{n}+o(1) =1-\tau+o(1).
\end{align*}
To obtain the third equation, we used the continuity of the distribution
of $\max_{1 \leq j \leq p}|V_{j,n}/\sqrt{\Sigma_{j,j}^{\bm{V}_{n}}}|$. Likewise, we
have
\[
P_{\cdot \mid \bZ}\left(\sqrt{\lambda_{n}^{d}}\max_{1 \leq j \leq p}\left|{\overline{Y}_{j,n}- \mu_{j} \over \sqrt{\hat{\Sigma}_{j,j}^{\bm{V}_{n}}}}\right| \leq \hat{q}_{n}(1-\tau)\right) \geq 1 - \tau - o(1).
\]
Further, by the Borel-Sudakov-Tsirelson inequality (Lemma A.2.2 in \cite{vaWe96}), we have
\[
q_{n}(1-\tau+\rho'_{n})\lesssim E_{\cdot \mid \bZ}\left[\max_{1 \leq j \leq p}|V_{j,n}/\sqrt{\Sigma_{j,j}^{\bm{V}_{n}}}|\right]+\sqrt{\log(1/(\tau-\rho'_{n}))} \lesssim \sqrt{\log p},
\]
which implies that $\hat{q}_{n}(1-\tau)=O_{P_{\cdot \mid \bZ}}(\sqrt{\log p})$.
Conclude that the maximum width of the joint confidence interval $\hat{C}_{1-\tau}$
is $2\lambda_{n}^{-d/2}\hat{q}_{n}(1-\tau)=O_{P_{\cdot \mid \bZ}}\left(\lambda_{n}^{-d/2}\sqrt{\log p}\right)$. \qed

\section{Technical tools}

Here we collect technical tools we used in the proofs. 

\begin{lemma}[A useful maximal inequality]
\label{lem: maximal inequality}
Let $\bZ_1,\dots,\bZ_n$ be independent random vectors in $\R^p$ with $p \ge 2$. Define $M=\max_{1 \le i \le n}\|\bZ_i\|_{\infty}$ and $\sigma^2 = \max_{1 \le j \le p}\sum_{i=1}^n E[Z_{ij}^2]$. Then we have 
\[
E \left [ \left \| {\textstyle \sum}_{i=1}^n (\bZ_i - E[\bZ_i]) \right \|_{\infty}\right] \le K\left ( \sigma \sqrt{\log p} + \sqrt{E[M^2]} \log p \right ),
\]
where $K$ is a universal constant.
\end{lemma}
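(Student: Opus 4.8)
The plan is to run the standard symmetrization-plus-contraction argument, coupled with a self-bounding (quadratic) inequality for the ``variance proxy'' $A := E[\max_{1\le j\le p}\sum_{i=1}^n Z_{ij}^2]$. Write $S := \sum_{i=1}^n(\bZ_i - E[\bZ_i])$. First I would symmetrize: the symmetrization inequality gives $E[\|S\|_\infty] \le 2 E[\|\sum_{i=1}^n \varepsilon_i \bZ_i\|_\infty]$ with $(\varepsilon_i)$ an independent Rademacher sequence. Conditionally on $(\bZ_i)$, for each coordinate $j$ the sum $\sum_i \varepsilon_i Z_{ij}$ is sub-Gaussian with variance proxy $\sum_i Z_{ij}^2$, so the elementary maximal inequality for at most $p$ sub-Gaussian variables yields both $E_\varepsilon[\max_j|\sum_i\varepsilon_i Z_{ij}|] \le C\sqrt{\log p}\,\max_j (\sum_i Z_{ij}^2)^{1/2}$ and, in second moment, $E_\varepsilon[(\max_j|\sum_i\varepsilon_i Z_{ij}|)^2] \le C\log p\,\max_j\sum_i Z_{ij}^2$. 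Taking expectation over $(\bZ_i)$ and applying Jensen to the first bound gives $E[\|S\|_\infty] \le C\sqrt{\log p}\,\sqrt{A}$, so it remains to control $A$.

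For $A$, I would use $\max_j\sum_i Z_{ij}^2 \le \sigma^2 + \max_j|\sum_i(Z_{ij}^2 - E[Z_{ij}^2])|$ and bound the second term by symmetrizing once more and then invoking the Ledoux--Talagrand contraction principle. Conditionally on $(\bZ_i)$, on the event $\|\bZ_i\|_\infty \le M$ for all $i$, the maps $x\mapsto x^2/(2M)$ are contractions vanishing at $0$, so $E_\varepsilon[\max_j|\sum_i\varepsilon_i Z_{ij}^2|] \le 4M\,E_\varepsilon[\max_j|\sum_i\varepsilon_i Z_{ij}|]$. Releasing the conditioning with Cauchy--Schwarz and then using the second-moment sub-Gaussian bound above gives $E[\max_j|\sum_i(Z_{ij}^2 - E[Z_{ij}^2])|] \le C\,\sqrt{E[M^2]}\,\bigl(E[(\max_j|\sum_i\varepsilon_i Z_{ij}|)^2]\bigr)^{1/2} \le C\,\sqrt{E[M^2]}\,\sqrt{\log p}\,\sqrt{A}$. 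Hence $A \le \sigma^2 + C\sqrt{E[M^2]}\sqrt{\log p}\,\sqrt{A}$, and resolving this via the inequality $c\sqrt{A}\le A/2 + c^2/2$ yields $A \le 2\sigma^2 + C^2\,E[M^2]\log p$.

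Plugging this back, $E[\|S\|_\infty] \le C\sqrt{\log p}\,(2\sigma^2 + C^2 E[M^2]\log p)^{1/2} \le K\bigl(\sigma\sqrt{\log p} + \sqrt{E[M^2]}\,\log p\bigr)$ for a universal constant $K$, which is the claim. The main obstacle is the bookkeeping around the randomness of $M$ in the contraction step: one must condition on $(\bZ_i)$ before invoking the contraction principle so that $M$ acts as a constant there, and only afterwards remove the conditioning with Cauchy--Schwarz; one also has to check that the constant in the self-bounding inequality for $A$ stays universal when the quadratic is resolved. Everything else is a routine application of symmetrization, the contraction inequality, and the $\sqrt{\log p}$ maximal bound for sub-Gaussian maxima.
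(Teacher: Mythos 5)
Your argument is correct: the paper does not prove this lemma at all but simply cites Lemma 8 of \cite{ChChKa15}, and your symmetrization--contraction--self-bounding argument is essentially the standard proof of that cited result (conditioning on the data before applying the contraction principle so that $M$ is constant, then resolving the quadratic inequality for $A$, exactly as needed). No gaps; this is a faithful self-contained reconstruction of the proof the paper outsources to the reference.
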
 
\begin{proof}
See Lemma 8 in \cite{ChChKa15}.
\end{proof}

The following anti-concentration inequality for Gaussian measures (called Nazarov's inequality in \cite{ChChKa17}), together with the Gaussian comparison inequality,  played  crucial roles in proving the high dimensional CLTs and asymptotic validity of the SWDB. 

\begin{lemma}[Nazarov's inequality]
	\label{lem: Nazarov}
	Let $\bm{G}=(G_1,\dots,G_p)'$ be a centered Gaussian vector in $\R^p$ such that $E[G^2_j]\ge \underline{\sigma}^{2}$ for all $j=1,\dots,p$ and some constant $\underline{\sigma}>0$. Then for every $\bm{t} = (t_1,\dots,t_p)' \in \R^p$ and $\delta >0$,
	\[
	P(\bm{G} \le \bm{t}+\delta )-P(\bm{G} \le \bm{t})\le \frac{\delta}{\underline{\sigma}} (\sqrt{2\log p} + 2). 
	\]
	Here $\bm{t}+\delta = (t_1+\delta,\dots,t_p+\delta)'$. 
\end{lemma}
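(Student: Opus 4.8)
The statement is Nazarov's anti-concentration inequality, and the plan is to follow the classical argument of Nazarov (2003) (see also the exposition underlying \cite{ChChKa17}), adapted to the stated normalization. First I would reduce to unit variances. Setting $v_j = \sqrt{E[G_j^2]} \ge \underline{\sigma}$ and $\tilde G_j = G_j/v_j$, the event $\{G \le t + \delta\bm 1\}$ equals $\{\tilde G_j \le (t_j + \delta)/v_j \ \forall j\}$, and since $(t_j + \delta)/v_j \le t_j/v_j + \delta/\underline{\sigma}$, coordinatewise monotonicity of $u \mapsto P(\tilde G \le u)$ gives
\[
P(G \le t + \delta\bm 1) - P(G \le t) \;\le\; P(\tilde G \le \tilde t + (\delta/\underline{\sigma})\bm 1) - P(\tilde G \le \tilde t), \qquad \tilde t := (t_j/v_j)_{j=1}^p .
\]
Thus it suffices to prove the inequality when all variances equal $1$ and with $\delta/\underline{\sigma}$ in place of $\delta$; by a routine approximation I may also assume the covariance of $\tilde G$ is nonsingular.

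Next, writing $F(s) = P(\tilde G \le \tilde t + s\bm 1)$ for $s \ge 0$, the quantity to bound is $F(\delta/\underline{\sigma}) - F(0) = \int_0^{\delta/\underline{\sigma}} F'(s)\,ds$, so it is enough to show $F'(s) \le \sqrt{2\log p} + 2$ for every $s$. Differentiating under the integral sign yields the facet decomposition
\[
F'(s) \;=\; \sum_{j=1}^p \varphi(u_j)\, P(\tilde G_k \le u_k \ \forall k \ne j \mid \tilde G_j = u_j), \qquad u := \tilde t + s\bm 1,
\]
where $\varphi$ is the standard normal density. I would split the sum according to whether $u_j \ge \sqrt{2\log p}$ or $u_j < \sqrt{2\log p}$. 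For indices in the first group, bound the conditional probability by $1$ and use $\varphi(u_j) \le \varphi(\sqrt{2\log p}) = (p\sqrt{2\pi})^{-1}$; since there are at most $p$ of them, their total contribution is at most $1/\sqrt{2\pi} < 1$. For the second group one exploits the log-concavity in $u_j$ of the map $u_j \mapsto P(\tilde G_k \le u_k\ \forall k \ne j \mid \tilde G_j = u_j)$ — a consequence of Prékopa's theorem, the conditional density of $\tilde G_{-j}$ given $\tilde G_j = u_j$ being a jointly log-concave function of $(\tilde G_{-j}, u_j)$ — together with the representation $F(s) = \int_{-\infty}^{u_j} \varphi(v)\, P(\tilde G_k \le u_k\ \forall k \ne j \mid \tilde G_j = v)\,dv$ to control each facet integral in terms of $F(s)$ itself; summing over this group and optimizing in the cutoff $\sqrt{2\log p}$ then yields, after the elementary bookkeeping with Gaussian tails, a bound of $\sqrt{2\log p} + 1$. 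Adding the two groups gives $F'(s) \le \sqrt{2\log p} + 2$, as required. An alternative route is the ``creating a gap'' smoothing argument (replacing $\max_j(\tilde G_j - u_j)$ by the soft-max $\beta^{-1}\log\sum_j e^{\beta(\tilde G_j - u_j)}$ and using anti-concentration of the latter), but it runs into the same crux.

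The main obstacle is precisely the small-threshold group: the naive estimate $\varphi(u_j) \le (2\pi)^{-1/2}$ there would introduce a spurious factor proportional to $p$, so obtaining the sharp logarithmic constant genuinely requires using the rectangular (product) structure of the event — which is the point of Nazarov's argument (equivalently, the bound $O(\sqrt{\log p})$ on the Gaussian surface area of a coordinate box). Since the inequality is needed only in the stated form and is by now standard, in the paper it would be recorded with a reference to Nazarov (2003) and \cite{ChChKa17} rather than with a full reproduction of the proof.
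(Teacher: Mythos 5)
Your outline is the standard Nazarov argument (variance normalization, facet decomposition of $F'(s)$, the $\sqrt{2\log p}$ threshold split, and log-concavity via Pr\'ekopa for the small-threshold facets), and it is correct as a sketch. The paper does not reproduce any of this: its proof of Lemma \ref{lem: Nazarov} is the single line ``See Lemma A.1 in \cite{ChChKa17}'', exactly as you anticipated in your closing remark, so there is nothing to reconcile beyond noting that your sketch is the argument underlying that cited lemma.
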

\begin{proof}
	See Lemma A.1 in \cite{ChChKa17}.
\end{proof}

\begin{lemma}[Gaussian comparison]
	\label{lem: Gaussian comparison}
	Let $\bm{G}$ and $\bm{V}$ be centered Gaussian random vectors in $\R^{p}$ with covariance matrices $\Sigma^{G} = (\Sigma_{j,k}^{G})_{1 \le j,k \le d}$ and $\Sigma^{V} = (\Sigma_{j,k}^{V})_{1 \le j,k \le p}$, respectively, and let $\Delta = \| \Sigma^{G} - \Sigma^{V} \|_{\infty} := \max_{1 \le j,k \le p} |\Sigma_{j,k}^{G} - \Sigma_{j,k}^{V}|$. 
	Suppose that $\min_{1 \le j \le p} \Sigma_{j,j}^{G} \bigvee \min_{1 \le j \le p} \Sigma_{j,j}^{V} \ge \underline{\sigma}^{2}$ for some constant $\underline{\sigma} > 0$. 
	Then
	\[
	\sup_{A \in \mathcal{A}} | P (\bm{G} \in A) - P (\bm{V} \in A) | \le C \Delta^{1/3} \log^{2/3} p,
	\]
	where $\mathcal{A} = \{ \prod_{j=1}^{p}[a_j,b_j] : -\infty \le a_j \le b_j \le \infty, 1 \le j \le p \}$ is the collection of closed rectangles in $\R^{p}$ and $C$ is a constant that depends only on $\underline{\sigma}$.
\end{lemma}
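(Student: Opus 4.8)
The plan is to prove this by the Slepian-type interpolation (``smart path'') argument combined with a double smoothing and Nazarov's anti-concentration inequality, following \cite{ChChKa13,ChChKa15}. First I would reduce the supremum over rectangles in $\R^{p}$ to a comparison of distributions of maxima of centered Gaussian vectors in $\R^{2p}$. For $A = \prod_{j=1}^{p}[a_j,b_j]$ one has $\bm{G}\in A$ iff $G_j - b_j \le 0$ and $a_j - G_j \le 0$ for all $j$, so with $L\in\R^{2p\times p}$ the matrix whose rows are $e_1,\dots,e_p,-e_1,\dots,-e_p$ and $\bm{W}^{G} = L\bm{G}$, $\bm{W}^{V} = L\bm{V}$, we get $\{\bm{G}\in A\} = \{\bm{W}^{G} \le \bm{t}_A\}$ with $\bm{t}_A = (b_1,\dots,b_p,-a_1,\dots,-a_p)'$. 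The entries of $L\Sigma^{G}L'$ are $\pm\Sigma^{G}_{ik}$, so $\|L\Sigma^{G}L' - L\Sigma^{V}L'\|_{\infty} \le \Delta$, while every diagonal entry of $L\Sigma^{G}L'$ (resp.\ $L\Sigma^{V}L'$) equals some $\Sigma^{G}_{ii}$ (resp.\ $\Sigma^{V}_{ii}$) and hence is $\ge \underline{\sigma}^{2}$. Absorbing the deterministic shifts $\bm{t}_A$ into the vectors, it suffices to bound $|P(\max_{k} W^{G}_k \le 0) - P(\max_{k} W^{V}_k \le 0)|$ uniformly, under the same covariance conditions with $p$ replaced by $2p$.

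Next I would smooth. For $\beta > 0$ set $F_\beta(w) = \beta^{-1}\log\big(\sum_{k=1}^{2p} e^{\beta w_k}\big)$, so that $\max_k w_k \le F_\beta(w) \le \max_k w_k + \beta^{-1}\log(2p) =: \max_k w_k + m$, with $\partial_k F_\beta = \pi_k \ge 0$, $\sum_k \pi_k = 1$, and $\sum_{k,\ell}|\partial^2_{k\ell} F_\beta| \le 2\beta$. For $\varphi > 0$ let $g=g_\varphi:\R\to[0,1]$ be a $C^{3}$ function with $g\equiv 1$ on $(-\infty,0]$, $g\equiv 0$ on $[\varphi,\infty)$, $\|g'\|_\infty \lesssim \varphi^{-1}$ and $\|g''\|_\infty \lesssim \varphi^{-2}$, and put $h(w) = g(F_\beta(w)-m)$. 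Then $1(\max_k w_k \le 0) \le h(w) \le 1(\max_k w_k \le m+\varphi)$, so $P(\max_k W^{G}_k \le 0) \le E[h(\bm{W}^{G})]$ and $E[h(\bm{W}^{V})] \le P(\max_k W^{V}_k \le m+\varphi)$, and the problem reduces to comparing $E[h(\bm{W}^{G})]$ and $E[h(\bm{W}^{V})]$ plus controlling the shift $m+\varphi$.

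For the comparison, take independent copies, set $\bm{Z}(s) = \sqrt{s}\,\bm{W}^{G} + \sqrt{1-s}\,\bm{W}^{V}$ and $\Psi(s) = E[h(\bm{Z}(s))]$; Gaussian integration by parts gives
\[
\Psi(1)-\Psi(0) = \frac12\int_0^1 \sum_{k,\ell}\left((L\Sigma^{G}L')_{k\ell} - (L\Sigma^{V}L')_{k\ell}\right) E\left[\partial^2_{k\ell} h(\bm{Z}(s))\right]ds,
\]
so $|\Psi(1)-\Psi(0)| \le \tfrac12\Delta\,\sup_w \sum_{k,\ell}|\partial^2_{k\ell}h(w)|$. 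Since $\partial^2_{k\ell}h = g''\pi_k\pi_\ell + g'\partial^2_{k\ell}F_\beta$, the bounds above give $\sum_{k,\ell}|\partial^2_{k\ell}h| \lesssim \varphi^{-2} + \beta\varphi^{-1}$, hence $|E[h(\bm{W}^{G})] - E[h(\bm{W}^{V})]| \lesssim \Delta(\varphi^{-2}+\beta\varphi^{-1})$. Applying Nazarov's inequality (Lemma \ref{lem: Nazarov}) to $\bm{W}^{V}$ bounds $P(\max_k W^{V}_k \le m+\varphi) - P(\max_k W^{V}_k \le 0) \lesssim \underline{\sigma}^{-1}(m+\varphi)\sqrt{\log p}$ with $m = \beta^{-1}\log(2p)$; combining these and the symmetric reverse bound yields $\sup_{A\in\mathcal{A}}|P(\bm{G}\in A) - P(\bm{V}\in A)| \lesssim \Delta(\varphi^{-2}+\beta\varphi^{-1}) + \underline{\sigma}^{-1}(\beta^{-1}\log p + \varphi)\sqrt{\log p}$. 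Choosing $\varphi \asymp \Delta^{1/3}(\log p)^{1/6}$ and $\beta \asymp \Delta^{-1/3}(\log p)^{5/6}$ makes every term $O(\Delta^{1/3}(\log p)^{2/3})$ (the estimate being trivial when this exceeds $1$), giving the claim.

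The main obstacle is the second-order bookkeeping in the interpolation step — deriving the sharp bounds $\sum_k|\partial_k F_\beta| = 1$ and $\sum_{k,\ell}|\partial^2_{k\ell}F_\beta|\le 2\beta$ together with the $C^{3}$-smoothing construction, and then tuning $\varphi$ and $\beta$ \emph{jointly} (the naive scaling $\beta\asymp\varphi^{-1}$ only delivers $\Delta^{1/3}\log p$) to extract the exponent $2/3$. All of this is carried out in detail in \cite{ChChKa13,ChChKa15}, whose argument we follow after the reduction to maxima over $2p$ coordinates described above.
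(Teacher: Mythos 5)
Your argument is correct and is precisely the Slepian-interpolation-plus-smoothing-plus-Nazarov proof from Chernozhukov--Chetverikov--Kato, which is exactly what the paper relies on (its "proof" is just the citation "implicit in the proof of Theorem 4.1 in \cite{ChChKa17}"); the reduction to maxima over $2p$ coordinates, the derivative bounds for $F_\beta$ and $h$, and the joint tuning $\varphi \asymp \Delta^{1/3}(\log p)^{1/6}$, $\beta \asymp \Delta^{-1/3}(\log p)^{5/6}$ all check out and deliver the stated $\Delta^{1/3}\log^{2/3}p$ rate with a constant depending only on $\underline{\sigma}$.
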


\begin{proof}
	Implicit in the proof of Theorem 4.1 in \cite{ChChKa17}.
\end{proof}

\clearpage

\section{Real data analysis}\label{real-data-US-ppt}

We illustrate the usefulness of  our SDWB by conducting a change-point analysis of U.S. precipitation data. Our objective here is not to perform a detailed climatological analysis,
but rather to illustrate main aspects of the SDWB-based change-point procedure in Section~\ref{sec: applications}. The U.S. precipitation data are monthly precipitation (in millimeters) observed at weather stations all over U.S.A from 1895 through 1997, which is available from Institute for Mathematics Applied to Geosciences (IMAG): 
\url{http://www.image.ucar.edu/Data/U.S..monthly.met/U.S.monthlyMet.shtml.} In our data analysis, we focus on  annual total precipitation observed at 101 weather stations in 6 midwest states (Iowa, Minnesota, North Dakota, Nebraska, South Dakota, Wisconsin)  from 1919 till 1994. Note that these six midwest states have similar K\"oppen climate classification Dfa or Dfb (see e.g., \cite{BeZiMcVeBeWo18}), which facilitates the use of our method as approximate spatial stationarity is needed.
To remove the effects due to the heavy tail distribution, we apply the following transformation considered in \cite{GrKoRe17} and \cite{MaYa18}:
\[
Y(\bm{s},t) = \log(1+M(\bm{s},t)),
\]
where $M(\bm{s},t)$ is the original annual precipitation record of year $t$ at location $\bm{s}$.

Figure \ref{Fig: sites 6 states} shows a map of weather stations in the 6 midwest states. In this case, we have $101$ sampling sites that are irregularly scattered  over the sampling region. We set $\lambda_{n} = 15$, which corresponds to setting $R_0$ as approximately a rectangle of $65$km $\times$ $100$km (length$\times$side), to identify the locations of weather stations but any other positive value of $\lambda_n$ would suffice too. 
Note that the only tuning parameter involved in our procedure is the bandwidth $b_n$, which can be a function of $\lambda_n$ in practice.

\subsubsection*{Results}

Let $\mu_{j}=E[Y(\bs,t_{j})]$. We report the results of the stepdown procedure for change-point detection described in Example \ref{cp-test-temporal}. 
Specifically, we test the set of null hypotheses on the differences of adjacent transformed annual precipitation
 \begin{align*}
H_{j}: \mu_{j+1} - \mu_{j} = 0,\ 1\leq j \leq p-1
\end{align*}
against the alternatives $H'_{j}: \mu_{j+1} - \mu_{j} \neq 0,\ 1\leq j \leq p-1$.
We generate $1,500$ bootstrap observations and use the Bartlett kernel for the covariance function of the Gaussian random field $W = \{W(\bm{s}): \bm{s} \in \mathbb{R}^{2}\}$. To examine the sensitivity with respect to the choice of the bandwidth parameter, we set $b_n \in \{1,2,\dots,10\}$.



Figure \ref{Fig: sum-CP-SN} shows 21 detected change points for every $b \in \{1,2,\dots,10\}$. In particular, We found 12 upward and 9 downward change points in Step 1 and no change points are detected in Step 2 at a significance level of $0.01$. Figure \ref{Fig: Change-point-sum-SN-b} shows the results of Step 1 in change-point analysis with joint 95\% (dark gray) and 99\% (gray) confidence intervals when $b=1$ (top left), $b=4$ (top right), $b=7$ (bottom left) and $b=10$ (bottom right). It appears that the interval widths vary as we move from $b=1$ to $b=10$, with the smallest interval widths for $b=1$ and the largest ones corresponding to $b=7$.  
These results imply the temporal nonstationarity of annual precipitations of 6 midwest states, since the spatial averages have been experiencing significant shifts over time. 
Note that several change-point detection procedures have been applied to U.S. precipitation data in the statistics literature; see \cite{GaLuRo12}, \cite{GrKoRe17} and \cite{ZhMaNgYa19}, among others. However, due to the use of different data sets (with different time periods and weather locations), it is a bit difficult to compare our results to theirs directly. Furthermore, the modeling assumption, statistical framework and the change-point alternatives detected by these procedures are all different, which render a direct comparison even harder. 
Nevertheless, our results shed some new light on the U.S. precipitation data, and indicate its strong mean non-stationarity over time.  

\begin{figure}[H]
  \begin{center}
     \includegraphics[clip, width=9cm]{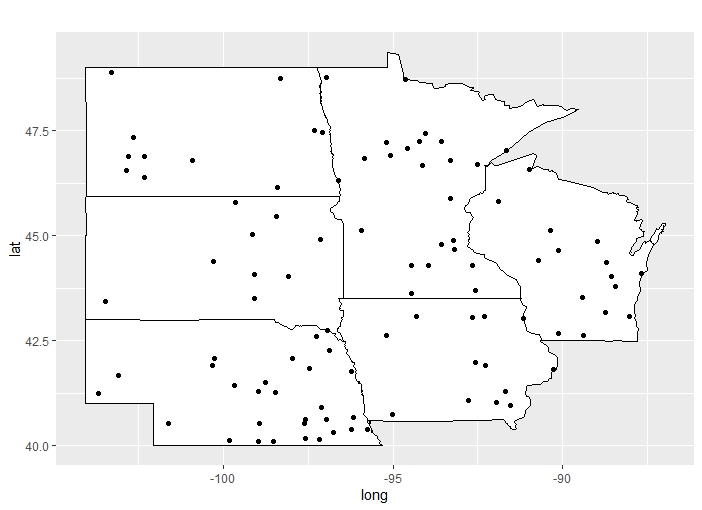}
       \caption{Map of weather stations (sampling sites) in 6 midwest states.}
       \label{Fig: sites 6 states}
     \end{center}
\end{figure}

\begin{figure}[H]
  \begin{center}
     \includegraphics[clip, width=9cm]{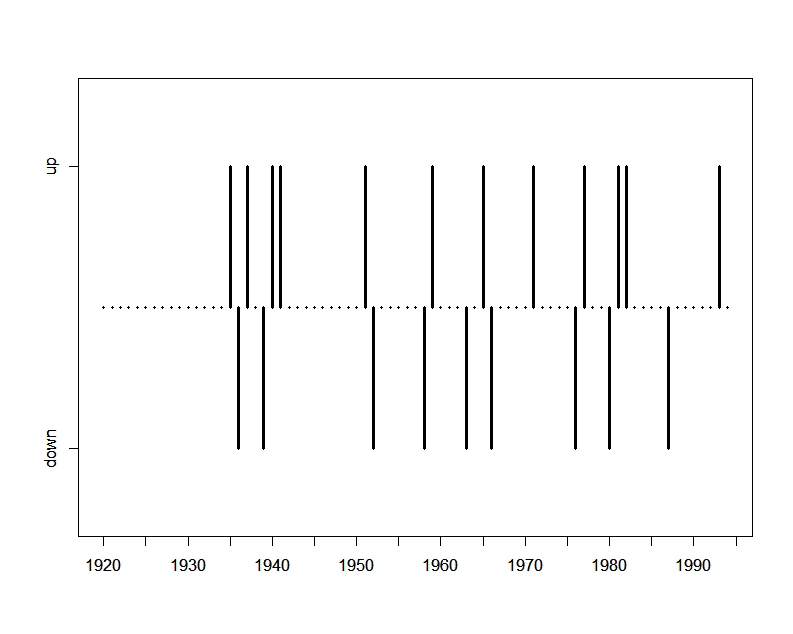}
        \caption{Change points detected in every $b \in \{1,2\dots,10\}$.}
    \label{Fig: sum-CP-SN}
  \end{center}
\end{figure}

\begin{figure}[H]
  \begin{center}
    \begin{tabular}{cc}

      \begin{minipage}{0.4\hsize}
        \begin{center}
          \includegraphics[clip, width=8cm]{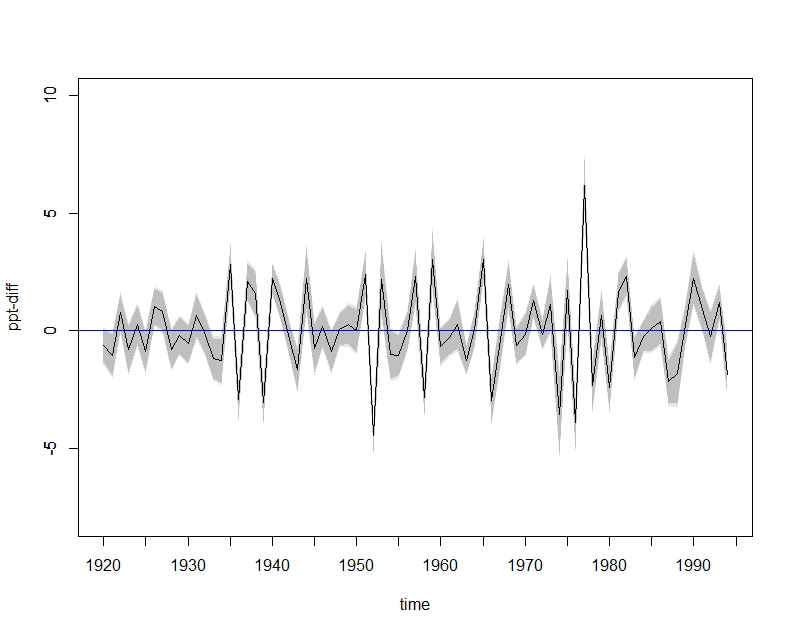}
        \end{center}
      \end{minipage}

       \begin{minipage}{0.6\hsize}
        \begin{center}
          \includegraphics[clip, width=8cm]{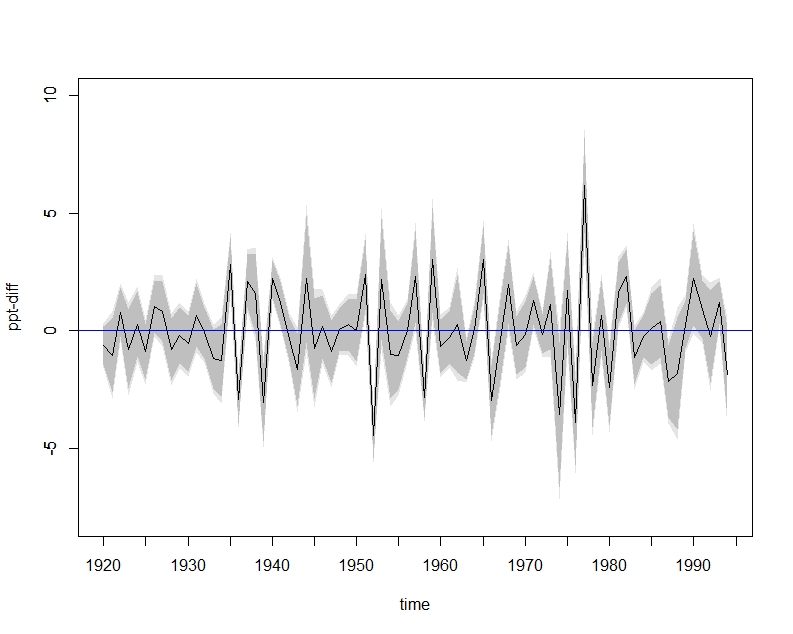}
        \end{center}
       \end{minipage} \\
       
       \begin{minipage}{0.4\hsize}
        \begin{center}
          \includegraphics[clip, width=8cm]{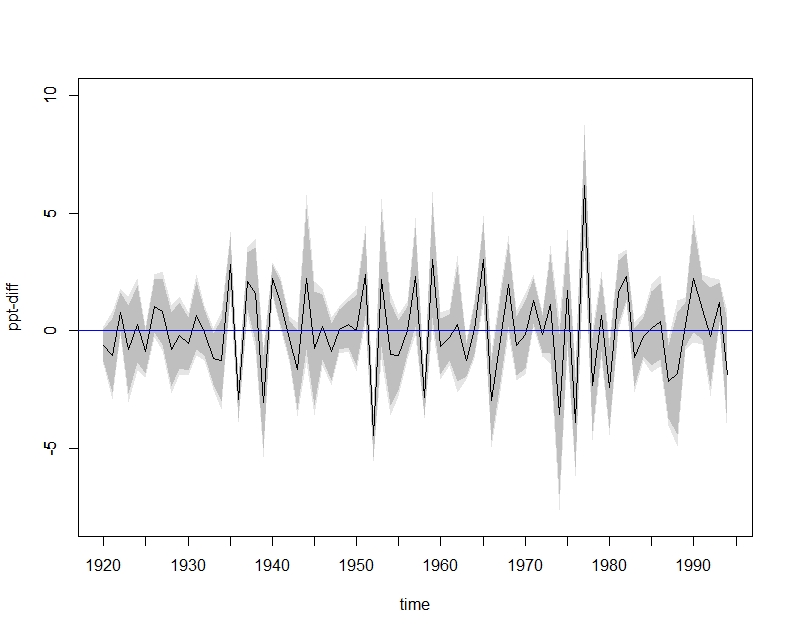}
        \end{center}
      \end{minipage}

       \begin{minipage}{0.6\hsize}
        \begin{center}
          \includegraphics[clip, width=8cm]{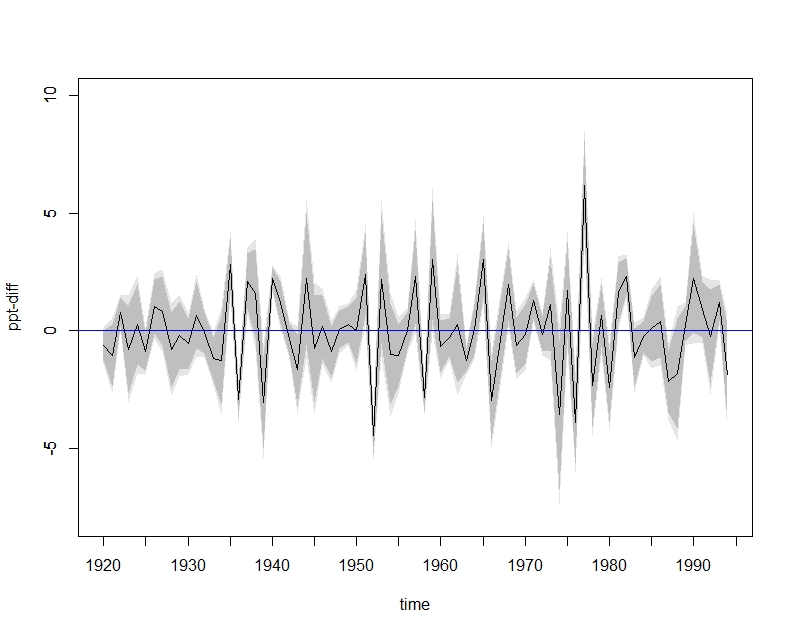}
        \end{center}
       \end{minipage} 

    \end{tabular}
    \caption{Results of the stepdown procedure for change-point analysis with joint 95\% (dark gray) and 99\% (gray) confidence intervals of differences of adjacent transformed annual precipitation data from 1920 till 1994. Each figure corresponds to the results of Step 1 with $b_{n}=1$ (top left), $b_{n} = 4$ (top right), $b_{n}=7$ (bottom left) and $b_{n}=10$ (bottom right). \label{Fig: Change-point-sum-SN-b}}
  \end{center}
\end{figure}

\clearpage

\end{document}